\newcommand{\limto}{{\displaystyle\lim_{\longrightarrow}}}
\newcommand{\rightlim}{\mathop{\limto}}
\newcommand{\leftlim}{\mathop{\displaystyle\lim_{\longleftarrow}}}
\newcommand{\limfromn}{\leftlim\limits_{\raise3pt\hbox{$n$}}}
\newcommand{\limton}{\rightlim\limits_{\raise3pt\hbox{$n$}}}
\newcommand{\rightlimit}[1]{\mathop{\lim\limits_{\longrightarrow}}\limits%
                   _{\raise3pt\hbox{$\scriptstyle #1$}}}
\newcommand{\leftlimit}[1]{\mathop{\lim\limits_{\longleftarrow}}\limits%
                   _{\raise3pt\hbox{$\scriptstyle #1$}}}
\numberwithin{equation}{section}
\newcommand{\rar}[1]{\stackrel{#1}{\longrightarrow}}
\newcommand{\xrar}[1]{\xrightarrow{#1}}
\newcommand{\into}{\hookrightarrow}
\newcommand{\al}{\alpha}
\newcommand{\be}{\beta}
\newcommand{\ga}{\gamma}
\newcommand{\Ga}{\Gamma}
\newcommand{\de}{\delta}
\newcommand{\la}{\lambda}
\newcommand{\ze}{\zeta}
\newcommand{\sg}{\sigma}
\newcommand{\te}{\theta}
\newcommand{\vp}{\varphi}
\newcommand{\bA}{{\mathbb A}}
\newcommand{\bB}{{\mathbb B}}
\newcommand{\bF}{{\mathbb F}}
\newcommand{\bG}{{\mathbb G}}
\newcommand{\bN}{{\mathbb N}}
\newcommand{\bO}{{\mathbb O}}
\newcommand{\bQ}{{\mathbb Q}}
\newcommand{\bT}{{\mathbb T}}
\newcommand{\bU}{{\mathbb U}}
\newcommand{\bZ}{{\mathbb Z}}
\newcommand{\cE}{{\mathcal E}}
\newcommand{\cF}{{\mathcal F}}
\newcommand{\cL}{{\mathcal L}}
\newcommand{\cO}{{\mathcal O}}
\newcommand{\cR}{{\mathcal R}}
\newcommand{\sD}{{\mathscr D}}
\newcommand{\sF}{{\mathscr F}}
\newcommand{\fG}{{\mathfrak G}}
\newcommand{\fX}{{\mathfrak X}}
\newcommand{\fY}{{\mathfrak Y}}
\newcommand{\fk}{{\mathfrak k}}
\newcommand{\abs}[1]{\lvert #1\rvert}
\newcommand{\Ker}{\operatorname{Ker}}
\newcommand{\End}{\operatorname{End}}
\newcommand{\Hom}{\operatorname{Hom}}
\newcommand{\Spec}{\operatorname{Spec}}
\newcommand{\id}{\operatorname{id}}
\newcommand{\pr}{\mathrm{pr}}
\newcommand{\Ind}{\operatorname{Ind}}
\newcommand{\Tr}{\operatorname{Tr}}
\newcommand{\Nm}{\operatorname{Nm}}
\newcommand{\tens}{\otimes}
\newcommand{\st}{\,\big\vert\,}
\newcommand{\sbr}{\smallbreak}
\newcommand{\mbr}{\medbreak}
\newcommand{\matr}[4]{\left(\begin{array}{cc} \! #1 & \! #2 \! \\ \! #3 & \! #4 \! \end{array}\right)}
\newtheorem{thm}{Theorem}[section]
\newtheorem{cor}[thm]{Corollary}
\newtheorem{lem}[thm]{Lemma}
\newtheorem{prop}[thm]{Proposition}
\newtheorem{conjecture}[thm]{Conjecture}
\newtheorem{probl}{Problem}
\theoremstyle{remark}
\newtheorem{rem}[thm]{Remark}
\newtheorem{rems}[thm]{Remarks}
\newtheorem{example}[thm]{Example}
\newtheorem{defin}[thm]{Definition}
\newcommand{\Fr}{\operatorname{Fr}}
\newcommand{\Gal}{\operatorname{Gal}}
\newcommand{\diag}{\operatorname{diag}}
\newcommand{\ql}{\overline{\bQ}_\ell}
\newcommand{\qls}{\overline{\bQ}_\ell^\times}
\newcommand{\Xt}{\widetilde{X}}
\newcommand{\Gt}{\widetilde{G}}
\newcommand{\Tt}{\widetilde{T}}
\newcommand{\Ut}{\widetilde{U}}
\newcommand{\at}{\widetilde{a}}
\newcommand{\gat}{\widetilde{\ga}}
\newcommand{\psit}{{\widetilde{\psi}}}
\newcommand{\zeb}{{\bar{\zeta}}}
\newcommand{\Knr}{\widehat{K}^{nr}}
\newcommand{\OKnr}{\widehat{\cO}^{nr}_K}
\newcommand{\bfq}{\overline{\bF}_q}
\newcommand{\fqn}{\bF_{q^n}}
\newcommand{\fqq}{\bF_{q^2}}
\newcommand{\UU}{U^{2,q}_3(\bF_{q^2})}
\begin{document}

\title[Deligne-Lusztig constructions]{Deligne-Lusztig constructions for unipotent and $p$-adic groups}

\author{Mitya Boyarchenko}

\begin{abstract}
In 1979 Lusztig proposed a conjectural construction of supercuspidal representations of reductive $p$-adic groups, which is similar to the well known construction of Deligne and Lusztig in the setting of finite reductive groups. We present a general method for explicitly calculating the representations arising from Lusztig's construction and illustrate it with several examples. The techniques we develop also provide background for the author's joint work with Weinstein on a purely local and explicit proof of the local Langlands correspondence.
\end{abstract}

\maketitle

\setcounter{tocdepth}{1}

\tableofcontents

\section*{Introduction}

Let $\bG$ be a connected reductive group over a locally compact non-Archimedean field $K$. In \cite{lusztig-corvallis} Lusztig suggested a cohomological construction of representations of the totally disconnected group $G:=\bG(K)$ associated to characters of unramified maximal tori in $\bG$. We begin by describing (a minor variation of) this construction.

\mbr

Write $\Knr$ for the completion of the maximal unramified extension of $K$. Let $\bT\subset\bG$ be a maximal torus, defined over $K$, such that $\bT\tens_K\Knr$ is contained in a Borel subgroup $\bB$ of $\bG\tens_K\Knr$. Write $\bU$ for the unipotent radical of $\bB$ and put $\Gt=\bG(\Knr)$, $\Tt=\bT(\Knr)$ and $\Ut=\bU(\Knr)$, where all three are viewed as abstract groups for the time being. Finally, let $F:\Gt\rar{}\Gt$ be the group automorphism induced by the (arithmetic) Frobenius element in $\Gal(\Knr/K)$.

\mbr

Define $X:=(\Ut\cap F^{-1}(\Ut))\setminus\bigl\{g\in\Gt\st F(g)g^{-1}\in\Ut\bigr\}$ (the quotient is taken with respect to the left multiplication action of $\Ut\cap F^{-1}(\Ut)$). The groups $G=\bG(K)=\Gt^F$ and $T=\bT(K)=\Tt^F$ act on $X$ via right and left multiplication; the two actions commute (the analogy with \cite[Def.~1.17(ii)]{deligne-lusztig} is apparent). Lusztig suggested that by interpreting $X$ as an (ind-)scheme over $\overline{\bF}_q$, the algebraic closure of the residue field of $K$, one should be able to define homology groups $H_i(X,\ql)$ giving rise to smooth representations of $G\times T$. In that case, for every smooth homomorphism $\theta:T\rar{}\qls$, the $\theta$-isotypic component $H_i(X,\ql)[\theta]$ is a smooth representation of $G$. One expects that for $\theta$ in sufficiently general position, $H_i(X,\ql)[\theta]$ vanishes for all but one $i$, which gives an irreducible representation of $G$, and that this representation is supercuspidal when $\bT$ is anisotropic modulo the center of $\bG$.

\mbr

Lusztig's construction naturally leads to the following three problems.

\begin{probl}\label{prob:formalization}
Formalize the construction above for a general connected reductive group $\bG$, that is, define $X$ as an ind-scheme over $\bfq$ and define its homology groups $H_i(X,\ql)$, in such a way that the action of $G\times T$ on $X$ yields smooth representations of $G\times T$ in $H_i(X,\ql)$ for all $i\geq 0$.
\end{probl}

\begin{probl}\label{prob:computation}
Explicitly compute the representations of $G$ resulting from Lusztig's construction.
\end{probl}

\begin{probl}\label{prob:generalization}
Find a generalization of Lusztig's construction that allows one to remove the assumption that $\bT\tens_K\Knr$ is contained in a Borel subgroup of $\bG\tens_K\Knr$.
\end{probl}

In this article we set aside Problem \ref{prob:formalization} because in the examples we will consider, it can be solved by an \emph{ad hoc} method found by Lusztig \cite{lusztig-corvallis}. On the other hand, Problem \ref{prob:generalization} is very difficult; we should mention that Stasinski \cite{stasinski-extended-deligne-lusztig} made some progress toward its solution in the closely related setting of groups of the form $\bG(\cO_K)$, where $\bG$ is a reductive group scheme over the ring of integers $\cO_K\subset K$.

\mbr

Our aim is to develop general tools that can be applied to the solution of Problem \ref{prob:computation}. Before moving on, we should explain what we mean by ``explicitly computing'' a representation of the group $G=\bG(K)$. For concreteness, let us suppose that we are in a situation where Problem \ref{prob:formalization} has already been solved, and for some $i_0\geq 0$, the representation $H_{i_0}(X,\ql)[\te]$ of $G$ is irreducible and supercuspidal. Then solving Problem \ref{prob:computation} means exhibiting a pair $(H,\rho)$ consisting of an open subgroup $H\subset G$, which is compact modulo the center of $G$, and a smooth irreducible representation $\rho$ of $H$ such that $H_{i_0}(X,\ql)[\te]\cong \Ind_H^G(\rho)$. Both $H$ and $\rho$ must be described explicitly.

\mbr

As remarked earlier, Lusztig's construction has a natural analogue for groups of the form $\bG(\cO_K)$; to formulate it one merely has to replace $K$ with $\cO_K$ and $\Knr$ with $\OKnr$ in the first part of the introduction. In this setting, some examples of the resulting representations were calculated by Lusztig \cite{lusztig-finite-rings} and Stasinski \cite{stasinski-extended-deligne-lusztig}. In those examples, $\bG$ is either $GL_2$ or $SL_2$, and the resulting representations have ``level $\leq 2$,'' namely, they factor through $\bG(\cO_K/P_K^2)$, where $P_K\subset\cO_K$ is the maximal ideal. In particular, they are related to representations of finite groups of the form $\fG(\bF_q)$, where $\fG$ is an algebraic group over the finite field $\bF_q=\cO_K/P_K$ whose unipotent radical is \emph{commutative}.

\mbr

A general solution of Problem \ref{prob:computation} requires the consideration of groups $\fG$ over $\bF_q$ whose unipotent radical may be very complicated. As the examples in Section \ref{s:examples-division} illustrate, the calculation of the representations obtained from Lusztig's construction is closely related to the analysis of \emph{Deligne-Lusztig patterns}, which is a term we use for the following general method of constructing representations of finite groups.

\mbr

Suppose $\fG$ is a connected algebraic group over $\bfq$ and $F:\fG\rar{}\fG$ is a Frobenius relative to an $\bF_q$-rational structure on $\fG$. By a result of Lang \cite{lang-theorem}, the morphism $L_q:\fG\rar{}\fG$ given by $g\mapsto F(g)g^{-1}$ is a finite \'etale cover. Its Galois group is $\fG(\bF_q)=\fG^F$ acting on $\fG$ via right multiplication. If $\fY\subset\fG$ is a subvariety (not necessarily defined over $\bF_q$), then $\fX:=L_q^{-1}(\fY)$ inherits the right $\fG(\bF_q)$-action, so the compactly supported cohomology groups $H^i_c(\fX,\ql)$ become representations of $\fG(\bF_q)$. The original construction of Deligne and Lusztig \cite{deligne-lusztig} can be interpreted as a special case of this pattern. On the other hand, different special cases (in which $\fG$ is a \emph{unipotent} group defined over $\bF_q$) play an important role in an approach to the study of the cohomology of the Lubin-Tate tower over $K$ initiated by Weinstein in \cite{WeinsteinGoodReduction} and continued in \cite{maximal-varieties-LLC}. Thus the main goal of this work is to present a general technique for studying Deligne-Lusztig patterns and explain its applications to Problem \ref{prob:computation} above as well as to the local Langlands correspondence for $GL_n(K)$.

%\subsection*{Acknowledgments} To appear (if necessary)...

\subsection*{Organization} The rest of the article is organized as follows. Section \ref{s:notation} clarifies all the notation used throughout the article. The general principles for analyzing Deligne-Lusztig patterns for algebraic groups over $\bF_q$ with a large unipotent radical are explained in Section \ref{s:principles}. In Section \ref{s:examples-unipotent} we give examples of results that can be proved using these methods, which have applications in the author's work with Weinstein on the local Langlands correspondence \cite{maximal-varieties-LLC}. In Section \ref{s:construction-division} we present a detailed solution of Problem \ref{prob:formalization} for one particular family of reductive groups over $p$-adic fields, following an outline provided by Lusztig in \cite{lusztig-corvallis}. Several examples of representations of these groups resulting from Lusztig's construction are calculated in Section \ref{s:examples-division}. The proofs of most of the results appearing in this article are postponed until Section \ref{s:proofs}, so as not to disturb the flow of the exposition. The main results of the article are Theorems \ref{t:lusztig-level-2} and \ref{t:main-example}. %\emph{We remark that Sections \ref{s:construction-division} and \ref{s:examples-division} can be read essentially independently of Sections \ref{s:principles} and \ref{s:examples-unipotent}.}

\section{Notation}\label{s:notation}

For the reader's convenience, we collect in one place all the notation used in this article. For brevity, we will use the phrase ``local field'' to refer to a (nondiscrete) locally compact non-Archimedean field, that is, either a finite extension of $\bQ_p$ or the field $\bF_q((\pi))$ of formal Laurent series in one variable over a finite field.

\mbr

The letter $K$ always denotes a local field. We write $\cO_K\subset K$ for its ring of integers and $\pi\in\cO_K$ for a uniformizer, which is chosen once and for all. We denote by $\Knr$ the completion of the maximal unramified extension of $K$ and by $\OKnr$ its ring of integers. The residue field of $K$ is denoted by $\bF_q=\cO_K/(\pi)$; it has $q$ elements. We write $\bfq$ for a fixed algebraic closure of $\bF_q$, which we identify with the residue field of $\Knr$ whenever convenient. The letter $\vp$ is reserved for the Frobenius automorphism of $\Knr$ over $K$ (it induces the automorphism $x\mapsto x^q$ on the residue field of $\Knr$).

\mbr

We write $p$ for the characteristic of $\bF_q$ and $\ell$ for a chosen prime with $\ell\neq p$. We fix an algebraic closure $\ql$ of the field $\bQ_\ell$ of $\ell$-adic numbers. The field of coefficients of all representations and all sheaves is assumed to be $\ql$.

\subsection{$\ell$-adic cohomology}\label{ss:cohomology-notation} We will freely use the formalism of $\ell$-adic cohomology with compact supports, as explained, e.g., in \cite{sga4.5}. We call a ``$\ql$-local system'' what is called a ``$\ql$-faisceau lisse'' in \emph{op.~cit.} If $X$ is a scheme of finite type over a field $\fk$ and $\pr:X\rar{}\fk$ is the structure morphism, then for any $i\geq 0$ we put $H^i_c(X,\ql)=R^i\pr_!(\ql)$, where $\ql$ is the constant local system of rank $1$ on $X$. We view $H^i_c(X,\ql)$ as an $\ell$-adic sheaf over $\Spec\fk$, that is, a finite dimensional vector space over $\ql$ equipped with a continuous action of $\Gal(\fk^{sep}/\fk)$. Similarly, we write $H^i_c(X,\cF)$ in place of $R^i\pr_!(\cF)$ for any (constructible) $\ell$-adic sheaf $\cF$ on $X$.

\mbr

The only cases that are of interest to us are where $\fk$ is either finite or algebraically closed and where $X$ is separated over $\fk$; these situations can be adequately handled by the formalism developed in \cite{sga4.5} and in SGA 4.

\subsection{Notions of Frobenius and the Lang map}\label{ss:Frobenius-notation} The term \emph{Frobenius} is used in several different ways in the article. First, if $X$ is a scheme over $\bF_q$, then $F:X\rar{}X$ will denote the absolute Frobenius (the identity map on the underlying topological space and the map $f\mapsto f^q$ on the structure sheaf). Moreover, if $\fk$ is any field extension of $\bF_q$, then $F$ will also denote the induced endomorphism of $X\tens_{\bF_q}\fk$. When necessary, we will write $F_q$ in place of $F$.

\mbr

An $\ell$-adic sheaf on $\Spec\bF_q$ gives rise to a finite dimensional vector space $V$ over $\ql$ together with a continuous action of $\Gal(\bfq/\bF_q)$; the action of the geometric Frobenius in $\Gal(\bfq/\bF_q)$ will be denoted by $\Fr_q:V\rar{\simeq}V$. We remark that the Tate twist $\ql(1)$ on $\Spec\bF_q$ corresponds to the vector space $\ql$ on which $\Fr_q$ acts as $q^{-1}$. If $X$ is a scheme over $\bF_q$ and $\cF$ is a (constructible) $\ell$-adic sheaf on $X$, for each $i\geq 0$ we can also view $H^i_c(X,\cF)$ as a finite dimensional vector space over $\ql$ equipped with an automorphism $\Fr_q$. As mentioned earlier, the letter $\vp$ will always denote the automorphism of $\Knr$ induced by the arithmetic Frobenius $x\mapsto x^q$ on its residue field $\bfq$, where $K$ is a local field with residue field $\bF_q$.

\mbr

If $G$ is a group scheme over $\bF_q$ and $\fk$ is any field extension of $\bF_q$, we will write $L_q:G\tens_{\bF_q}\fk\rar{}G\tens_{\bF_q}\fk$ for the morphism $g\mapsto F_q(g)g^{-1}$ and call it the \emph{Lang map}.

\section{General principles}\label{s:principles}

\subsection{Setup}\label{ss:DL-setup} In this section, unlike the introduction, the letter $G$ will denote an algebraic group (i.e., a reduced group scheme of finite type) over a finite field $\bF_q$. Suppose that $Y\subset G$ is a (locally closed) subvariety defined over $\bF_q$ and put $X=L_q^{-1}(Y)$, where $L_q:G\rar{}G$ is the Lang map given by $g\mapsto F(g)g^{-1}$ (cf.~\S\ref{ss:Frobenius-notation}). The finite group $G(\bF_q)$ acts on $X$ by right multiplication, so for each $i\geq 0$, we obtain a representation of $G(\bF_q)$ on the finite dimensional vector space $H^i_c(X,\ql)$ over $\ql$. On the other hand, we also have an automorphism $\Fr_q$ of $H^i_c(X,\ql)$ (cf.~\S\ref{ss:cohomology-notation}); the actions of $G(\bF_q)$ and $\Fr_q$ commute. In this section we present some general tools that can be used to attack the following

\begin{probl}\label{prob:patterns}
Compute each $H^i_c(X,\ql)$ as a representation of $G(\bF_q)\times\Fr_q^{\bZ}$.
\end{probl}

For certain classes of unipotent groups $G$ this problem arose in Weinstein's approach to a purely local proof of the local Langlands correspondence \cite{WeinsteinGoodReduction,maximal-varieties-LLC}. On the other hand, we will see in Section \ref{s:examples-division} that it is also related to Problem \ref{prob:computation} above.

\subsection{Computation of spaces of intertwiners}
In the setting of \S\ref{ss:DL-setup}, suppose that $H\subset G$ is a connected subgroup defined over $\bF_q$ and $\chi:H(\bF_q)\rar{}\qls$ is a character. Write $V_\chi$ for the induced representation $\Ind_{H(\bF_q)}^{G(\bF_q)}(\chi)$ of $G(\bF_q)$. The method presented in this section is based on calculating the space of intertwiners $\Hom_{G(\bF_q)}\bigl(V_\chi,H^i_c(X,\ql)\bigr)$ for each $i\geq 0$. In practice, if the dimension of this space is known for sufficiently many pairs $(H,\chi)$, one can often derive a complete description of $H^i_c(X,\ql)$ as a direct sum of irreducible representations of $G(\bF_q)$.

\mbr

To state the next result we need some auxiliary notation. Consider the right multiplication action of $H(\bF_q)$ on $G$ and form the quotient\footnote{Not to be confused with the set of $\bF_q$-points of the homogeneous space $G/H$!} $Q:=G/(H(\bF_q))$ (it is a smooth affine scheme over $\bF_q$). As the Lang map $L_q:G\rar{}G$ is invariant under right multiplication by $H(\bF_q)$, it factors through a morphism $\al:Q\rar{}G$. On the other hand, the quotient map $G\rar{}Q$ is a right $H(\bF_q)$-torsor, so the character $\chi$ yields a $\ql$-local system $\cE_\chi$ of rank $1$ on $Q$. The next result is proved in \S\ref{ss:proof-l:DL-first}.

\begin{lem}\label{l:DL-first}
There is a natural $\Fr_q$-equivariant vector space isomorphism
\begin{equation}\label{e:key-DL-ingredient}
\Hom_{G(\bF_q)}\bigl(V_\chi,H^i_c(X,\ql)\bigr) \cong H^i_c\bigl(\al^{-1}(Y),\cE_\chi\bigl\lvert_{\al^{-1}(Y)}\bigr) \qquad\forall\,i\geq 0.
\end{equation}
\end{lem}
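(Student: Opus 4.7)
My plan is to combine Frobenius reciprocity with the standard character decomposition of the pushforward along a finite étale torsor.

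First, I would restrict the right $G(\bF_q)$-action on $H^i_c(X,\ql)$ to $H(\bF_q)$. Since $V_\chi = \Ind_{H(\bF_q)}^{G(\bF_q)}\chi$, Frobenius reciprocity provides a natural $\Fr_q$-equivariant isomorphism between $\Hom_{G(\bF_q)}\bigl(V_\chi, H^i_c(X,\ql)\bigr)$ and the $\chi$-isotypic subspace of $H^i_c(X,\ql)$ under $H(\bF_q)$, i.e.\ the subspace on which $H(\bF_q)$ acts through the character $\chi$. It therefore suffices to identify this isotypic subspace with $H^i_c(\al^{-1}(Y),\cE_\chi|_{\al^{-1}(Y)})$.

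Next, I would observe that $\pi : G \to Q$ is a right $H(\bF_q)$-torsor. For every $h \in H(\bF_q) = H^F$ we have $F(h)=h$, hence
\[
L_q(gh) \;=\; F(g)\,h\,h^{-1}g^{-1} \;=\; L_q(g),
\]
so $X = L_q^{-1}(Y)$ is stable under right multiplication by $H(\bF_q)$ and its image in $Q$ is precisely $\al^{-1}(Y)$. Consequently the restriction $\pi|_X : X \to \al^{-1}(Y)$ is itself a right $H(\bF_q)$-torsor. By the general formalism of finite étale torsors, the pushforward $(\pi|_X)_*\ql$ is a local system of $\ql[H(\bF_q)]$-modules on $\al^{-1}(Y)$, and its $\chi$-isotypic subsheaf is exactly $\cE_\chi|_{\al^{-1}(Y)}$ by the very definition of $\cE_\chi$ as the rank-$1$ local system cut out by $\chi$ from the $H(\bF_q)$-torsor $G \to Q$. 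Since taking $\chi$-isotypic subspaces is exact and commutes with $H^i_c$, the $\chi$-isotypic summand of
\[
H^i_c(X,\ql) \;=\; H^i_c\!\bigl(\al^{-1}(Y),\,(\pi|_X)_*\ql\bigr)
\]
equals $H^i_c(\al^{-1}(Y),\cE_\chi|_{\al^{-1}(Y)})$, which together with the previous paragraph yields \eqref{e:key-DL-ingredient}.

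All the objects involved --- $H$, $\chi$, $Y$, $L_q$, the torsor $G \to Q$, and $\cE_\chi$ --- are defined over $\bF_q$, so the identification is automatically $\Fr_q$-equivariant. The only genuinely non-formal point I anticipate is bookkeeping at the torsor step: one must verify that the left-versus-right convention in the construction of $\cE_\chi$ from $G \to Q$ matches the action convention used when picking out the $\chi$-isotypic subspace of $H^i_c(X,\ql)$, so that the character $\chi$ (rather than $\chi^{-1}$) appears on both sides. This reduces to a direct fibre-level calculation and is the step where I would expect to spend the most care.
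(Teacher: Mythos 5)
Your proposal is correct and follows essentially the same route as the paper: Frobenius reciprocity to pass to the $\chi$-isotypic part of $H^i_c(X,\ql)$ under $H(\bF_q)$, then the character decomposition of the pushforward of $\ql$ along the finite $H(\bF_q)$-torsor, identifying the $\chi$-piece with $\cE_\chi$. The only (cosmetic) difference is that you restrict the torsor to $X\to\al^{-1}(Y)$ before decomposing, whereas the paper decomposes $\nu_!\ql$ on all of $Q$ and then invokes proper base change together with $X=\nu^{-1}(\al^{-1}(Y))$; the two orderings are interchangeable.
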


We now make two further assumptions under which the right hand side of \eqref{e:key-DL-ingredient} can be described much more explicitly. Namely, let us suppose that the quotient morphism $G\rar{}G/H$ admits a section $s:G/H\rar{}G$. Assume also that there is an algebraic group morphism $f:H\rar{}\bG_a$ defined over $\bF_q$ such that $\chi=\psi\circ f$ for an additive character $\psi:\bF_q\rar{}\qls$ and let $\cL_\psi$ be the Artin-Schreier local system on $\bG_a$ defined by $\psi$ (in other words, $\cL_\psi$ is induced by the covering $\bG_a\rar{}\bG_a$, $x\mapsto x^q-x$, whose structure group is $\bF_q=\bG_a(\bF_q)$, via the character $\psi$).

\begin{lem}\label{l:DL-second}
There is an isomorphism $\ga:(G/H)\times H\rar{\simeq}Q$ such that $\ga^*\cE_\chi\cong(f\circ\pr_2)^*\cL_\psi$ and $\al\circ\ga=\be$, where $\pr_2:(G/H)\times H\rar{}H$ is the second projection and $\be:(G/H)\times H\rar{}G$ is given by $\be(x,h)=s(F_q(x))\cdot h\cdot s(x)^{-1}$.
\end{lem}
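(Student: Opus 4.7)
The plan is to construct $\ga$ explicitly and verify its properties by elementary torsor computations. First I would define $\ga$ by the informal formula $\ga(x,h)=[s(x)h']$, where $h'\in H$ is any element with $L_q(h')=h$. Since $L_q:H\to H$ is an $H(\bF_q)$-torsor (Lang's theorem applied to the connected group $H$), any two such lifts differ by right multiplication by an element of $H(\bF_q)$, so the image in $Q=G/H(\bF_q)$ does not depend on the choice. More rigorously, I would realize $\ga$ as the inverse of the morphism $\bar\Psi:Q\to(G/H)\times H$ induced by $\Psi:G\to(G/H)\times H$, $g\mapsto\bigl(\pi(g),\,L_q(s(\pi(g))^{-1}g)\bigr)$, where $\pi:G\to G/H$ is the quotient. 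The identity $\Psi(g\eta)=\Psi(g)$ for $\eta\in H(\bF_q)$ follows because $\pi(g\eta)=\pi(g)$ and $L_q$ is trivial on right multiplication by $\bF_q$-points; that $\bar\Psi$ and $\ga$ are mutually inverse is then a one-line check.

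The identity $\al\circ\ga=\be$ is a direct computation. Using that $s$ is defined over $\bF_q$, so $F_q\circ s=s\circ F_q$, one finds for any lift $h'$ of $h$ that
\[
\al(\ga(x,h)) \;=\; L_q(s(x)h') \;=\; F_q(s(x))\,F_q(h')(h')^{-1}\,s(x)^{-1} \;=\; s(F_q(x))\,L_q(h')\,s(x)^{-1} \;=\; \be(x,h).
\]

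For the sheaf isomorphism, the main point is to identify the pullback torsor $\ga^*(G\to Q)$. I would verify that the square with top row $\ti s:(G/H)\times H\to G$, $(x,h')\mapsto s(x)h'$, bottom row $\ga$, left column $(\id,L_q):(G/H)\times H\to(G/H)\times H$, and right column the quotient $G\to Q$, is commutative and Cartesian: $\ti s$ is an isomorphism, both vertical maps are $H(\bF_q)$-torsors (for the right action of $H(\bF_q)$ on the second factor of $H$), and commutativity is immediate from the defining formula for $\ga$. Consequently $\ga^*\cE_\chi\cong\pr_2^*\cE^H_\chi$, where $\cE^H_\chi$ denotes the local system on $H$ associated to the torsor $L_q^H:H\to H$ and the character $\chi$.

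Finally, because $f:H\to\bG_a$ is an $\bF_q$-group homomorphism, the squares $f\circ L_q^H=L_q^{\bG_a}\circ f$ commute, and the resulting morphism of torsors yields an isomorphism $(L_q^H:H\to H)\times^{H(\bF_q),\,f}\bG_a(\bF_q)\cong f^*(L_q^{\bG_a}:\bG_a\to\bG_a)$ of $\bF_q$-torsors on $H$. Pushing further along $\psi$ gives $\cE^H_\chi\cong f^*\cL_\psi$, and therefore $\ga^*\cE_\chi\cong\pr_2^*f^*\cL_\psi=(f\circ\pr_2)^*\cL_\psi$, as required. The only mildly delicate step is tracking the conventions in the torsor pushouts so that the character appears without an unwanted inverse twist; I do not expect any real obstacle.
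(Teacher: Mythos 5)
Your proposal is correct and follows essentially the same route as the paper's proof: you identify $(G/H)\times H$ with $G$ via $(x,h')\mapsto s(x)h'$, descend through the commuting square with $\id\times L_q$ on the left and the quotient $G\rar{}Q$ on the right to get $\ga$ with $\ga^*\cE_\chi\cong\pr_2^*\cL_\chi$, use $\chi=\psi\circ f$ to get $\cL_\chi\cong f^*\cL_\psi$, and verify $\al\circ\ga=\be$ by the same Lang-map computation. The only cosmetic differences are that you build the inverse of $\ga$ explicitly and spell out the torsor pushout for $f$; also note that the commutation $F_q\circ s=s\circ F_q$ is automatic for the absolute Frobenius and does not require $s$ to be defined over $\bF_q$.
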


This result is proved in \S\ref{ss:proof-l:DL-second}. Combining Lemmas \ref{l:DL-first} and \ref{l:DL-second} yields

\begin{prop}\label{p:DL-main}
Assume that we are given the following data:
\begin{itemize}
\item an algebraic group $G$ with a connected subgroup $H\subset G$ over $\bF_q$;
 \sbr
\item a section $s:G/H\rar{}G$ of the quotient morphism $G\rar{}G/H$;
 \sbr
\item an algebraic group homomorphism $f:H\rar{}\bG_a$;
 \sbr
\item an additive character $\psi:\bF_q\rar{}\qls$;
 \sbr
\item a locally closed subvariety $Y\subset G$.
\end{itemize}
Set $X=L_q^{-1}(Y)$, the preimage of $Y$ under the Lang map $L_q(g)=F_q(g)g^{-1}$. The right multiplication action of $G(\bF_q)$ on $X$ induces linear representations of $G(\bF_q)$ on the cohomology $H^i_c(X,\ql)$, and for each $i\geq 0$, we have a vector space isomorphism
\begin{equation}\label{e:DL-main}
\Hom_{G(\bF_q)}\bigl(\Ind_{H(\bF_q)}^{G(\bF_q)}(\psi\circ f),H^i_c(X,\ql)\bigr) \cong H^i_c\bigl(\be^{-1}(Y), P^*\cL_\psi\bigr)
\end{equation}
compatible with the action of $\Fr_q$ on both sides (cf.~\S\ref{ss:Frobenius-notation}),
where $\cL_\psi$ is the Artin-Schreier local system on $\bG_a$ corresponding to $\psi$, the morphism $\be:(G/H)\times H\rar{}G$ is given by $\be(x,h)=s(F_q(x))\cdot h\cdot s(x)^{-1}$, and the morphism $P:\be^{-1}(Y)\rar{}\bG_a$ is the composition $\be^{-1}(Y)\into (G/H)\times H \xrar{\ \ \pr_2\ \ }H\rar{f}\bG_a$.
\end{prop}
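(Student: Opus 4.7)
The plan is to derive the proposition by chaining the isomorphisms furnished by Lemmas \ref{l:DL-first} and \ref{l:DL-second}; no new geometric input beyond those two lemmas is needed. Setting $\chi = \psi \circ f$, Lemma \ref{l:DL-first} provides, for each $i \geq 0$, an $\Fr_q$-equivariant identification
\[ \Hom_{G(\bF_q)}\bigl(\Ind_{H(\bF_q)}^{G(\bF_q)}(\chi), H^i_c(X, \ql)\bigr) \cong H^i_c\bigl(\al^{-1}(Y),\, \cE_\chi\bigl\lvert_{\al^{-1}(Y)}\bigr), \]
so the task reduces to rewriting the right-hand side in terms of the variety $(G/H) \times H$ and the explicit morphism $\be$.

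The key observation is that the relation $\al \circ \ga = \be$ from Lemma \ref{l:DL-second} implies that $\ga$ restricts to an isomorphism $\ga : \be^{-1}(Y) \iso \al^{-1}(Y)$ of $\bF_q$-varieties. Pulling back along this isomorphism and invoking the identification $\ga^* \cE_\chi \cong (f \circ \pr_2)^* \cL_\psi$ also provided by Lemma \ref{l:DL-second}, I obtain
\[ H^i_c\bigl(\al^{-1}(Y),\, \cE_\chi\bigl\lvert_{\al^{-1}(Y)}\bigr) \cong H^i_c\bigl(\be^{-1}(Y),\, (f \circ \pr_2)^* \cL_\psi \bigl\lvert_{\be^{-1}(Y)}\bigr). \]
Since $P : \be^{-1}(Y) \to \bG_a$ is by definition the composition $\be^{-1}(Y) \into (G/H) \times H \xrar{\pr_2} H \xrar{f} \bG_a$, the restriction of $(f \circ \pr_2)^* \cL_\psi$ to $\be^{-1}(Y)$ is precisely $P^* \cL_\psi$, yielding the isomorphism \eqref{e:DL-main}.

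For the $\Fr_q$-equivariance, the point is that every object and morphism used in the argument --- the subgroup $H$, the quotient $Q$, the morphism $\al$, the local system $\cE_\chi$, the section $s$, the morphisms $\ga$ and $\be$, the subvariety $Y$, and the map $P$ --- is defined over $\bF_q$ (this uses that $\chi$, $\psi$, and $f$ are $\bF_q$-rational). Hence all induced isomorphisms of compactly supported $\ell$-adic cohomology automatically intertwine the geometric Frobenius, and compatibility with the $\Fr_q$-equivariance already present in Lemma \ref{l:DL-first} is immediate.

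The main obstacle is therefore not in this proposition itself, which is essentially a formal corollary, but in establishing Lemmas \ref{l:DL-first} and \ref{l:DL-second}. The former requires a version of the standard Deligne--Lusztig identification of Hom spaces with cohomology of Lang-type coverings (reinterpreting $\Ind_{H(\bF_q)}^{G(\bF_q)}(\chi)$ as sections of a sheaf on a quotient), while the latter requires an explicit trivialization of the $H(\bF_q)$-torsor $G \to Q$ by means of the section $s$, together with a careful check that this trivialization carries $\cE_\chi$ to the Artin--Schreier pullback. Those verifications are deferred to Section \ref{s:proofs}.
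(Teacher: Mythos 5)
Your proposal is correct and follows the paper's own route: the paper proves Proposition \ref{p:DL-main} precisely by combining Lemmas \ref{l:DL-first} and \ref{l:DL-second}, exactly as you do (with the same observation that $\al\circ\ga=\be$ lets $\ga$ restrict to an isomorphism $\be^{-1}(Y)\iso\al^{-1}(Y)$ carrying $\cE_\chi$ to $P^*\cL_\psi$, and that $\bF_q$-rationality of all the data gives $\Fr_q$-compatibility).
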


\begin{rem}
In the situation of Proposition \ref{p:DL-main}, if $G$ is affine and $H$ is contained in the unipotent radical of $G$, then $H$ is a connected unipotent group and the homogeneous space $G/H$ is affine, so the existence of the section $s$ is automatic.
\end{rem}

\subsection{The $\ell$-adic formalism} Our conventions regarding $\ell$-adic sheaves, cohomology and the sheaves-to-functions correspondence are the same as those of \cite{sga4.5}. If $X$ is a scheme of finite type over $\bF_q$ and $\cF$ is a constructible $\ql$-sheaf on $X$, the function corresponding to $\cF$ will be denoted by $t_{\cF}:X(\bF_q)\rar{}\ql$.

\mbr

We write $D^b_c(X,\ql)$ for the bounded derived category of complexes of constructible $\ql$-sheaves on $X$. (In fact, all complexes appearing in this article are concentrated in a single cohomological degree.) As usual, the square brackets $[]$ denote cohomological shift to the left and the parentheses $()$ denote Tate twists.

\mbr

If $f:X\rar{}Y$ is a morphism of schemes of finite type over $\bF_q$, we have the pullback functor $f^*:D^b_c(Y,\ql)\rar{}D^b_c(X,\ql)$ and the functor of (derived) pushforward with compact supports $Rf_!:D^b_c(X,\ql)\rar{}D^b_c(Y,\ql)$. Both of these are compatible with the sheaves-to-functions correspondence in the natural sense.

\mbr

The applications of the methods developed in this section rely heavily on the proper base change theorem and the projection formula. We recall their statements.

\begin{thm}[Proper base change]
If
\[
\xymatrix{
  X' \ar[rr]^{g'} \ar[d]_{f'} & & X \ar[d]^f \\
  Y' \ar[rr]^g & & Y
   }
\]
is a Cartesian diagram of finite type schemes over $\bF_q$, then there is a canonical isomorphism of functors $g^*\circ(Rf_!)\rar{\simeq}Rf'_!\circ g'^* : D^b_c(X,\ql)\rar{}D^b_c(Y',\ql)$.
\end{thm}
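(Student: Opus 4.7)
The plan is to reduce the statement to the classical proper base change theorem of SGA 4 (exposé XII), which I would cite rather than reprove. The key preliminary step is Nagata compactification: every separated morphism of finite type $f : X \to Y$ admits a factorization $f = \overline{f} \circ j$, where $j : X \hookrightarrow \overline{X}$ is an open immersion and $\overline{f} : \overline{X} \to Y$ is proper. By the very definition of $Rf_!$ used in \cite{sga4.5}, one has $Rf_! = R\overline{f}_* \circ j_!$, so it suffices to prove base change separately for the proper piece and for the open immersion piece and then to compose.

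Base changing along $g : Y' \to Y$, I would form $\overline{X}' := \overline{X} \times_Y Y'$ together with its projection $\overline{f}' : \overline{X}' \to Y'$ and the second projection $\overline{g} : \overline{X}' \to \overline{X}$. Since $j$ is an open immersion, the fibre product $X' = X \times_Y Y'$ identifies with $X \times_{\overline{X}} \overline{X}'$ and sits inside $\overline{X}'$ as an open subscheme via an immersion $j' : X' \hookrightarrow \overline{X}'$, with $\overline{g}|_{X'} = g'$. The original Cartesian square therefore decomposes into two Cartesian squares: a proper square attached to $(\overline{f}, \overline{f}')$ and an open immersion square attached to $(j, j')$. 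For the latter, the base change isomorphism $\overline{g}^* \circ j_! \iso j'_! \circ g'^*$ is formal, since extension by zero commutes with arbitrary pullback. For the former, the isomorphism $g^* \circ R\overline{f}_* \iso R\overline{f}'_* \circ \overline{g}^*$ is precisely the classical proper base change theorem, promoted to bounded constructible $\ql$-complexes in \cite{sga4.5}; the hypotheses (separated, finite type over $\bF_q$) are exactly what is needed to invoke both references. Composing the two yields the desired canonical isomorphism of functors $g^* \circ Rf_! \iso Rf'_! \circ g'^*$, and the naturality in $\cF$ is inherited from the naturality of the two constituent base change maps.

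The main obstacle, which I would treat as a black box, is the proper base change theorem itself for torsion coefficients. Its proof in SGA 4 proceeds by dévissage to constant sheaves of $\bZ/n$-modules with $n$ invertible in the residue characteristics; reduction to the case where $Y' = \Spec(\overline{k})$ is a geometric point of $Y$, which turns the statement into the identification $(R\overline{f}_* \cF)_{\overline{y}} \cong R\Gamma(\overline{X}_{\overline{y}}, \cF|_{\overline{X}_{\overline{y}}})$; and finally a local computation for projective morphisms, achieved via Chow's lemma and explicit analysis of $\bP^1$-bundles. The subsequent passage from $\bZ/\ell^n$-coefficients to $\ql$-coefficients is routine within the formalism of \cite{sga4.5}, given that all our schemes are separated and of finite type over $\bF_q$ so that the relevant inverse limits behave well.
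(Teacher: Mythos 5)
Your proposal is correct, and it coincides with the paper's treatment: the paper offers no proof of this theorem, simply citing SGA 4 and \cite{sga4.5}, and your sketch (Nagata compactification, $Rf_!=R\overline{f}_*\circ j_!$, the formal open-immersion case, and the classical proper base change theorem as a black box) is exactly the standard argument contained in those references. Nothing further is needed.
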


\begin{thm}[Projection formula]
If $f:X\rar{}Y$ is a morphism of finite type schemes over $\bF_q$, then for $M\in D^b_c(X,\ql)$ and $N\in D^b_c(Y,\ql)$, we have a natural isomorphism
\[
Rf_!\bigl((f^*N)\tens M\bigr) \cong N\tens Rf_!(M) \qquad\text{in } D^b_c(Y,\ql).
\]
\end{thm}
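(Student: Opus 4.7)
My plan is to construct a canonical natural transformation
\[
\rho_{M,N} \colon N \tens Rf_{!}(M) \longrightarrow Rf_{!}\bigl(f^{*}N \tens M\bigr),
\]
natural in $M$ and $N$, and then prove it is an isomorphism by reducing to a computation on geometric stalks. To construct $\rho$, I would choose a Nagata compactification $f = \bar{f} \circ j$ with $j \colon X \into \bar{X}$ an open immersion and $\bar{f} \colon \bar{X} \longrightarrow Y$ proper, so that $Rf_{!} = R\bar{f}_{*} \circ j_{!}$. The projection map for $\bar{f}$ is extracted from the $(\bar{f}^{*}, R\bar{f}_{*})$ adjunction: it is transpose to the morphism
\[
\bar{f}^{*}\bigl(N \tens R\bar{f}_{*}M\bigr) = \bar{f}^{*}N \tens \bar{f}^{*}R\bar{f}_{*}M \xrar{\ \id\,\tens\,\varepsilon\ } \bar{f}^{*}N \tens M
\]
obtained by tensoring the counit $\varepsilon \colon \bar{f}^{*}R\bar{f}_{*}M \longrightarrow M$ with $\bar{f}^{*}N$. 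The corresponding map for $j_{!}$ will turn out to be the tautological identification. Composing these two yields $\rho$ for an arbitrary finite-type $f$.

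Since both sides of the desired isomorphism are triangulated functors of $M$ and $N$ which commute with direct sums, dévissage lets one reduce to the case where $M$ and $N$ are constructible sheaves concentrated in a single degree; and by the Nagata factorisation it is enough to treat an open immersion and a proper morphism separately. For an open immersion $j \colon X \into \bar{X}$ the functor $j_{!}$ is exact, and both $j_{!}(j^{*}N \tens M)$ and $N \tens j_{!}M$ can be computed directly on geometric stalks: both vanish outside $X$, while on $X$ both agree with $N|_{X}\tens M$. Hence $\rho$ is an isomorphism in this case.

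For the proper morphism $\bar{f} \colon \bar{X} \longrightarrow Y$ I would show that $\rho$ is an isomorphism on every geometric stalk. Applying the proper base change theorem (stated just above) to the Cartesian square
\[
\xymatrix{
  \bar{X}_{\bar{y}} \ar[r] \ar[d] & \bar{X} \ar[d]^{\bar{f}} \\
  \bar{y} \ar[r] & Y
   }
\]
identifies $\bigl(R\bar{f}_{*}(\bar{f}^{*}N \tens M)\bigr)_{\bar{y}}$ with $R\Gamma\bigl(\bar{X}_{\bar{y}},\,N_{\bar{y}}\tens M|_{\bar{X}_{\bar{y}}}\bigr)$ and $\bigl(N \tens R\bar{f}_{*}M\bigr)_{\bar{y}}$ with $N_{\bar{y}} \tens R\Gamma\bigl(\bar{X}_{\bar{y}},\,M|_{\bar{X}_{\bar{y}}}\bigr)$. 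Because the coefficient ring $\ql$ is a field, $N_{\bar{y}}$ is represented by a bounded complex of finite-dimensional $\ql$-vector spaces, hence is termwise flat, and tensoring with it is an exact functor that commutes with $R\Gamma$. Unwinding the construction of $\rho$ identifies the stalk map with this tautological identification, completing the proof. I expect the main obstacle to be purely organisational: one must construct $\rho$ so that the passage through the Nagata compactification is visibly functorial, and verify that the stalk map produced by proper base change matches $\rho_{\bar{y}}$; with those compatibilities in hand, the geometric content reduces to proper base change combined with the exactness of tensoring by finite-dimensional $\ql$-vector spaces.
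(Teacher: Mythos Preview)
The paper does not give its own proof of the projection formula: it simply records the statement and refers the reader to SGA~4 and \cite{sga4.5}. Your sketch is essentially the classical argument found there---construct the comparison map from the $(\bar f^{*},R\bar f_{*})$ adjunction, factor $f$ via Nagata as an open immersion followed by a proper map, and verify the isomorphism stalkwise using proper base change. So there is nothing to compare against; your approach is the standard one.

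One small caveat: Nagata compactification requires $f$ to be separated, which the statement as written does not explicitly impose. In practice this is harmless (the paper remarks elsewhere that all schemes of interest are separated over the base field), but you should either add that hypothesis or note that the projection formula in full generality can be obtained by a further reduction.
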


The proofs of these theorems can be found in SGA 4 and \cite{sga4.5}.

\subsection{Additive characters of finite fields} Consider $\bG_a$ as the additive group over the prime field $\bF_p$. The corresponding Frobenius $F_p:\bG_a\rar{}\bG_a$ acts on the set of $\bF_{p^k}$-points of $\bG_a$ as the map $x\mapsto x^p$ for any $k\in\bN$.

\mbr

Let us fix, once and for all, a nontrivial character $\psi_0:\bF_p\rar{}\qls$. For each $k\in\bN$, the map $\bF_{p^k}\rar{}\Hom(\bF_{p^k},\qls)$ taking $a\in\bF_{p^k}$ to the homomorphism $x\mapsto \psi_0(\Tr_{\bF_{p^k}/\bF_p}(ax))$ is a group isomorphism, which is compatible with the action of $F_p$ on both sides because $\Tr_{\bF_{p^k}/\bF_p}(a^p x)=\Tr_{\bF_{p^k}/\bF_p}(ax^{1/p})$ for all $a,x\in\bF_{p^k}$.

\begin{rem}
If $k,r\in\bN$ are such that $k$ divides $r$, the following square commutes:
\[
\xymatrix{
  \bF_{p^r} \ar[rr] & & \Hom(\bF_{p^r},\qls) \\
  \bF_{p^k} \ar[rr] \ar[u]^{\text{inclusion}} & & \Hom(\bF_{p^k},\qls) \ar[u]_{\text{composition with }\Tr_{\bF_{p^r}/\bF_{p^k}}}
   }
\]
where the horizontal arrows are the isomorphisms that were constructed in the previous paragraph. This observation justifies the validity of the next
\end{rem}

\begin{defin}\label{d:conductor}
If $q$ is a power of $p$ and $n\in\bN$, then given a character $\psi:\bF_{q^n}\rar{}\qls$, there exists a unique integer $1\leq m\leq n$ (which divides $n$) such that $\psi$ factors through the trace map $\Tr_{\bF_{q^n}/\bF_{q^m}}:\bF_{q^n}\rar{}\bF_{q^m}$ and does not factor through the trace map $\bF_{q^n}\rar{}\bF_{q^r}$ for any $1\leq r<m$. We call $q^m$ the \emph{conductor} of $\psi$.
\end{defin}

\begin{rem}
This notion is relative to the choice of $q$. However, in all the situations where we will use it, the prime power $q$ will be specified explicitly.
\end{rem}

\subsection{Inductive calculation of cohomology} In order to use Proposition \ref{p:DL-main} in practice one must be able to solve special cases of the following

\begin{probl}
Let $S$ be a scheme of finite type over $\bF_q$, let $P:S\rar{}\bG_a$ be a morphism defined over $\bF_q$ and let $\cL_\psi$ be the Artin-Schreier local system on $\bG_a$ corresponding to a character $\psi:\bF_q\rar{}\qls$. For each $i\geq 0$, calculate the cohomology $H^i_c(S,P^*\cL_\psi)$ together with the action of $\Fr_q$ on it (see \S\ref{ss:Frobenius-notation}).
\end{probl}

Even if $S$ is an affine space $\bA^r$, this problem is very difficult for general morphisms $P$, and there is no known collection of methods that can be used to solve the problem in all possible cases. However, experience shows that in the situations arising from applications of Proposition \ref{p:DL-main}, the cohomology of $P^*\cL_\psi$ can be computed via an inductive procedure similar to the one used in \cite{maximal-varieties-LLC}. The essence of the idea on which this inductive procedure is based is contained in the following result.

\begin{prop}\label{p:inductive-idea}
Let $q$ be a power of $p$, let $n\in\bN$ and let $\psi:\bF_{q^n}\rar{}\qls$ be a character that has conductor $q^m$ (see Def.~\ref{d:conductor}). Let $S_2$ be a scheme of finite type over $\fqn$, put $S=S_2\times\bA^1$ and suppose that a morphism $P:S\rar{}\bG_a$ has the form
\[
P(x,y)=f(x)^{q^j}y-f(x)^{q^n} y^{q^{n-j}}+P_2(x)
\]
for some integer $j$ not divisible by $m$, where $f,P_2:S_2\rar{}\bG_a$ are two morphisms. Let $S_3\subset S_2$ be the subscheme defined by $f=0$ and let $P_3=P_2\bigl\lvert_{S_3}:S_3\rar{}\bG_a$. Then for all $i\in\bZ$, we have
\[
H^i_c(S,P^*\cL_\psi) \cong H^{i-2}_c(S_3,P_3^*\cL_\psi)(-1)
\]
as vector spaces equipped with an action of $\Fr_q$, where the Tate twist $(-1)$ means that the action of $\Fr_q$ on $H^{i-2}_c(S_3,P_3^*\cL_\psi)$ is multiplied by $q$.
\end{prop}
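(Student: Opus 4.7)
The plan is to compute $R\pi_!(P^*\cL_\psi)$ on $S_2$ via the projection $\pi: S = S_2 \times \bA^1 \to S_2$, and then apply the Leray formula $H^i_c(S, P^*\cL_\psi) = H^i_c(S_2, R\pi_! P^*\cL_\psi)$. First I would decompose $P(x,y) = A(x,y) + P_2(x)$ with $A(x,y) := f(x)^{q^j} y - f(x)^{q^n} y^{q^{n-j}}$. Using the basic additivity $m^*\cL_\psi \cong \cL_\psi \boxtimes \cL_\psi$ of Artin-Schreier sheaves under the addition $m: \bG_a\times\bG_a \to \bG_a$, we get $P^*\cL_\psi \cong A^*\cL_\psi \otimes \pi^*(P_2^*\cL_\psi)$, and the projection formula reduces the calculation to analyzing $R\pi_!(A^*\cL_\psi)$.

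The next step is to analyze this complex by proper base change over the stratification $S_2 = (S_2 \setminus S_3) \sqcup S_3$, where $S_3 = \{f = 0\}$. Over the closed stratum, $A$ is identically zero, so $A^*\cL_\psi|_{S_3 \times \bA^1} \cong \ql$ and compactly supported integration along $\bA^1$ gives $\ql(-1)[-2]$. The main work, and the expected obstacle, is to show that the stalks of $R\pi_!(A^*\cL_\psi)$ vanish over the open locus $\{f \neq 0\}$. By proper base change it suffices to verify, for every geometric point $x$ with $f(x) \neq 0$, that $H^*_c(\bA^1_y, A(x,\cdot)^*\cL_\psi) = 0$. Performing the linear change of coordinate $t := f(x)^{q^j} y$ (which is an isomorphism because $f(x)\neq 0$) converts $A(x,y)$ into the additive polynomial $\phi(t) := t - t^{q^{n-j}}$, so the problem becomes the vanishing of $H^*_c(\bA^1_t, \phi^*\cL_\psi)$.

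Here the hypothesis $m \nmid j$ enters in an essential way. Since $\phi: \bG_a \to \bG_a$ is a homomorphism of algebraic groups, the rank-$1$ local system $\phi^*\cL_\psi$ is again multiplicative (i.e.\ $m^*\phi^*\cL_\psi \cong \phi^*\cL_\psi \boxtimes \phi^*\cL_\psi$), and its $\Fr_{q^n}$-trace on the stalk at any $y_0 \in \bF_{q^n}$ equals $\psi(\phi(y_0))$. This identifies $\phi^*\cL_\psi \cong \cL_{\psi'}$ where $\psi'(y) = \psi(y - y^{q^{n-j}}) = \psi(y)/\psi(\sigma^{n-j}(y))$ and $\sigma$ denotes the $q$-power Frobenius on $\bF_{q^n}$. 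Under the self-duality $\bF_{q^n} \simeq \widehat{\bF_{q^n}}$ determined by $\psi_0$, if $\psi$ corresponds to $a\in\bF_{q^n}$ then $\psi'$ corresponds to $(1 - \sigma^j)(a)$, since $\sigma^{n-j} = \sigma^{-j}$ on $\bF_{q^n}$. The Galois stabilizer of $a$ is $\Gal(\bF_{q^n}/\bF_{q^m})$ by the definition of the conductor $q^m$, so $(1-\sigma^j)(a) = 0$ if and only if $m \mid j$; the assumption $m \nmid j$ therefore guarantees that $\psi'$ is nontrivial, and the standard Artin-Schreier vanishing $H^i_c(\bA^1, \cL_{\psi'}) = 0$ concludes the vanishing on the open stratum.

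Combining the two strata, $R\pi_!(A^*\cL_\psi) \cong i_*\ql[-2](-1)$ in $D^b_c(S_2,\ql)$, where $i: S_3 \hookrightarrow S_2$. A second application of the projection formula, this time for the closed immersion $i$, gives $i_*\ql \otimes P_2^*\cL_\psi \cong i_*(i^*P_2^*\cL_\psi) = i_*(P_3^*\cL_\psi)$, and therefore
\[
R\pi_!(P^*\cL_\psi) \cong i_*(P_3^*\cL_\psi)[-2](-1).
\]
Taking $H^i_c(S_2, -)$ of both sides yields the desired identification $H^i_c(S, P^*\cL_\psi) \cong H^{i-2}_c(S_3, P_3^*\cL_\psi)(-1)$, compatibly with Frobenius, the Tate twist precisely accounting for the multiplication-by-$q$ of $\Fr_q$ that arises from $H^2_c(\bA^1,\ql) = \ql(-1)$.
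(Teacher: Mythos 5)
Your proof is correct and follows essentially the same route as the paper: decompose $P$ into the part additive in $y$ plus $P_2$, use multiplicativity of $\cL_\psi$ and the projection formula, and kill the stalks over $\{f\neq 0\}$ by Artin--Schreier nontriviality, which is exactly where $m\nmid j$ enters. The only cosmetic difference is that you rescale $y$ to reduce to the fixed additive polynomial $t-t^{q^{n-j}}$ and identify the pullback local system through its trace function, whereas the paper computes the same pullback directly via its formula for the dual of an additive endomorphism (Corollary \ref{c:dual-endomorphism}); the content is identical.
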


The proof is given in \S\ref{ss:proof-p:inductive-idea}.

\subsection{The Deligne-Lusztig fixed point formula} When $G$ is a unipotent group over $\bF_q$, the methods outlined earlier in this section appear to be sufficient to solve Problem \ref{prob:patterns} in practice. However, the applications of these methods to Problem \ref{prob:computation} also require the techniques of Deligne-Lusztig theory for finite reductive groups. We now state the fixed point formula proved in \cite[\S3]{deligne-lusztig}; it will be used in the proofs of the main results of Section \ref{s:examples-division} below.

\begin{thm}\label{t:fixed-point-formula-DL}
Let $\sg$ be a finite order automorphism of a separated scheme $X$ of finite type over an algebraically closed field $\fk$ of characteristic $p>0$. Then
\begin{equation}\label{e:fixed-point-formula-DL}
\sum_i (-1)^i \Tr\bigl(\sg^*;H^i_c(X,\ql)\bigr) = \sum_i (-1)^i \Tr\bigl(u^*;H^i_c(X^s,\ql)\bigr),
\end{equation}
where $\sg=s\circ u$ is an ``abstract Jordan decomposition'' --- $s$ and $u$ are powers of $\sg$ such that the order of $s$ (respectively, $u$) is prime to $p$ (respectively, a power of $p$) --- and $X^s$ is the fixed point scheme of $s$ acting on $X$.
\end{thm}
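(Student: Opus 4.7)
The plan is to reduce Theorem \ref{t:fixed-point-formula-DL} to the following more general assertion, which I will call $(\ast)$: \emph{if $s$ is an automorphism of $X$ of finite order prime to $p$ and $g$ is any automorphism of $X$ commuting with $s$, then}
\begin{equation*}
\sum_i (-1)^i \Tr\bigl((sg)^*;\,H^i_c(X,\ql)\bigr) \;=\; \sum_i (-1)^i \Tr\bigl(g^*;\,H^i_c(X^s,\ql)\bigr).
\end{equation*}
Since $\sg$ has finite order, the abelian group $\langle\sg\rangle\subset\Aut(X)$ is canonically the product of its $p'$-part and its $p$-part, providing commuting powers $s,u$ of $\sg$ with $s$ of prime-to-$p$ order, $u$ of $p$-power order, and $\sg=s\circ u$; taking $g=u$ in $(\ast)$ then recovers the theorem.

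I would prove $(\ast)$ by induction on $|s|$, the case $|s|=1$ being tautological. For the inductive step, pick a prime $\ell'\neq p$ dividing $|s|$ and factor $s=s_1s_2$ with commuting $s_1,s_2$, where $s_1$ has $\ell'$-power order and $s_2$ has order coprime to $\ell'$. Applying $(\ast)$ to the pair $(s_1,s_2g)$ on $X$ (these commute, since $g$ commutes with both $s_1$ and $s_2$) rewrites the left-hand side as an alternating trace of $(s_2g)^*$ on $H^\bullet_c(X^{s_1},\ql)$; applying $(\ast)$ to the pair $(s_2,g)$ on the smaller scheme $X^{s_1}$, using $(X^{s_1})^{s_2}=X^s$, then yields the desired right-hand side. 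Both steps invoke $(\ast)$ for automorphisms of strictly smaller order, so the induction reduces everything to $s$ of prime order $\ell'\neq p$.

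For that base case, form the geometric quotient $\pi:X\to Y:=X/\langle s\rangle$, which exists as a separated finite-type $\fk$-scheme since $\langle s\rangle$ is finite. Because $|\langle s\rangle|=\ell'$ is invertible in $\ql$, the sheaf $\pi_*\ql$ carries a $\langle s\rangle$-action and decomposes into $\chi$-isotypic summands $\cF_\chi$ indexed by characters of $\langle s\rangle$; since $g$ commutes with $s$ it descends to $\bar g\in\Aut(Y)$, each $\cF_\chi$ is $\bar g$-equivariant, and $s$ acts on $\cF_\chi$ by the scalar $\chi(s)$. Leray gives $H^*_c(X,\ql)=\bigoplus_\chi H^*_c(Y,\cF_\chi)$, so
\begin{equation*}
\sum_i (-1)^i \Tr\bigl((sg)^*;\,H^i_c(X,\ql)\bigr) \;=\; \sum_\chi \chi(s)\cdot\sum_i (-1)^i \Tr\bigl(\bar g^*;\,H^i_c(Y,\cF_\chi)\bigr).
\end{equation*}
Splitting $Y$ into $Z:=\pi(X^s)$ and its open complement $U$ (both $\bar g$-stable) via the excision long exact sequence, proper base change at the finite map $\pi$ identifies $\pi_*\ql|_Z$ with the constant sheaf on $Z\cong X^s$ with trivial $s$-action, so $\cF_\chi|_Z=0$ for $\chi\neq\mathbf 1$ and the $Z$-contribution collapses to exactly $\sum_i(-1)^i\Tr(g^*;H^i_c(X^s,\ql))$. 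The $U$-contribution collects into $\sum_i(-1)^i\Tr((sg)^*;H^i_c(X\setminus X^s,\ql))$, on which $s$ now acts freely, and the proof is reduced to showing this last quantity vanishes.

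The step I expect to be the main obstacle is this final vanishing on the free locus. The classical strategy is to compactify $X\setminus X^s$ equivariantly for the finite abelian group $\langle s,g\rangle$ and apply dévissage (additivity for open-closed decompositions, plus resolution) to reduce to smooth projective schemes, where Poincaré duality and the topological Lefschetz fixed point theorem, combined with the character-orthogonality identity $\sum_\chi\chi(s)=0$ for non-identity $s$ in a prime-to-$p$ cyclic group, force the alternating trace to vanish. Establishing equivariant compactification/resolution in the required generality and verifying that every dévissage step is compatible with the commuting $\bar g$-action (or, alternatively, spreading out over a finitely generated subring and reducing the computation of traces to an application of the Grothendieck-Lefschetz trace formula for Frobenius over finite fields) is the technical heart of the argument; the detailed execution appears in \cite[\S3]{deligne-lusztig}.
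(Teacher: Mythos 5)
The paper does not actually prove Theorem \ref{t:fixed-point-formula-DL}: it is quoted as a known result from \cite[\S3]{deligne-lusztig}, so there is no internal argument to compare yours with, and your proposal must be judged as a self-contained proof. As such it has a genuine structural gap: the induction on $\lvert s\rvert$ is circular whenever $\lvert s\rvert=\ell'^k$ with $k\geq 2$. In that case your factorization $s=s_1s_2$, with $s_1$ of $\ell'$-power order and $s_2$ of order prime to $\ell'$, gives $s_1=s$ and $s_2=1$, so the ``first application of $(\ast)$'' is to the pair $(s,g)$ itself and nothing decreases, while your base case only covers $\lvert s\rvert$ prime (so, e.g., $\lvert s\rvert=4$ with $p$ odd is never handled). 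This is fixable, but the fix must be stated: apply the prime-order case to $s_1=s^{\ell'^{k-1}}$ and the commuting automorphism $s_1^{-1}sg$, and note that the restriction of $s_1^{-1}s$ to $X^{s_1}$ has order dividing $\ell'^{k-1}$ and fixed locus exactly $X^s$, so the inductive hypothesis applies on $X^{s_1}$. A smaller gap of the same kind: the geometric quotient $X/\langle s\rangle$ need not exist as a scheme for an arbitrary separated finite-type $X$; one should first use additivity of the alternating trace over invariant locally closed decompositions (Noetherian induction) to reduce to invariant affine or quasi-projective pieces, or work with algebraic spaces.

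More seriously, the entire content of the theorem sits in the step you label the main obstacle, namely the vanishing of $\sum_i(-1)^i\Tr\bigl((sg)^*;H^i_c(X\setminus X^s,\ql)\bigr)$ when $s$ has no fixed points, and the mechanism you sketch for it does not work as stated: d\'evissage and compactification destroy the freeness of the $s$-action (for $X'=\bG_m$, $s=$ multiplication by a primitive $\ell'$-th root of unity and $g=$ multiplication by a scalar, $sg$ has two fixed points on $\bP^1$ and the Lefschetz number there is $2$, not $0$), and $\sum_\chi\chi(s)=0$ only helps after one knows that the alternating trace of $\bar g$ on $H^\bullet_c(U,\cF_\chi)$ is independent of $\chi$ --- equivalently that the $g$-equivariant virtual $\langle s\rangle$-module $\sum_i(-1)^i[H^i_c]$ is a multiple of the regular representation --- which is precisely the hard input (obtained in the literature either by an integrality argument with torsion coefficients for free actions, or by descending to a finite field and counting fixed points of Frobenius twists). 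Since you ultimately defer exactly this step to \cite[\S3]{deligne-lusztig}, the very reference the paper invokes for the whole statement, your proposal as written reduces the theorem to easy bookkeeping plus the citation; the crux remains imported, and the bookkeeping itself needs the repairs indicated above.
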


\subsection{Character computations} If, in the situation of Theorem \ref{t:fixed-point-formula-DL}, the order of $\sg$ is a power of $p$, formula \eqref{e:fixed-point-formula-DL} becomes vacuous. In such situations, one can sometimes apply a different fixed point formula, based on a more elementary idea, which we now describe.

\subsubsection{The setting}\label{sss:fixed-point-setting} Assume that $X$ is a separated scheme of finite type over the finite field $\bF_q$ and we are given an automorphism $\sg$ of $X$ and a right action of a finite group $A$ on $X$ that commutes with $\sg$. For each character $\chi:A\rar{}\qls$ and each $i\geq 0$, we write $H^i_c(X,\ql)[\chi]$ for the subspace of $H^i_c(X,\ql)$ on which $A$ acts via $\chi$. This subspace is invariant under the action of $\sg^*:H^i_c(X,\ql)\rar{\simeq}H^i_c(X,\ql)$.

\begin{lem}\label{l:fixed-point-1}
Let $\chi:A\to\qls$ be a character and let $r\geq 0$ be an integer such that $H^i_c(X,\ql)[\chi]=0$ for all $i\neq r$ and the Frobenius $\Fr_q$ acts on $H^r_c(X,\ql)[\chi]$ by a scalar $\la\in\qls$. Then
\begin{equation}\label{e:fixed-point-1}
\Tr\bigl(\sg^*,H^r_c(X,\ql)[\chi]\bigr) = \frac{(-1)^r}{\la\cdot\abs{A}} \cdot \sum_{a\in A} \chi(a) \cdot \Bigl\lvert \bigl\{ x\in X(\bfq) \st \sg(F_q(x))= x\cdot a \bigr\} \Bigr\rvert
\end{equation}
where $\abs{S}$ denotes the number of elements of a finite set $S$.
\end{lem}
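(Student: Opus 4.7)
The plan is to apply the Grothendieck--Lefschetz trace formula to count the points on the right-hand side of \eqref{e:fixed-point-1}, then recognize the $\chi$-weighted sum as the projector onto the $\chi$-isotypic component, and finally use the two hypotheses on $H^i_c(X,\ql)[\chi]$ to isolate the scalar action of $\Fr_q$.

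\textbf{Step 1 (Lefschetz count).} For each fixed $a\in A$, the condition $\sigma(F_q(x))=x\cdot a$ is equivalent to saying that $x\in X(\bfq)$ is a fixed point of the composition $R_{a^{-1}}\circ\sigma\circ F_q$, where $R_{a^{-1}}$ denotes right multiplication by $a^{-1}$. Because $\sigma$ and the $A$-action are defined over $\bF_q$, the map $R_{a^{-1}}\circ\sigma$ commutes with $F_q$, so the Grothendieck--Lefschetz trace formula applies in its $\bF_q$-twisted form and gives
\[
\bigl\lvert\{x\in X(\bfq) \st \sigma(F_q(x))=x\cdot a\}\bigr\rvert \;=\; \sum_i (-1)^i \Tr\bigl(\sigma^*\circ R_{a^{-1}}^*\circ\Fr_q,\, H^i_c(X,\ql)\bigr).
\]

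\textbf{Step 2 (Projector recognition).} Multiply by $\chi(a)$, sum over $a\in A$, and substitute $b=a^{-1}$. This turns the right-hand side into
\[
\sum_i (-1)^i \Tr\Bigl(\sigma^*\circ\Bigl(\sum_{b\in A}\chi(b^{-1})\,R_b^*\Bigr)\circ\Fr_q,\, H^i_c(X,\ql)\Bigr)
= \lvert A\rvert\sum_i (-1)^i \Tr\bigl(\sigma^*\circ e_\chi\circ\Fr_q,\, H^i_c(X,\ql)\bigr),
\]
where $e_\chi=\lvert A\rvert^{-1}\sum_b\chi(b^{-1})R_b^*$ is the standard projector onto the $\chi$-isotypic subspace for the pullback action $a\mapsto R_a^*$. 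Since $\sigma$ commutes with the $A$-action and both are $\bF_q$-rational, the three operators $\sigma^*$, $R_b^*$ and $\Fr_q$ commute pairwise, so $e_\chi$ commutes with both $\sigma^*$ and $\Fr_q$, and the expression above equals $\lvert A\rvert\sum_i(-1)^i\Tr(\sigma^*\Fr_q,\,H^i_c(X,\ql)[\chi])$.

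\textbf{Step 3 (Applying the hypotheses).} By the vanishing assumption, only $i=r$ contributes; by the scalar-Frobenius assumption, $\Fr_q$ acts on $H^r_c(X,\ql)[\chi]$ as multiplication by $\lambda$, so this single remaining term equals $(-1)^r\lambda\cdot\Tr(\sigma^*,\,H^r_c(X,\ql)[\chi])$. Dividing through by $\lvert A\rvert\cdot(-1)^r\lambda$ yields \eqref{e:fixed-point-1}. The only real obstacle is bookkeeping: one must track the sign/inverse conventions carefully so that the combination $\sum_b\chi(b^{-1})R_b^*$ really is the projector onto the space on which $R_a^*$ acts by $\chi(a)$, and one must invoke the twisted Lefschetz formula (not merely Deligne--Lusztig's Theorem~\ref{t:fixed-point-formula-DL}, which is vacuous in the $p$-power case at issue here) --- everything else is formal.
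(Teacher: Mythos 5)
Your proof is correct and takes essentially the same route as the paper's: for each $a$ you apply the $\bF_q$-twisted Lefschetz formula (the paper invokes exactly this via the argument of Deligne--Lusztig, Prop.~3.3, since Theorem \ref{t:fixed-point-formula-DL} is indeed useless here), then average against $\chi$ to recognize the projector $\abs{A}^{-1}\sum_b\chi(b^{-1})R_b^*$ onto $H^i_c(X,\ql)[\chi]$, and finally use the concentration in degree $r$ and the scalar action of $\Fr_q$. The sign and inverse bookkeeping in your Steps 1--3 matches the paper's, so there is nothing to add.
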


The lemma is proved in \S\ref{ss:proof-l:fixed-point-1}.

\subsubsection{A variant}\label{sss:fixed-point-variant} Let $X$ be a separated scheme of finite type over the finite field $\bF_q$ and let $\Ga$ and $A$ be two finite groups. Assume that we are given a left action of $\Ga$ on $X$, a right action of $A$ on $X$, and these actions commute with each other. Let $\te:\Ga\rar{}\qls$ and $\chi:A\rar{}\qls$ be characters. For each $i\geq 0$, write
\[
H^i_c(X,\ql)_{\te,\chi} \subset H^i_c(X,\ql)
\]
for the subspace on which $\Ga$ acts via $\te$ and $A$ acts via $\chi$.

\begin{lem}\label{l:fixed-point-2}
Suppose that there exists $r\geq 0$ such that $H^i_c(X,\ql)_{\te,\chi}=0$ for all $i\neq r$ and the Frobenius $\Fr_q$ acts on $H^r_c(X,\ql)_{\te,\chi}$ by a scalar $\la\in\qls$. Then
\[
\dim H^r_c(X,\ql)_{\te,\chi} = \frac{(-1)^r}{\la\cdot\abs{\Ga}\cdot\abs{A}} \sum_{\ga\in\Ga, a\in A} \te(\ga)^{-1}\chi(a) \cdot \Bigl\lvert \bigl\{ x\in X(\bfq) \st \ga\cdot F_q(x)= x\cdot a \bigr\} \Bigr\rvert.
\]
\end{lem}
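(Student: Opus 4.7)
The argument is a direct generalization of the proof of Lemma~\ref{l:fixed-point-1}: one carries out the same computation on the product group $\Ga\times A$ in place of the single group $A$ treated there.

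First, I would observe that because the left action of $\Ga$ and the right action of $A$ commute with each other and with $F_q$, they descend to commuting linear representations of $\Ga\times A$ on each $H^i_c(X,\ql)$, and $\Fr_q$ preserves the resulting isotypic decomposition. By the orthogonality relations for characters of $\Ga\times A$, the projector from $H^i_c(X,\ql)$ onto its $(\te,\chi)$-isotypic summand has the form
\[
e_{\te,\chi}=\frac{1}{\abs{\Ga}\cdot\abs{A}}\sum_{\ga\in\Ga,\,a\in A}\te(\ga)^{-1}\chi(a)\cdot T_{\ga,a},
\]
where $T_{\ga,a}$ denotes the endomorphism of $H^i_c(X,\ql)$ induced (via pullback) by the $\bF_q$-automorphism $\mu_{\ga,a}:X\rar{}X$, $x\mapsto\ga\cdot x\cdot a^{-1}$. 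The appearance of $\te(\ga)^{-1}$ and $\chi(a)$ (rather than $\te(\ga)$ and $\chi(a)^{-1}$) reflects the standard convention for turning a left (resp.\ right) action on $X$ into a linear action on cohomology via pullback, and is the only point at which care with signs is required.

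Next, the hypotheses of the lemma force the alternating sum of traces to collapse. Since $H^i_c(X,\ql)_{\te,\chi}=0$ for $i\neq r$ and $\Fr_q$ acts on $H^r_c(X,\ql)_{\te,\chi}$ as multiplication by $\la$,
\[
\sum_i(-1)^i\Tr\bigl(\Fr_q\circ e_{\te,\chi};\,H^i_c(X,\ql)\bigr)=(-1)^r\la\cdot\dim H^r_c(X,\ql)_{\te,\chi}.
\]
I would then substitute the explicit expression for $e_{\te,\chi}$ and interchange summations, reducing the problem to computing, for each fixed pair $(\ga,a)\in\Ga\times A$, the alternating trace $\sum_i(-1)^i\Tr\bigl(\Fr_q\circ T_{\ga,a};\,H^i_c(X,\ql)\bigr)$. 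By the Grothendieck-Lefschetz trace formula --- which applies because the composition $\mu_{\ga,a}\circ F_q$ of an absolute Frobenius with a finite-order $\bF_q$-automorphism automatically has isolated, transverse fixed points on $X(\bfq)$ (the differential of the composite vanishes) --- this inner alternating trace equals $\bigl\lvert\{x\in X(\bfq)\st\mu_{\ga,a}(F_q(x))=x\}\bigr\rvert=\bigl\lvert\{x\in X(\bfq)\st\ga\cdot F_q(x)=x\cdot a\}\bigr\rvert$. Assembling the resulting identities and dividing through by $(-1)^r\la$ yields the claimed formula.

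The only real difficulty is the bookkeeping in the first step --- namely, pinning down the sign and inverse conventions so that the projector $e_{\te,\chi}$ is written with exactly the factor $\te(\ga)^{-1}\chi(a)$ appearing in the statement. The cohomological input (Grothendieck-Lefschetz for a Frobenius twisted by a finite-order automorphism) is the same standard tool already used to prove Lemma~\ref{l:fixed-point-1} and introduces no new difficulty.
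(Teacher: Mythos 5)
Your proposal is correct and is essentially the paper's own argument, just reorganized: the paper averages in two stages (first over $A$, by running the proof of Lemma \ref{l:fixed-point-1} with $\sg=\sg_\ga$ for each fixed $\ga\in\Ga$, and then over $\Ga$ with the projector $\frac{1}{\abs{\Ga}}\sum_\ga\te(\ga)^{-1}\sg_\ga^*$), whereas you form the single projector over the product group $\Ga\times A$ and sum over pairs $(\ga,a)$ at once, which amounts to the same computation. The cohomological input is identical in both versions --- the fixed point formula for $\Fr_q$ composed with a finite-order automorphism --- and, as in the paper's proof of Lemma \ref{l:fixed-point-1}, this step is best justified by citing the argument of \cite[Prop.~3.3]{deligne-lusztig} rather than the transversality heuristic you sketch.
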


This result is proved in \S\ref{ss:proof-l:fixed-point-2}.

\vfill\newpage

\section{Examples for unipotent groups}\label{s:examples-unipotent}

In this section we state two results that can be proved using the methods described in Section \ref{s:principles}. Both have applications to the local Langlands correspondence.

\subsection{First setting}\label{ss:unipotent-first} Let $q$ be a power of a prime $p$ and let $n\geq 1$ be an integer.

\begin{defin}\label{d:ring-scheme-level-2}
If $A$ is an $\bF_p$-algebra, consider the twisted polynomial ring $A\langle\tau\rangle$ with the commutation relation $\tau\cdot a=a^q\cdot\tau$ for all $a\in A$, and put $\cR_{n,q}(A)=A\langle\tau\rangle/(\tau^{n+1})$. The functor $A\mapsto\cR_{n,q}(A)$ is obviously representable by the affine space $\bA^{n+1}$ over $\bF_p$.
Thus it defines a ring scheme $\cR_{n,q}$ over $\bF_p$. Its elements will be written as $a_0+a_1\tau+\dotsc+a_n\tau^n$. The multiplicative group of $\cR_{n,q}$ will be denoted by $\cR^\times_{n,q}$ and the subgroup of $\cR^\times_{n,q}$ consisting of all elements of the form $1+a_1\tau+\dotsc+a_n\tau^n$ will be denoted by $U^{n,q}\subset\cR_{n,q}^\times$ (it is an $n$-dimensional unipotent group over $\bF_p$).
\end{defin}

\begin{rem}\label{r:Frobenius-R-n-q}
The Frobenius morphism $F_p:\cR_{n,q}\rar{}\cR_{n,q}$ is given by
\[
F_p(a_0+a_1\tau+\dotsc+a_n\tau^n)=a_0^p+a_1^p\tau+\dotsc+a_n^p\tau^n.
\]
If $r\geq 1$ is any integer and we view $\cR_{n,q}$ as a scheme over $\bF_{p^r}$ by base change, the corresponding Frobenius equals $F_{p^r}=F_p^r$.
\end{rem}

\begin{defin}\label{d:X-main}
Fix $n$ and $q$ as above and view $U^{n,q}$ as an algebraic group \emph{over $\bF_{q^n}$} by extension of scalars. Let $Y\subset U^{n,q}$ be the subscheme defined by $a_n=0$, consider the Lang map $L_{q^n}:U^{n,q}\rar{}U^{n,q}$ given by $g\mapsto F_{q^n}(g)g^{-1}$ and put $X=L_{q^n}^{-1}(Y)$.
\end{defin}

\begin{rem}
By construction, $L_{q^n}:X\rar{}Y$ is a torsor with respect to the right multiplication action of the finite group $U^{n,q}(\bF_{q^n})$. In particular, for each $i\geq 0$, we obtain a linear action of $U^{n,q}(\bF_{q^n})$ on $H^i_c(X,\ql)$, which commutes with the action of the Frobenius $\Fr_{q^n}$ (cf.~\S\S\ref{ss:cohomology-notation}, \ref{ss:Frobenius-notation}). It is calculated explicitly in Theorem \ref{t:cohomology-first-variety}.
\end{rem}

\begin{rem}\label{r:central-characters-1}
If $Z\subset U^{n,q}$ consists of expressions of the form $1+a_n\tau^n$, then $Z$ is the center of $U^{n,q}$ and $Z(\bF_{q^n})$ is the center of $U^{n,q}(\bF_{q^n})$. We have $Z\cong\bG_a$, and we often tacitly identify the two groups. In particular, every irreducible representation of $U^{n,q}(\bF_{q^n})$ over $\ql$ has a central character $\bF_{q^n}\rar{}\qls$.
\end{rem}

\begin{rem}\label{r:reduced-norm-1}
Fix a local field $K$ with finite residue field $\bF_q$ and let $L\subset\Knr$ be the unique unramified extension of $K$ of degree $n$. Consider the twisted polynomial ring $L\langle\Pi\rangle$, where the relation is $\Pi\cdot x=\vp(x)\cdot\Pi$ for all $x\in L$ and $\vp\in\Gal(L/K)$ is induced by the Frobenius automorphism of $\Knr$. Choose a uniformizer $\pi\in K$ and let $D_{1/n}$ denote the quotient of $L\langle\Pi\rangle$ by the relation $\Pi^n=\pi$. Then $D_{1/n}$ is a central division algebra over $K$ with invariant $1/n$ and $\Pi$ is a uniformizer in $D_{1/n}$.

\mbr

Let $\cO_D\subset D_{1/n}$ denote the ring of integers (the unique maximal order). Then the quotient group $(1+\Pi\cO_D)/(1+\Pi^{n+1}\cO_D)$ can be naturally identified with $U^{n,q}(\bF_{q^n})$ via the isomorphism
\begin{equation}\label{e:identify-U}
U^{n,q}(\bF_{q^n}) \rar{\simeq} \frac{1+\Pi\cO_D}{1+\Pi^{n+1}\cO_D}, \qquad 1+\sum_{j=1}^n a_j\tau^j \longmapsto 1+\sum_{j=1}^n \iota(a_j)\Pi^j,
\end{equation}
where $\iota$ is the natural inclusion of $\bF_{q^n}=\cO_L/(\pi)$ into $\cO_D/(\pi)=\cO_D/(\Pi^n)$. Moreover, the reduced norm homomorphism $D^\times_{1/n}\rar{}K^\times$ induces a map
\[
(1+\Pi\cO_D)/(1+\Pi^{n+1}\cO_D) \rar{} (1+\pi\cO_K)/(1+\pi^2\cO_K).
\]
The right hand side is naturally isomorphic to the additive group of $\bF_q$, so we obtain a group homomorphism $\Nm^{n,q}:U^{n,q}(\bF_{q^n})\rar{}\bF_q$. %By a slight abuse of terminology, we will also refer to this homomorphism as the reduced norm map.

\mbr

This map plays the following role in the study of representations of $U^{n,q}(\bF_{q^n})$. The restriction of $\Nm^{n,q}$ to $Z(\bF_{q^n})=\bF_{q^n}$ is equal to the trace map $\Tr_{\bF_{q^n}/\bF_q}$. In particular, given a character $\psi:Z(\bF_{q^n})\rar{}\qls$ with conductor\footnote{We recall that the notion of conductor was given in Definition \ref{d:conductor}.} $q$, we obtain a preferred extension of $\psi$ to a character of $U^{n,q}(\bF_{q^n})$. Namely, if $\psi=\psi_1\circ\Tr_{\bF_{q^n}/\bF_q}$, where $\psi_1:\bF_q\rar{}\qls$, then $\psi_1\circ\Nm^{n,q}:U^{n,q}(\bF_{q^n})\rar{}\qls$ extends $\psi$.
\end{rem}

\begin{rem}\label{r:inclusion-1}
Suppose that $n=m\cdot n_1$, where $m,n_1\in\bN$, and put $q_1=q^m$, so that $q_1^{n_1}=q^n$. We can consider the unipotent group $U^{n_1,q_1}$ over $\bF_{q^n}$. To avoid confusion, let us temporarily denote its elements by $1+b_1\tau_1+\dotsb+b_{n_1}\tau_1^{n_1}$. We can naturally embed $U^{n_1,q_1}$ as a subgroup of $U^{n,q}$ via the map
\[
1+b_1\tau_1+b_2\tau_1^2+\dotsb+b_{n_1}\tau_1^{n_1} \longmapsto 1 + b_1 \tau^m + b_2 \tau^{2m} + \dotsb + b_{n_1} \tau^{n}.
\]
From now on we identify $U^{n_1,q_1}$ with its image under this embedding. In particular, we view $U^{n_1,q_1}(\bF_{q^n})$ as the subgroup of $U^{n,q}(\bF_{q^n})$ consisting of all elements of the form $1+\sum_{m\mid j}a_j\tau^j$, where each $a_j\in\bF_{q^n}$.
\end{rem}

\begin{thm}\label{t:cohomology-first-variety}
Fix an arbitrary character $\psi:\bF_{q^n}\rar{}\qls$.
 \sbr
\begin{enumerate}[$($a$)$]
\item There is a unique $($up to isomorphism$)$ irreducible representation $\rho_\psi$ of $U^{n,q}(\bF_{q^n})$ with central character $\psi$ that occurs in $H^\bullet_c(X,\ql)=\bigoplus_{i\in\bZ} H^i_c(X,\ql)$. Moreover, the multiplicity of $\rho_\psi$ in $H^\bullet_c(X,\ql)$ as a representation of $U^{n,q}(\bF_{q^n})$ equals $1$.
 \sbr
\item Let $\psi$ have conductor $q^m$, so that $n=mn_1$ for some $n_1\in\bN$. Then $\rho_\psi$ occurs in $H^{n+n_1-2}_c(X,\ql)$, and $\Fr_{q^n}$ acts on it via the scalar $(-1)^{n-n_1}\cdot q^{n(n+n_1-2)/2}$.
 \sbr
\item The representation $\rho_\psi$ can be constructed as follows. Write $\psi=\psi_1\circ\Tr_{\bF_{q^n}/\bF_{q_1}}$ for a unique character $\psi_1:\bF_{q_1}\rar{}\qls$, where $q_1=q^m$ as in Remark \ref{r:inclusion-1}. Put
    \[
    H_m = \Bigl\{ 1 + \sum_{\substack{j\leq n/2 \\ m\mid j}} a_j\tau^j + \sum_{n/2<j\leq n} a_j\tau^j \Bigr\} \subset U^{n,q},
    \]
    a connected subgroup. The projection $\nu_m:H_m\rar{}U^{n_1,q_1}$ obtained by discarding all summands $a_j\tau^j$ with $m\nmid j$ $($cf.~Remark \ref{r:inclusion-1}$)$ is a group homomorphism, and $\widetilde{\psi}:=\psi_1\circ\Nm^{n_1,q_1}\circ\nu_m$ is a character of $H_m(\bF_{q^n})$ that extends $\psi:Z(\bF_{q^n})\rar{}\qls$ $($see Remark \ref{r:reduced-norm-1}$)$. With this notation:
    \begin{itemize}
    \item if $m$ is odd \underline{or} $n_1$ is even, then $\rho_\psi\cong\Ind_{H_m(\bF_{q^n})}^{U^{n,q}(\bF_{q^n})}(\widetilde{\psi})$;
    \item if $m$ is even \underline{and} $n_1$ is odd, then $\Ind_{H_m(\bF_{q^n})}^{U^{n,q}(\bF_{q^n})}(\widetilde{\psi})$ is isomorphic to a direct sum of $q^{n/2}$ copies of $\rho_\psi$. Moreover, in this case, if $\Ga_m\subset U^{n,q}(\bF_{q^n})$ is the subgroup consisting of all elements of the form $h+a_{n/2}\tau^{n/2}$, where $h\in H_m(\bF_{q^n})$ and $a_{n/2}\in\bF_{q^{n/2}}$, then $\widetilde{\psi}$ can be extended to a character of $\Ga_m$, and if $\chi:\Ga_m\rar{}\qls$ is any such extension, then $\rho_\psi\cong\Ind_{\Ga_m}^{U^{n,q}(\bF_{q^n})}(\chi)$.
    \end{itemize}
\end{enumerate}
\end{thm}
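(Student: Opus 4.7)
The plan is to combine Proposition \ref{p:DL-main} with a careful iteration of Proposition \ref{p:inductive-idea} to pin down the intertwiner space
\[
\Hom_{U^{n,q}(\bF_{q^n})}\bigl(\Ind_{H_m(\bF_{q^n})}^{U^{n,q}(\bF_{q^n})}(\widetilde\psi),\,H^i_c(X,\ql)\bigr),
\]
and then to translate this, via Frobenius reciprocity and a Mackey-style irreducibility check, into the structural statements (a)--(c). Throughout, we decompose $H^\bullet_c(X,\ql)$ according to central characters $\psi\colon Z(\bF_{q^n})\to\qls$, so it suffices to treat a single $\psi$ of conductor $q^m$ at a time, with $n=mn_1$ and $\psi=\psi_1\circ\Tr_{\bF_{q^n}/\bF_{q_1}}$ as in Definition \ref{d:conductor}.

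First I would choose a section $s\colon U^{n,q}/H_m\to U^{n,q}$ whose image is the complementary subvariety $\{1+\sum_{j\le n/2,\,m\nmid j}a_j\tau^j\}$, giving a variety isomorphism $U^{n,q}\cong(U^{n,q}/H_m)\times H_m$. Next, realize $\widetilde\psi=\psi_1\circ\Nm^{n_1,q_1}\circ\nu_m$ as $\psi\circ f$ for an algebraic morphism $f\colon H_m\to\bG_a$; this uses the explicit polynomial nature of the reduced norm in the coordinates $a_{mk}$ of $H_m$. Proposition \ref{p:DL-main} then rewrites the intertwiner space as $H^i_c(\be^{-1}(Y),P^*\cL_\psi)$, where $\be(x,h)=s(F_{q^n}(x))\cdot h\cdot s(x)^{-1}$ and $P=f\circ\pr_2$. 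The condition that the $\tau^n$-coefficient of $\be(x,h)$ vanishes is affine-linear in one top-degree coordinate of $h$; solving for that coordinate exhibits $\be^{-1}(Y)$ as an affine space of dimension $n-1$ and produces an explicit formula for $P$ in the remaining coordinates.

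The main step, and the expected main obstacle, is an iterative reduction by Proposition \ref{p:inductive-idea}. After a careful relabeling of variables, each coordinate $a_j$ of the section (with $j\le n/2$ and $m\nmid j$) should pair, through the cross-term in the commutator expansion of $\be$, with the coordinate $a_{n-j}$ of $H_m$ to contribute a summand of precisely the form $f(x)^{q^j}y-f(x)^{q^n}y^{q^{n-j}}+P_2(x)$ demanded by the proposition, with $j\not\equiv 0\pmod m$. Each application eliminates such a pair, replaces the ambient variety by the zero-locus of the corresponding leading coefficient, shifts cohomological degree by $2$, and applies one Tate twist $(-1)$, multiplying $\Fr_{q^n}$ by $q^n$. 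Iterating to exhaustion reduces the computation to a low-dimensional residual locus whose cohomology with $P^*\cL_\psi$-coefficients is directly computable; careful bookkeeping of the number of pairs eliminated, together with the residual sign and Frobenius contribution, is expected to yield exactly the degree $n+n_1-2$ and eigenvalue $(-1)^{n-n_1}q^{n(n+n_1-2)/2}$.

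To conclude parts (a)--(c), when $m$ is odd or $n_1$ is even the indices $j\le n/2$ with $m\nmid j$ pair up perfectly and the induction terminates cleanly, giving $\dim\Hom=\delta_{i,n+n_1-2}$; a Mackey-style check, using that the conjugates of $\widetilde\psi$ by coset representatives of $H_m(\bF_{q^n})$ in $U^{n,q}(\bF_{q^n})$ are pairwise non-isomorphic on intersections, shows that $\Ind_{H_m(\bF_{q^n})}^{U^{n,q}(\bF_{q^n})}(\widetilde\psi)$ is irreducible, so Frobenius reciprocity identifies it with $\rho_\psi$. In the degenerate case $m$ even and $n_1$ odd, the middle index $j=n/2=(m/2)n_1$ is divisible by $m$, so the variable $a_{n/2}$ cannot be handled by Proposition \ref{p:inductive-idea}; enlarging $H_m$ to $\Ga_m$ absorbs this variable, $\widetilde\psi$ admits an extension $\chi$ to $\Ga_m$, and the induction completes for $\chi$, with the multiplicity factor $q^{n/2}$ matching the index $[\Ga_m(\bF_{q^n}):H_m(\bF_{q^n})]$. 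Finally, the uniqueness and multiplicity-one assertion of (a) follows from an Euler-characteristic dimension count comparing $\sum_i(-1)^i\dim H^i_c(X,\ql)_\psi$ to $\dim\rho_\psi$, showing that $\rho_\psi$ exhausts the $\psi$-isotypic component.
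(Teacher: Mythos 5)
First, a point of reference: this paper does not actually prove Theorem \ref{t:cohomology-first-variety}; it is imported verbatim from \cite{maximal-varieties-LLC} (``Theorem 4.4''), so there is no internal proof to compare with. The closest in-paper analogue is the treatment of the $n=2$, $h=3$ case (Lemma \ref{l:calculate-intertwiners}, Lemma \ref{l:analysis-irreps}, Corollary \ref{c:structure-irreps}, Lemma \ref{l:fixed-point-2}), and it differs from your sketch exactly at the point where your sketch breaks down.

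The genuine gap is in your very first reduction: you propose to apply Proposition \ref{p:DL-main} to the pair $(H_m,\widetilde{\psi})$, which requires writing $\widetilde{\psi}=\psi\circ f$ for an \emph{algebraic group homomorphism} $f:H_m\rar{}\bG_a$. No such $f$ exists once $m<n$. Indeed, any homomorphism of algebraic groups $H_m\rar{}\bG_a$ kills the derived subgroup of $H_m$, and for $m\leq n/2$ the commutator $[1+a\tau^m,\,1+b\tau^{n-m}]=1+(ab^{q^m}-ba^{q^{n-m}})\tau^n$ already sweeps out all of the center $Z\cong\bG_a$ (take $b=1$ and use surjectivity of $a\mapsto a-a^{q^{n-m}}$ over $\bfq$); but $\widetilde{\psi}\bigl\lvert_{Z(\bF_{q^n})}=\psi$ is nontrivial. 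Equivalently, the reduced norm is not additive as a morphism of schemes (already for $n_1=2$ it contains the term $a_j^{1+q^j}$), so $\psi_1\circ\Nm^{n_1,q_1}\circ\nu_m$ is a character of the finite group $H_m(\bF_{q^n})$ that does \emph{not} arise from an additive polynomial, and Lemma \ref{l:DL-second} (hence Proposition \ref{p:DL-main}) is simply not applicable. Only in the primitive case $m=n$, where $H_n$ is a vector group and $f=\pr_n$ works, does your setup make sense. Consequently the iterated use of Proposition \ref{p:inductive-idea}, the degree/Frobenius bookkeeping, and the Mackey identification in parts (a)--(c) are all built on a reduction that is unavailable in precisely the non-primitive cases the theorem is mostly about. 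The workable route (the one the paper follows for its higher-level example, and essentially the one in \cite{maximal-varieties-LLC}) is to apply Proposition \ref{p:DL-main} only with a subgroup on which the inducing character \emph{is} of the form $\psi\circ(\text{additive }f)$ --- e.g.\ an abelian piece in degrees $>n/2$ with $f$ the $\tau^n$-coordinate --- which computes the full $\psi$-isotypic part of $H^\bullet_c(X,\ql)$ in one stroke; the identification of the unique constituent with $\Ind_{H_m(\bF_{q^n})}^{U^{n,q}(\bF_{q^n})}(\widetilde{\psi})$ in part (c) then requires a separate representation-theoretic and point-counting argument (in the style of Lemmas \ref{l:fixed-point-1}, \ref{l:fixed-point-2} and Corollary \ref{c:structure-irreps}), not Frobenius reciprocity applied to an intertwiner space you cannot compute this way. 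Your closing Euler-characteristic argument for multiplicity one is also too coarse as stated, since it would need control of each $H^i_c(X,\ql)[\psi]$ separately rather than only of the alternating sum.
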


This result appears as Theorem 4.4 of \cite{maximal-varieties-LLC}.

\subsection{Second setting}\label{ss:unipotent-second} The setup and results of this subsection are parallel to those of \S\ref{ss:unipotent-first}. The only difference is that $U^{n,q}$ is replaced with a different unipotent group over $\bF_p$. We denote the new group by $G^{n,q}$, and it is defined as follows. If $A$ is any commutative $\bF_p$-algebra, $G^{n,q}(A)$ consists of expressions of the form $1+a_1\cdot e_1+a_2\cdot e_2+\dotsb+a_n\cdot e_n$, where $a_j\in A$ for all $1\leq j\leq n$ and $e_1,e_2,\dotsc,e_n$ are formal symbols. These expressions are multiplied using distributivity and the rules
\[
e_i\cdot e_j = \begin{cases}
e_n & \text{if } i+j=n, \\
0 & \text{if } i+j\neq n;
\end{cases}
\]
\[
e_j\cdot a=a^{q^j}\cdot e_j \qquad\forall\,1\leq j\leq n,\ a\in A.
\]

\mbr

From now on we view $G^{n,q}$ as an algebraic group over $\bF_{q^n}$. Let $Y'\subset G^{n,q}$ be defined by the equation $a_n=0$. Write $L_{q^n}:G^{n,q}\rar{}G^{n,q}$ for the Lang map $g\mapsto F_{q^n}(g) g^{-1}$ and put $X'=L_{q^n}^{-1}(Y')$. Then $G^{n,q}(\bF_{q^n})$ acts on $X'$ by right multiplication. In Theorem \ref{t:cohomology-second-variety} we compute each cohomology group $H^i_c(X',\ql)$ as a representation of $G^{n,q}(\bF_{q^n})$ together with the action of the Frobenius $\Fr_{q^n}$ on it.

\begin{rem}\label{r:central-characters-2}
If $Z'\subset G^{n,q}$ consists of expressions of the form $1+a_n\cdot e_n$, then $Z'$ is the center of $G^{n,q}$ and $Z'(\bF_{q^n})$ is the center of $G^{n,q}(\bF_{q^n})$. We have $Z'\cong\bG_a$, and we often tacitly identify the two groups. In particular, every irreducible representation of $G^{n,q}(\bF_{q^n})$ over $\ql$ has a central character $\bF_{q^n}\rar{}\qls$.
\end{rem}

\begin{rem}\label{r:reduced-norm-2}
Let $K,L,D_{1/n},\pi$ be as in Remark \ref{r:reduced-norm-1}. Write $\cO_L\subset\cO_D$ for the ring of integers of $E$ and denote by $C$ the orthogonal complement of $\cO_L$ in $\cO_D$ with respect to the reduced trace pairing $\cO_D\times\cO_D\rar{}\cO_K$. Thus
\[
C = \cO_L\cdot\Pi \oplus \cO_L\cdot\Pi^2 \oplus \dotsb \oplus \cO_L\cdot\Pi^{n-1}.
\]
Next fix an arbitrary integer $k\geq 1$. It is easy to check that $1+\pi^{2k+1}\cO_L+\pi^k C$ is a subgroup of $\cO_D^\times$ and $1+\pi^{2k+2}\cO_L+\pi^{k+1}C$ is a normal subgroup of $1+\pi^{2k+1}\cO_L+\pi^k C$. We have an identification
\[
G^{n,q}(\bF_{q^n}) \rar{\simeq} \frac{1+\pi^{2k+1}\cO_L+\pi^k C}{1+\pi^{2k+2}\cO_L+\pi^{k+1}C}
\]
obtained by sending $1+a_1e_1+\dotsb+a_ne_n$ to the image of \[
1+\widetilde{a}_n\cdot\pi^{2k+1}+\pi^k\cdot(\widetilde{a}_1\Pi+\widetilde{a}_2\Pi^2+\dotsb+\widetilde{a}_{n-1}\Pi^{n-1})
\]
in the quotient of the right hand side, where $\widetilde{a}_j$ are arbitrary lifts of the $a_j\in\bF_{q^n}$ to elements of $\cO_L$ (here the residue field of $L$ is identified with $\bF_{q^n}$).

\mbr

Furthermore, it is not hard to see that the reduced norm map $D_{1/n}^\times\rar{}K^\times$ takes $1+\pi^{2k+1}\cO_L+\pi^k C$ into $1+\pi^{2k+1}\cO_K$ and $1+\pi^{2k+2}\cO_L+\pi^{k+1}C$ into $1+\pi^{2k+2}\cO_K$. So after passing to the quotients we obtain a homomorphism
\[
\frac{1+\pi^{2k+1}\cO_L+\pi^k C}{1+\pi^{2k+2}\cO_L+\pi^{k+1}C} \rar{} \frac{1+\pi^{2k+1}\cO_K}{1+\pi^{2k+2}\cO_K}.
\]
Identifying the left hand side with $G^{n,q}(\bF_{q^n})$ as explained above and identifying the right hand side with the additive group of $\bF_q$ in the usual way, we obtain a group homomorphism, which we again denote by $\Nm^{n,q}:G^{n,q}(\bF_{q^n})\rar{}\bF_q$.
\end{rem}

\begin{rem}\label{r:inclusion-2}
Suppose that $n=m\cdot n_1$, where $m,n_1\in\bN$, and put $q_1=q^m$, so that $q_1^{n_1}=q^n$. We can consider the unipotent group $G^{n_1,q_1}$ over $\bF_{q^n}$. To avoid confusion, let us temporarily denote its elements by $1+b_1 \cdot e'_1+\dotsb+b_{n_1}\cdot e'_{n_1}$. We can naturally embed $G^{n_1,q_1}$ as a subgroup of $G^{n,q}$ via the map
\[
1 + b_1 e'_1 + b_2 e'_2 + \dotsb + b_{n_1} e'_{n_1} \longmapsto 1 + b_1 e_m + b_2 e_{2m} + \dotsb + b_{n_1} e_n.
\]
From now on we identify $G^{n_1,q_1}$ with its image under this embedding. In particular, we view $G^{n_1,q_1}(\bF_{q^n})$ as the subgroup of $G^{n,q}(\bF_{q^n})$ consisting of all elements of the form $1+\sum_{m\mid j}a_j e_j$, where each $a_j\in\bF_{q^n}$.
\end{rem}

\begin{thm}\label{t:cohomology-second-variety}
Fix an arbitrary character $\psi:\bF_{q^n}\rar{}\qls$.
 \sbr
\begin{enumerate}[$($a$)$]
\item There is a unique $($up to isomorphism$)$ irreducible representation $\rho'_\psi$ of $G^{n,q}(\bF_{q^n})$ with central character $\psi$ that occurs in $H^\bullet_c(X',\ql)=\bigoplus_{i\in\bZ} H^i_c(X',\ql)$. The multiplicity of $\rho'_\psi$ in $H^\bullet_c(X',\ql)$ as a representation of $G^{n,q}(\bF_{q^n})$ is equal to $1$.
 \sbr
\item Let $\psi$ have conductor $q^m$, so that $n=mn_1$ for some $n_1\in\bN$. Then $\rho'_\psi$ occurs in $H^{n+n_1-2}_c(X',\ql)$, and $\Fr_{q^n}$ acts on it via the scalar $(-1)^{n-n_1}\cdot q^{n(n+n_1-2)/2}$.
 \sbr
\item The representation $\rho'_\psi$ can be constructed as follows. Write $\psi=\psi_1\circ\Tr_{\bF_{q^n}/\bF_{q_1}}$ for a unique character $\psi_1:\bF_{q_1}\rar{}\qls$, where $q_1=q^m$ as in Remark \ref{r:inclusion-2}. Put
    \[
    H'_m = \Bigl\{ 1 + \sum_{\substack{j\leq n/2 \\ m\mid j}} a_j e_j + \sum_{n/2<j\leq n} a_j e_j \Bigr\} \subset G^{n,q},
    \]
    a connected subgroup. The projection $\nu'_m:H'_m\rar{}G^{n_1,q_1}$ obtained by discarding all summands $a_j e_j$ with $m\nmid j$ $($cf.~Remark \ref{r:inclusion-2}$)$ is a group homomorphism, and $\widetilde{\psi}':=\psi_1\circ\Nm^{n_1,q_1}\circ\nu'_m$ is a character of $H_m(\bF_{q^n})$ that extends $\psi:Z(\bF_{q^n})\rar{}\qls$ $($see Remark \ref{r:reduced-norm-2}$)$. With this notation:
    \begin{itemize}
    \item if $m$ is odd \underline{or} $n_1$ is even, then $\rho'_\psi\cong\Ind_{H'_m(\bF_{q^n})}^{G^{n,q}(\bF_{q^n})}(\widetilde{\psi}')$;
    \item if $m$ is even \underline{and} $n_1$ is odd, then $\Ind_{H'_m(\bF_{q^n})}^{G^{n,q}(\bF_{q^n})}(\widetilde{\psi}')$ is isomorphic to a direct sum of $q^{n/2}$ copies of $\rho'_\psi$. Moreover, in this case, if $\Ga'_m\subset G^{n,q}(\bF_{q^n})$ is the subgroup consisting of all elements of the form $h+a_{n/2}e_{n/2}$, where $h\in H'_m(\bF_{q^n})$ and $a_{n/2}\in\bF_{q^{n/2}}$, then $\widetilde{\psi}'$ can be extended to a character of $\Ga'_m$, and if $\chi:\Ga'_m\rar{}\qls$ is any such extension, then $\rho'_\psi\cong\Ind_{\Ga'_m}^{G^{n,q}(\bF_{q^n})}(\chi)$.
    \end{itemize}
\end{enumerate}
\end{thm}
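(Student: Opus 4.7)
The plan is to follow closely the strategy used to prove Theorem \ref{t:cohomology-first-variety} (Theorem 4.4 of \cite{maximal-varieties-LLC}), replacing $U^{n,q}$ throughout by the 2-step nilpotent group $G^{n,q}$. The group $G^{n,q}$ is structurally simpler than $U^{n,q}$: every commutator lies in the central one-parameter subgroup $Z' \cong \bG_a$ spanned by $e_n$, and the induced Frobenius-twisted commutator pairing on $G^{n,q}/Z'$ pairs coordinates in positions $i$ and $n-i$. The subgroup $H'_m$ is designed so that this pairing vanishes when restricted to $H'_m/Z'$, which is exactly what is needed for $\widetilde{\psi}'$ to be a well-defined character of $H'_m(\bF_{q^n})$.

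The execution proceeds in three steps. First, apply Proposition \ref{p:DL-main} to the pair $(H'_m, \widetilde{\psi}')$, with $f: H'_m \to \bG_a$ the $a_n$-coordinate (that $\widetilde{\psi}' = \psi \circ f$ has the required form is read off from Remark \ref{r:reduced-norm-2}), choosing for $s$ the linear complement
\[
s(G^{n,q}/H'_m) = \bigl\{ 1 + \sum\nolimits_{m \nmid j,\, j \leq n/2} c_j e_j \bigr\},
\]
which exists because $H'_m$ sits inside the affine space $G^{n,q}$ as an affine subspace. Direct computation from $\be(x,h) = s(F_{q^n}(x)) \cdot h \cdot s(x)^{-1}$ using the explicit multiplication rules shows that $\be^{-1}(Y')$ is cut out by one polynomial equation in the $a_n$-coordinate and that $P: \be^{-1}(Y') \to \bG_a$ has exactly the shape required by Proposition \ref{p:inductive-idea}, with the role of $(x, y)$ played by a pair $(c_j, c_{n-j})$ for $j < n/2$ with $m \nmid j$. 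Second, apply Proposition \ref{p:inductive-idea} iteratively, peeling off one such pair of coordinates at each step; each application shifts cohomological degree by $2$ and introduces a Tate twist $(-1)$. After exhausting all non-central coordinates one is left with $H^\bullet_c(\bG_a, \cL_\psi)$, which is one-dimensional, concentrated in degree $1$, with a Gauss-sum Frobenius eigenvalue. Accumulating all shifts and twists produces a one-dimensional Hom space in degree $n + n_1 - 2$ on which $\Fr_{q^n}$ acts by $(-1)^{n-n_1} q^{n(n+n_1-2)/2}$. This, together with Frobenius reciprocity, proves parts (a), (b) and the first bullet of (c) in the case where $m$ is odd or $n_1$ is even.

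The main obstacle is the exceptional case $m$ even, $n_1$ odd. In this regime $e_{n/2}$ is not contained in $H'_m$, and the coordinate $c_{n/2}$ of $s(G^{n,q}/H'_m)$ is self-paired: it has no partner to be integrated out by Proposition \ref{p:inductive-idea}, and the restriction of the commutator pairing to the $e_{n/2}$-component is the non-degenerate Artin-Schreier-type pairing on $\bF_{q^n}$ having $\bF_{q^{n/2}}$ as a maximal isotropic subspace of dimension $n/2$ over $\bF_q$. Consequently $\widetilde{\psi}'$ extends (non-uniquely) to a character $\chi$ of $\Ga'_m(\bF_{q^n})$, and the same inductive argument applied to $(\Ga'_m, \chi)$ yields a one-dimensional Hom space (now $c_{n/2}$ pairs up against the extra variable), whereas the Hom space for $(H'_m, \widetilde{\psi}')$ turns out to be $q^{n/2}$-dimensional (the residual self-paired direction contributes the extra factor of $q^{n/2}$). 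Standard Frobenius reciprocity together with the Stone--von Neumann classification of irreducible representations of the finite Heisenberg-type group $G^{n,q}(\bF_{q^n})$ with a given nontrivial central character then forces $\Ind_{H'_m(\bF_{q^n})}^{G^{n,q}(\bF_{q^n})}(\widetilde{\psi}')$ to be isotypic of type $\rho'_\psi \cong \Ind_{\Ga'_m(\bF_{q^n})}^{G^{n,q}(\bF_{q^n})}(\chi)$ with multiplicity $q^{n/2}$. I expect the explicit cohomological bookkeeping of Frobenius eigenvalues under the iterated reductions to be the most laborious part of the argument, while the symplectic analysis of the exceptional case is the main conceptual obstacle.
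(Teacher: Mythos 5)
Your overall architecture (intertwiner computation via the machinery of Section \ref{s:principles} plus Heisenberg-type finite group theory, mirroring the proof of Theorem \ref{t:cohomology-first-variety}) is the intended one, but your first step contains a genuine gap. Proposition \ref{p:DL-main} cannot be applied to the pair $(H'_m,\widetilde{\psi}')$ with $f$ the $a_n$-coordinate. By Remark \ref{r:reduced-norm-2} the map $\Nm^{n_1,q_1}$ is \emph{not} the trace of the central coordinate: expanding the reduced norm of an element of $1+\pi^{2k+1}\cO_L+\pi^kC$ produces quadratic cross terms of the shape $\Tr(a_ia_{n-i}^{q^i})$ (already for $n=2$ one finds $\Nm^{2,q}(1+a_1e_1+a_2e_2)=\Tr_{\fqq/\bF_q}(a_2)-a_1^{1+q}$), so $\widetilde{\psi}'\neq\psi\circ(a_n)$ on $H'_m$; indeed $\psi\circ(a_n)$ is not even multiplicative on $H'_m$, because for $m\mid i$, $i\leq n/2$, the cross term $a_ib_{n-i}^{q^i}$ survives in the product. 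Moreover no other choice of algebraic group homomorphism $f:H'_m\rar{}\bG_a$ can repair this once $n_1\geq 2$: contrary to your claim, the $Z'$-valued commutator pairing does \emph{not} vanish on $H'_m/Z'$ (pair an element supported in position $i$ with $m\mid i\leq n/2$ against one supported in position $n-i$), so $[H'_m,H'_m]=Z'$ as algebraic groups; any homomorphism to $\bG_a$ then kills $Z'$, whereas $\widetilde{\psi}'$ restricts to the nontrivial $\psi$ on $Z'(\fqn)$. What is true, and what the polarization picture actually uses, is only that $\psi\circ[\cdot,\cdot]$ vanishes on $H'_m$ --- but that does not place you in the hypotheses of Lemma \ref{l:DL-second} or Proposition \ref{p:DL-main}. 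The way this is handled in the model argument (see the worked case in Lemma \ref{l:calculate-intertwiners}) is to compute intertwiners from a \emph{smaller} subgroup on which the character genuinely is a coordinate character composed with $\psi$, e.g.\ $\bigl\{1+\sum_{n/2<j\leq n}a_je_j\bigr\}$ with $1+\sum a_je_j\mapsto\psi(a_n)$ (no pair $i+j=n$ lies in its support, so this is a character and the coordinate map is a homomorphism), and only afterwards to identify constituents by finite group theory; your iterated use of Proposition \ref{p:inductive-idea} and the Frobenius bookkeeping can then proceed along the lines you describe.

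There is a second gap concerning part (a). Even granting a computation of $\Hom\bigl(\Ind_{H'_m(\fqn)}^{G^{n,q}(\fqn)}(\widetilde{\psi}'),H^i_c(X',\ql)\bigr)$, a one-dimensional answer only controls the multiplicity of the constituents of that single induced representation. When the conductor of $\psi$ is smaller than $q^n$ there are many irreducible representations of $G^{n,q}(\fqn)$ with central character $\psi$ that are not constituents of it, and your argument says nothing about whether they occur in $H^\bullet_c(X',\ql)$; so uniqueness and multiplicity one in part (a) are not established. The paper's method closes this by inducing from a subgroup whose induced representation contains \emph{every} irreducible with central character $\psi$ (cf.\ Remark \ref{r:auxiliary}) and then separating isotypic pieces with the counting formula of Lemma \ref{l:fixed-point-2} (as in Lemma \ref{l:calculate-eigenspaces}); it is also at that stage, together with the analogue of Corollary \ref{c:structure-irreps}, that the identifications $\rho'_\psi\cong\Ind_{H'_m(\fqn)}^{G^{n,q}(\fqn)}(\widetilde{\psi}')$ and, in the exceptional case, $\rho'_\psi\cong\Ind_{\Ga'_m}^{G^{n,q}(\fqn)}(\chi)$ are actually proved, rather than by running the cohomological induction directly on $H'_m$ or $\Ga'_m$. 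Your qualitative description of the exceptional case ($m$ even, $n_1$ odd) is consistent with the statement, but as written it rests on the same inapplicable use of Proposition \ref{p:DL-main}.
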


Theorem \ref{t:cohomology-second-variety} can be proved by a method that is essentially identical to the one used in \cite{maximal-varieties-LLC} to prove Theorem \ref{t:cohomology-first-variety}.

\section{Lusztig's construction for division algebras}\label{s:construction-division}

If $K$ is a local field and $n\geq 1$ is an integer, there exist a connected reductive group $\bG$ over $K$ such that $\bG(K)$ is isomorphic to the multiplicative group of the central division algebra with invariant $1/n$ over $K$, and a $K$-rational maximal torus $\bT\subset\bG$ such that $\bT(K)$ is isomorphic to the multiplicative group of the degree $n$ unramified extension of $K$. Lusztig's construction, which we described in the Introduction, should associate a family of smooth representations $H_i(X,\ql)[\te]$ of $\bG(K)$ to every smooth homomorphism $\te:\bT(K)\rar{}\qls$. In this section we explain how to formalize the construction. The ideas are taken from \cite[\S2]{lusztig-corvallis}; however, we provide the details of all the proofs and work in a slightly different setup.

\subsection{Setting}\label{ss:Lusztig-setting} In this section $K$ denotes a local field of arbitrary characteristic, with chosen uniformizer $\pi$ and residue field $\bF_q=\cO_K/(\pi)$. We will write $\Knr$ for the completion of the maximal unramified extension of $K$ and $\OKnr$ for the ring of integers of $\Knr$. The Frobenius automorphism of $\Knr$ is denoted by $\vp$ (it induces the automorphism $x\mapsto x^q$ on the residue field $\overline{\bF}_q$ of $\Knr$).

\mbr

Consider the matrix\footnote{The lower left entry of $\varpi$ is $\pi$; each superdiagonal entry is $1$; all other entries are $0$.}
\begin{equation}\label{e:uniformizer-division}
\varpi=\begin{pmatrix} 0 & 1 & 0 & \dotsc & 0 \\  & 0 & 1 & \dotsc & 0 \\ & & \ddots & \ddots & \\ & & & 0 & 1 \\ \pi & & & & 0 \end{pmatrix}
\end{equation}
The homomorphism $F:GL_n(\Knr)\rar{}GL_n(\Knr)$ given by $F(A)=\varpi^{-1} A^\vp\varpi$ is a Frobenius relative to a $K$-rational structure; we denote the corresponding algebraic group over $K$ by $\bG$ (it is an unramified inner form of $GL_n$). Here, for any matrix $A$ with entries in $\Knr$, we denote by $A^\vp$ the result of applying $\vp$ to each entry of $A$.

\mbr

It is useful to have an explicit formula for $F$: if $A=\bigl(a_{ij}\bigr)_{i,j=1}^n$, then
\begin{equation}\label{e:Frobenius-division-explicit}
F(A) = \begin{pmatrix}
\vp(a_{n,n}) & \pi^{-1}\cdot\vp(a_{n,1}) & \pi^{-1}\cdot\vp(a_{n,2}) & \dotsc & \pi^{-1} \cdot\vp(a_{n,n-1}) \\
\pi\vp(a_{1,n}) & \vp(a_{1,1}) & \vp(a_{1,2}) & \dotsc & \vp(a_{1,n-1}) \\
\pi\vp(a_{2,n}) & \vp(a_{2,1}) & \vp(a_{2,2}) & \dotsc & \vp(a_{2,n-1}) \\
\vdots & \vdots & \vdots & \ddots & \vdots \\
\pi\vp(a_{n-1,n}) & \vp(a_{n-1,1}) & \vp(a_{n-1,2}) & \dotsc & \vp(a_{n-1,n-1})
\end{pmatrix},
\end{equation}
as one can easily check by a direct calculation.

\begin{rem}\label{r:identify-rational-points}
Let $D_{1/n}$ be the central division algebra with invariant $1/n$ over $K$. Then the multiplicative group $D_{1/n}^\times$ can be naturally identified with the group $\bG(K)$ of rational points of $\bG$. Indeed, let us use the presentation $D_{1/n}=L\langle\Pi\rangle\bigl/(\Pi^n-\pi)$ given in Remark \ref{r:reduced-norm-1}.
%following presentation of $D_{1/n}$: if $L\subset\Knr$ is the unique unramified extension of $K$ of degree $n$, then we can identify $D_{1/n}$ with the quotient $L\langle\Pi\rangle\bigl/(\Pi^n-\pi)$, where the multiplication in the twisted polynomial ring $L\langle\Pi\rangle$ is determined by the relation $\Pi\cdot a=\vp(a)\cdot\Pi$ for all $a\in L$.
Given $a\in\Knr$, we write $\at=\diag\bigl(a,\vp(a),\dotsc,\vp^{n-1}(a)\bigr)$, a diagonal matrix with entries in $\Knr$. One can easily check that $\varpi\cdot\at=\widetilde{\vp(a)}\cdot\varpi$ for all $a\in L$, and that there is an embedding of $K$-algebras $D_{1/n}\into Mat_n(\Knr)$ determined by $a\mapsto\at$ for all $a\in L$ and $\Pi\mapsto\varpi$, which induces an isomorphism
\begin{equation}\label{e:identify-rational-points}
D_{1/n}^\times\rar{\simeq}\bigl\{ A\in GL_n(\Knr) \st A^\vp=\varpi A\varpi^{-1} \bigr\} \overset{\text{def}}{=} \bG(K).
\end{equation}
\end{rem}

The subgroup of $GL_n(\Knr)$ consisting of diagonal matrices is $F$-invariant and hence corresponds to a maximal torus $\bT\subset\bG$ defined over $K$. The isomorphism \eqref{e:identify-rational-points} induces an isomorphism $L^\times\rar{\simeq}\bT(K)$.

\subsection{Calculation of the quotient $\Xt$}\label{ss:lusztig-calculation-quotient} By construction, $\Gt=\bG(\Knr)=GL_n(\Knr)$ and the subgroup $\Tt=\bT(\Knr)\subset\Gt$ consists of all diagonal matrices. Let $\bB\subset\bG\tens_K\Knr$ be the Borel subgroup consisting of upper triangular matrices. If $\bU$ is its unipotent radical, then $\Ut=\bU(\Knr)\subset GL_n(\Knr)$ consists of unipotent upper triangular matrices. Further, we let $\Ut^-\subset GL_n(\Knr)$ denote the subgroup consisting of unipotent \emph{lower} triangular matrices.

\begin{lem}\label{l:quotient-calculation}
The map $\bigl(\Ut\cap F^{-1}(\Ut)\bigr)\times\bigl(\Ut\cap F(\Ut^-)\bigr)\rar{}\Ut$ given by $(B,g)\mapsto F(B)gB^{-1}$ is a bijection.
\end{lem}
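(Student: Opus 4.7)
The plan is to first describe $\Ut_1 := \Ut \cap F^{-1}(\Ut)$ and $\Ut_2 := \Ut \cap F(\Ut^-)$ explicitly via formula \eqref{e:Frobenius-division-explicit}. A direct inspection shows that $\Ut_1$ consists precisely of the unipotent upper triangular $B = (b_{ij})$ whose last column above the diagonal vanishes (i.e., $b_{i,n} = 0$ for $i < n$), forming an affine space of dimension $\binom{n-1}{2}$; and $\Ut_2$ consists of the unipotent upper triangular $g$ whose only nontrivial above-diagonal entries $g_{1,j}$ lie in the first row, of dimension $n - 1$. In particular $\dim\Ut_1 + \dim\Ut_2 = \binom{n}{2} = \dim\Ut$, which is at least consistent with the claimed bijection.

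Next I would exploit the clean shape of $F(B)\,g$ for $(B, g) \in \Ut_1 \times \Ut_2$: by \eqref{e:Frobenius-division-explicit}, $F(B)_{i+1,\, j+1} = \vp(b_{ij})$ for $1 \le i < j \le n - 1$ and the other off-diagonal entries of $F(B)$ vanish, so $F(B) - I$ is supported in the bottom-right $(n-1) \times (n-1)$ block while $g - I$ is supported in the first row. Their matrix product $\bigl(F(B) - I\bigr)(g - I)$ then vanishes entry by entry, yielding the identity
\[
F(B)\, g \;=\; I \,+\, \bigl(F(B) - I\bigr) \,+\, (g - I),
\]
from which the entries of $F(B)\, g$ can be read off directly in terms of those of $B$ and $g$.

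The main step is to solve the equivalent equation $HB = F(B)\, g$ for given $H = (h_{ij}) \in \Ut$. Expanding $(HB)_{ij}$ (using $b_{in} = 0$ for $i < n$) and comparing entries produces the system
\[
h_{ij} + b_{ij} + \sum_{i < k < j} h_{ik}\, b_{kj} \;=\; \begin{cases} g_{1,\,j}, & i = 1, \\ \vp(b_{i-1,\, j-1}), & i \ge 2, \end{cases}
\]
indexed by $1 \le i < j \le n$. This system is solved by downward induction on $j$: when $j = n$ one obtains $g_{1,n} = h_{1,n}$ and $\vp(b_{i-1, n-1}) = h_{i,n}$, determining the $(n{-}1)$-th column of $B$ uniquely (since $\vp$ is a bijection of $\Knr$); and for each smaller $j$, the $i \ge 2$ equations express $\vp(b_{i-1,\, j-1})$ as a polynomial in $h$'s and in $b$-entries whose second index is strictly larger than $j-1$ (hence already determined at earlier stages), while the $i = 1$ equation then defines $g_{1,j}$. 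This extracts $(B, g)$ uniquely from $H$, proving bijectivity. The main obstacle is purely organizational --- setting up the inductive ordering --- and the essential feature that makes it work is the $(+1, +1)$ index shift built into $F$: the Frobenius-twisted term $\vp(b_{i-1,\, j-1})$ has second index strictly less than the second indices of the other $b$-entries in its equation, which is also the root-theoretic reason why $\alpha_{1,n} = e_1 - e_n$ is the unique positive root that $F$ sends to a negative one.
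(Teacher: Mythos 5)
Your proof is correct and is essentially the paper's: the same explicit description of $\Ut\cap F^{-1}(\Ut)$ and $\Ut\cap F(\Ut^-)$ read off from \eqref{e:Frobenius-division-explicit}, the same reduction to solving $F(B)g=HB$ entry by entry, and a recursive solution of the resulting semilinear triangular system --- your elimination column by column (from $j=n$ downward) versus the paper's sweep along diagonals of fixed offset is only an organizational difference, and your observation that $\bigl(F(B)-I\bigr)(g-I)=0$ is a pleasant shortcut for writing the right-hand sides. One inessential slip in the closing aside: $e_1-e_n$ is not the unique positive root sent to a negative root by $F$ --- all of $e_i-e_n$, $1\le i\le n-1$, are (which is exactly why $\Ut\cap F^{-1}(\Ut)$ is cut out by $b_{i,n}=0$ for $i<n$), while $e_1-e_n$ is only distinguished as the unique positive root made negative by both $F$ and $F^{-1}$ --- but this remark plays no role in your argument.
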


The proof is given in \S\ref{ss:proof-l:quotient-calculation}. The lemma immediately implies

\begin{cor}
Given $A\in GL_n(\Knr)$ satisfying $F(A)A^{-1}\in\Ut$, there is a unique $B\in\Ut\cap F^{-1}(\Ut)$ such that $F(BA)\cdot(BA)^{-1}\in\Ut\cap F(\Ut^-)$.
\end{cor}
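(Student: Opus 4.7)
The plan is to reduce this to a direct application of Lemma \ref{l:quotient-calculation} via the substitution $C = B^{-1}$. First I would rewrite the target expression: for any $B \in \Gt$,
\[
F(BA)\cdot(BA)^{-1} \;=\; F(B)F(A)A^{-1}B^{-1} \;=\; F(B)\,h\,B^{-1},
\]
where $h := F(A)A^{-1} \in \Ut$ by hypothesis. So the corollary is equivalent to the assertion that there is a unique $B \in \Ut \cap F^{-1}(\Ut)$ with $F(B)\,h\,B^{-1} \in \Ut \cap F(\Ut^-)$.

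Next, since $\Ut \cap F^{-1}(\Ut)$ is a subgroup of $\Gt$ (being the intersection of two subgroups, as $F$ is a group automorphism), inversion is a bijection on it. Setting $C := B^{-1}$, the condition $F(B)\,h\,B^{-1} \in \Ut \cap F(\Ut^-)$ becomes $F(C)^{-1}\,h\,C \in \Ut \cap F(\Ut^-)$, i.e., there exists $g \in \Ut \cap F(\Ut^-)$ with $h = F(C)\,g\,C^{-1}$.

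Finally, apply Lemma \ref{l:quotient-calculation} to the element $h \in \Ut$: it produces a \emph{unique} pair $(C,g) \in (\Ut \cap F^{-1}(\Ut)) \times (\Ut \cap F(\Ut^-))$ satisfying $F(C)\,g\,C^{-1} = h$. This $C$ is exactly the one we need; taking $B = C^{-1}$ gives existence and uniqueness of the required element, completing the proof. I do not anticipate any genuine obstacle here — the argument is a purely formal consequence of the lemma, and the only thing worth doing carefully is checking that $B \mapsto B^{-1}$ is a well-defined involution on $\Ut \cap F^{-1}(\Ut)$ (which is immediate from its being a group).
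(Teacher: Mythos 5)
Your argument is correct and is exactly the intended derivation: the paper states the corollary as an immediate consequence of Lemma \ref{l:quotient-calculation}, and the only content is the formal manipulation $F(BA)(BA)^{-1}=F(B)\,h\,B^{-1}$ with $h=F(A)A^{-1}$ followed by the inversion $C=B^{-1}$ to match the lemma's bijection $(C,g)\mapsto F(C)gC^{-1}$, which you carry out correctly. Nothing further is needed.
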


Hence we can identify the quotient
\[
(\Ut\cap F^{-1}(\Ut))\setminus\bigl\{A\in GL_n(\Knr)\st F(A)A^{-1}\in\Ut\bigr\}
\]
appearing in Lusztig's construction with the set
\begin{equation}\label{e:X-tilde}
\Xt:=\bigl\{A\in GL_n(\Knr)\st F(A)A^{-1}\in \Ut\cap F(\Ut^-)\bigr\}.
\end{equation}

In the next three lemmas we explain how to interpret $\Xt$ as the set of $\overline{\bF}_q$-points of an ind-scheme defined over $\overline{\bF}_q$ and how to define $H_*(\Xt,\ql)$.

\begin{lem}\label{l:form-of-matrices}
A matrix $A\in GL_n(\Knr)$ belongs to $\Xt$ if and only if it has the form
\begin{equation}\label{e:form-of-matrices}
A=\begin{pmatrix}
a_0 & a_1 & a_2 & \dotsc & a_{n-1} \\
\pi\vp(a_{n-1}) & \vp(a_0) & \vp(a_1) & \dotsc & \vp(a_{n-2}) \\
\pi\vp^2(a_{n-2}) & \pi\vp^2(a_{n-1}) & \vp^2(a_0) & \dotsc & \vp^2(a_{n-3}) \\
\vdots & \vdots & \ddots & \ddots & \vdots \\
\pi\vp^{n-1}(a_1) & \pi\vp^{n-1}(a_2) & \dotsc & \pi\vp^{n-1}(a_{n-1}) & \vp^{n-1}(a_0)
\end{pmatrix}
\end{equation}
where $a_0,a_1,\dotsc,a_{n-1}\in\Knr$, and, in addition, $\det(A)\in K^\times$.
\end{lem}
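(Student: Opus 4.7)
The plan is to translate the defining condition $F(A)A^{-1}\in\Ut\cap F(\Ut^-)$ into explicit matrix equations on the entries $a_{ij}$ of $A$, using formula \eqref{e:Frobenius-division-explicit}, and then to solve those equations. The preparatory step is to describe $\Ut\cap F(\Ut^-)$ concretely. Applying \eqref{e:Frobenius-division-explicit} to a matrix $C\in\Ut^-$, the entries of $F(C)$ in column $1$ below the $(1,1)$-spot are automatically zero, while the entries of $F(C)$ in rows $2,\dotsc,n$ below the diagonal are $\vp(c_{i-1,j-1})$ with $i>j\geq 2$, and these lie strictly below the diagonal of $C$; demanding that $F(C)\in\Ut$ therefore forces $c_{i-1,j-1}=0$ for all $i>j\geq 2$, so only the last row of $C$ is unconstrained. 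A direct check shows that every matrix of the shape $M = I + \sum_{j=2}^n b_{j-1}E_{1j}$ with $b_1,\dotsc,b_{n-1}\in\Knr$ arises this way, so $\Ut\cap F(\Ut^-)$ is precisely this $(n-1)$-parameter subgroup, which I will denote $W$.

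Next I would rewrite $A\in\Xt$ as $F(A)=MA$ with $M\in W$. Since left multiplication by $M$ only alters row $1$, the equation is equivalent to the conditions
\[
(\text{row } i \text{ of } F(A)) = (\text{row } i \text{ of } A) \qquad\text{for } i=2,\dotsc,n,
\]
with no constraint on row $1$. Using \eqref{e:Frobenius-division-explicit} these become
\[
a_{i,1}=\pi\vp(a_{i-1,n}), \qquad a_{i,j}=\vp(a_{i-1,j-1}) \text{ for } j\geq 2,
\]
for $i=2,\dotsc,n$. Setting $a_s:=a_{1,s+1}$ for $s=0,1,\dotsc,n-1$ as the free parameters of the first row, a straightforward induction on $i$ then expresses every entry of $A$ in terms of $(a_0,\dotsc,a_{n-1})$. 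The only bookkeeping needed is to track when the recursion wraps from column $1$ back to column $n$, which happens exactly once per entry on or below the ``broken diagonal,'' producing a single factor of $\pi$; this yields precisely the shape \eqref{e:form-of-matrices}. Conversely, a matrix of that shape manifestly satisfies the recursion, so rows $2,\dotsc,n$ of $F(A)$ and $A$ agree, hence $F(A)A^{-1}\in W=\Ut\cap F(\Ut^-)$; together with the hypothesis $A\in GL_n(\Knr)$ this gives $A\in\Xt$.

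Finally, for the determinant statement I would argue: if $A\in\Xt$, then $\det F(A)=\det(MA)=\det A$ because $M\in\Ut$ is unipotent, while the definition $F(A)=\varpi^{-1}A^{\vp}\varpi$ gives $\det F(A)=\vp(\det A)$; hence $\vp(\det A)=\det A$, i.e.\ $\det A\in K^\times$. (In fact this is automatic once $A$ is invertible and has the form \eqref{e:form-of-matrices}, but it is convenient to record it explicitly for later use.)

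\textbf{Main obstacle.} Nothing in this argument is deep; the one step that requires care is the inductive calculation of the entries of $A$ from $(a_0,\dotsc,a_{n-1})$, in particular keeping track of the single factor of $\pi$ picked up each time the recursion $a_{i,1}=\pi\vp(a_{i-1,n})$ is applied. Everything else reduces to inspection of \eqref{e:Frobenius-division-explicit}.
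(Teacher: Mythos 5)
Your forward direction and the description of $\Ut\cap F(\Ut^-)$ as the group $W$ of unipotent matrices supported on the first row agree with the paper, and the recursion extracting the shape \eqref{e:form-of-matrices} from the equality of rows $2,\dotsc,n$ of $F(A)$ and $A$ is carried out the same way there. But your converse direction has a genuine gap, and the parenthetical claim at the end is false. The equation $F(A)=MA$ with $M\in W$ is \emph{not} equivalent to ``rows $2,\dotsc,n$ of $F(A)$ and $A$ agree, with no constraint on row $1$.'' If the rows $2,\dotsc,n$ agree, then (using invertibility of $A$) the matrix $M:=F(A)A^{-1}$ has rows $2,\dotsc,n$ equal to those of the identity, but its $(1,1)$-entry is $c_1=\det F(A)/\det A=\vp(\det A)/\det A$, which is nonzero but need not equal $1$. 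Membership in $W\subset\Ut$ forces $c_1=1$, i.e.\ exactly the extra condition $\det A\in K^\times$ appearing in the statement. So ``shape plus invertibility'' does not give $A\in\Xt$: for instance $A=\diag\bigl(a_0,\vp(a_0),\dotsc,\vp^{n-1}(a_0)\bigr)$ has the form \eqref{e:form-of-matrices} with $a_1=\dotsb=a_{n-1}=0$ and is invertible for any $a_0\in(\Knr)^\times$, yet $F(A)A^{-1}=\diag\bigl(\vp^n(a_0)/a_0,1,\dotsc,1\bigr)$ lies in $\Ut$ only when $\vp^n(a_0)=a_0$, equivalently $\det A\in K^\times$. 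Hence the determinant condition is an essential part of the ``if'' direction, not an automatic consequence as your final remark asserts.

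The paper's proof closes precisely this hole: it observes that a matrix of the form \eqref{e:form-of-matrices} satisfies $F(A)=C\cdot A$ where $C$ has rows $2,\dotsc,n$ of the identity and first row $(c_1,c_2,\dotsc,c_n)$ with $c_1\neq 0$, and then notes that $c_1=1$ if and only if $\det F(A)=\det A$, i.e.\ $\det A\in K^\times$. Your writeup needs this last step; as it stands, the converse implication you prove contradicts the ``only if'' half of the lemma (which shows membership in $\Xt$ forces $\det A\in K^\times$) together with the diagonal example above.
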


The proof of the lemma is given in \S\ref{ss:proof-l:form-of-matrices}. It implies that we can write $\Xt$ as the infinite disjoint union of subsets $\Xt^{(m)}$, where $m\in\bZ$ and $\Xt^{(m)}\subset\Xt$ consists of matrices of the form \eqref{e:form-of-matrices} such that $\det(A)\in\pi^m\cdot\cO_K^\times$. We recall that the group $\bG(K)=GL_n(\Knr)^F\cong D_{1/n}^\times$ (cf.~Remark \ref{r:identify-rational-points}) acts on $\Xt$ via right multiplication. The element \eqref{e:uniformizer-division} belongs to $\bG(K)$ and corresponds to a uniformizer in $D_{1/n}$. The action of $\varpi$ takes each $\Xt^{(m)}$ isomorphically onto $\Xt^{(m+1)}$. Thus it suffices to show that $\Xt^{(0)}$ can be naturally identified with the set of rational points of an $\overline{\bF}_q$-scheme.

\begin{lem}\label{l:aux}
If a matrix of the form \eqref{e:form-of-matrices} satisfies $\det(A)\in\cO_K^\times$, then $a_j\in\OKnr$ for $0\leq j\leq n-1$ and $a_0\not\in\pi\OKnr$.
\end{lem}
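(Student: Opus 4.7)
The plan is to expand $\det(A)$ by the Leibniz formula and track the $\pi$-adic valuation of each term, exploiting the cyclic structure of the entries of \eqref{e:form-of-matrices}. Let $v$ denote the normalized valuation on $\Knr$ (so $v(\pi)=1$); it is integer-valued and preserved by $\vp$, so $c_j:=v(a_j)$ is a well-defined element of $\bZ\cup\{+\infty\}$. The first thing I would observe is that, directly from \eqref{e:form-of-matrices}, the $(i,k)$ entry of $A$ has valuation $c_{(k-i)\bmod n}+\mathbf{1}_{k<i}$.

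Next I would rewrite the valuation of the $\sigma$-term in a revealing form. Setting $t_i(\sigma):=(\sigma(i)-i)\bmod n$ and $T_j(\sigma):=\#\{i:t_i(\sigma)=j\}$, the identity $\sum_i t_i(\sigma)=n\cdot\#\{i:\sigma(i)<i\}$ (a direct consequence of $\sum_i(\sigma(i)-i)=0$) expresses the number of $\pi$-factors collected by the $\sigma$-term as $\tfrac{1}{n}\sum_j j\,T_j(\sigma)$. Combining with the entry valuations yields
\[
v\Bigl(\prod_{i=1}^n A_{i,\sigma(i)}\Bigr)=\sum_{j=0}^{n-1}T_j(\sigma)\bigl(c_j+j/n\bigr).
\]
Since the $T_j(\sigma)$ are nonnegative integers summing to $n$, this quantity is at least $n\cdot\min_j(c_j+j/n)$.

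The crux of the argument is to show that the minimum of $c_j+j/n$ over $j\in\{0,\dots,n-1\}$ is attained at a \emph{unique} index $j^*$. Indeed, if two distinct indices $j_1,j_2$ gave the same value, then $c_{j_1}-c_{j_2}=(j_2-j_1)/n$ would be a nonzero integer of absolute value strictly less than $1$, which is absurd. Given this uniqueness, the minimum valuation among Leibniz terms is achieved by exactly one permutation, namely the cyclic rotation $\sigma_{j^*}(i)\equiv i+j^*\pmod{n}$, and every other term has strictly larger valuation. Hence no cancellation can occur at the leading order, and $v(\det A)=n\,c_{j^*}+j^*$.

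The conclusion is then immediate from the hypothesis $\det(A)\in\cO_K^\times$: the equation $n\,c_{j^*}+j^*=0$ with $j^*\in\{0,\dots,n-1\}$ and $c_{j^*}\in\bZ$ forces $j^*=0$ and $c_0=0$, proving $a_0\in\OKnr\setminus\pi\OKnr$. For any $j\neq 0$, unique attainment of the minimum gives $c_j+j/n>0$, so $c_j>-j/n>-1$, and hence $c_j\geq 0$ since $c_j\in\bZ$. The only delicate step I anticipate is ruling out cancellation among leading-valuation terms in the Leibniz expansion, and this is precisely what the one-line integrality argument for unique attainment of the minimum accomplishes.
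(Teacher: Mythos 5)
Your proposal is correct and follows essentially the same route as the paper's proof: expand $\det(A)$ by the Leibniz formula, express the valuation of each term as a weighted average of the quantities $n\,c_j+j$ (your $\sum_j T_j(\sigma)(c_j+j/n)$ is exactly the paper's $\frac{1}{n}\sum_i(n v_{\al(i)}+\al(i))$), and use the integrality/pairwise-distinctness of these quantities to see that the minimum is attained only by the cyclic rotation $\tau^{j^*}$, so no cancellation occurs and $v(\det A)=n\,c_{j^*}+j^*$. The final deduction ($j^*=0$, $c_0=0$, $c_j\geq 0$ for $j\neq 0$) matches the paper as well.
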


This lemma is proved in \S\ref{ss:proof-l:aux}.

\subsection{The scheme structure}\label{ss:lusztig-quotient-scheme-structure}
Let $h\geq 1$ be an integer and consider a matrix of the form \eqref{e:form-of-matrices} with $a_0\in(\OKnr/\pi^h\OKnr)^\times$ and $a_j\in\OKnr/\pi^{h-1}\OKnr$ for $1\leq j\leq n-1$. The entries below the diagonal are viewed as elements of $\pi\OKnr/\pi^h\OKnr$. The determinant of any such matrix can be naturally defined as an element of $(\OKnr/\pi^h\OKnr)^\times$.

\mbr

By Lemma \ref{l:aux}, we have $\Xt^{(0)}=\varprojlim_h \Xt^{(0)}_h$, where $\Xt^{(0)}_h$ consists of matrices of the form \eqref{e:form-of-matrices} with $a_0\in(\OKnr/\pi^h\OKnr)^\times$ and $a_j\in\OKnr/\pi^{h-1}\OKnr$ for $1\leq j\leq n-1$, whose determinant belongs to $(\cO_K/\pi^h\cO_K)^\times\subset(\OKnr/\pi^h\OKnr)^\times$. Now $\Xt^{(0)}_h$ can be naturally viewed as the set of $\bfq$-points of a scheme of finite type over $\bF_q$. When $\operatorname{char}(K)=p>0$, the construction of this scheme is clear (see also Remark \ref{r:equal-characteristic-construction}). In the case where $\operatorname{char}(K)=0$, we sketch it in \S\ref{ss:greenberg-construction} below.

\mbr

Similarly, $\Xt^{(m)}=\varprojlim_h \Xt^{(m)}_h$ for every $m\in\bZ$, where each $\Xt^{(m)}_h$ can be regarded as an algebraic variety over $\overline{\bF}_q$. In particular, $\Xt^{(m)}$ can be viewed as an $\bfq$-scheme.

\begin{example}\label{ex:lusztig-level-1}
Taking $h=1$ in the construction above, we see that $\Xt^{(0)}_1$ is the finite discrete set of diagonal matrices of the form $\operatorname{diag}\bigl(a,a^q,a^{q^2},\dotsc,a^{q^{n-1}}\bigr)$ with $a\in\bfq^\times$, whose determinant belongs to $\bF_q\subset\bfq$. Thus $\Xt^{(0)}_1$ is naturally identified with $\bF_{q^n}^\times$, viewed as a discrete set. The actions of $\cO_D^\times\subset D_{1/n}^\times$ and $\cO_L^\times\subset L^\times$ on $\Xt$ induce actions on $\Xt^{(0)}_1$ that factor through the quotients $\cO_D^\times/(1+\Pi\cO_D)$ and $\cO_L^\times/(1+\pi\cO_L)$, respectively. Both of these quotients can be naturally identified with $\bF_{q^n}^\times$, and both actions become the multiplication action of $\bF_{q^n}^\times$ on itself.
\end{example}

\subsection{Definition of homology groups}\label{ss:lusztig-homology-definition}
Recall that the subgroup $\bT(K)\cong L^\times$ acts on $\Xt$ via left multiplication. The action of $\cO^\times_L\subset\bT(K)$ preserves each $\Xt^{(m)}$, and if we write $\cO^\times_L=\varprojlim Z_h$, where $Z_h=(\cO_L/\pi^h\cO_L)^\times$ for $h\geq 1$, then the action of $\cO_L^\times$ on $\Xt^{(m)}_h$ factors through $Z_h$. The next result is proved in \S\ref{ss:proof-l:cohomology-X-h}.

\begin{lem}\label{l:cohomology-X-h}
For each $h\geq 2$, the action of $W_h=\Ker(Z_h\to Z_{h-1})$ on $\Xt^{(m)}_h$ preserves every fiber of the natural map $\Xt^{(m)}_h\rar{}\Xt^{(m)}_{h-1}$, the induced morphism $W_h\setminus\Xt^{(m)}_h\rar{}\Xt^{(m)}_{h-1}$ is smooth, and each of its fibers is isomorphic to the affine space $\bA^{n-1}$ over ${\overline{\bF}_q}$.
\end{lem}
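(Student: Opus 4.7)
My plan is to analyze the fibers of $\Xt^{(m)}_h\to\Xt^{(m)}_{h-1}$ geometrically: describe each fiber as an explicit Artin--Schreier subvariety of $\bA^n_{\bfq}$, compute the $W_h$-action on it, and verify by a counting argument that the quotient is $\bA^{n-1}$. Since right multiplication by $\varpi^m$ identifies $\Xt^{(m)}_h$ with $\Xt^{(0)}_h$ in a $W_h$-equivariant manner, it suffices to treat $m=0$. I use the parametrization $A=\sum_{j=0}^{n-1}\widetilde{a_j}\varpi^j$ from Lemma \ref{l:form-of-matrices}, where $\widetilde{a}:=\diag(a,\vp(a),\dotsc,\vp^{n-1}(a))$. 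Any $\xi\in W_h$ has the form $1+\pi^{h-1}\lambda$ with $\lambda\in\cO_L/\pi\cO_L$, so its lift $\widetilde{\xi}$ satisfies $\widetilde{\xi}\equiv I\pmod{\pi^{h-1}}$; this immediately proves that $W_h$ preserves every fiber of the reduction map, and shows the $W_h$-action is free since every $A\in\Xt^{(0)}_h$ is invertible.

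Over a fixed $A_0\in\Xt^{(0)}_{h-1}(\bfq)$, I would parametrize lifts to $\Xt^{(0)}_h$ by new coordinates $b_0$ (the $\pi^{h-1}$-coefficient of $a_0$) and $b_j$ (the $\pi^{h-2}$-coefficient of $a_j$ for $1\le j\le n-1$); ignoring the determinant constraint, the space of lifts is thus a $\bfq^n$-torsor, identified with $\bA^n_{\bfq}$. The constraint $\det A\in\cO_K/\pi^h\cO_K$ demands that the new $\pi^{h-1}$-coefficient of $\det A$ --- a polynomial $P(b_0,\dotsc,b_{n-1})$ with coefficients in $\bfq$ depending on $A_0$ --- lie in $\bF_q$. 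This carves out the fiber $F\subset\bA^n_{\bfq}$ as the Artin--Schreier hypersurface $\{P^q-P=0\}$. Expanding $\det A=\det A_0\cdot\det\bigl(I+A_0^{-1}(A-A_0)\bigr)$ via the Taylor series of $\det(I+\cdot)$ (keeping only the first-order term when $h\ge 4$, and including quadratic or cubic corrections for $h=2,3$), I would show that $P$ is additive of degree $q^{n-1}$ in $b_0$, with linear coefficient a nonzero constant multiple of $\Nm_{L/K}(\bar a_0)/\bar a_0$; this is nonvanishing because $\bar a_0\in\bF_{q^n}^\times$ by Example \ref{ex:lusztig-level-1}.

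For the $W_h$-action on $F$, the identity $\widetilde{\xi}\,\widetilde{a_j}\varpi^j=\widetilde{\xi a_j}\varpi^j$ shows that $\xi=1+\pi^{h-1}\lambda$ acts on the lift coordinates by
\[
(b_0,b_1,\dotsc,b_{n-1})\mapsto(b_0+\lambda\bar a_0,b_1,\dotsc,b_{n-1}),
\]
i.e., by translation of $b_0$ along $\bF_{q^n}\bar a_0=\bF_{q^n}$ fixing the remaining coordinates. Now consider the forgetful projection $F\to\bA^{n-1}$, $(b_0,\dotsc,b_{n-1})\mapsto(b_1,\dotsc,b_{n-1})$. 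The equation $P^q-P=0$ is separable in $b_0$ (its derivative with respect to $b_0$ is $-\partial_{b_0}P$, which equals the nonzero linear coefficient of $P$) and has degree $q^n$ in $b_0$, so this projection is finite \'etale of degree $q^n$. The $W_h$-action preserves the projection, is free, and has orbits of size $q^n=|W_h|$, hence is transitive on each geometric fiber. Therefore $W_h\backslash F\to\bA^{n-1}$ is finite \'etale of degree $1$, i.e., an isomorphism, establishing the asserted fiber shape. The smoothness of $W_h\backslash\Xt^{(0)}_h\to\Xt^{(0)}_{h-1}$ then follows by combining this fiber description with the smoothness of $\Xt^{(0)}_{h-1}$ and the fact that $\Xt^{(0)}_h\to W_h\backslash\Xt^{(0)}_h$ is an \'etale $W_h$-torsor.

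The hardest step will be verifying the shape of $P$ at the small levels $h=2,3$, where the second- and third-order terms in the Taylor expansion of $\det$ contribute nonlinear expressions in $b_1,\dotsc,b_{n-1}$ (such as $\Nm_{L/K}(b_j)$ for $j\ge 1$ and various trace terms mixing distinct $b_j$'s). These contributions must be computed by hand, and one must check that despite them the linear-in-$b_0$ leading coefficient of $P$ is unchanged, so that $P$ remains additive and separable in $b_0$; this is what drives the \'etale/counting argument uniformly in $h$.
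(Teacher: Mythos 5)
Your core computation is essentially the paper's own (\S\ref{ss:proof-l:cohomology-X-h}): parametrize the lifts by the kernel coordinates $b_0,\dotsc,b_{n-1}$, observe that modulo $\pi^h$ the only contribution of $b_0$ to the determinant comes from the product of the diagonal entries, so the new constraint is Artin--Schreier in $b_0$ with $b_0$-part $\sum_{i}b_0^{q^i}\bar a_0^{(1+q+\dotsc+q^{n-1})-q^i}$ (linear coefficient $\bar a_0^{q+\dotsc+q^{n-1}}\neq 0$), while $W_h\cong\bF_{q^n}$ acts by $b_0\mapsto b_0+\la\bar a_0$. Your worry about extra terms at small $h$ is harmless for exactly the reason you state (they never involve $b_0$), though at $h=2$ they have degree up to $n$ in $b_1,\dotsc,b_{n-1}$, not merely quadratic or cubic; and in mixed characteristic your ``$\pi^{h-1}$-coefficient'' and Taylor-expansion language must be interpreted through the Greenberg/Witt-vector coordinates and chosen sections of \S\ref{ss:greenberg-construction}, as the paper is careful to do.

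The genuine gap is the final step, the smoothness of $W_h\setminus\Xt^{(0)}_h\rar{}\Xt^{(0)}_{h-1}$. You deduce it from ``all fibers are $\bA^{n-1}$, the base is smooth, and $\Xt^{(0)}_h\rar{}W_h\setminus\Xt^{(0)}_h$ is an \'etale torsor.'' That implication is invalid: a morphism whose geometric fibers are all smooth of the same dimension need not be flat, hence need not be smooth, and you have established no flatness; moreover the smoothness of $\Xt^{(0)}_{h-1}$ is not available at this point (it could only come from an induction on $h$ using the very lemma being proved) and is in fact not needed. The repair is to run your computation relatively over $\Xt^{(0)}_{h-1}$ instead of fiber by fiber, which is exactly what the paper does: since $\bar a_0^{1+q+\dotsc+q^{n-1}}\in\bF_q$ on all of $\Xt^{(0)}_{h-1}$, the defining equation can be written globally as $b_0^{q^n}/\bar a_0^{q^n}-b_0/\bar a_0=\bar a_0^{-1-q-\dotsc-q^{n-1}}(\sF_2^q-\sF_2)$ with $\sF_2$ independent of $b_0$, and dividing the $b_0$-line by the translation action via the relative Lang-type map of \S\ref{sss:quotient-identification} identifies $W_h\setminus\Xt^{(0)}_h$ with the product $\Xt^{(0)}_{h-1}\times\bA^{n-1}$ over the base, so the quotient map becomes a projection and smoothness plus the fiber description are immediate, with no hypothesis on the base. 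Alternatively, your \'etale-counting argument does go through once you upgrade it to the family: the unit leading coefficient in $b_0$ makes the forgetful map $\Xt^{(0)}_h\rar{}\Xt^{(0)}_{h-1}\times\bA^{n-1}$ finite \'etale of degree $q^n$ globally, and then the torsor/degree argument descends; as written, the fiberwise statement alone does not yield the smoothness assertion of the lemma.
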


Following \cite{lusztig-corvallis}, we put $H_i(S,\ql):= H^{2d-i}_c(S,\ql(d))$ for any smooth ${\overline{\bF}_q}$-scheme $S$ of pure dimension $d$, where $H^{2d-i}_c$ denotes $\ell$-adic cohomology with compact supports and $(d)$ is the Tate twist. Lemma \ref{l:cohomology-X-h} yields an isomorphism \[H_i(\Xt^{(m)}_{h-1},\ql)\rar{\simeq}H_i(\Xt^{(m)}_h,\ql)^{W_h};\] in particular, we have a natural embedding $H_i(\Xt^{(m)}_{h-1},\ql)\into H_i(\Xt^{(m)}_h,\ql)$. We set
\[
H_i(\Xt^{(m)},\ql):=\varinjlim_h H_i(\Xt^{(m)}_h,\ql), \qquad H_i(\Xt,\ql):=\bigoplus_m H_i(\Xt^{(m)},\ql).
\]
For each $i\geq 0$, the vector space $H_i(\Xt,\ql)$ inherits commuting smooth actions of the groups $\bG(K)\cong D_{1/n}^\times$ and $\bT(K)\cong L^\times$. Given a smooth character $\te:L^\times\rar{}\qls$, we write $H_i(\Xt,\ql)[\te]$ for the subspace of $H_i(\Xt,\ql)$ on which $L^\times$ acts via $\te$.

\begin{defin}\label{d:lusztig-construction}
We will denote by $R^i_{\bT,\te}$ the resulting smooth representation of the group $\bG(K)=D^\times_{1/n}$ in the vector space $H_i(\Xt,\ql)[\te]$.
\end{defin}

\begin{rem}
The setup considered in this section is essentially equivalent to that of \cite[\S2]{lusztig-corvallis}, modulo the fact that Lusztig worked with the subgroup $SL_n(\Knr)\subset GL_n(\Knr)$ rather than the full general linear group (in particular, Lusztig's $\bG(K)$ is isomorphic to the subgroup of elements in $D_{1/n}^\times$ that have reduced norm $1$).
\end{rem}

\subsection{Appendix: the mixed characteristic case}\label{ss:greenberg-construction} In this subsection we assume that $K$ is a finite extension of $\bQ_p$. For each integer $h\geq 1$, we described in \S\ref{ss:lusztig-quotient-scheme-structure} a certain set of matrices $\Xt^{(0)}_h$. Our goal is to explain how $\Xt^{(0)}_h$ can be naturally identified with the set of $\bfq$-points of a scheme of finite type over $\bF_q$. The method we use is due to Greenberg \cite{greenberg-1,greenberg-2}; however, we follow the more modern interpretation of this method presented in \cite[\S2]{stasinski-formalism}.

\mbr

Let $K_0$ be the maximal unramified extension of $\bQ_p$ inside $K$, so that $K_0$ also has residue field $\bF_q$ and $K$ is totally ramified over $K_0$. Let $r\geq 1$ be such that the kernel of the natural homomorphism $\cO_{K_0}\rar{}\cO_K/(\pi^h)$ is generated by $p^r$. Then $\cO_K/(\pi^h)$ can be viewed as an algebra over $\cO_{K_0}/(p^r)$. For each $\bF_p$-algebra $R$ write $W_r(R)$ for the ring of ($p$-typical) truncated Witt vectors of length $r$ over $R$. We have a natural identification $\cO_{K_0}/(p^r)\cong W_r(\bF_q)$, so if $R$ is an $\bF_q$-algebra, then $W_r(R)$ can be viewed as an algebra over $\cO_{K_0}/(p^r)$.

\mbr

We obtain a functor $\bO_{K,h}$ from $\bF_q$-algebras to commutative rings defined by
\[
\bO_{K,h}(R) = \cO_K/(\pi^h) \underset{\cO_{K_0}/(p^r)}{\otimes} W_r(R).
\]
(Since $r$ is determined uniquely by $h$, we omit it from the notation.) Thanks to \cite[\S2]{stasinski-formalism}, the functor $\bO_{K,h}$ is representable by an affine scheme of finite type over $\bF_q$. This fact formally implies that the functor $\bO_{K,h}^\times$ from $\bF_q$-algebras to commutative groups, which sends $R$ to the group of units $\bO_{K,h}(R)^\times$ of the ring $\bO_{K,h}(R)$, is also representable by an affine scheme of finite type over $\bF_q$. By construction, we have a natural identification $\bO_{K,h}(\bfq)=\OKnr/(\pi^h)$.

\begin{rem}
In fact, working with the quotient $\cO_{K_0}/(p^r)$ is unnecessary. Let $W(R)$ denote the full ring of $p$-typical Witt vectors over $R$. We can identify $\cO_{K_0}$ with $W(\bF_q)$, and if $R$ is an $\bF_q$-algebra, then $W(R)$ can be viewed as an $\cO_{K_0}$-algebra. We have a canonical isomorphism $\cO_K/(\pi^h)\tens_{\cO_{K_0}} W(R)\rar{\simeq}\bO_{K,h}(R)$ for each $h\geq 1$, which provides a more ``uniform'' (with respect to $h$) description of the functor $\bO_{K,h}$.
\end{rem}

\mbr

We will now explain how $\Xt^{(0)}_h$ can be viewed as the set of $\bfq$-points of a closed $\bF_q$-subscheme of the product $\bO_{K,h}^\times\times\underbrace{\bO_{K,h-1}\times\dotsc\times\bO_{K,h-1}}_{n-1}$. If $R$ is any $\bF_q$-algebra, the endomorphism $a\mapsto a^q$ of $R$ induces an $\cO_{K_0}$-algebra endomorphism of $W(R)$ and hence of $\bO_{K,h}(R)$. We obtain an endomorphism of $\bO_{K,h}$ as a functor from $\bF_q$-algebras to commutative rings, which we denote simply by $\vp:\bO_{K,h}\rar{}\bO_{K,h}$. (The induced map $\bO_{K,h}(\bfq)\rar{}\bO_{K,h}(\bfq)$ is the same as the map induced by the Frobenius automorphism of $\OKnr$ used in \S\ref{ss:Lusztig-setting}, so the notation should not cause any confusion.) On the other hand, multiplication by $\pi$ induces a morphism of functors $\bO_{K,h-1}\rar{}\bO_{K,h}$ whose image will be denoted by $\pi\bO_{K,h-1}\subset\bO_{K,h}$.

\subsubsection{The main construction}\label{sss:greenberg-main}
Let $R$ be any $\bF_q$-algebra. Given $a_0\in\bO_{K,h}(R)^\times$ and $a_1,\dotsc,a_{n-1}\in\bO_{K,h-1}(R)$, form the matrix
\[
A_h(a_0,a_1,\dotsc,a_{n-1})=\begin{pmatrix}
a_0 & a_1 & a_2 & \dotsc & a_{n-1} \\
\pi\vp(a_{n-1}) & \vp(a_0) & \vp(a_1) & \dotsc & \vp(a_{n-2}) \\
\pi\vp^2(a_{n-2}) & \pi\vp^2(a_{n-1}) & \vp^2(a_0) & \dotsc & \vp^2(a_{n-3}) \\
\vdots & \vdots & \ddots & \ddots & \vdots \\
\pi\vp^{n-1}(a_1) & \pi\vp^{n-1}(a_2) & \dotsc & \pi\vp^{n-1}(a_{n-1}) & \vp^{n-1}(a_0)
\end{pmatrix}
\]
where the entries below the diagonal are viewed as elements of the additive subgroup $\pi\bO_{K,h-1}(R)\subset\bO_{K,h}(R)$. The determinant of such a matrix can be viewed as an element of $\bO_{K,h}(R)^\times$, and the map
\[
(a_0,a_1,\dotsc,a_{n-1}) \mapsto \det A_h(a_0,a_1,\dotsc,a_{n-1})
\]
is a morphism of functors $\bO_{K,h}^\times\times\bO_{K,h-1}^{n-1}\rar{}\bO_{K,h}^\times$. With this notation, $\Xt^{(0)}_h$ is the set of $\bfq$-points of the closed $\bF_q$-subscheme of $\bO_{K,h}^\times\times\bO_{K,h-1}^{n-1}$ defined as the fiber of
\[
\bO_{K,h}^\times\times\bO_{K,h-1}^{n-1}\rar{}\bO_{K,h}^\times, \qquad (a_0,a_1,\dotsc,a_{n-1}) \mapsto \frac{\vp\bigl(\det A_h(a_0,a_1,\dotsc,a_{n-1})\bigr)}{\det A_h(a_0,a_1,\dotsc,a_{n-1})},
\]
over the identity element of $\bO_{K,h}^\times$.

\begin{rem}\label{r:equal-characteristic-construction}
Sometimes (for instance, in the proof of Lemma \ref{l:cohomology-X-h} and in \S\ref{ss:division-level-2}) it is convenient to be able to treat the equal characteristics and mixed characteristics cases simultaneously without having to use two different sets of notations. To that end, we adopt the following conventions. Suppose that $K=\bF_q((\pi))$ is the field of formal Laurent series in one variable over $\bF_q$. We then put $K_0=K$, and if $R$ is any $\bF_q$-algebra, we set $W(R):=R[[\pi]]$, which is viewed as an algebra over $\cO_{K_0}=\bF_q[[\pi]]$ in the obvious way. We again obtain a functor $\bO_{K,h}:R\mapsto\cO_K/(\pi^h)\tens_{\cO_{K_0}}W(R)$. Since in this setting we have $\bO_{K,h}(R)=R[[\pi]]/(\pi^h)$, it is clear that $\bO_{K,h}$ is representable by the affine space $\bA^h$ over $\bF_q$. All the other constructions described earlier in this subsections go through without any changes.
\end{rem}

\section{Examples for division algebras}\label{s:examples-division}

We keep the notation of Section \ref{s:construction-division}. In particular, $K$ is a local field, $n$ is an integer, $L\subset\Knr$ is the unramified extension of $K$ of degree $n$ and $D_{1/n}$ is the central division algebra with invariant $1/n$ over $K$. We use the presentation $D_{1/n}=L\langle\Pi\rangle/(\Pi^n-\pi)$ given in Remark \ref{r:reduced-norm-1}, where $\pi\in K$ is a uniformizer, and write $\cO_D=\cO_L\langle\Pi\rangle/(\Pi^n-\pi)$ for the ring of integers of $D_{1/n}$. Given a smooth character $\te:L^\times\rar{}\qls$, we define the \emph{level} of $\te$ to be the smallest integer $r$ such that $\te$ is trivial on $1+\pi^r\cO_L\subset\cO_L^\times$. As before, we write $\bF_q$ and $\bF_{q^n}$ for the residue fields of $K$ and $L$, respectively.

\subsection{Representations of level $\leq 2$}\label{ss:division-level-2} In \cite{corwin-division-Advances-1974} Corwin described a procedure that associates a smooth irreducible representation of $D_{1/n}^\times$ to any smooth character $\te:L^\times\rar{}\qls$. In this subsection we explain a related construction in the case where the level of $\te$ is at most $2$ (cf.~Remark \ref{r:eta-theta-irreducible}) and compare it with the cohomological construction suggested by Lusztig, which was described in detail in Section \ref{s:construction-division}.

\mbr

Assume that $\te:L^\times\rar{}\qls$ is a character that is trivial on $1+\pi^2\cO_L$. The restriction of $\te$ to $1+\pi\cO_L$ can then be viewed as a character of $(1+\pi\cO_L)/(1+\pi^2\cO_L)$, which we identify with the additive group of $\bF_{q^n}$. Let $\psi:\bF_{q^n}\rar{}\qls$ be the (possibly trivial) character induced by $\te$. In Theorem \ref{t:cohomology-first-variety} we explicitly constructed an irreducible representation $\rho_\psi$ of the finite group $U^{n,q}(\bF_{q^n})$. We identify $U^{n,q}(\bF_{q^n})$ with the quotient $(1+\Pi\cO_D)/(1+\Pi^{n+1}\cO_D)$ via the isomorphism \eqref{e:identify-U} and let $\widetilde{\rho}_\psi$ denote the inflation of $\rho_\psi$ to an irreducible representation of $1+\Pi\cO_D$.

\begin{rem}
Write $U^j_D=1+\Pi^j\cO_D$ for all $j\geq 1$. Then the center of $U^1_D/U^{n+1}_D$ equals $U^n_D/U^{n+1}_D$ and the central character of $\rho_\psi$ corresponds to $\psi$ under the natural identification $\bF_{q^n}\rar{\simeq}U^n_D/U^{n+1}_D$ given by $a\mapsto 1+a\pi=1+a\Pi^n \mod U^{n+1}_D$.
\end{rem}

Next let $\zeta\in\cO_L^\times$ denote a primitive root of unity of order $q^n-1$; the image of $\zeta$ modulo $\pi$ generates the cyclic group $(\cO_L/(\pi))^\times=\bF_{q^n}^\times$. We have a natural embedding $\cO_L\into\cO_D$, so $\ze$ can also be viewed as an element of $\cO_D^\times$. The group $\cO_D^\times$ is the semidirect product $\langle\zeta\rangle\ltimes(1+\Pi\cO_D)$, where $\langle\zeta\rangle\subset\cO_D^\times$ is the cyclic subgroup generated by $\zeta$. Hence if $\widetilde{\rho}_\psi$ can be extended to a representation of $\cO_D^\times$, then that extension is unique up to a $1$-dimensional character of the cyclic group $\langle\ze\rangle$.

\begin{lem}\label{l:extend-to-O-D-times}
There exists a (necessarily unique and irreducible) representation $\eta^\circ_\te$ of $\cO_D^\times$ such that $\eta^\circ_\te\bigl\lvert_{1+\Pi\cO_D}=\widetilde{\rho}_\psi$ and $\Tr(\eta^\circ_\te(\ze))=(-1)^{n+n/m}\te(\ze)$, where $m$ is such that $q^m$ is the conductor of $\psi$ (see Definition \ref{d:conductor}).
\end{lem}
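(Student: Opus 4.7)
The product decomposition $\cO_D^\times=\langle\ze\rangle\ltimes(1+\Pi\cO_D)$, combined with $\langle\ze\rangle\cap(1+\Pi\cO_D)=\{1\}$, reduces the problem to Clifford theory in three steps: (i) verify that $\widetilde{\rho}_\psi$ is invariant under conjugation by $\ze$; (ii) deduce existence of extensions, together with the uniqueness of the one having a prescribed trace at $\ze$; (iii) compute the trace of one natural extension and match it to $(-1)^{n+n/m}\te(\ze)$.

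For (i), a direct computation in $\cO_D$ using $\Pi\cdot x=\vp(x)\cdot\Pi$ shows that conjugation by $\ze$ preserves every $1+\Pi^j\cO_D$ and, under the identification \eqref{e:identify-U}, corresponds to the $\bF_{q^n}$-automorphism $(a_1,\dotsc,a_n)\mapsto(\zeb^{\,1-q}a_1,\dotsc,\zeb^{\,1-q^n}a_n)$ of $U^{n,q}$, where $\zeb\in\bF_{q^n}^\times$ is the image of $\ze$. Since $\zeb^{\,1-q^n}=1$, this automorphism is trivial on the center, so the central character $\psi$ is preserved. Moreover it preserves the subgroups $H_m$ and $\Ga_m$ of Theorem \ref{t:cohomology-first-variety}(c) and fixes the character $\widetilde{\psi}$, because this character is built from the intrinsic projection $\nu_m$ and the reduced norm $\Nm^{n_1,q_1}$. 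Consequently, in both cases of Theorem \ref{t:cohomology-first-variety}(c), $\widetilde{\rho}_\psi^\ze\cong\widetilde{\rho}_\psi$.

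For (ii), Schur's lemma then produces, uniquely up to scalar, an intertwiner $A$ on the space of $\widetilde{\rho}_\psi$ satisfying $A\cdot\widetilde{\rho}_\psi(g)\cdot A^{-1}=\widetilde{\rho}_\psi(\ze g\ze^{-1})$ for all $g\in 1+\Pi\cO_D$. Any such $A$ satisfies $A^{q^n-1}=\la\cdot\id$ for some $\la\in\qls$, so exactly $q^n-1$ scalar multiples of $A$ satisfy $A^{q^n-1}=\id$, and these give all extensions of $\widetilde{\rho}_\psi$ to $\cO_D^\times$. The extensions form a torsor under the character group of $\langle\ze\rangle$, so the normalization $\Tr(\eta^\circ_\te(\ze))=(-1)^{n+n/m}\te(\ze)$ singles out at most one (which is automatically irreducible, as any extension of the irreducible $\widetilde{\rho}_\psi$ is).

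For (iii), conjugation by $\ze$ is defined over $\bF_{q^n}$ and hence descends to an automorphism $\iota_\ze$ of the variety $X$ of Definition \ref{d:X-main}; the action of $\iota_\ze^*$ on $H^{n+n_1-2}_c(X,\ql)$ preserves the $\rho_\psi$-isotypic component and restricts there to an intertwiner $A_0$ as in (ii). I would compute $\Tr(A_0)$ by combining Theorem \ref{t:fixed-point-formula-DL} (whose ``abstract Jordan decomposition'' for $\ze$ is trivial, since $|\ze|=q^n-1$ is coprime to $p$) with Lemma \ref{l:fixed-point-2}, using that $\Fr_{q^n}$ acts on the isotypic component by the known scalar from Theorem \ref{t:cohomology-first-variety}(b). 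The main obstacle is the explicit determination of the fixed-point scheme $X^\ze$: after rescaling coordinates so that conjugation by $\ze$ acts trivially, the fixed-point equations become Artin-Schreier-type conditions whose associated character sum must be carefully evaluated to yield exactly the sign $(-1)^{n+n/m}$ and the factor $\te(\ze)$. Once the trace of $A_0$ is known, the unique character of $\langle\ze\rangle$ that corrects the discrepancy produces the desired extension $\eta^\circ_\te$.
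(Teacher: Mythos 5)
Your overall skeleton is sound and close in spirit to the paper: the paper also realizes the extension via the conjugation action of $\bF_{q^n}^\times\cong\langle\zeb\rangle$ on $X$ (so that $H^{n+n/m-2}_c(X,\ql)[\psi]$ becomes a representation of $\bF_{q^n}^\times\ltimes U^{n,q}(\bF_{q^n})\cong\cO_D^\times/U^{n+1}_D$, making your steps (i)--(ii) unnecessary as separate Clifford-theoretic arguments), computes the trace of $\zeb$ there, and then twists by the character of $\cO_D^\times/U^{n+1}_D$ that is trivial on $U^1_D/U^{n+1}_D$ and sends $\ze\mapsto\te(\ze)$. The genuine gap is your step (iii): the identity $\Tr\bigl(\zeb^*,H^{n+n/m-2}_c(X,\ql)[\psi]\bigr)=(-1)^{n+n/m}$ (Proposition \ref{p:trace-zeta-bar}) is the entire content of the lemma, and you only defer it, flagging ``the explicit determination of the fixed-point scheme $X^\ze$'' and an ``Artin--Schreier-type character sum'' as the obstacle. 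In fact no such character sum occurs, and your description of the mechanism is off in two ways. First, the fixed-point locus is determined in one line: conjugation by $\zeb$ scales $a_j$ by $\zeb^{1-q^j}$, which is $\neq 1$ for $1\leq j\leq n-1$, so the fixed points in $U^{n,q}$ form the center $Z$, and intersecting with $X=L_{q^n}^{-1}(\{a_n=0\})$ forces $a_n^{q^n}=a_n$; thus $X^{\zeb}=Z(\bF_{q^n})$ is a finite set of $q^n$ points. Second, the sign $(-1)^{n+n/m}$ does not come from evaluating a character sum but from the concentration of $H^\bullet_c(X,\ql)[\psi]$ in the single degree $n+n/m-2$: one projects to the $\psi$-isotypic part by averaging over the center, applies Theorem \ref{t:fixed-point-formula-DL} to the elements $(\zeb,z)$ with $z\in Z(\bF_{q^n})$ (this is where a nontrivial Jordan decomposition $s=\zeb$, $u=z$ is used --- not Lemma \ref{l:fixed-point-2}, which is the wrong tool here and does not combine with the Frobenius eigenvalue in the way you suggest), obtains $\sum_i(-1)^i\Tr(\zeb^*,H^i_c(X,\ql)[\psi])=1$, and reads off the trace from the parity of $n+n/m-2$.

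A related confusion: you write that the geometric computation should ``yield exactly the sign $(-1)^{n+n/m}$ and the factor $\te(\ze)$.'' It cannot yield $\te(\ze)$ --- the variety $X$ and the conjugation action know nothing about $\te$. The factor $\te(\ze)$ enters only through the final twist by a character of $\langle\ze\rangle$, as your closing sentence correctly (but inconsistently with the preceding one) indicates; and for that correction to exist one must already know $\Tr(A_0)=(-1)^{n+n/m}$, so that the discrepancy $\te(\ze)$ is a $(q^n-1)$-st root of unity. So the proposal identifies the right framework but omits, and partially misdescribes, the one computation on which the lemma rests.
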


The lemma is proved in \S\ref{ss:proof-l:extend-to-O-D-times}.

\mbr

Finally, observe that $D_{1/n}^\times$ is the semidirect product of the normal subgroup $\cO_D^\times$ and the cyclic subgroup $\Pi^\bZ$ generated by the uniformizer $\Pi$. It contains the product $\pi^{\bZ}\cdot\cO_D^\times$ as an open subgroup of index $n$. We extend the representation $\eta^\circ_\te$ described in Lemma \ref{l:extend-to-O-D-times} to a representation $\eta'_\te$ of $\pi^{\bZ}\cdot\cO_D^\times\cong\pi^{\bZ}\times\cO_D^\times$ by letting $\pi$ act via the scalar $\te(\pi)$ and form the induced representation
\begin{equation}\label{e:eta-theta}
\eta_\te := \Ind_{\pi^\bZ\cdot\cO_D^\times}^{D_{1/n}^\times} \eta'_\te.
\end{equation}

\begin{thm}\label{t:lusztig-level-2}
With the notation above, if $q^m$ is the conductor of $\psi$, then $R^{n-n/m}_{\bT,\te}\cong\eta_\te$ and $R^i_{\bT,\te}=0$ for all $i\neq n-n/m$, where $R^i_{\bT,\te}$ is the smooth representation of $D^\times_{1/n}$ constructed in Definition \ref{d:lusztig-construction}.
\end{thm}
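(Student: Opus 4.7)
The plan is a three-step reduction: pass from the ind-scheme $\Xt$ to its level-$2$ truncation $\Xt^{(0)}_2$, identify an appropriate isotypic subquotient with the unipotent Deligne--Lusztig variety $X$ of Definition \ref{d:X-main} so that Theorem \ref{t:cohomology-first-variety} applies, and then pin down the extension of the resulting $(1+\Pi\cO_D)$-representation to $\cO_D^\times$ via a trace computation using Theorem \ref{t:fixed-point-formula-DL}.

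First, since $\te$ is trivial on $1+\pi^2\cO_L$, iterating Lemma \ref{l:cohomology-X-h} for $h\geq 3$ yields canonical isomorphisms $H_i(\Xt^{(m)},\ql)[\te]\cong H_i(\Xt^{(m)}_2,\ql)[\te\lvert_{Z_2}]$, where $Z_2=(\cO_L/\pi^2)^\times$. Right multiplication by $\Pi$ identifies $\Xt^{(m)}$ with $\Xt^{(m+1)}$, and because $\pi=\Pi^n$ is central and acts as the scalar $\te(\pi)$ on any $\te$-isotypic subspace, summing over $m$ yields
\[
R^i_{\bT,\te}\;\cong\;\Ind_{\pi^{\bZ}\cdot\cO_D^\times}^{D_{1/n}^\times}\bigl(H_i(\Xt^{(0)}_2,\ql)[\te\lvert_{Z_2}]\bigr),
\]
where $\pi$ acts by $\te(\pi)$ on the inner space. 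It thus suffices to show, as a representation of $\cO_D^\times$, that this inner space vanishes for $i\neq n-n/m$ and is isomorphic to $\eta^\circ_\te$ for $i=n-n/m$.

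Next, the reduction $\Xt^{(0)}_2\to\Xt^{(0)}_1\cong\bF_{q^n}^\times$ is equivariant for the transitive Teichm\"uller action of $\bF_{q^n}^\times$, so choosing a basepoint gives a fiber $\sF$ on which the pro-$p$ subquotient $(1+\Pi\cO_D)/(1+\pi^2\cO_D)$ acts. Using the isomorphism $U^{n,q}(\bF_{q^n})\cong(1+\Pi\cO_D)/(1+\Pi^{n+1}\cO_D)$ of Remark \ref{r:reduced-norm-1}, the residual abelian piece $(1+\Pi^{n+1}\cO_D)/(1+\pi^2\cO_D)$ acts on cohomology by characters pinned down by $\te\lvert_{1+\pi\cO_L}$ through the reduced trace pairing; this identifies the $\te\lvert_{Z_2}$-isotypic part of $H^\bullet_c(\Xt^{(0)}_2,\ql)$ with the $\psi$-isotypic part of $H^\bullet_c(X,\ql)$ under $U^{n,q}(\bF_{q^n})$, where $\psi$ is the character of $\bF_{q^n}$ induced by $\te\lvert_{1+\pi\cO_L}$. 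Theorem \ref{t:cohomology-first-variety} then produces vanishing outside cohomological degree $n+n/m-2$ together with a copy of $\widetilde{\rho}_\psi$ in that degree, and Poincar\'e duality $H_i=H^{2(n-1)-i}_c(n-1)$ translates this to the homological degree $i=n-n/m$.

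It remains to identify the extension of $\widetilde{\rho}_\psi$ to $\cO_D^\times=\langle\ze\rangle\ltimes(1+\Pi\cO_D)$ afforded by the cohomology. Since $\rho_\psi$ is irreducible, this extension is determined by the trace of $\ze$ on the one-dimensional intertwiner space, and I would compute that trace via Theorem \ref{t:fixed-point-formula-DL}: as $\ze$ has order prime to $p$, its fixed-point subscheme $\sF^\ze$ is an explicit lower-dimensional Deligne--Lusztig pattern whose cohomology can be evaluated inductively using Proposition \ref{p:inductive-idea} and Lemma \ref{l:fixed-point-1}, producing the scalar $(-1)^{n+n/m}\te(\ze)$ prescribed by Lemma \ref{l:extend-to-O-D-times}. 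The main obstacle will be this trace computation, especially in the case of Theorem \ref{t:cohomology-first-variety}(c) where $m$ is even and $n/m$ is odd: there $\rho_\psi$ is induced not from $\widetilde{\psi}$ but from an extension $\chi$ of $\widetilde{\psi}$ to the larger group $\Ga_m$, and matching the cohomological extension against $\chi$ will require additional character-theoretic bookkeeping via Lemma \ref{l:fixed-point-2} together with careful tracking of Frobenius weights.
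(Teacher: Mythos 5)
Your three-step architecture is the same as the paper's (reduce to the level-$2$ truncation via Lemma \ref{l:cohomology-X-h}, compare with the variety $X$ of Definition \ref{d:X-main} so that Theorem \ref{t:cohomology-first-variety} applies, then fix the extension to $\cO_D^\times$ by a trace computation based on Theorem \ref{t:fixed-point-formula-DL}), but the middle step --- which the paper calls the heart of the proof --- is only asserted, and the mechanism you offer for it is incorrect. The subquotient $(1+\Pi^{n+1}\cO_D)/(1+\pi^2\cO_D)$ acts \emph{trivially} on $\Xt^{(0)}_2$, since the $\cO_D^\times$-action on $\Xt_2$ factors through $\cO_D^\times/U^{n+1}_D$; so it cannot ``pin down characters through the reduced trace pairing,'' and no representation-theoretic argument of this kind can substitute for the geometric fact you actually need: the fiber of $\Xt^{(0)}_2\rar{}\Xt^{(0)}_1$ over $1$ (the locus $a_0\equiv 1\bmod\pi$) is isomorphic to $X$ itself, compatibly with right multiplication by $U^{n,q}(\bF_{q^n})$ and with the left $U^1_L/U^2_L$-action going over to right multiplication by the center $Z(\bF_{q^n})$. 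Concretely, one must show that on this locus the condition $\det\in(\cO_K/\pi^2\cO_K)^\times$ cutting out $\Xt^{(0)}_2$ coincides with the equation $\pr_n(L_{q^n}(g))=0$ defining $X$; this rests on the identity $\pr_n(L_{q^n}(g))=N_2(g)^q-N_2(g)$ of \cite[Prop.~6.2]{maximal-varieties-LLC}, which the paper uses to obtain Corollary \ref{c:embed-X}, together with the equivariance checks of Lemmas \ref{l:embedding-actions-compatible} and \ref{l:action-of-Gamma}. Without this input Theorem \ref{t:cohomology-first-variety} cannot be brought to bear, so this is the gap you must fill.

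Two corrections to your last step. First, $\ze$ does not preserve the fiber; only $(\ze,\ze^{-1})$ does, acting by $\zeb$-conjugation, and the fixed locus of that conjugation on $X$ is just the finite set $Z(\bF_{q^n})$, so Theorem \ref{t:fixed-point-formula-DL} gives the trace $(-1)^{n+n/m}$ immediately (Proposition \ref{p:trace-zeta-bar}); no inductive computation via Proposition \ref{p:inductive-idea} is needed, and the factor $\te(\ze)$ appearing in Lemma \ref{l:extend-to-O-D-times} does not come out of the fixed-point formula at all, but from writing $(\ze,1)=(\ze,\ze^{-1})\cdot(1,\ze)$ when the $\te$-isotypic part of the module induced over the component group is assembled. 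Second, the difficulty you anticipate in the case $m$ even, $n_1$ odd is illusory: the explicit model of $\rho_\psi$ in Theorem \ref{t:cohomology-first-variety}(c) is never used; parts (a), (b) and the single trace value of Proposition \ref{p:trace-zeta-bar} suffice, with no case distinction and no appeal to Lemma \ref{l:fixed-point-2}.
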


The proof of the theorem is given in \S\ref{ss:proof-t:lusztig-level-2}. The heart of the proof lies in finding a relation between the ind-scheme $\Xt$ constructed in Section \ref{s:construction-division} and the variety $X$ appearing in Definition \ref{d:X-main}. In fact, we will see that the scheme $\Xt^{(0)}_2$ (see \S\ref{ss:lusztig-quotient-scheme-structure}) is naturally isomorphic to a disjoint union of $q^n-1$ copies of $X$.

\begin{rem}\label{r:eta-theta-irreducible}
If $\psi$ has conductor $q^n$ (in other words, $\psi$ does not factor through the trace map $\Tr_{\bF_{q^n}/\bF_{q^r}}:\bF_{q^n}\rar{}\bF_{q^r}$ for any $1\leq r<n$), then the representation $\eta_\te$ is irreducible and coincides with the representation of $D_{1/n}^\times$ associated to $\te$ via the construction of \cite{corwin-division-Advances-1974}. To check irreducibility, observe that the subgroup $U^n_D/U^{n+1}_D$ is central in $\pi^\bZ\cdot\cO_D^\times/U^{n+1}_D$, and if we identify $U^n_D/U^{n+1}_D$ with the additive group $\bF_{q^n}$ in the natural way, then the conjugation action of $\Pi\in D_{1/n}^\times/U^{n+1}_D$ becomes identified with the action of the Frobenius $F_q(x)=x^q$ on $\bF_{q^n}$. If $\psi$ has conductor $q^n$, then $\psi$ is not invariant under the action of $F_q^j$ for any $1\leq j\leq n-1$. This means that the normalizer of the irreducible representation $\eta'_\te$ in $D_{1/n}^\times$ is equal to $\pi^\bZ\cdot\cO_D^\times$, so by Mackey's criterion, $\eta_\te$ is irreducible.
\end{rem}

\subsection{Higher levels: general strategy}\label{ss:higher-levels-strategy} We now suggest a strategy for analyzing the representations $R^i_{\bT,\te}$ for characters $\te:L^\times\rar{}\qls$ that have level $>2$.

\mbr

\textbf{From now on we assume that} $\operatorname{char}(K)=p$; in other words, $K=\bF_q((\pi))$. It is likely that the methods of Section \ref{s:principles} can also be used to handle the mixed characteristics case; however, the calculations will be rather more complicated. The idea is to relate the schemes $\Xt^{(0)}_h$ defined in Section \ref{s:construction-division} with certain higher-dimensional analogues of the unipotent group $U^{n,q}$ and the scheme $X$ constructed in Definitions \ref{d:ring-scheme-level-2} and \ref{d:X-main}. We keep the notation used in the first part of the section and identify $L$ with the field of Laurent series $\bF_{q^n}((\pi))$. Elements of $\cO_D$ will often be written as formal power series $\sum_{j\geq 0}a_j\Pi^j$, where $a_j\in\bF_{q^n}$; the commutation relation defining the product of two such power series is $\Pi\cdot a=a^q\cdot\Pi$ for $a\in\bF_{q^n}$.

\mbr

From now on we fix an integer $h\geq 2$.

\begin{defin}\label{d:ring-scheme-level-h}
If $A$ is an $\bF_p$-algebra, consider the twisted polynomial ring $A\langle\tau\rangle$ with the commutation relation $\tau\cdot a=a^q\cdot\tau$ ($a\in A$) and put $\cR_{h,n,q}(A)=A\langle\tau\rangle/(\tau^{n(h-1)+1})$. The functor $A\mapsto\cR_{h,n,q}(A)$ is obviously representable by the affine space $\bA^{n(h-1)+1}$ over $\bF_p$.
Thus it defines a ring scheme $\cR_{h,n,q}$ over $\bF_p$. Its elements will be written as $a_0+a_1\tau+\dotsc+a_{n(h-1)}\tau^{n(h-1)}$. The multiplicative group of $\cR_{h,n,q}$ will be denoted by $\cR^\times_{h,n,q}$ and the subgroup of $\cR^\times_{h,n,q}$ of elements of the form $1+a_1\tau+\dotsc+a_{n(h-1)}\tau^{n(h-1)}$ will be denoted by $U_h^{n,q}\subset\cR_{h,n,q}^\times$ (it is a unipotent group over $\bF_p$).
\end{defin}

When $h=2$, this construction specializes to that of Definition \ref{d:ring-scheme-level-2}. We have an obvious analogue of Remark \ref{r:Frobenius-R-n-q}. However, the analogue of Definition \ref{d:X-main} is more subtle. In what follows we will work with a subscheme of $U^{n,q}_h$ of the form $X_h=L_{q^n}^{-1}(Y_h)$, but the construction of $Y_h$ is more complicated (in particular, it is no longer defined by linear equations; see Remark \ref{r:Y-3-is-complicated} for an example).

\begin{rem}
With the notation of Definition \ref{d:ring-scheme-level-h}, we have a ring isomorphism
\begin{equation}\label{e:identify-rings}
\cR_{h,n,q}(\fqn)\rar{\simeq}\frac{\cO_D}{\Pi^{n(h-1)+1}\cO_D}, \qquad \sum_{j=0}^{n(h-1)}a_j\tau^j \longmapsto \sum_{j=0}^{n(h-1)}a_j\Pi^j,
\end{equation}
that restricts to group isomorphisms
\begin{equation}\label{e:identify-U-h}
\cR_{h,n,q}^\times(\fqn)\rar{\simeq}\frac{\cO_D^\times}{1+\Pi^{n(h-1)+1}\cO_D}, \qquad U^{n,q}_h(\fqn)\rar{\simeq}\frac{1+\Pi\cO_D}{1+\Pi^{n(h-1)+1}\cO_D}.
\end{equation}
\end{rem}

\subsubsection{The scheme $\Xt^{(0)}_h$ revisited} In \S\ref{ss:greenberg-construction} we explained how to define $\Xt^{(0)}_h$ as an (affine) scheme of finite type over $\bF_q$. In the equal characteristics case $K=\bF_q((\pi))$ this definition can be rewritten more explicitly, and we proceed to do so.

\begin{defin}\label{d:big-matrices}
If $A$ is an $\bF_p$-algebra we let $M_h(A)$ denote the ring of all $n$-by-$n$ matrices $B=(b_{ij})_{i,j=1}^n$ with $b_{ii}\in A[\pi]/(\pi^h)$ for all $i$, $b_{ij}\in A[\pi]/(\pi^{h-1})$ for all $i<j$ and $b_{ij}\in\pi A[\pi]/(\pi^h)$ for all $i>j$. The ring operations are given by the usual addition and multiplication of matrices. The determinant can be viewed as a multiplicative map $\det:M_h(A)\rar{}A[\pi]/(\pi^h)$.
\end{defin}

\begin{defin}\label{d:embedding-level-h}
If $A$ is an $\bF_p$-algebra and $a_0,a_1,\dotsc,a_{n(h-1)}\in A$, we will write $\iota_h(a_0,a_1,\dotsc,a_{n(h-1)})\in M_h(A)$ for the matrix (cf.~Definition \ref{d:big-matrices})
{\footnotesize
\[
\begin{pmatrix}
\sum\limits_{j=0}^{h-1} a_{nj}\pi^j & \sum\limits_{j=0}^{h-1} a_{nj+1}\pi^j &  \sum\limits_{j=0}^{h-1} a_{nj+2}\pi^j & \dotsc & \sum\limits_{j=0}^{h-1} a_{nj+n-1}\pi^j \\
\sum\limits_{j=0}^{h-1} a_{nj+n-1}^q\pi^{j+1} & \sum\limits_{j=0}^{h-1} a_{nj}^q\pi^j & \sum\limits_{j=0}^{h-1} a^q_{nj+1}\pi^j & \dotsc & \sum\limits_{j=0}^{h-1} a_{nj+n-2}^q\pi^j \\
\sum\limits_{j=0}^{h-1} a^{q^2}_{nj+n-2}\pi^{j+1} & \sum\limits_{j=0}^{h-1} a_{nj+n-1}^{q^2}\pi^{j+1} & \sum\limits_{j=0}^{h-1} a^{q^2}_{nj}\pi^j & \dotsc & \sum\limits_{j=0}^{h-1} a_{nj+n-3}^{q^2}\pi^j \\
\vdots & \vdots & \ddots & \ddots & \vdots \\
\sum\limits_{j=0}^{h-1} a_{nj+1}^{q^{n-1}}\pi^{j+1} & \sum\limits_{j=0}^{h-1} a_{nj+2}^{q^{n-1}}\pi^{j+1} & \dotsc & \sum\limits_{j=0}^{h-1} a_{nj+n-1}^{q^{n-1}}\pi^{j+1} & \sum\limits_{j=0}^{h-1} a_{nj}^{q^{n-1}}\pi^j
\end{pmatrix}
\]
}
\end{defin}

\begin{rem}\label{r:embedding-level-h}
If $\varpi$ is the matrix given by \eqref{e:uniformizer-division}, we can also write
\begin{equation}\label{e:iota-h}
\iota_h(a_0,a_1,\dotsc,a_{n(h-1)}) = \sum_{j=0}^{n(h-1)} \at_j \varpi^j,
\end{equation}
where $\at$ denotes the diagonal matrix $\operatorname{diag}(a,a^q,\dotsc,a^{q^{n-1}})$ for any $a\in A$.
\end{rem}

If $A$ is an $\bF_q$-algebra, then the set of $A$-valued points $\Xt^{(0)}_h(A)$ of the scheme $\Xt^{(0)}_h$ can be identified with the set of all matrices of the form $\iota_h(a_0,a_1,\dotsc,a_{n(h-1)})$ with $a_0\in A^\times$ and $a_1,\dotsc,a_{n(h-1)}\in A$, whose determinant lies in $\bF_q[\pi]/(\pi^h)\subset A[\pi]/(\pi^h)$.

\begin{example}\label{ex:n-2-h-3}
Let $n=2$ and $h=3$. We find that when $n=2$, the set $\Xt^{(0)}_3(A)$ can be identified with the set of all matrices of the form
\[
\matr{a_0+a_2\pi+a_4\pi^2}{a_1+a_3\pi}{a_1^q\pi+a_3^q\pi^2}{a_0^q+a_2^q\pi+a_4^q\pi^2}
\]
such that $a_0\in A^\times$, $a_1,a_2,a_3,a_4\in A$ and
\[
(a_0+a_2\pi+a_4\pi^2)(a_0^q+a_2^q\pi+a_4^q\pi^2)-(a_1+a_3\pi)(a_1^q\pi+a_3^q\pi^2)\in\bF_q[\pi]/(\pi^2).
\]
\end{example}

\begin{rem}\label{r:n-2-h-3}
The last requirement can be rewritten more explicitly as follows:
\[
a_0^{q+1}\in\bF_q^\times, \quad a_0 a_2^q+a_2 a_0^q-a_1^{q+1}\in\bF_q, \quad a_0 a_4^q+a_4 a_0^q+a_2^{q+1}-a_1a_3^q-a_3a_1^q\in\bF_q.
\]
\end{rem}

\subsubsection{Conventions} From now on all geometric objects we consider will be defined over $\bF_{q^n}$. In particular, we view $\Xt^{(0)}_h$ as a scheme over $\bF_{q^n}$ by base change from $\bF_q$ (the actions of $\cO_L^\times$ and $\cO_D^\times$ on $\Xt^{(0)}_h$ are only defined over $\bF_{q^n}$) and we view the unipotent group $U^{n,q}_h$ constructed in Definition \ref{d:ring-scheme-level-h} as an algebraic group over $\bF_{q^n}$.

\subsubsection{Group actions} Recall that we have a left action of $\cO_L^\times$ and a right action of $\cO_D^\times$ on $\Xt^{(0)}_h$, and these actions commute with each other. The action of $\cO_L^\times$ factors through $\cO_L^\times/U^h_L$ and the action of $\cO_D^\times$ factors through $\cO_D^\times/U^{n(h-1)+1}_D$, where $U^j_L=1+\pi^j\cO_L$ and $U^j_D=1+\Pi^j\cO_D$ for all $j\geq 1$. We will now explicitly describe the actions of $\cO_L^\times/U^h_L$ and $\cO_D^\times/U^{n(h-1)+1}_D$ on $\Xt^{(0)}_h$.

\mbr

The quotient $\cO_L^\times/U^h_L$ can be identified with the group of truncated polynomials $b_0+b_1\pi+\dotsc+b_{h-1}\pi^{h-1}$, where $b_0\in\bF_{q^n}^\times$ and $b_1,\dotsc,b_{h-1}\in\fqn$, which are multiplied using distributivity and the identity $\pi^h=0$. With this notation, for any $\fqn$-algebra $A$, the left action of $\cO_L^\times/U^h_L$ on $\Xt^{(0)}_h(A)$ comes from the left multiplication action of $\bigl(\bF_{q^n}[\pi]/(\pi^h)\bigr)^\times$ on the set of all matrices $M_h(A)$ described in Definition \ref{d:big-matrices}.

\mbr

On the other hand, with the notation of Remark \ref{r:embedding-level-h}, we have $\varpi\cdot\at=\widetilde{a^q}\cdot\varpi$ for all $a\in\bF_{q^n}$, and formula \eqref{e:iota-h} shows that the quotient ring $\cO_D/(\Pi^{n(h-1)+1})$ can be identified with the subring of $M_h(\bF_{q^n})$ (cf.~Def.~\ref{d:big-matrices}) consisting of all matrices of the form $\iota_h(a_0,a_1,\dotsc,a_{n(h-1)})$, where $a_0,a_1,\dotsc,a_{n(h-1)}\in\bF_{q^n}$ (cf.~Def.~\ref{d:embedding-level-h}). Under this identification, for any $\fqn$-algebra $A$, the right action of $\cO_D^\times/U^{n(h-1)+1}_D$ on $\Xt^{(0)}_h(A)$ comes from the action of $\cO_D^\times/U^{n(h-1)+1}_D$ on $M_h(A)$ by right multiplication.

\begin{rems}\label{rems:crucial}
\begin{enumerate}[(1)]
\item Let $A$ be an $\bF_p$-algebra and define $\iota'_h:\cR_{h,n,q}(A)\into M_h(A)$ by
    \[
    \iota'_h\left( \sum_{j=0}^{n(h-1)} a_j\tau^j \right) = \iota_h\bigl(a_0,a_1,\dotsc,a_{n(h-1)}\bigr),
    \]
    where $\cR_{h,n,q}(A)$ is the ring introduced in Definition \ref{d:ring-scheme-level-h}. Then $\iota'_h$ is additive but not multiplicative in general. However, it enjoys the following weakening of the multiplicativity property: if $A$ is an $\fqn$-algebra, then $\iota'_h(xy)=\iota'_h(x)\iota'_h(y)$ for all $x\in\cR_{h,n,q}(A)$ and all $y\in\cR_{h,n,q}(\fqn)$.
 \sbr
\item If $y\in\cR_{h,n,q}(\fqn)$, then $\det\iota'_h(y)$ is automatically contained in the subring $\bF_q[\pi]/(\pi^h)\subset\fqn[\pi]/(\pi^h)$.
\end{enumerate}
 \sbr
Thus the subset $\Xt^{(0)}_h(A)\subset M_h(A)$ is indeed stable under right multiplication by matrices of the form $\iota_h(a_0,a_1,\dotsc,a_{n(h-1)})$ with $a_0\in\fqn^\times$ and $a_1,\dotsc,a_{n(h-1)}\in\bF_{q^n}$.
\end{rems}

\subsubsection{The main construction}\label{sss:higher-levels-main-construction} Recall that we now view $\cR_{h,n,q}$ (Def.~\ref{d:ring-scheme-level-h}) and $M_h$ (Def.~\ref{d:big-matrices}) as ring schemes over $\fqn$. Using the morphism $\iota'_h:\cR_{h,n,q}\into M_h$ constructed in Remark \ref{rems:crucial}(1), we can identify $\Xt^{(0)}_h$ with the closed subscheme $\iota'^{-1}_h(\Xt^{(0)}_h)$ of the multiplicative group $\cR_{h,n,q}^\times$ of $\cR_{h,n,q}$.

\begin{defin}
We define $X_h\subset U^{n,q}_h$ to be the intersection of $\iota'^{-1}_h(\Xt^{(0)}_h)$ with the subgroup $U^{n,q}_h\subset\cR_{h,n,q}^\times$. Equivalently, we see that for any $\fqn$-algebra $A$, we have
\[
X_h(A) = \bigl\{ g\in U^{n,q}_h(A) \st \det\iota'_h(g)\in\bF_q[\pi]/(\pi^h) \bigr\}.
\]
\end{defin}

\begin{rem}\label{r:X-h-is-a-preimage}
Remarks \ref{rems:crucial} imply that the subscheme $X_h\subset U^{n,q}_h$ is closed under right multiplication by $U^{n,q}_h(\fqn)$. Therefore $X_h=L_{q^n}^{-1}(Y_h)$ for some closed subscheme $Y_h\subset U^{n,q}_h$, where $L_{q^n}:U^{n,q}_h\rar{}U^{n,q}_h$ is the Lang map $g\mapsto F_{q^n}(g)g^{-1}$.
\end{rem}

\begin{defin}\label{d:left-action}
The image of the embedding $\iota'_h:X_h\into\Xt^{(0)}_h$ is invariant under the left action of the subgroup $U^1_L/U^h_L\subset\cO_L^\times/U^h_L$. Hence there is a left action of $U^1_L/U^h_L$ on $X_h$ that makes $\iota'_h$ equivariant under $U^1_L/U^h_L$. We will denote this action by
\[
\ga:x\longmapsto\ga*x, \qquad \ga\in U^1_L/U^h_L, \ x\in X_h.
\]
This action commutes with the right multiplication action of $U^{n,q}_h(\fqn)$ on $X_h$.
\end{defin}

\subsubsection{Some conjectures} In \S\ref{sss:higher-levels-main-construction} we constructed a closed subscheme $X_h\subset U^{n,q}_h$ defined over $\fqn$ which is stable under right multiplication by elements of $U^{n,q}_h(\fqn)$ and a left action ``$*$'' of $U^1_L/U^h_L$ on $X_h$ that commutes with the right action of $U^{n,q}_h(\fqn)$. Given an integer $i\geq 0$ and a character $\chi:U^1_L/U^h_L\rar{}\qls$, we write
\[
H^i_c(X_h,\ql)[\chi]\subset H^i_c(X_h,\ql)
\]
for the subspace on which $U^1_L/U^h_L$ acts via $\chi$. This subspace is a representation of the finite group $U^{n,q}_h(\fqn)$; it also carries an action of the Frobenius $\Fr_{q^n}$ that commutes with the action of $U^{n,q}_h(\fqn)$.

\begin{conjecture}
For each $i\geq 0$, we have $H^i_c(X_h,\ql)=0$ unless $i$ or $n$ is even, and $\Fr_{q^n}$ acts on $H^i_c(X_h,\ql)$ by the scalar $(-1)^i q^{ni/2}$.
\end{conjecture}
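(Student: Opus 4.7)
\medbreak

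The plan is to proceed by induction on $h$, the base case $h=2$ being exactly Theorem~\ref{t:cohomology-first-variety}: there the nonzero contribution $\rho_\psi$ lives in degree $i=n+n_1-2$, and a quick parity check shows $(-1)^{n-n_1}=(-1)^i$ and $n(n+n_1-2)/2=ni/2$, matching the target Frobenius scalar $(-1)^i q^{ni/2}$; moreover $i=n+n_1-2$ is even unless both $n$ and $n_1$ are odd, so in particular $i$ is forced to be even when $n$ is odd. Since the commutative group $U^1_L/U^h_L$ acts on $X_h$ by the $*$-action (Definition~\ref{d:left-action}) and this action commutes with both the right multiplication action of $U^{n,q}_h(\fqn)$ and the geometric Frobenius $\Fr_{q^n}$, we obtain an equivariant decomposition
\[
H^i_c(X_h,\ql)=\bigoplus_\chi H^i_c(X_h,\ql)[\chi],
\]
where $\chi$ runs over characters of $U^1_L/U^h_L$, so it suffices to verify the claim on each isotypic component.

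For a fixed $\chi$, I would analyze $H^i_c(X_h,\ql)[\chi]$ as a representation of $U^{n,q}_h(\fqn)$ by probing it with induced representations $\Ind_{H(\fqn)}^{U^{n,q}_h(\fqn)}(\psi\circ f)$, where $H\subset U^{n,q}_h$ is a suitably chosen connected subgroup and $f:H\to\bG_a$ is a homomorphism, chosen in the same spirit as the inducing data $(H_m,\widetilde{\psi})$ of Theorem~\ref{t:cohomology-first-variety}(c) but adapted to level $h$. Applying Proposition~\ref{p:DL-main} converts each intertwiner space into the Artin--Schreier cohomology $H^i_c(\be^{-1}(Y_h),P^*\cL_\psi)$ on an explicit closed subvariety of $(U^{n,q}_h/H)\times H$, compatibly with the $\Fr_{q^n}$-action. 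Because the $*$-action of $U^1_L/U^h_L$ shifts the matrix $\iota'_h(g)$ by left multiplication by units of $\bF_{q^n}[\pi]/(\pi^h)$, the character $\chi$ effectively imposes additional Artin--Schreier conditions on the defining function $P$, which will be exploited in the next step.

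The crucial step is iterating Proposition~\ref{p:inductive-idea} on the resulting Artin--Schreier cohomology. Each application peels off a pair of coordinates, reduces the cohomological degree by $2$, and twists the Frobenius action by $q$; after sufficiently many peelings the remaining variety is the same sort of Artin--Schreier cohomology associated to the level $h-1$ problem, cut out by the vanishing of the coordinates belonging to the top Moy--Prasad piece. The induction hypothesis then supplies concentration in a single degree and a pure Frobenius weight, and summing the shifts accumulated along the way reproduces the degree $i$ and the eigenvalue $(-1)^i q^{ni/2}$ predicted by the conjecture. The parity statement follows formally: each peeling changes $i$ by an even integer, so the parity at level $h$ matches the parity at the base case, where it is controlled by Theorem~\ref{t:cohomology-first-variety}(b).

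The hard part will be twofold. First, the subscheme $Y_h\subset U^{n,q}_h$ is genuinely nonlinear for $h\geq 3$: already for $n=2$, $h=3$ the defining conditions of Remark~\ref{r:n-2-h-3} involve the polynomial $a_0 a_2^q+a_2 a_0^q-a_1^{q+1}$ and a further cubic-type expression, and for larger $h$ the equations proliferate. One must unwind $\det\iota'_h$ carefully to describe $\be^{-1}(Y_h)$ and $P$ in coordinates precise enough to verify the hypothesis of Proposition~\ref{p:inductive-idea}, namely that $P$ takes the form $f(x)^{q^j}y-f(x)^{q^n}y^{q^{n-j}}+P_2(x)$ with $j$ not divisible by the conductor exponent of $\chi$. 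Second, one must organize the peeling so that it lands cleanly on the level $h-1$ variety rather than some hybrid object; this requires a compatibility between the filtrations on $\cO_D$ by powers of $\Pi$, on $\cO_L$ by powers of $\pi$, and on $U^{n,q}_h$ by the natural chain of normal subgroups coming from the $\tau$-adic filtration on $\cR_{h,n,q}$. Once this bookkeeping is in place, the purity statement and the explicit Frobenius eigenvalue follow formally from tracking Tate twists through each application of Proposition~\ref{p:inductive-idea}.
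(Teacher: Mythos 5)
This statement is one of the paper's open conjectures: the paper does not prove it. The only thing the paper asserts is that the case $h=2$ follows from Theorem \ref{t:cohomology-first-variety} together with Corollary \ref{c:embed-X} (which identifies $X_2$ with $X$), and even in the one higher-level case that the paper analyzes in detail ($n=2$, $h=3$, Theorem \ref{t:main-example}) it only establishes Conjecture \ref{conj:reduction-to-X_h} for suitable $\chi$, not the maximality statement you are asked about. So there is no ``paper proof'' for your argument to coincide with, and your text is in any case a strategy outline rather than a proof: your own last paragraph concedes that the decisive steps are not carried out.

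Concretely, the gaps are these. First, passing to $\chi$-isotypic pieces and probing them with $\Ind_{H(\fqn)}^{U^{n,q}_h(\fqn)}(\psi\circ f)$ via Proposition \ref{p:DL-main} only gives you the Frobenius action on the intertwiner spaces $\Hom\bigl(V_\chi,H^i_c(X_h,\ql)\bigr)$ for the particular pairs $(H,f)$ you choose; to conclude that $\Fr_{q^n}$ acts by the scalar $(-1)^iq^{ni/2}$ on \emph{all} of $H^i_c(X_h,\ql)$ you would need to know that every irreducible constituent of every $H^i_c$ is detected by your probes, which requires a prior analysis of the representation theory of $U^{n,q}_h(\fqn)$ at level $h$ (the analogue of Lemma \ref{l:analysis-irreps} and Corollary \ref{c:structure-irreps}, which the paper only supplies for $n=2$, $h=3$). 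Second, the heart of your induction --- that iterating Proposition \ref{p:inductive-idea} ``lands cleanly on the level $h-1$ variety'' --- is asserted, not argued, and it is not what the paper's own computation suggests: in \S\ref{sss:proof-l:calculate-intertwiners} the Artin--Schreier reduction for $n=2$, $h=3$ terminates in a finite discrete set of $q^2$ points, not in the level-$2$ variety, and the hypotheses of Proposition \ref{p:inductive-idea} (that $P$ has the shape $f(x)^{q^j}y-f(x)^{q^n}y^{q^{n-j}}+P_2(x)$ with $j$ prime to the conductor exponent) have to be verified from the genuinely nonlinear equations cutting out $\be^{-1}(Y_h)$, which you have not done for any $h\geq 3$. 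Your base-case parity and weight check for $h=2$ is correct, but as it stands the proposal does not settle the conjecture; it reproduces the paper's general toolkit and leaves open exactly the points that make the statement conjectural.
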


\begin{rem}
This conjecture amounts to the assertion that $X_h$ is a ``maximal variety'' over $\fqn$; in other words, the size of $X_h(\fqn)$ is as large as possible subject to the constraints imposed by the dimensions of $H^i_c(X_h,\ql)$. When $h=2$, the conjecture follows from Theorem \ref{t:cohomology-first-variety} because $X_2=X$ by Corollary \ref{c:embed-X}.
\end{rem}

\begin{conjecture}\label{conj:reduction-to-X_h}
Given a character $\chi:U^1_L/U^h_L\rar{}\qls$, there exists $r\geq 0$ such that $H^i_c(X_h,\ql)[\chi]=0$ for all $i\neq r$. Moreover, $H^r_c(X_h,\ql)[\chi]$ is an irreducible representation of $U^{n,q}_h(\fqn)$.
\end{conjecture}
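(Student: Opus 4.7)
The plan is to proceed by induction on $h$, with the base case $h=2$ handled by Theorem \ref{t:cohomology-first-variety}: indeed, $X_2\cong X$ by Corollary \ref{c:embed-X}, and the left action of $U^1_L/U^2_L\cong\fqn$ corresponds, under this isomorphism, to the central character action featured in Theorem \ref{t:cohomology-first-variety}. For the inductive step, the goal is to mirror the three-part structure of that theorem: produce a candidate irreducible representation $\rho_\chi$ of $U^{n,q}_h(\fqn)$ of the form $\Ind_{H_\chi(\fqn)}^{U^{n,q}_h(\fqn)}(\widetilde{\chi})$ for an explicit connected subgroup $H_\chi\subset U^{n,q}_h$ defined over $\fqn$ and explicit character $\widetilde{\chi}$, show that $\rho_\chi$ occurs in $H^\bullet_c(X_h,\ql)[\chi]$ with multiplicity one in a single cohomological degree $r=r(\chi)$, and conclude by a dimension count.

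\textbf{Construction and intertwining computation.} For $\chi$ non-trivial let $j(\chi)\in\{1,\dots,h-1\}$ be the largest $j$ such that $\chi|_{U^j_L/U^{j+1}_L}$ is non-trivial, and let $q^{m(\chi)}$ denote the conductor (Def.~\ref{d:conductor}) of this restriction viewed as an additive character of $\fqn$. The subgroup $H_\chi\subset U^{n,q}_h$ should be defined by analogy with $H_m$ of Theorem \ref{t:cohomology-first-variety}(c), by restricting the coefficients $a_j$ to satisfy divisibility conditions $m(\chi)\mid j$ when $j$ lies ``below the half-point'' (a precise formulation will require tracking the index $j$ modulo $n$ as well as the $\pi$-adic depth induced by the ladder $U^1_L/U^h_L\supset U^2_L/U^h_L\supset\cdots$). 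The extension $\widetilde{\chi}$ of $\chi$ to $H_\chi(\fqn)$ should be built from a generalized reduced-norm-composed-with-projection analogous to $\psi_1\circ\Nm^{n_1,q_1}\circ\nu_m$ in Theorem \ref{t:cohomology-first-variety}(c), and should take the form $\psi\circ f$ for an additive character $\psi$ and an algebraic homomorphism $f:H_\chi\to\bG_a$. Proposition \ref{p:DL-main} then gives
$$\Hom_{U^{n,q}_h(\fqn)}\bigl(\Ind_{H_\chi(\fqn)}^{U^{n,q}_h(\fqn)}\widetilde{\chi},\ H^i_c(X_h,\ql)\bigr)\cong H^i_c(\be^{-1}(Y_h),P^*\cL_\psi),$$
and the right-hand side should be computable by iterated application of Proposition \ref{p:inductive-idea}, each iteration killing one free coordinate at the cost of a shift by $2$ and a Tate twist. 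The group $H_\chi$ would be designed so that after all substitutions the remaining scheme is a single point with trivial local system, yielding one-dimensional cohomology in a single degree. This shows $\rho_\chi$ embeds into $H^{r(\chi)}_c(X_h,\ql)[\chi]$ with multiplicity exactly one. To finish one computes $\dim H^\bullet_c(X_h,\ql)[\chi]$ via the counting formula of Lemma \ref{l:fixed-point-2} applied to the commuting actions of $U^1_L/U^h_L$ (via $\chi$) and $U^{n,q}_h(\fqn)$, and compares with $\dim\rho_\chi=[U^{n,q}_h(\fqn):H_\chi(\fqn)]$; a match forces $H^{r(\chi)}_c(X_h,\ql)[\chi]\cong\rho_\chi$. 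Irreducibility of $\rho_\chi$ itself would follow from a Mackey-style intertwining calculation, with a parity dichotomy as in Theorem \ref{t:cohomology-first-variety}(c) likely requiring a further enlargement $\Ga_\chi\supset H_\chi$.

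\textbf{Main obstacles.} The hardest part is the combinatorial bookkeeping behind the choice of $H_\chi$ and the verification that the iterated applications of Proposition \ref{p:inductive-idea} ``close up'' correctly. For $h\geq 3$ the subscheme $Y_h\subset U^{n,q}_h$ is no longer cut out by linear equations (see Remark \ref{r:Y-3-is-complicated}), so the morphism $P$ produced by Proposition \ref{p:DL-main} is a polynomial of high degree, and one must check at each step that it has the form $f(x)^{q^j}y-f(x)^{q^n}y^{q^{n-j}}+P_2(x)$ required by Proposition \ref{p:inductive-idea}, with $j$ indivisible by the relevant $m$. A secondary difficulty is that the parity dichotomy at the bottom of Theorem \ref{t:cohomology-first-variety}(c) is likely to cascade through the induction, producing a more intricate family of ``exceptional'' cases whose bookkeeping at level $h$ may depend non-trivially on the behaviour of $\chi$ at all intermediate levels $2,3,\dots,h-1$.
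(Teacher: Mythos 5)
The statement you set out to prove is stated in the paper as a \emph{conjecture}, and the paper itself does not prove it in general: it is established there only for $h=2$ (where $X_2=X$ by Corollary \ref{c:embed-X} and the claim follows from Theorem \ref{t:cohomology-first-variety}, the left action of $U^1_L/U^2_L$ being the central right-multiplication action) and for $n=2$, $h=3$ under the extra hypothesis that the restriction of $\chi$ to $U^2_L/U^3_L$ has conductor $q^2$ (Theorem \ref{t:main-example}(a)). Your text is accordingly a programme rather than a proof, and it stops exactly where the real difficulties begin: the subgroup $H_\chi$ and the extension $\widetilde{\chi}$ are never constructed; it is never verified that for the non-linearly defined $Y_h$, $h\geq 3$ (cf.\ Remark \ref{r:Y-3-is-complicated}), the morphism $P$ of Proposition \ref{p:DL-main} can be put, at each stage, into the shape $f(x)^{q^j}y-f(x)^{q^n}y^{q^{n-j}}+P_2(x)$ required by Proposition \ref{p:inductive-idea} with $j$ not divisible by the relevant $m$; and the concluding multiplicity and dimension count is not carried out. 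The announced ``induction on $h$'' also does no work, since the proposed step at level $h$ never uses the case $h-1$. Note finally that your sketch implicitly needs a primitivity hypothesis: in the only case beyond $h=2$ that the paper settles, the conductor assumption on the restriction of $\chi$ to the deepest layer is used essentially (in the transitivity argument of Lemma \ref{l:analysis-irreps} and in the vanishing of $\sum_{\de}\psi(\de)$ in the proof of Lemma \ref{l:calculate-eigenspaces}), whereas the conjecture as stated carries no such hypothesis and your plan does not say how imprimitive $\chi$ would be handled beyond analogy with Theorem \ref{t:cohomology-first-variety}.

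It is worth contrasting your architecture with what the paper actually does for $n=2$, $h=3$, since the ingredients coincide but are deployed differently. You want a single subgroup $H_\chi$ such that $\Ind_{H_\chi(\fqn)}^{U^{n,q}_h(\fqn)}(\widetilde{\chi})$ is already irreducible and occurs with multiplicity one. The paper instead induces from $H_3=\{1+a_3\tau^3+a_4\tau^4\}$ the character $\psit$, so that $V_\psit$ is the sum of \emph{all} irreducibles with central character $\psi$ (Remark \ref{r:auxiliary}); Lemma \ref{l:calculate-intertwiners} --- Proposition \ref{p:DL-main} plus one application of Proposition \ref{p:inductive-idea}, with the key trick of parametrizing $\be^{-1}(Y_3)$ through the Lang map of $H_3$ so that $Y_3$ never has to be computed --- then yields concentration in degree $2$ and the Frobenius eigenvalue, and only afterwards does the point-counting formula of Lemma \ref{l:fixed-point-2} (carried out in Lemma \ref{l:calculate-eigenspaces}), combined with the purely group-theoretic classification of Corollary \ref{c:structure-irreps}, isolate the unique irreducible constituent $\Ind_{H_2(\fqq)}^{U^{2,q}_3(\fqq)}(\chi^\sharp)$ of the $\chi$-isotypic part. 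This two-step structure --- coarse vanishing first, identification of the irreducible second --- is more robust than asking one subgroup to do both jobs, and any attempt at the general conjecture along your lines would do well to adopt it; but as it stands your proposal cannot be accepted as a proof of the conjecture, which remains open in the paper beyond the cases listed above.
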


If $h=2$, we can identify $\bF_{q^n}$ both with $U^1_L/U^2_L$ and with the center of $U^{n,q}_2(\fqn)$, and under these identifications, the left action of $U^1_L/U^2_L$ is the same as the right multiplication action of the center of $U^{n,q}_2(\fqn)$ on $X_2$. So for $h=2$, the last conjecture also follows from Theorem \ref{t:cohomology-first-variety}.

\subsubsection{Calculation of $R^i_{\bT,\te}$} We now explain how the calculation of the representations $R^i_{\bT,\te}$ arising from Lusztig's construction (Definition \ref{d:lusztig-construction}) can be reduced to the calculation of $H^i_c(X_h,\ql)$ as a representation of $(U^1_L/U^{h}_L)\times U^{n,q}_h(\fqn)$ (modulo Conjecture \ref{conj:reduction-to-X_h}). We will identify $U^{n,q}_h(\fqn)$ with $U^1_D/U^{n(h-1)+1}_D$ via \eqref{e:identify-U-h}. We also choose a generator $\ze$ of the cyclic group $\fqn^\times$; we can view $\ze$ both as an element of $\cO_L^\times$ and as an element of $\cO_D^\times$. Note that $\cO_D^\times/U^{n(h-1)+1}_D$ is the semidirect product of the normal subgroup $U^1_D/U^{n(h-1)+1}_D$ and the cyclic subgroup $\langle\ze\rangle$ generated by $\ze$.

\begin{prop}[see \S\ref{ss:proof-p:main-reduction}]\label{p:main-reduction}
Let $\te:L^\times\to\qls$ be a character of level $h$ and write $\chi:U^1_L/U^h_L\to\qls$ for the character induced by $\te$. Assume that Conjecture \ref{conj:reduction-to-X_h} holds for the character $\chi$, let $r\geq 0$ be as in the formulation of the conjecture and let $\rho_\chi$ denote the representation of $U^1_D/U^{n(h-1)+1}_D\cong U^{n,q}_h(\fqn)$ in $H^r_c(X_h,\ql)[\chi]$. Then
 \sbr
\begin{enumerate}[$($a$)$]
\item $\rho_\chi$ extends to a representation $\eta^\circ_\te$ of $\cO_D^\times/U^{n(h-1)+1}_D$ with $\Tr(\eta^\circ_\te(\ze))=(-1)^r\te(\ze)$.
 \sbr
\item Let $\widetilde{\eta}^\circ_\te$ denote the inflation of $\eta^\circ_\te$ to a representation of $\cO_D^\times$ and extend $\widetilde{\eta}^\circ_\te$ to a representation $\eta'_\te$ of $\pi^\bZ\cdot\cO_D^\times$ by setting $\eta'_\te(\pi)=\te(\pi)$. Finally, set
    \[
    \eta_\te := \Ind_{\pi^\bZ\cdot\cO_D^\times}^{D_{1/n}^\times} \eta'_\te.
    \]
    Then $R^{2(h-1)(n-1)-r}_{\bT,\te}\cong\eta_\te$ and $R^i_{\bT,\te}=0$ for all $i\neq 2(h-1)(n-1)-r$.
 \sbr
\item If the restriction of $\chi$ to $U^{h-1}_L/U^h_L\cong\fqn$ has conductor $q^n$ (Def.~\ref{d:conductor}), then $\eta_\te$ is irreducible.
\end{enumerate}
\end{prop}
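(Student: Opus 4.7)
The plan is to mirror the structure of the proof of Theorem~\ref{t:lusztig-level-2} at arbitrary level $h$, with the scheme $X_h$ playing the role played by $X=X_2$ at level~$2$. The central geometric input is the decomposition
\[
\Xt^{(0)}_h = \bigsqcup_{a_0\in\fqn^\times}X_h^{(a_0)} \qquad (\text{over }\bfq),
\]
indexed by the constant term $a_0$ of the matrix $\iota_h(a_0,a_1,\dotsc,a_{n(h-1)})$; this follows because the determinant condition forces $N_{\fqn/\bF_q}(a_0)=a_0^{(q^n-1)/(q-1)}\in\bF_q^\times$, hence $a_0^{q^n-1}=1$, and each fiber is identified with $X_h$ by the normalization $\iota_h(a_0,a_1,\dotsc)\mapsto\iota_h(1,a_0^{-1}a_1,a_0^{-1}a_2,\dotsc)$. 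Under this decomposition I would verify that the Teichm\"uller subgroup $\fqn^\times\subset\cO_L^\times/U^h_L$ maps the copy $X_h^{(a_0)}$ onto $X_h^{(\zeta a_0)}$ via the identity on $X_h$, while $U^1_L/U^h_L$ acts on each copy via the $*$-action of Definition~\ref{d:left-action}; on the right, $U^1_D/U^{n(h-1)+1}_D\cong U^{n,q}_h(\fqn)$ preserves each copy by right multiplication, whereas $\zeta\in\fqn^\times\subset\cO_D^\times$ shifts $a_0\mapsto a_0\zeta$ and acts on each copy via
\[
\sigma_\zeta\colon\iota_h(1,b_1,b_2,\dotsc)\mapsto\iota_h(1,\zeta^{q-1}b_1,\zeta^{q^2-1}b_2,\dotsc),
\]
the restriction to $X_h$ of the conjugation $x\mapsto\ti\zeta^{-1}x\ti\zeta$ on $\Xt^{(0)}_h$, with $\ti\zeta=\diag(\zeta,\zeta^q,\dotsc,\zeta^{q^{n-1}})$.

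Writing $\te|_{\cO_L^\times}=\te_1\otimes\chi$ via $\cO_L^\times=\fqn^\times\times U^1_L$, the assignment $v\mapsto(\te_1(a_0)\,v)_{a_0\in\fqn^\times}$ identifies $H^i_c(X_h,\ql)$ with the $\te_1$-isotypic component of $H^i_c(\Xt^{(0)}_h,\ql)$ for the left Teichm\"uller action. A direct check from the formulas above shows that the right action of $\zeta\in\fqn^\times\subset\cO_D^\times$ on this eigenspace becomes $\te_1(\zeta)\cdot\sigma_\zeta^*$ on $H^i_c(X_h,\ql)$. Further passing to the $\chi$-isotypic part for $U^1_L/U^h_L$ and invoking Conjecture~\ref{conj:reduction-to-X_h} singles out $\rho_\chi=H^r_c(X_h,\ql)[\chi]$ as the unique nonzero degree, and the formula $\eta^\circ_\te(\zeta):=\te_1(\zeta)\cdot\sigma_\zeta^*$ then extends $\rho_\chi$ to a representation of $\cO_D^\times/U^{n(h-1)+1}_D=\langle\zeta\rangle\ltimes(U^1_D/U^{n(h-1)+1}_D)$. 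The required trace identity $\Tr(\eta^\circ_\te(\zeta))=(-1)^r\te(\zeta)$ thus reduces to
\[
\Tr\bigl(\sigma_\zeta^*\bigm|H^r_c(X_h,\ql)[\chi]\bigr)=(-1)^r,
\]
which I would prove by a fixed-point computation combining the Deligne-Lusztig fixed-point formula (Theorem~\ref{t:fixed-point-formula-DL}) for the finite-order automorphism $\sigma_\zeta$ acting on $X_h$ with Lemma~\ref{l:fixed-point-2} applied to the commuting left $U^1_L/U^h_L$ and right $U^{n,q}_h(\fqn)$ actions, together with the Frobenius scalar hypothesis from Conjecture~\ref{conj:reduction-to-X_h}.

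For part~(b) I would use the $\Pi$-equivariant decomposition $\Xt=\bigsqcup_m\Xt^{(m)}$ and the central character condition $\pi\cdot v=\te(\pi)v$ to realize
\[
H_\bullet(\Xt,\ql)[\te]\cong\Ind_{\pi^\bZ\cdot\cO_D^\times}^{D_{1/n}^\times}H_\bullet(\Xt^{(0)},\ql)[\te];
\]
the filtration of $\Xt^{(0)}=\varprojlim_h\Xt^{(0)}_h$ by level, together with the $W_h$-invariance statement of Lemma~\ref{l:cohomology-X-h}, identifies $H^i_c(\Xt^{(0)},\ql)[\te]$ with $H^i_c(X_h,\ql)[\chi]$, since a character of level exactly $h$ cannot have its eigenspace descend to $\Xt^{(0)}_{h-1}$. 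The degree shift from homology to compactly supported cohomology uses $\dim\Xt^{(0)}_h=(n-1)(h-1)$, obtained by iterating the fibration of Lemma~\ref{l:cohomology-X-h}, and places the nonvanishing degree precisely at $i=2(h-1)(n-1)-r$. For part~(c) I would verify by direct computation that the left action of $U^{h-1}_L/U^h_L\cong\fqn$ on $X_h$ (translation of the coordinate $a_{n(h-1)}$) coincides with the right multiplication by the center $U^{n(h-1)}_D/U^{n(h-1)+1}_D\cong\fqn$ of $U^{n,q}_h(\fqn)$, so that $\chi|_{U^{h-1}_L/U^h_L}$ is the central character of $\rho_\chi$; conjugation by $\Pi$ acts on this central $\fqn$ as the Frobenius $x\mapsto x^q$, and the hypothesis that this restriction has conductor $q^n$ forces its $n$ Frobenius twists to be pairwise distinct, so Mackey's criterion applied to $\Ind_{\pi^\bZ\cdot\cO_D^\times}^{D_{1/n}^\times}\eta'_\te$ yields irreducibility.

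The principal obstacle I anticipate is the trace identity in~(a): for $h=2$ it followed from the explicit description of $\rho_\psi$ in Theorem~\ref{t:cohomology-first-variety}, but no comparable explicit model of $\rho_\chi$ is available for $h>2$, so a genuinely geometric argument is required. The analysis of the fixed-point scheme $X_h^{\sigma_\zeta}$ is delicate because the defining equation of $X_h$ (inherited from the determinant condition on $\Xt^{(0)}_h$) is nonlinear at higher levels, and extracting the $\chi$-isotypic contribution from the $U^{n,q}_h(\fqn)$-orbit structure on this fixed scheme requires careful bookkeeping of the kind developed in Section~\ref{s:principles}.
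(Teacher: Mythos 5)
Your overall route is the same as the paper's (\S\ref{ss:proof-p:main-reduction}): decompose $\Xt^{(0)}_h$ (and then all of $\Xt_h$) into translates of the embedded copy of $X_h$ indexed by $\fqn^\times$ together with the $\Pi$-shifts, deduce that $H_i(\Xt_h,\ql)$ is induced from the stabilizer of that copy, realize the extension of $\rho_\chi$ geometrically through the conjugation action of $\zeb$ on $X_h$, and obtain (c) from Mackey's criterion exactly as in Remark \ref{r:eta-theta-irreducible}. Parts (b) and (c) of your plan, and the construction of the extension in (a), match the paper step for step; the paper merely packages your fiberwise decomposition of $\Xt^{(0)}_h$ as Lemma \ref{l:action-of-Gamma-h}, working with the normalizer $\Ga_h$ inside $(D_{1/n}^\times/U^{n(h-1)+1}_D)\times(L^\times/U^h_L)$ rather than component by component, and then concludes as in \S\ref{sss:end-proof-level-2}; that difference is cosmetic.

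The genuine gap is the trace identity, i.e.\ that the $\zeb$-conjugation acts on $H^r_c(X_h,\ql)[\chi]$ with trace $(-1)^r$: you defer it, and the toolkit you propose for it would not work as stated. Conjecture \ref{conj:reduction-to-X_h} contains no statement about the Frobenius eigenvalue on $H^r_c(X_h,\ql)[\chi]$ (that is the other, unnumbered conjecture), so any argument through Lemma \ref{l:fixed-point-2} --- which in any case computes a dimension from point counts rather than the trace of an extra automorphism --- would import a hypothesis the proposition does not grant you. What actually closes the argument, and what the paper does, needs no Frobenius information at all: apply Theorem \ref{t:fixed-point-formula-DL} to the finite-order automorphisms $x\mapsto\zeb(\ga*x)\zeb^{-1}$ for $\ga\in U^1_L/U^h_L$ (the $\zeb$-conjugation is the prime-to-$p$ part and the $*$-translation the $p$-part of the Jordan decomposition), average against $\chi(\ga)^{-1}$ to project onto the $[\chi]$-isotypic part, and use the vanishing outside degree $r$ to convert the alternating sum into $(-1)^r$ times the desired trace. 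Moreover, the fixed-point analysis you flag as the principal obstacle is not delicate: the fixed locus of $\zeb$-conjugation consists of the elements of $U^{n,q}_h$ supported on powers of $\tau^n$, on which the determinant condition reduces to $\vp^n$-invariance, so $X_h^{\zeb}$ is the finite discrete set $U^1_L/U^h_L$ with the $*$-action given by left translation; the resulting Lefschetz numbers are $q^{n(h-1)}$ for $\ga=1$ and $0$ otherwise, which yields the identity at once.
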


\subsection{Higher levels: an example} We continue working in the setup of \S\ref{ss:higher-levels-strategy}.

\begin{thm}[see \S\ref{ss:proof-t:main-example}]\label{t:main-example} Let $n=2$ and $h=3$. Let $\te:L^\times\to\qls$ be a character of level $3$ and write $\chi:U^1_L/U^3_L\to\qls$ for the character induced by $\te$. Assume that the restriction of $\chi$ to $U^2_L/U^3_L\cong\fqq$ has conductor $q^2$ (Def.~\ref{d:conductor}). Then
 \sbr
\begin{enumerate}[$($a$)$]
\item Conjecture \ref{conj:reduction-to-X_h} holds for the character $\chi$ with $r=2$.
 \sbr
\item We have $R^i_{\bT,\te}=0$ for all $i\neq 2$ and $R^2_{\bT,\te}\cong\Ind_{\pi^\bZ\cdot\langle\ze\rangle\cdot U^2_D}^{D_{1/n}^\times}\te'$, where $\te':\pi^\bZ\cdot\langle\ze\rangle\cdot U^2_D\to\qls$ is the character given by $\pi^s\ze^k\cdot\bigl(1+\sum_{j\geq 2}a_j\Pi^j\bigr)\mapsto\te\bigl(\pi^s\ze^k\cdot(1+a_2\pi+a_4\pi^4)\bigr)$.
\end{enumerate}
\end{thm}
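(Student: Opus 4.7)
My plan is to reduce the theorem to a verification of Conjecture \ref{conj:reduction-to-X_h}, and then apply Proposition \ref{p:main-reduction} directly. If the conjecture holds for $\chi$ with $r = 2$, then part (a) is immediate; moreover, since $2(h-1)(n-1) - r = 2$, part (b) of Proposition \ref{p:main-reduction} yields $R^i_{\bT,\te} = 0$ for $i \neq 2$ and $R^2_{\bT,\te} \cong \eta_\te$, while the conductor-$q^2$ hypothesis on $\chi|_{U^2_L/U^3_L}$ triggers part (c) to ensure that $\eta_\te$ is irreducible. The remaining work then splits into (I) verifying Conjecture \ref{conj:reduction-to-X_h}, and (II) rewriting $\eta_\te$ in the explicit form $\Ind_{\pi^\bZ \langle\ze\rangle U^2_D}^{D^\times_{1/n}}(\te')$.

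For task (I), I would use Example \ref{ex:n-2-h-3} and Remark \ref{r:n-2-h-3} (specialized to $a_0 = 1$) to describe $X_3 \subset U^{2,q}_3 \cong \bA^4$ as the common zero locus of two ``$c^q - c = 0$'' equations, with $c$ being respectively $a_2 + a_2^q - a_1^{q+1}$ and $a_4 + a_4^q + a_2^{q+1} - a_1 a_3^q - a_3 a_1^q$. To compute $H^i_c(X_3, \ql)[\chi]$ I would apply Proposition \ref{p:DL-main} with a carefully chosen subgroup $H \subset U^{2,q}_3$ and character $\psi \circ f$ tailored to $\chi$ --- a natural analogue for $h = 3$ of the subgroup $H_m$ appearing in Theorem \ref{t:cohomology-first-variety}(c), such as the subgroup $\{1 + a_2 \tau^2 + a_3 \tau^3 + a_4 \tau^4\}$. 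Proposition \ref{p:DL-main} then rewrites the $\Hom$-space in question as the cohomology of an Artin-Schreier local system on $\be^{-1}(Y_3)$, and iterated application of Proposition \ref{p:inductive-idea} successively peels off variables, each step contributing a Tate twist, ultimately leaving a one-dimensional space concentrated in degree $r = 2$ with $\Fr_{q^2}$ acting as $q^2$. This verifies Conjecture \ref{conj:reduction-to-X_h} with $r = 2$.

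For task (II), by induction in stages applied to $\eta_\te = \Ind_{\pi^\bZ \cO_D^\times}^{D_{1/n}^\times}(\eta'_\te)$, the claim $\eta_\te \cong \Ind_{\pi^\bZ \langle\ze\rangle U^2_D}^{D_{1/n}^\times}(\te')$ reduces to $\eta^\circ_\te \cong \Ind_{\langle\ze\rangle U^2_D / U^5_D}^{\cO_D^\times/U^5_D}(\te'|_{\langle\ze\rangle U^2_D/U^5_D})$. Using the decomposition $\cO_D^\times = \langle\ze\rangle \cdot U^1_D$ and Mackey's restriction formula, this further reduces to the assertion $\rho_\chi \cong \Ind_{U^2_D/U^5_D}^{U^1_D/U^5_D}(\te'|_{U^2_D/U^5_D})$ as representations of $U^1_D/U^5_D \cong U^{2,q}_3(\bF_{q^2})$. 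The right-hand side is irreducible of dimension $q^2 = [U^1_D : U^2_D]$ by Mackey's criterion --- the conductor-$q^2$ hypothesis ensures that $\te'|_{U^2_D/U^5_D}$ has trivial stabilizer under the conjugation action of $U^1_D/U^2_D$. Since $\rho_\chi$ is also irreducible of dimension $q^2$ (by task (I)) and both representations have the same central character on $U^4_D/U^5_D \cong \bF_{q^2}$ (namely the character of the center induced by $\chi$), they must coincide.

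The principal technical obstacle lies in task (I): unlike the $h = 2$ setting treated in Theorem \ref{t:cohomology-first-variety}, where the defining equations of $X$ are essentially linear in the level-$2$ coordinates, the variety $X_3$ involves genuinely nonlinear terms ($a_1^{q+1}$, $a_2^{q+1}$, $a_1 a_3^q$, etc.). The inductive scheme of Proposition \ref{p:inductive-idea} must be applied several times, and at each stage the polynomial coefficient of the linear variable $y$ must have the correct shape (in particular, the exponent $j$ must not be divisible by the conductor exponent $m$). The conductor-$q^2$ hypothesis on $\chi|_{U^2_L/U^3_L}$ is precisely what ensures that these non-degeneracy conditions are met at every stage and that the induction terminates cleanly with the expected one-dimensional cohomology in degree $2$.
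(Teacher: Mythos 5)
Your overall frame (verify Conjecture \ref{conj:reduction-to-X_h} for $\chi$ with $r=2$, then feed it into Proposition \ref{p:main-reduction}) matches the paper, but task (I) has a genuine gap. Conjecture \ref{conj:reduction-to-X_h} concerns the isotypic subspace $H^i_c(X_3,\ql)[\chi]$ for the \emph{left} action ``$*$'' of $U^1_L/U^3_L$, and this left action is not right translation by any subgroup of $\UU$ (only its restriction to $U^2_L/U^3_L$ agrees with the right action of the center $H_4(\fqq)$, cf.\ Remark \ref{rems:properties}(3)). Proposition \ref{p:DL-main} computes $\Hom_{\UU}\bigl(\Ind_{H(\fqq)}^{\UU}(\psi\circ f),H^i_c(X_3,\ql)\bigr)$ for the right action only, so no choice of $(H,\psi\circ f)$, however well ``tailored to $\chi$'', can isolate $[\chi]$ or prove its irreducibility. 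In the paper, the intertwiner computation (Lemma \ref{l:calculate-intertwiners}, done with $H=H_3$, $f=a_4$, a single application of Proposition \ref{p:inductive-idea}, and the Lang-map trick of Remark \ref{r:Y-3-is-complicated} to describe $\be^{-1}(Y_3)$ without computing $Y_3$) only shows that the whole $\psi$-central-character block of $H^\bullet_c(X_3,\ql)$ sits in degree $2$ with total multiplicity $q^2$; since by Corollary \ref{c:structure-irreps} there are exactly $q^2$ pairwise non-isomorphic irreducibles with that central character, all of dimension $q^2$, this says nothing yet about how the left $U^1_L/U^3_L$-action distributes them. The decisive ingredient you are missing is Lemma \ref{l:calculate-eigenspaces}: an explicit point count over $\bfq$ via the elementary fixed-point formula (Lemma \ref{l:fixed-point-2}) for the commuting pair $(U^1_L/U^3_L,\,H_2(\fqq))$, giving $\dim H^2_c(X_3,\ql)_{\chi_1,\chi_2^\sharp}=1$ if $\chi_1=\chi_2$ and $0$ otherwise, whence $H^2_c(X_3,\ql)[\chi]\cong\Ind_{H_2(\fqq)}^{\UU}(\chi^\sharp)$ is irreducible. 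Note also that your concrete suggestion $H=\{1+a_2\tau^2+a_3\tau^3+a_4\tau^4\}=H_2$ does not even satisfy the hypotheses of Proposition \ref{p:DL-main}: the term $a_2b_2^{q^2}$ in the $\tau^4$-coordinate of the group law puts the $a_4$-line inside the derived subgroup of the algebraic group $H_2$, so every algebraic homomorphism $H_2\to\bG_a$ kills $a_4$, and no character of $H_2(\fqq)$ nontrivial on the center has the form $\psi\circ f$.

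Task (II) also leans on the missing identification. Your closing argument --- ``both irreducible of dimension $q^2$ with the same central character on $U^4_D/U^5_D$, hence isomorphic'' --- is false as stated: all $q^2$ representations in Corollary \ref{c:structure-irreps} share these invariants, and distinguishing among them requires knowing how the \emph{full} center $\{1+a_2\tau^2+a_4\tau^4\}$ acts on $H^2_c(X_3,\ql)[\chi]$, which again is exactly the content of Lemma \ref{l:calculate-eigenspaces} (it does not follow from the left-action constraint, since only the $U^2_L/U^3_L$-part of the left action is a right translation). Finally, even granting $\rho_\chi\cong\Ind_{U^2_D/U^5_D}^{U^1_D/U^5_D}(\te'\lvert)$, identifying the induced representation $\Ind_{\langle\ze\rangle\cdot U^2_D/U^5_D}^{\cO_D^\times/U^5_D}(\te'\lvert)$ with the $\eta^\circ_\te$ of Proposition \ref{p:main-reduction}(a) requires matching the normalization $\Tr\bigl(\eta^\circ_\te(\ze)\bigr)=(-1)^r\te(\ze)$; the paper checks this by computing the trace of $\zeb$ on $\Ind_{\langle\zeb\rangle\cdot H_2(\fqq)}^{\cR^\times_{3,2,q}(\fqq)}(\chi')$ via the Frobenius character formula, a step you omit, and without it the extension (hence the induced representation of $D_{1/n}^\times$) could differ by a character of $\langle\ze\rangle$.
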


\section{Proofs}\label{s:proofs}

\subsection{Proof of Lemma \ref{l:DL-first}}\label{ss:proof-l:DL-first}
Let us view $H^i_c(X,\ql)$ as a representation of the subgroup $H(\bF_q)\subset G(\bF_q)$ by restricting the right multiplication action of $G(\bF_q)$ on $X$ to an action of $H(\bF_q)$. Using Frobenius reciprocity, the assertion of the lemma can be rewritten as follows: for each $i\geq 0$, there is a linear isomorphism
\[
\Hom_{H(\bF_q)}\bigl(\chi,H^i_c(X,\ql)\bigr) \cong H^i_c\bigl(\al^{-1}(Y),\cE_\chi\bigl\lvert_{\al^{-1}(Y)}\bigr)
\]
compatible with the action of $\Fr_q$ on both sides. (The action of $\Fr_q$ on the left hand side is induced by its action on $H^i_c(X,\ql)$.) Let $\nu:G\rar{}Q:=G/(H(\bF_q))$ denote the quotient map, which is a torsor with respect to the left multiplication action of $H(\bF_q)$. If we let $\ql$ denote the constant rank $1$ $\ql$-local system on $G$, then
\begin{equation}\label{e:pushforward-decomposition}
\nu_!(\ql) \cong \bigoplus_{\rho\in\widehat{H(\bF_q)}} \rho\tens\cE_\rho
\end{equation}
as local systems with an action of $H(\bF_q)$, where $\widehat{H(\bF_q)}$ is a set of representatives of isomorphism classes of irreducible representations of $H(\bF_q)$ over $\ql$, and if $\rho$ is any such representation, then $\cE_\rho$ denotes the local system on $Q$ obtained from the torsor $\nu$ via $\rho$. The action of $H(\bF_q)$ on $\nu_!(\ql)$ comes from its action on $G$ and the action of $H(\bF_q)$ on each $\rho\tens\cE_\rho$ comes from its action on $\rho$.

\mbr

Formula \eqref{e:pushforward-decomposition} implies that for each $\rho\in\widehat{H(\bF_q)}$ and each $i\geq 0$ we have a $\Fr_q$-equivariant isomorphism $\Hom_{H(\bF_q)}\bigl(\rho,H^i_c(\al^{-1}(Y),\nu_!\ql)\bigr)\cong H^i_c\bigl(\al^{-1}(Y),\cE_\rho\bigl\lvert_{\al^{-1}(Y)}\bigr)$. Since $X=L_q^{-1}(Y)=\nu^{-1}(\al^{-1}(Y))$, the proper base change theorem allows us to canonically identify $H^i_c(\al^{-1}(Y),\nu_!\ql)$ with $H^i_c(X,\ql)$. Taking $\rho=\chi$ completes the proof of the lemma. \qed

\subsection{Proof of Lemma \ref{l:DL-second}}\label{ss:proof-l:DL-second}
By construction, the morphism $(G/H)\times H\rar{\gat}G$ given by $(x,h)\mapsto s(x)h$ is an isomorphism that is compatible with the right multiplication of $H(\bF_q)$ on both sides. Hence there is a unique isomorphism $\ga:(G/H)\times H\rar{\simeq}Q$ making the following square commute:
\[
\xymatrix{
  (G/H)\times H \ar[rr]^{\ \ \ \ \ \gat} \ar[d]_{\id\times L_q} & & G \ar[d] \\
  (G/H)\times H \ar[rr]^{\ \ \ \ \ \ga} & & Q
   }
\]
where $L_q:H\rar{}H$ is the Lang map $h\mapsto F_q(h)h^{-1}$ for the group $H$, which identifies $H$ with the quotient $H/(H(\bF_q))$, and the right vertical arrow is the quotient map $G\rar{}Q=G/(H(\bF_q))$. Thus $\ga^*(\cE_\chi)\cong\pr_2^*(\cL_\chi)$, where $\pr_2:(G/H)\times H\rar{}H$ is the second projection and $\cL_\chi$ is the local system on $H$ induced by the $H(\bF_q)$-torsor $L_q:H\rar{}H$ via $\chi$. Since $\chi=\psi\circ f$, we have $\cL_\chi\cong f^*\cL_\psi$. Finally, if we let $\be=\al\circ\ga:(G/H)\times H\rar{}G$, then the identity
\[
L_q(s(x)h)=F_q(s(x)h)\cdot(s(x)h)^{-1}=s(F_q(x))F_q(h)h^{-1}s(x)^{-1}=s(F_q(x))L_q(h)s(x)^{-1}
\]
implies that $\be(x,h)=s(F_q(x))\cdot h\cdot s(x)^{-1}$, completing the proof. \qed

\subsection{Multiplicative local systems}\label{ss:multiplicative-local-systems} This subsection and the next one contain some preliminaries for the proof of Proposition \ref{p:inductive-idea}.

\mbr

Let $G$ be an algebraic group over $\bF_q$. A rank $1$ $\ql$-local system $\cL$ on $G$ is said to be \emph{multiplicative} if $\mu^*(\cL)\cong\pr_1^*(\cL)\tens\pr_2^*(\cL)$, where $\mu:G\times G\rar{}G$ is the group operation and $\pr_1,\pr_2:G\times G\rar{}G$ are the two projections. In this case the corresponding trace-of-Frobenius function takes values in $\qls$ and is a character $t_{\cL}:G(\bF_q)\rar{}\qls$.

\begin{lem}\label{l:characters-and-multiplicative-local-systems}
If $G$ is a connected commutative algebraic group over $\bF_q$, the map $[\cL]\mapsto t_{\cL}$ is a bijection between the set of isomorphism classes of multiplicative local systems on $G$ and $\Hom\bigl(G(\bF_q),\qls\bigr)$.
\end{lem}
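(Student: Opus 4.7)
The strategy is to construct the inverse $\chi\mapsto\cL_\chi$ using the Lang torsor and verify it is two-sided. By Lang's theorem, since $G$ is connected the morphism $L_q:G\to G$, $g\mapsto F(g)g^{-1}$, is finite étale surjective; because $G$ is commutative, $L_q$ is moreover a group homomorphism with kernel $G(\bF_q)$, so it is a right $G(\bF_q)$-torsor. Given $\chi:G(\bF_q)\to\qls$, define $\cL_\chi$ to be the rank $1$ $\ql$-local system on $G$ obtained from this torsor by association along $\chi$. Then $\cL_\chi$ is multiplicative: the identity $L_q\circ\mu=\mu\circ(L_q\times L_q)$ (valid because $L_q$ is a homomorphism) identifies $\mu^*\cL_\chi$ with the local system on $G\times G$ associated to the product torsor $L_q\times L_q$ via $\chi\circ\mu_{G(\bF_q)}=\chi\boxtimes\chi$, giving $\mu^*\cL_\chi\cong\pr_1^*\cL_\chi\otimes\pr_2^*\cL_\chi$.

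Next I would verify $t_{\cL_\chi}=\chi$ by a stalk computation at an $\bF_q$-point $g$. Pick $\widetilde g\in G(\bfq)$ with $L_q(\widetilde g)=g$, so that $F(\widetilde g)=g\widetilde g$. Then $F$ permutes the fiber $L_q^{-1}(g)=\{\widetilde g\cdot a:a\in G(\bF_q)\}$ by $\widetilde g\cdot a\mapsto g\widetilde g\cdot a=\widetilde g\cdot(ga)$, i.e.\ by left translation by $g$ on the index set, so it acts on the $\chi$-isotypic line by the scalar $\chi(g)$.

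The main obstacle is surjectivity: every multiplicative $\cL$ has the form $\cL_\chi$. The key claim is that $L_q^*\cL$ is canonically trivial as a local system, so that $\cL$ descends along the $G(\bF_q)$-torsor $L_q$ and is therefore of the form $\cL_\chi$ for the resulting descent datum $\chi\in\Hom(G(\bF_q),\qls)$. The trivialization comes from the pointwise factorization $L_q=\mu\circ(F\times\iota)\circ\De$ (where $\iota(g)=g^{-1}$ and $\De$ is the diagonal), which yields
\[
L_q^*\cL\;\cong\;F^*\cL\otimes\iota^*\cL\;\cong\;\cL\otimes\cL^{-1}\;\cong\;\ql.
\]
Here I use two facts: (i) pullback along the absolute Frobenius $F$ is canonically isomorphic to the identity functor on étale $\ql$-sheaves over an $\bF_q$-scheme; and (ii) $\iota^*\cL\cong\cL^{-1}$ for multiplicative $\cL$, obtained by pulling back the multiplicativity isomorphism along $(\id,\iota)\circ\De$ and using that $\mu\circ(\id,\iota)\circ\De$ is the constant map to the identity, so its pullback of $\cL$ is canonically $\ql$.

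The technical subtlety lies in the last step: making the isomorphism $L_q^*\cL\cong\ql$ genuinely canonical and checking that the resulting $G(\bF_q)$-descent datum is precisely the character $t_\cL$ rather than a twist. Once this compatibility is verified, combining it with the stalk computation above shows that $\cL\mapsto t_\cL$ and $\chi\mapsto\cL_\chi$ are mutually inverse bijections.
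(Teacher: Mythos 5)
Your argument is correct in substance, and it is essentially the standard Lang-torsor proof; the paper itself gives no argument here, deferring instead to Lemmas 1.10 and 1.11 of \cite{intro}, which encapsulate exactly the two steps you carry out (the association $\chi\mapsto\cL_\chi$ with $t_{\cL_\chi}=\chi$, and the triviality of $L_q^*\cL$ for multiplicative $\cL$ via $F^*\cL\cong\cL$ and $\iota^*\cL\cong\cL^\vee$). Two remarks. First, the ``technical subtlety'' you flag at the end is not actually needed: once you know that every multiplicative $\cL$ is isomorphic to $\cL_\chi$ for \emph{some} character $\chi$ (which your descent argument gives, since rank $1$ local systems trivialized by the Galois covering $L_q$ with group $G(\bF_q)$ correspond to characters of $G(\bF_q)$), and that $t_{\cL_\chi}=\chi$ for every $\chi$, then bijectivity of $[\cL]\mapsto t_{\cL}$ follows formally --- injectivity because $t_{\cL_\chi}=t_{\cL_{\chi'}}$ forces $\chi=\chi'$, surjectivity by construction --- so there is no need to identify the descent datum of a given $\cL$ with $t_{\cL}$ on the nose. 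Second, the stalk computation asserting that Frobenius acts by $\chi(g)$ is sensitive to the usual arithmetic-versus-geometric Frobenius conventions and to whether one associates via $\chi$ or $\chi^{-1}$; one of the two conventions produces $\chi(g)^{-1}$ instead, but this at worst replaces the bijection by its composition with $\chi\mapsto\chi^{-1}$ and so does not affect the statement being proved. With these caveats your proof closes, and it is the argument the paper's citation stands in for.
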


This follows from Lemmas 1.10 and 1.11 in \cite{intro}.

\begin{lem}\label{l:cohomology-vanishing}
Let $G$ be a connected algebraic group over $\bF_q$. If $\cL$ is any nontrivial multiplicative local system on $G$, then $H^i_c(G,\cL)=0$ for all $i\geq 0$.
\end{lem}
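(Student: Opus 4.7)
The plan is to compute $H^*_c(G\times G,\mu^*\cL)$ in two different ways, where $\mu:G\times G\to G$ is the multiplication, and compare. On the one hand, multiplicativity gives $\mu^*\cL\cong\pr_1^*\cL\otimes\pr_2^*\cL$, so the K\"unneth formula yields
\[
H^*_c(G\times G,\mu^*\cL)\cong H^*_c(G,\cL)\otimes H^*_c(G,\cL).
\]
On the other hand, the map $\phi:G\times G\to G\times G$, $(x,y)\mapsto(x,xy)$, is an automorphism with inverse $(x,y)\mapsto(x,x^{-1}y)$, and $\pr_2\circ\phi=\mu$. Hence $\mu^*\cL\cong\phi^*\pr_2^*\cL$, and combining the isomorphism $\phi$ with the projection formula for $\pr_2$ (a trivial $G$-bundle over $G$), I obtain
\[
H^*_c(G\times G,\mu^*\cL)\cong H^*_c(G,\ql)\otimes H^*_c(G,\cL).
\]

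Setting $P(t):=\sum_i t^i\dim H^i_c(G,\cL)$ and $Q(t):=\sum_i t^i\dim H^i_c(G,\ql)$, the two calculations force the polynomial identity $P(t)^2=P(t)\cdot Q(t)$. Consequently either $P(t)=0$, which is the desired conclusion, or $P(t)=Q(t)$.

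To rule out the second alternative I will invoke Poincar\'e duality. Because $G$ is a reduced group scheme of finite type over the perfect field $\bF_q$ it is smooth, and being connected it is irreducible of some pure dimension $d$. Duality yields $H^{2d}_c(G,\ql)\cong\ql(-d)$, so the coefficient of $t^{2d}$ in $Q(t)$ equals $1$, while $H^{2d}_c(G,\cL)\cong H^0(G,\cL^\vee)^\vee(-d)$. Any nonzero global section of the rank $1$ local system $\cL^\vee$ on the connected scheme $G$ would constitute an isomorphism $\ql\iso\cL^\vee$ and contradict the nontriviality of $\cL$; hence $H^0(G,\cL^\vee)=0$ and the coefficient of $t^{2d}$ in $P(t)$ is $0$. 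This is incompatible with $P(t)=Q(t)$, so $P(t)=0$ as desired.

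The one delicate point I anticipate is the second K\"unneth computation: it really amounts to combining the projection formula $R\pr_{2,!}(\pr_2^*\cL)\cong\cL\otimes R\pr_{2,!}\ql$ with the fact that $R\pr_{2,!}\ql$ is the constant complex on $G$ with fibre $H^*_c(G,\ql)$ (since $\pr_2$ is a trivial fibration with fibre $G$). This is standard in the $\ell$-adic formalism but deserves to be spelled out to keep track of Tate twists; once it is in place the remainder of the argument is purely formal bookkeeping with Poincar\'e polynomials.
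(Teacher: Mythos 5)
Your argument is correct, and it is genuinely self-contained, whereas the paper offers no proof at all for this lemma: it simply cites \cite[Lem.~9.4]{characters}. The device you use --- comparing the two computations of $H^*_c(G\times G,\mu^*\cL)$ obtained from multiplicativity on one side and from the shearing automorphism $(x,y)\mapsto(x,xy)$ on the other, then cancelling Poincar\'e polynomials --- is in fact the standard way this vanishing is established, so your route is a perfectly good substitute for the missing reference. Two small refinements would tighten it. First, for the second computation you do not need the projection formula plus any splitting of $R\pr_{2,!}\ql$ (which need not be formal, so a naive spectral-sequence count would only give an inequality): since $\phi$ is an automorphism with $\pr_2\circ\phi=\mu$, transport the sheaf along $\phi$ and apply the K\"unneth isomorphism directly to $\pr_2^*\cL\cong\ql\boxtimes\cL$; with field coefficients K\"unneth is exact, so $P(t)^2=P(t)Q(t)$ holds on the nose. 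Second, in the paper's conventions $H^i_c$ denotes geometric cohomology (over $\overline{\bF}_q$, with its Galois action), so the step $H^0(G,\cL^\vee)=0$ requires $\cL$ to be \emph{geometrically} nontrivial, while the hypothesis only says nontrivial over $\bF_q$; this is harmless but deserves a line: restricting the multiplicativity isomorphism $\mu^*\cL\cong\pr_1^*\cL\otimes\pr_2^*\cL$ to the rational point $(e,e)$ shows the fibre $\cL_e$ is trivial, so a multiplicative local system that becomes trivial over $\overline{\bF}_q$ is already trivial over $\bF_q$ (being pulled back from $\Spec\bF_q$ with trivial fibre at $e$); hence nontrivial multiplicative implies geometrically nontrivial, and your Poincar\'e-duality argument closes as stated. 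In the paper's actual applications (e.g.\ in the proof of Proposition \ref{p:inductive-idea}) the base is algebraically closed, so this distinction disappears there anyway.
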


For the proof, see \cite[Lem.~9.4]{characters}.

\subsection{Multiplicative local systems on $\bG_a$}\label{ss:mult-loc-sys-G_a} In this subsection we view $\bG_a$ as an algebraic group over the prime field $\bF_p$. Recall that we have a nontrivial character $\psi_0:\bF_p\rar{}\qls$, which is fixed once an for all. The corresponding Artin-Schreier local system $\cL_{\psi_0}$ on $\bG_a$ is multiplicative because the Lang map $L_p:\bG_a\rar{}\bG_a$ is a group homomorphism, and the trace-of-Frobenius function $t_{\cL_{\psi_0}}:\bF_p\rar{}\qls$ is equal to $\psi_0$ by \cite[(1.5.1), \emph{Sommes Trig.}]{sga4.5}. We can also consider the base change of $\cL_{\psi_0}$ to the algebraic closure $\overline{\bF}_p$ of $\bF_p$ in $\bF$. For every $x\in\overline{\bF}_p$, let $\cL_x$ denote the pullback of $\cL_{\psi_0}$ by the morphism $y\mapsto xy$. Then $\cL_x$ is a  local system on $\bG_a\tens\overline{\bF}_p$, and if $x\in\bF_{p^k}$ for some integer $k\geq 1$, then $\cL_x$ is also defined over $\bF_{p^k}$.

\begin{lem}
For each integer $k\geq 1$, the map $x\mapsto\cL_x$ is an isomorphism between $\bF_{p^k}$ and the group\footnote{The group operation is given by the tensor product of local systems.} of (isomorphism classes of) multiplicative local systems on $\bG_a\tens\bF_{p^k}$. The character $\bF_{p^k}\to\qls$ corresponding to $\cL_x$ is given by $y\mapsto\psi_0(\Tr_{\bF_{p^k}/\bF_p}(xy))$.
\end{lem}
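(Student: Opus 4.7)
The plan is to reduce the statement to Lemma~\ref{l:characters-and-multiplicative-local-systems} combined with the classical parametrization of characters of $\bF_{p^k}$ by elements of $\bF_{p^k}$ via $x\mapsto\bigl(y\mapsto\psi_0(\Tr_{\bF_{p^k}/\bF_p}(xy))\bigr)$ recalled at the start of the subsection on additive characters of finite fields. This reduction has two ingredients: verifying that $\cL_x$ is genuinely multiplicative, and computing its associated trace-of-Frobenius character on $\bF_{p^k}$-points.

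First I would observe that multiplication by $x\in\overline{\bF}_p$ defines a group endomorphism $m_x:\bG_a\to\bG_a$, $y\mapsto xy$. Since $\cL_{\psi_0}$ is multiplicative and pullback along a group homomorphism preserves multiplicativity of a rank one local system (pull $\mu^*\cL_{\psi_0}\cong\pr_1^*\cL_{\psi_0}\tens\pr_2^*\cL_{\psi_0}$ back by $m_x\times m_x$), the local system $\cL_x=m_x^*\cL_{\psi_0}$ is multiplicative on $\bG_a\tens\overline{\bF}_p$, and on $\bG_a\tens\bF_{p^k}$ whenever $x\in\bF_{p^k}$. Next I would compute its character. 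Functoriality of the sheaves-to-functions correspondence under pullback yields $t_{\cL_x}(y)=t_{\cL_{\psi_0}}(xy)$ for every $y\in\bF_{p^k}$, so it suffices to establish the standard Artin--Schreier identity $t_{\cL_{\psi_0}}(z)=\psi_0(\Tr_{\bF_{p^k}/\bF_p}(z))$ for every $z\in\bF_{p^k}$. For this, any lift $u\in\overline{\bF}_p$ of $z$ along the Lang covering $u\mapsto u^p-u$ satisfies $u^{p^k}-u=z+z^p+\dotsb+z^{p^{k-1}}=\Tr_{\bF_{p^k}/\bF_p}(z)$, so the geometric Frobenius $\Fr_{p^k}$ acts on the stalk of $\cL_{\psi_0}$ at $z$ via $\psi_0$ of this quantity. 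Specializing to $z=xy$ gives the claimed formula for the character of $\cL_x$.

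Combining this with the bijection $[\cL]\mapsto t_\cL$ supplied by Lemma~\ref{l:characters-and-multiplicative-local-systems} produces a commutative triangle in which the vertical arrow $[\cL]\mapsto t_\cL$ is a bijection and the diagonal $\bF_{p^k}\to\Hom(\bF_{p^k},\qls)$, $x\mapsto\bigl(y\mapsto\psi_0(\Tr_{\bF_{p^k}/\bF_p}(xy))\bigr)$, is the classical isomorphism; bijectivity of the horizontal map $x\mapsto\cL_x$ follows formally. The group-homomorphism property $\cL_{x+x'}\cong\cL_x\tens\cL_{x'}$ can be read off from the same diagram (the sheaves-to-functions correspondence is multiplicative under $\tens$), or verified directly by pulling back $\mu^*\cL_{\psi_0}\cong\pr_1^*\cL_{\psi_0}\tens\pr_2^*\cL_{\psi_0}$ along $(m_x,m_{x'}):\bG_a\to\bG_a\times\bG_a$. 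The only nontrivial step is the trace-of-Frobenius identity for $\cL_{\psi_0}$ itself, and even that is essentially \cite[(1.5.1), \emph{Sommes Trig.}]{sga4.5} already invoked at the start of \S\ref{ss:mult-loc-sys-G_a}; everything else is purely formal.
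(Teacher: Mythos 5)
Your proposal is correct and follows essentially the same route as the paper: the paper's proof is precisely the combination of the preceding remarks (multiplicativity of $\cL_x$ as a pullback of $\cL_{\psi_0}$ along the group homomorphism $y\mapsto xy$, and the trace-of-Frobenius identity from \cite[(1.5.1), \emph{Sommes Trig.}]{sga4.5}) with the bijection of Lemma \ref{l:characters-and-multiplicative-local-systems}, which is exactly the commutative triangle you describe. You have merely written out the details (the pullback formula for trace functions and the computation $u^{p^k}-u=\Tr_{\bF_{p^k}/\bF_p}(z)$ identifying the trace function of $\cL_{\psi_0}$ over $\bF_{p^k}$ with $\psi_0\circ\Tr_{\bF_{p^k}/\bF_p}$) that the paper leaves implicit.
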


This result follows from the previous remarks together with Lemma \ref{l:characters-and-multiplicative-local-systems}.

\mbr

Our next goal is to compute the action of the endomorphism ring $\End_{\overline{\bF}_p}(\bG_a\tens\overline{\bF}_p)$ on the group of isomorphism classes of multiplicative local systems on $\bG_a\tens\overline{\bF}_p$. Let us make this problem more precise. The last lemma yields a natural identification of this group with the additive group of $\overline{\bF}_p$.
For each $f\in\End_{\overline{\bF}_p}(\bG_a\tens\overline{\bF}_p)$ and each $x\in\overline{\bF}_p$, the pullback $f^*(\cL_x)$ is another multiplicative local system on $\bG_a\tens\overline{\bF}_p$, whence $f^*(\cL_x)\cong\cL_{f^*(x)}$ for a unique element $x\in\overline{\bF}_p$, and $f^*:\overline{\bF}_p\rar{}\overline{\bF}_p$ is an additive homomorphism. We would like to calculate the map $f\mapsto f^*$ explicitly.

\mbr

To this end, we observe that every element of $\End_{\overline{\bF}_p}(\bG_a\tens\overline{\bF}_p)$ can be written uniquely as $a_0+a_1 F_p+\dotsb+a_d F_p^d$, where $a_j\in\overline{\bF}_p$ and $F_p:\bG_a\rar{}\bG_a$ is the Frobenius $x\mapsto x^p$. Furthermore, the map $f\mapsto f^*$ is an antihomomorphism, that is, it is additive and $(f\circ g)^*=g^*\circ f^*$. So it suffices to calculate $f^*$ when $f$ is a scalar in $\overline{\bF}_p$ and when $f=F_p$. The answer is provided by the following lemma.

\begin{lem}
If $f\in\End_{\overline{\bF}_p}(\bG_a\tens\overline{\bF}_p)$ is the endomorphism of multiplication by some element of $\overline{\bF}_p$, then $f^*=f$. Moreover, $F_p^*=F_p^{-1}$ is the map $x\mapsto x^{1/p}$.
\end{lem}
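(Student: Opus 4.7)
The plan is to derive both statements directly from the basic formal properties of the Artin-Schreier sheaf: namely, that $\cL_{\psi_0}$ is the rank one local system associated to the Lang torsor $L_p:\bG_a\rar{}\bG_a$, $y\mapsto y^p-y$, via $\psi_0$. Two consequences will be used throughout. First, $\cL_{\psi_0}$ is multiplicative, so for any two morphisms $g_1,g_2:S\rar{}\bG_a$ we have $(g_1+g_2)^*\cL_{\psi_0}\cong g_1^*\cL_{\psi_0}\tens g_2^*\cL_{\psi_0}$. Second, $L_p^*\cL_{\psi_0}$ is canonically trivial, so any morphism $S\rar{}\bG_a$ that factors through $L_p$ pulls $\cL_{\psi_0}$ back to a trivial local system.

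The first assertion is immediate from the definition of $\cL_x$. Write $\mu_a:\bG_a\rar{}\bG_a$ for the endomorphism of multiplication by $a\in\overline{\bF}_p$, so that $\cL_x=\mu_x^*\cL_{\psi_0}$. Then for $f=\mu_a$ one computes
\begin{equation*}
f^*\cL_x=\mu_a^*\mu_x^*\cL_{\psi_0}=(\mu_x\circ\mu_a)^*\cL_{\psi_0}=\mu_{ax}^*\cL_{\psi_0}=\cL_{ax},
\end{equation*}
so $f^*(x)=ax=f(x)$.

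For the second assertion I plan to compare $F_p^*\cL_x$ with $\cL_{x^{1/p}}$ by writing both as pullbacks of $\cL_{\psi_0}$ along morphisms $\bG_a\rar{}\bG_a$ and showing the two morphisms differ by something that factors through $L_p$. Explicitly, $F_p^*\cL_x$ is the pullback of $\cL_{\psi_0}$ along $y\mapsto xy^p$, while $\cL_{x^{1/p}}$ is the pullback along $y\mapsto x^{1/p}y$. The key identity is
\begin{equation*}
xy^p-x^{1/p}y=(x^{1/p}y)^p-x^{1/p}y=L_p(x^{1/p}y),
\end{equation*}
which shows that the difference of the two morphisms factors through $L_p$. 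By multiplicativity of $\cL_{\psi_0}$ together with the triviality of $L_p^*\cL_{\psi_0}$, the two pullbacks are isomorphic, so $F_p^*\cL_x\cong\cL_{x^{1/p}}$ and thus $F_p^*(x)=x^{1/p}$.

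There is no real obstacle in the argument: it reduces to spotting the identity $xy^p-x^{1/p}y=L_p(x^{1/p}y)$, after which the two formal properties of $\cL_{\psi_0}$ recalled in the first paragraph do the rest. The only mild subtlety is that the $p$-th root $x^{1/p}$ is meaningful because we have already passed to the algebraic closure $\overline{\bF}_p$, which is perfect.
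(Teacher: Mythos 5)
Your proof is correct, and the first assertion is handled exactly as in the paper. For the second assertion, however, you take a genuinely different route. The paper factors the map $y\mapsto xy^p=(x^{1/p}y)^p$ as multiplication by $x^{1/p}$ followed by the Frobenius $y\mapsto y^p$, and then invokes the fact that $\cL_{\psi_0}$, being defined over $\bF_p$, is invariant under pullback by $y\mapsto y^p$; this kills the Frobenius factor and leaves the pullback along $y\mapsto x^{1/p}y$, i.e.\ $\cL_{x^{1/p}}$. You instead work additively: from the identity $xy^p-x^{1/p}y=L_p(x^{1/p}y)$ you write $y\mapsto xy^p$ as the sum of $y\mapsto x^{1/p}y$ and a morphism factoring through the Lang map, and then conclude by multiplicativity of $\cL_{\psi_0}$ together with the canonical triviality of $L_p^*\cL_{\psi_0}$. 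Both ingredients you use are available in the paper's framework (multiplicativity of $\cL_{\psi_0}$ is recorded there precisely because $L_p$ is a homomorphism), so your argument is self-contained and avoids any appeal to Frobenius-invariance of sheaves defined over $\bF_p$; the paper's argument is a bit shorter but relies on that extra (standard) fact about pullback along the Frobenius. Your closing remark about perfectness of $\overline{\bF}_p$ making $x^{1/p}$ meaningful is also the right thing to note.
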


\begin{proof}
If $z\in\overline{\bF}_p$ is such that $f(x)=xz$ for all $x$, then $f^*(\cL_x)$ is the pullback of $\cL_{\psi_0}$ via the map $y\mapsto zxy$, which implies that $f^*(x)=zx$, proving the first assertion of the lemma. On the other hand, if $f(x)=x^p$, then $f^*(\cL_x)$ is the pullback of $\cL_{\psi_0}$ by the map $y\mapsto x\cdot y^p = (x^{1/p}\cdot y)^p$. Since $\cL_{\psi_0}$ is defined over $\bF_p$, it is invariant under pullback via $y\mapsto y^p$, which means that $f^*(\cL_x)$ is isomorphic to the pullback of $\cL_{\psi_0}$ via $y\mapsto x^{1/p}y$. So $f^*(\cL_x)\cong\cL_{x^{1/p}}$, which proves the lemma.
\end{proof}

\begin{cor}\label{c:dual-endomorphism}
We have $(a_0+a_1 F_p+\dotsb+a_d F_p^d)^*=a_0+a_1^{1/p} F_p^{-1}+\dotsb+a_d^{1/p^d} F_p^{-d}$.
\end{cor}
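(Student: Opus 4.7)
The plan is to combine the three established properties of the operation $f \mapsto f^*$, namely additivity, the antihomomorphism identity $(f\circ g)^* = g^*\circ f^*$, and the two base cases from the preceding lemma (scalar multiplications are self-dual, $F_p^* = F_p^{-1}$), with a short commutation calculation. No new geometric input is required; the corollary is a purely algebraic consequence.

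First I would split the sum using additivity to reduce to the monomial case: it suffices to prove that $(a_j F_p^j)^* = a_j^{1/p^j} F_p^{-j}$ for each $j\geq 0$ and $a_j\in\overline{\bF}_p$. Writing $a_j F_p^j = m_{a_j}\circ F_p^j$, where $m_{a_j}$ denotes multiplication by $a_j$, the antihomomorphism property gives
\[
(a_j F_p^j)^* \;=\; (m_{a_j}\circ F_p^j)^* \;=\; (F_p^j)^*\circ m_{a_j}^*.
\]
By iterating the antihomomorphism identity, $(F_p^j)^* = (F_p^*)^j = F_p^{-j}$, and by the first assertion of the preceding lemma $m_{a_j}^* = m_{a_j}$. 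Hence $(a_j F_p^j)^* = F_p^{-j}\circ m_{a_j}$.

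Finally I would rewrite the composition $F_p^{-j}\circ m_{a_j}$ in the standard form ``scalar times power of $F_p$''. For any $x\in\overline{\bF}_p$,
\[
(F_p^{-j}\circ m_{a_j})(x) \;=\; (a_j x)^{1/p^j} \;=\; a_j^{1/p^j}\,x^{1/p^j} \;=\; \bigl(m_{a_j^{1/p^j}}\circ F_p^{-j}\bigr)(x),
\]
so $(a_j F_p^j)^* = a_j^{1/p^j} F_p^{-j}$. Summing over $j$ yields the claimed formula. There is no real obstacle here: the only point where care is needed is to remember that the antihomomorphism reverses the order of composition, so that the Frobenius pieces end up on the \emph{right} of the scalars after dualization, which is exactly what produces the shift $a_j\mapsto a_j^{1/p^j}$ when we normalize back to the ``scalar first'' convention.
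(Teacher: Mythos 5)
Your argument is correct and is exactly the route the paper intends: the corollary is stated as an immediate consequence of the preceding lemma together with the remark that $f\mapsto f^*$ is an additive antihomomorphism, and your reduction to monomials, the identity $(F_p^j)^*=F_p^{-j}$, $m_{a_j}^*=m_{a_j}$, and the final commutation $F_p^{-j}\circ m_{a_j}=m_{a_j^{1/p^j}}\circ F_p^{-j}$ is precisely that deduction, carried out carefully.
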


\subsection{Proof of Proposition \ref{p:inductive-idea}}\label{ss:proof-p:inductive-idea} Let us introduce some additional notation. Write $\pr:S=S_2\times\bA^1\rar{}S_2$ for the first projection and $\iota:S_3\into S_2$ for the inclusion, and let $\eta:S\rar{}\bG_a$ be given by $\eta(x,y)=f(x)^{q^j}y-f(x)^{q^n} y^{q^{n-j}}$. Then $P_1=\eta+P_2\circ\pr$ as morphisms $S\rar{}\bG_a$, where $+$ denotes addition in $\bG_a$. Since $\cL_\psi$ is a multiplicative local system on $\bG_a$, we have $P_1^*\cL_\psi\cong(\eta^*\cL_\psi)\tens\pr^*(P_2^*\cL_\psi)$. The projection formula implies that
\[
R\pr_!(P_1^*\cL_\psi)\cong P_2^*\cL_\psi \tens R\pr_!(\eta^*\cL_\psi) \qquad\text{in } D^b_c(S_2,\ql).
\]
To complete the proof it therefore suffices to check that
\begin{equation}\label{e:need-p:inductive-idea}
R\pr_!(\eta^*\cL_\psi)\cong \iota_!(\ql)[2](1) \qquad\text{in } D^b_c(S_2,\ql)
\end{equation}
where $\ql$ denotes the constant rank $1$ local system on $S_2$. The restriction of $\eta$ to $\pr^{-1}(S_3)\subset S$ is constant, so the restriction of $\eta^*\cL_\psi$ to $\pr^{-1}(S_3)$ is the constant local system of rank $1$. The restriction of $\pr$ to $\pr^{-1}(S_3)$ can be identified with the first projection $S_3\times\bA^1\rar{}S_3$, so the proper base change theorem implies that $\iota^* R\pr_!(\eta^*\cL_\psi)\cong \ql[2](1)$ in $D^b_c(S_3,\ql)$. Hence to prove \eqref{e:need-p:inductive-idea} it remains to show that each of the sheaves $R^i\pr_!(\eta^*\cL_\psi)$ vanishes outside the subscheme $S_3\subset S_2$.

\mbr

To that end, consider any point $x\in S_2(\bfq)$ that does not belong to $S_3$. The stalk of $R^i\pr_!(\eta^*\cL_\psi)$ at $x$ can be calculated via the proper base change theorem:
\[
R^i\pr_!(\eta^*\cL_\psi)_x \cong H^i_c(\bG_a,f_x^*\cL_\psi)
\]
where $f_x:\bG_a\rar{}\bG_a$ is given by $y\mapsto f(x)^{q^j}y-f(x)^{q^n} y^{q^{n-j}}$. (Here, by a slight abuse of notation, we view $\bG_a$ as an algebraic group over $\bfq$.) With the notation of \S\ref{ss:mult-loc-sys-G_a} we can write $\cL_\psi=\cL_z$ for some $z\in\bF_{q^n}$; the assumption on the conductor of $\psi$ made in the statement of the proposition means that $z\in\bF_{q^m}$ but $z\not\in\bF_{q^j}$, since $m$ does not divide $j$. By Corollary \ref{c:dual-endomorphism}, we have $f_x^*\cL_\psi\cong\cL_{f_x^*(z)}$, where
\[
f_x^*(z)=f(x)^{q^j}z-f(x)^{q^n/(q^{n-j})}z^{1/(q^{n-j})}=f(x)^{q^j}\cdot(z-z^{q^{j-n}})=f(x)^{q^j}\cdot(z-z^{q^{j}}).
\]
But $z\neq z^{q^j}$ and $f(x)\neq 0$, so $f_x^*\cL_\psi$ is a nontrivial multiplicative local system on $\bG_a$. Thus $H^i_c(\bG_a,f_x^*\cL_\psi)=0$ for all $i\geq 0$ by Lemma \ref{l:cohomology-vanishing}, as required. \qed

\subsection{Proof of Lemma \ref{l:fixed-point-1}}\label{ss:proof-l:fixed-point-1} For each $a\in A$, let $\rho_a:X\rar{}X$ denote the automorphism $x\mapsto x\cdot a$ given by the action of $a$ on $X$ and write $\sg_a=\sg\circ\rho_a$. Then $\sg_a$ is a finite order automorphism of $X$, so as in the proof of \cite[Prop.~3.3]{deligne-lusztig},
\[
\sum_{i} (-1)^i \Tr\bigl( \Fr_q\circ\sg_a^*; H^i_c(X,\ql) \bigr) = \Bigl\lvert \bigl\{ x\in X(\bfq) \st \sg(F_q(x))= x\cdot a^{-1} \bigr\} \Bigr\rvert
\]
because the right hand side equals the number of fixed points of $F_q\circ\sg_a$ acting on $X(\bfq)$. Now observe that for each $i\geq 0$, the operator
\[
\abs{A}^{-1}\sum_{a\in A}\chi(a^{-1})\rho_a^* : H^i_c(X,\ql)\rar{}H^i_c(X,\ql)
\]
is a projector onto the subspace $H^i_c(X,\ql)[\chi]$, commuting with the actions of both $\sg^*$ and $\Fr_q$.
It follows that
\[
\abs{A}^{-1}\sum_{a\in A} \chi(a)\cdot \Bigl\lvert \bigl\{ x\in X(\bfq) \st \sg(F_q(x))= x\cdot a \bigr\} \Bigr\rvert = \sum_i (-1)^i \Tr\bigl( \Fr_q\sg^*; H^i_c(X,\ql)[\chi] \bigr).
\]
Now formula \eqref{e:fixed-point-1} follows at once from the hypotheses of Lemma \ref{l:fixed-point-1}. \qed

\subsection{Proof of Lemma \ref{l:fixed-point-2}}\label{ss:proof-l:fixed-point-2} For each $\ga\in\Ga$, write $\sg_\ga:X\rar{}X$ for the automorphism $x\mapsto\ga\cdot x$ given by the action of $\Ga$. The argument of \S\ref{ss:proof-l:fixed-point-1} yields
\[
\abs{A}^{-1}\sum_{a\in A} \chi(a)\cdot \Bigl\lvert \bigl\{ x\in X(\bfq) \st \ga\cdot F_q(x)= x\cdot a \bigr\} \Bigr\rvert = \sum_i (-1)^i \Tr\bigl( \Fr_q\sg_\ga^*; H^i_c(X,\ql)[\chi] \bigr).
\]
It remains to observe that for each $i\geq 0$, the operator
\[
\frac{1}{\abs{\Ga}} \sum_{\ga\in\Ga} \te(\ga)^{-1}\sg_\ga^* : H^i_c(X,\ql)[\chi]\rar{}H^i_c(X,\ql)[\chi]
\]
is a projector onto the subspace $H^i_c(X,\ql)_{\te,\chi}\subset H^i_c(X,\ql)[\chi]$ that commutes with the action of $\Fr_q$, and to apply the hypotheses of Lemma \ref{l:fixed-point-2}. \qed

\subsection{Proof of Lemma \ref{l:quotient-calculation}}\label{ss:proof-l:quotient-calculation}
We reformulate the assertion of the lemma as follows: given $h\in\Ut$, there exist unique $g\in\Ut\cap F(\Ut^-)$ and $B\in\Ut\cap F^{-1}(\Ut)$ such that $F(B)g=hB$. To prove the last statement, observe first that the explicit formula \eqref{e:Frobenius-division-explicit} implies that $\Ut\cap F(\Ut^-)$ is the subgroup of $\Ut$ consisting of matrices $(c_{ij})$ such that $c_{ij}=0$ unless $i=1$ or $i=j$, while $\Ut\cap F^{-1}(\Ut)$ is the subgroup of $\Ut$ consisting of matrices $(b_{ij})$ such that $b_{i,n}=0$ for all $1\leq i\leq n-1$.

\mbr

Choose an arbitrary $h\in\Ut$ and write $h=(a_{ij})_{i,j=1}^n$, so that $a_{ij}=0$ for $i>j$ and $a_{ii}=1$ for all $i$. Further, write
\[
B=
\begin{pmatrix}
1 & b_{1,2} & b_{1,3} & \dotsc & b_{1,n-1} & 0 \\
 & 1 & b_{2,3} & \dotsc & b_{2,n-1} & 0 \\
 & & 1 & & & \vdots \\
 & & & \ddots & b_{n-2,n-1} & 0 \\
 & & & & 1 & 0 \\
 & & & & & 1
\end{pmatrix}, \qquad
g=
\begin{pmatrix}
1 & c_2 & c_3 & \dotsc & c_n \\
 & 1 & 0 & \dotsc & 0 \\
 & & 1 &  & \\
 & & & \ddots & \\
 & & & & 1
\end{pmatrix}
\]
and treat the identity $F(B)g=hB$ as a system of equations in the unknowns $b_{ij}$ and $c_k$. We must prove that this system has a unique solution in $\Knr$.

\mbr

In fact, we will show that by choosing a specific order of these equations, the system can be solved recursively. As the first step, consider the entries of both sides of the equation $F(B)g=hB$ in the following positions: $(n-1,n)$, $(n-2,n-1)$, $(n-3,n-2)$, $\dotsc$, $(2,3)$, $(1,2)$. Using \eqref{e:Frobenius-division-explicit}, we obtain the following equations:
\begin{eqnarray*}
\vp(b_{n-2,n-1}) &=& a_{n-1,n}, \\
\vp(b_{n-3,n-2}) &=& b_{n-2,n-1}+a_{n-2,n-1}, \\
\vp(b_{n-4,n-3}) &=& b_{n-3,n-2}+a_{n-3,n-2}, \\
&& \dotsc \\
\vp(b_{1,2}) &=& b_{2,3}+a_{2,3}, \\
c_2 &=& b_{1,2}+a_{1,2}.
\end{eqnarray*}
These equations can be solved uniquely for $b_{n-2,n-1}$, $b_{n-3,n-2}$, $\dotsc$, $b_{2,3}$, $b_{1,2}$ and $c_2$.

\mbr

We proceed by induction. Namely, suppose that $t\geq 2$ and that $b_{ij}$ and $c_k$ were already found for all $j\leq i+t-1$ and $k\leq t$. Consider the entries of both sides of the equation $F(B)g=hB$ in the following positions: $(n-t,n)$, $(n-t-1,n-1)$, $\dotsc$, $(2,t+2)$, $(1,t+1)$. Using \eqref{e:Frobenius-division-explicit}, we obtain the following equations:
\begin{eqnarray*}
\vp(b_{n-t-1,n-1}) &=& a_{n-t,n}, \\
\vp(b_{n-t-2,n-2}) &=& b_{n-t-1,n-1}+???, \\
\vp(b_{n-t-3,n-3}) &=& b_{n-t-2,n-2}+???, \\
&& \dotsc \\
\vp(b_{1,t+1}) &=& b_{2,t+2}+???, \\
c_{t+1} &=& b_{1,t+1}+???,
\end{eqnarray*}
where in each case the question marks ``???'' denote a polynomial expression that involves only $b_{ij}$ and $c_k$ for $j\leq i+t-1$ and $k\leq t$, as well as the entries of $h$. The equations above can be solved uniquely for $b_{n-t-1,n-1}$, $b_{n-t-2,n-2}$, $\dotsc$, $b_{1,t+1}$, $c_{t+1}$, which completes the induction step in the proof of Lemma \ref{l:quotient-calculation}. \qed

\subsection{Proof of Lemma \ref{l:form-of-matrices}}\label{ss:proof-l:form-of-matrices} Recall that $\Ut\cap F(\Ut^-)$ is the subgroup of $\Ut$ consisting of matrices $(c_{ij})$ such that $c_{ij}=0$ unless $i=1$ or $i=j$. So if $A\in GL_n(\Knr)$, then $A\in\Xt$ if and only if there exist $c_2,c_3,\dotsc,c_n\in\Knr$ such that
\begin{equation}\label{e:proof-l:form-of-matrices-1}
F(A) = \begin{pmatrix}
1 & c_2 & c_3 & \dotsc & c_n \\
 & 1 & 0 & \dotsc & 0 \\
 & & 1 &  & \\
 & & & \ddots & \\
 & & & & 1
\end{pmatrix} \cdot A.
\end{equation}
Let us compare the entries on the two sides of \eqref{e:proof-l:form-of-matrices-1} in the second, third, $\dotsc$, $n$-th rows. Using \eqref{e:Frobenius-division-explicit}, one easily finds that if \eqref{e:proof-l:form-of-matrices-1} holds for some $c_2,c_3,\dotsc,c_n\in\Knr$, then $A$ necessarily has the form \eqref{e:form-of-matrices} for some $a_0,a_1,\dotsc,a_{n-1}\in\Knr$. Reversing this argument, we see that if $A\in GL_n(\Knr)$ has the form \eqref{e:form-of-matrices}, then
\begin{equation}\label{e:proof-l:form-of-matrices-2}
F(A) = \begin{pmatrix}
c_1 & c_2 & c_3 & \dotsc & c_n \\
 & 1 & 0 & \dotsc & 0 \\
 & & 1 &  & \\
 & & & \ddots & \\
 & & & & 1
\end{pmatrix} \cdot A
\end{equation}
for uniquely determined $c_1,c_2,\dotsc,c_n\in\Knr$ such that $c_1\neq 0$. It remains to observe that if \eqref{e:proof-l:form-of-matrices-2} holds, then $c_1=1$ if and only if $\det(F(A))=\det(A)$, which is equivalent to the requirement that $\det(A)\in K^\times$. \qed

\subsection{Proof of Lemma \ref{l:aux}}\label{ss:proof-l:aux} We will use the standard formula for $\det(A)$ as an alternating sum of $n!$ terms labeled by the elements of the symmetric group $S_n$:
\begin{equation}\label{e:expansion-of-determinant}
\det(A)=\sum_{\sg\in S_n} (-1)^\sg \left( \prod_{i=1}^n A_{i,\sg(i)} \right),
\end{equation}
where $A_{i,j}$ is the $(i,j)$-th entry of $A$. Let us denote by $v_0,v_1,\dotsc,v_{n-1}\in\bZ$ the normalized valuations of the elements $a_0,a_1,\dotsc,a_{n-1}\in\Knr$. We must prove that if $\det(A)$ has normalized valuation $0$, then $v_0=0$ and $v_1,\dotsc,v_{n-1}\geq 0$.

\mbr

Let $\tau\in S_n$ be the $n$-cycle defined by $\tau(i)=i+1$ for $1\leq i\leq n-1$ and $\tau(n)=1$. If $\sg=\tau^j$ for some $0\leq j\leq n-1$, the corresponding term in \eqref{e:expansion-of-determinant} only involves the variable $a_j$ and has normalized valuation $n\cdot v_j+j$. If $\sg$ is not a power of $\tau$, the corresponding term in \eqref{e:expansion-of-determinant} involves two or more of the variables $a_0,a_1,\dotsc,a_{n-1}$.

\mbr

We will show that $\det(A)$ has normalized valuation
\[
\min\bigl\{ n\cdot v_0,\ n\cdot v_1+1,\ n\cdot v_2+2, \dotsc,\ n\cdot v_{n-1}+n-1 \bigr\},
\]
which will complete the proof of the lemma. Since the integers
\begin{equation}\label{e:proof-l:aux-1}
n\cdot v_0,\ n\cdot v_1+1,\ n\cdot v_2+2, \dotsc,\ n\cdot v_{n-1}+n-1
\end{equation}
are pairwise distinct, it suffices to check that if $\sg\in S_n$ is not a power of $\tau$, then the normalized valuation of the corresponding term in \eqref{e:expansion-of-determinant} is a nontrivial weighted average of the numbers \eqref{e:proof-l:aux-1}. To this end, we fix $\sg\in S_n$ and write
\[
\al(i):=\begin{cases}
\sg(i)-i & \text{if } i\leq \sg(i) \\
\sg(i)-i+n & \text{if } i>\sg(i)
\end{cases}
\quad\text{and}\quad
\be(i):=\begin{cases}
0 & \text{if } i\leq \sg(i) \\
1 & \text{if } i>\sg(i)
\end{cases}
\]
for $1\leq i\leq n$. The product $\prod_{i=1}^n A_{i,\sg(i)}$ has normalized valuation $\sum_{i=1}^n (v_{\al(i)}+\be(i))$. Since $\sum_{i=1}^n (\sg(i)-i)=0$, we have $\frac{1}{n}\sum_{i=1}^n \al(i)=\sum_{i=1}^n\be(i)$, which implies that
\[
\sum_{i=1}^n (v_{\al(i)}+\be(i)) = \frac{1}{n} \cdot \sum_{i=1}^n (n\cdot v_{\al(i)} + \al(i) ),
\]
where the right hand side is manifestly a weighted average of the numbers \eqref{e:proof-l:aux-1}. It remains to observe that $\al(i)$ is independent of $i$ if and only if $\sg$ is a power of $\tau$. \qed

\subsection{Proof of Lemma \ref{l:cohomology-X-h}}\label{ss:proof-l:cohomology-X-h} Without loss of generality, we may assume that $m=0$. We will give an argument that is valid both in the equal characteristic case and in the mixed characteristic case using the conventions of \S\ref{ss:greenberg-construction} and Remark \ref{r:equal-characteristic-construction}.

\subsubsection{}
Let us introduce some auxiliary notation. For every integer $r\geq 1$, reduction modulo $\pi$ induces a morphism of functors $\bO_{K,r}\rar{}\bO_{K,1}$ which will be denoted by $x\mapsto\bar{x}$ (in each case, it will be clear what $r$ is from the context). We remark also that $\bO_{K,1}$ is the functor that sends every $\bF_q$-algebra $R$ to the underlying commutative ring $R$. Next, consider the morphism $\bO_{K,r}\rar{}\bO_{K,r-1}$ of reduction modulo $\pi^{r-1}$. If we view it as a morphism of commutative group schemes under addition, its kernel is naturally identified with the additive group $\bG_a$ over $\bF_q$. Since $\bO_{K,r-1}$ is an affine scheme, it follows that the morphism $\bO_{K,r}\rar{}\bO_{K,r-1}$ admits a scheme-theoretic section. For each $r$, we choose such a section once and for all, and denote it by $x\mapsto\hat{x}$ (again, in each case it will be clear what $r$ is from the context). As a final remark, observe that if $r\geq 2$, then the kernel of the reduction morphism of multiplicative groups $\bO_{K,r}^\times\rar{}\bO_{K,r-1}^\times$ is once again naturally identified with $\bG_a$, and the map $x\mapsto\hat{x}$ can also be viewed as a morphism of schemes $\bO_{K,r-1}^\times\rar{}\bO_{K,r}^\times$.

\subsubsection{}\label{sss:identification}
We now proceed with the proof of Lemma \ref{l:cohomology-X-h}. Let $h\geq 2$ be given and consider the product $\bO_{K,h-1}^\times\times\bO_{K,h-2}^{n-1}\times\bG_a^n$. We have an isomorphism of schemes
\[
\bO_{K,h-1}^\times\times\bO_{K,h-2}^{n-1}\times\bG_a^n \rar{\simeq} \bO_{K,h}^\times\times\bO_{K,h-1}^{n-1}
\]
given by
\[
(a_0,a_1,\dotsc,a_{n-1},b_0,b_1,\dotsc,b_{n-1})\mapsto (\hat{a}_0+b_0\pi^h,\hat{a}_1+b_1\pi^h,\dotsc,\hat{a}_{n-1}+b_{n-1}\pi^h).
\]
Under this isomorphism, the subscheme $\Xt^{(0)}_h\subset\bO_{K,h}^\times\times\bO_{K,h-1}^{n-1}$ is identified with a subscheme of the product $\Xt^{(0)}_{h-1}\times\bG_a^n\subset\bO_{K,h-1}^\times\times\bO_{K,h-2}^{n-1}\times\bG_a^n$. Let us describe this subscheme explicitly. Given an $\bF_q$-algebra $R$ and a point
\[
x=(a_0,a_1,\dotsc,a_{n-1},b_0,b_1,\dotsc,b_{n-1})\in \Xt^{(0)}_{h-1}(R)\times R^n,
\]
the difference\footnote{We recall that the notation $A_h$ was introduced in \S\ref{ss:greenberg-construction}.}
\begin{eqnarray*}
\sD(x) &:=& \vp\bigl(\det A_h(\hat{a}_0+b_0\pi^h,\hat{a}_1+b_1\pi^h,\dotsc,\hat{a}_{n-1}+b_{n-1}\pi^h)\bigr) \\
&& -\det A_h(\hat{a}_0+b_0\pi^h,\hat{a}_1+b_1\pi^h,\dotsc,\hat{a}_{n-1}+b_{n-1}\pi^h)
\end{eqnarray*}
automatically belongs to $\pi^h R\subset\bO_{K,h}(R)^\times$. We can thus view $\sD$ as a morphism of schemes $\Xt^{(0)}_{h-1}\times\bG_a^n\rar{}\bG_a$, and $\Xt^{(0)}_h$ is identified with the fiber of $\sD$ over $0\in\bG_a$.

\subsubsection{} For the remainder of the proof, we extend scalars from $\bF_q$ to $\bfq$ and view each $\Xt^{(0)}_h$ as a scheme over $\bfq$. In particular, for each $h\geq 1$, the finite group $Z_h=(\cO_L/(\pi^h))^\times$ acts on $\Xt^{(0)}_h$ by left multiplication, where $L$ is the degree $n$ unramified extension of $K$ inside $\Knr$.

\subsubsection{}
Observe that the group $W_h=\Ker(Z_h\to Z_{h-1})$ appearing in the formulation of Lemma \ref{l:cohomology-X-h} can be naturally identified with the additive group of $\bF_{q^n}$ via the map $\la\mapsto 1+\la\pi^h$. Under this isomorphism and the identification of $\Xt^{(0)}_h$ with a subscheme of $\Xt^{(0)}_{h-1}\times\bG_a^n$ described in \S\ref{sss:identification}, the action of $W_h$ on $\Xt^{(0)}_h$ is given by
\begin{equation}\label{e:proof-l:cohomology-X-h-1}
\la:(a_0,\dotsc,a_{n-1},b_0,\dotsc,b_{n-1})\mapsto(a_0,\dotsc,a_{n-1},b_0+\la \bar{a}_0,b_1,\dotsc,b_{n-1}).
\end{equation}
This implies the first assertion of the lemma, namely, that the natural projection $\Xt^{(0)}_h\rar{}\Xt^{(0)}_{h-1}$ is $W_h$-equivariant. To prove the other two assertions, we will show that the quotient $W_h\setminus\Xt^{(0)}_h$ is in fact isomorphic to the product $\Xt^{(0)}_{h-1}\times\bA^{n-1}$.

\subsubsection{}\label{sss:quotient-identification} If we view $\bG_a$ as a scheme over $\bfq$ on which the finite additive group $\bF_{q^n}$ acts via addition, then the map $\bG_a\rar{}\bG_a$ given by $x\mapsto x^{q^n}-x$ identifies the quotient $\bF_{q^n}\setminus\bG_a$ with $\bG_a$. This implies that if we consider \eqref{e:proof-l:cohomology-X-h-1} as defining a left action of $\bF_{q^n}$ on $\Xt^{(0)}_{h-1}\times\bG_a^n$, then the map $\Xt^{(0)}_{h-1}\times\bG_a^n\rar{}\Xt^{(0)}_{h-1}\times\bG_a^n$ given by
\[
(a_0,\dotsc,a_{n-1},b_0,\dotsc,b_{n-1})\mapsto \Bigl( a_0,\dotsc,a_{n-1},\frac{b_0^{q^n}}{\bar {a}_0^{q^n}}-\frac{b_0}{\bar{a}_0},b_1,\dotsc,b_{n-1} \Bigr)
\]
identifies the quotient $\bF_{q^n}\setminus\bigl(\Xt^{(0)}_{h-1}\times\bG_a^n\bigr)$ with $\Xt^{(0)}_{h-1}\times\bG_a^n$.

\subsubsection{} With the notation of \S\ref{sss:identification}, let us analyze the determinant
\begin{equation}\label{e:proof-l:cohomology-X-h-2}
\det A_h(\hat{a}_0+b_0\pi^h,\hat{a}_1+b_1\pi^h,\dotsc,\hat{a}_{n-1}+b_{n-1}\pi^h)\in\bO_{K,h}^\times.
\end{equation}
Using the standard expansion of the determinant as an alternating sum of $n!$ terms, we can view as a certain polynomial expression in $\hat{a}_0,\dotsc,\hat{a}_{n-1},b_0,\dotsc,b_{n-1}$. The only summand that involves $b_0$ is the one coming from the coefficient of $\pi^h$ in the product of the diagonal entries, namely,
\[
(\hat{a}_0+b_0\pi^h)\cdot(\widehat{\vp(a_0)}+b_0^q\pi^h)\cdot\dotsc\cdot(\widehat{\vp^{n-1}(a_0)}+b_0^{q^{n-1}}\pi^h).
\]
Thus we see that the contribution of $b_0$ to the determinant \eqref{e:proof-l:cohomology-X-h-2} equals
\[
\left( \frac{b_0}{\bar{a}_0} + \frac{b_0^q}{\bar{a}_0^q} + \dotsc + \frac{b_0^{q^{n-1}}}{\bar{a}_0^{q^{n-1}}} \right) \cdot \bar{a}_0^{1+q+q^2+\dotsc+q^{n-1}} \cdot \pi^h.
\]
It follows that $\Xt^{(0)}_h$ is identified with the subscheme of $\Xt^{(0)}_{h-1}\times\bG_a^n$ defined by an equation of the form $\sF_1(a_0,b_0)^q-\sF_1(a_0,b_0)=\sF_2^q-\sF_2$, where
\[
\sF_1(a_0,b_0) = \left( \frac{b_0}{\bar{a}_0} + \frac{b_0^q}{\bar{a}_0^q} + \dotsc + \frac{b_0^{q^{n-1}}}{\bar{a}_0^{q^{n-1}}} \right) \cdot \bar{a}_0^{1+q+q^2+\dotsc+q^{n-1}}
\]
and $\sF_2:\Xt^{(0)}_{h-1}\times\bG_a^n\rar{}\bG_a$
is a certain morphism that is \emph{independent of $b_0$}.

\subsubsection{} It remains to observe that the determinant $\det A_{h-1}(a_0,a_1,\dotsc,a_{n-1})$ is equal to $\bar{a}_0^{1+q+\dotsc+q^{n-1}}$ modulo $\pi$, which implies that if $(a_0,a_1,\dotsc,a_{n-1})\in\Xt^{(0)}_{h-1}$, then $\bar{a}_0^{1+q+\dotsc+q^{n-1}}\in\bF_{q}$. Hence the equation $\sF_1(a_0,b_0)^q-\sF_1(a_0,b_0)=\sF_2^q-\sF_2$ defining $\Xt^{(0)}_h$ as a subscheme of $\Xt^{(0)}_{h-1}\times\bG_a^n$ can be rewritten as
\[
\frac{b_0^{q^n}}{\bar {a}_0^{q^n}}-\frac{b_0}{\bar{a}_0} = \bar{a}_0^{-1-q-\dotsc-q^{n-1}}\cdot(\sF_2^q-\sF_2).
\]
In view of the remarks in \S\ref{sss:quotient-identification}, the projection $\Xt^{(0)}_{h-1}\times\bG_a^n\rar{}\Xt^{(0)}_{h-1}\times\bA^{n-1}$ obtained by discarding $b_0$ yields an isomorphism $W_h\setminus\Xt^{(0)}_h\rar{\simeq}\Xt^{(0)}_{h-1}\times\bA^{n-1}$. \qed

\subsection{An auxiliary calculation}\label{ss:auxiliary-traces} This subsection contains some preliminaries for the proof of Lemma \ref{l:extend-to-O-D-times} and Theorem \ref{t:lusztig-level-2}. Recall that in Definition \ref{d:ring-scheme-level-2} we introduced a ring scheme $\cR_{n,q}$ over $\bF_p$ and a unipotent group $U^{n,q}$, which coincides with the unipotent radical of the multiplicative group $\cR_{n,q}$. In Definition \ref{d:X-main} we constructed a scheme $X$ over $\bF_{q^n}$ equipped with an action of $U^{n,q}(\bF_{q^n})$ by right multiplication.

\mbr

The subgroup of $\cR_{n,q}^\times$ consisting of all elements of the form $\al+0\cdot\tau+\dotsc+0\cdot\tau^n$ is naturally isomorphic to the multiplicative group $\bG_m$, and $\cR_{n,q}^\times$ is equal to the semidirect product $\bG_m\ltimes U^{n,q}$. If we let $\bG_m$ act on $U^{n,q}$ by conjugation, then the action of $\bG_m(\bF_{q^n})=\bF_{q^n}^\times$ preserves the subvariety $X\subset U^{n,q}$, and thus we obtain an action of $\bF_{q^n}^\times\ltimes U^{n,q}(\bF_{q^n})$ on $X$, where $\bF_{q^n}^\times$ acts by conjugation and $U^{n,q}(\bF_{q^n})$ acts by right multiplication, as before.

\mbr

Choose any character $\psi:\bF_{q^n}\rar{}\qls$ and view it as a character of the center of $U^{n,q}(\bF_{q^n})$. Write $H^i_c(X,\ql)[\psi]$ for the subspace of $H^i_c(X,\ql)$ on which the center of $U^{n,q}(\bF_{q^n})$ acts via $\psi$. Parts (a) and (b) of Theorem \ref{t:cohomology-first-variety} can be rephrased as follows.

\begin{cor}\label{c:main-reformulation}
If $q^m$ is the conductor of $\psi$, we have $H^i_c(X,\ql)[\psi]=0$ for all $i\neq n+n/m-2$, and $H^{n+n/m-2}_c(X,\ql)[\psi]$ is an irreducible representation of $U^{n,q}(\bF_{q^n})$.
\end{cor}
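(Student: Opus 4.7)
The plan is to observe that Corollary \ref{c:main-reformulation} is essentially an unpacking of parts (a) and (b) of Theorem \ref{t:cohomology-first-variety} combined with the standard decomposition of a representation into isotypic components.

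First, I would note that for any finite-dimensional representation $V$ of $U^{n,q}(\bF_{q^n})$ over $\ql$, the subspace $V[\psi]$ on which the center $Z(\bF_{q^n})$ acts via the character $\psi$ equals the sum of the $\rho$-isotypic components of $V$ over all irreducible representations $\rho$ of $U^{n,q}(\bF_{q^n})$ whose central character equals $\psi$. Indeed, every irreducible representation has a well-defined central character (Remark \ref{r:central-characters-1}), and Maschke's theorem applied to the finite group $U^{n,q}(\bF_{q^n})$ decomposes $V$ into irreducibles.

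Applying this with $V = H^i_c(X,\ql)$ for each $i\geq 0$, Theorem \ref{t:cohomology-first-variety}(a) says that the only irreducible representation $\rho$ of $U^{n,q}(\bF_{q^n})$ with central character $\psi$ that occurs anywhere in $H^\bullet_c(X,\ql)$ is $\rho_\psi$, and its total multiplicity across all degrees is exactly $1$. Hence $H^i_c(X,\ql)[\psi]$ is either zero or isomorphic to $\rho_\psi$, and there is exactly one $i$ for which the latter holds. By Theorem \ref{t:cohomology-first-variety}(b), setting $n_1=n/m$, that distinguished value of $i$ equals $n+n_1-2=n+n/m-2$. Combining these two statements gives $H^i_c(X,\ql)[\psi]=0$ for $i\neq n+n/m-2$ and $H^{n+n/m-2}_c(X,\ql)[\psi]\cong\rho_\psi$, which is irreducible.

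There is no real obstacle here: the corollary is a direct rephrasing of the cited theorem, and the only substantive point to verify is the elementary linear-algebra fact that the $\psi$-eigenspace of the center in a representation equals the sum of the isotypic components with central character $\psi$. The actual content — existence, uniqueness, multiplicity one, and concentration in a single cohomological degree — is imported wholesale from Theorem \ref{t:cohomology-first-variety}.
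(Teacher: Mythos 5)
Your proposal is correct and matches the paper, which offers no separate argument but simply introduces the corollary with ``Parts (a) and (b) of Theorem \ref{t:cohomology-first-variety} can be rephrased as follows''; your only addition is to spell out the elementary fact that the $\psi$-eigenspace of the center is the sum of the isotypic components of irreducibles with central character $\psi$, which is exactly the implicit step the paper leaves to the reader.
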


As the center $Z(\bF_{q^n})$ of $U^{n,q}(\bF_{q^n})$ commutes with all of $\cR_{n,q}^\times(\bF_{q^n})$, we see that the space $H^{n+n/m-2}_c(X,\ql)[\psi]$ is preserved by the larger group $\bF_{q^n}^\times\ltimes U^{n,q}(\bF_{q^n})$, whose action on $X$ was described earlier. The main result of this subsection is

\begin{prop}\label{p:trace-zeta-bar}
Let $\zeb$ be a generator of the cyclic group $\bF_{q^n}^\times$. Then
\[
\Tr\bigl( \zeb^*, H^{n+n/m-2}_c(X,\ql)[\psi] \bigr) = (-1)^{n+n/m},
\]
where $\zeb^*$ denotes the automorphism of $H^i_c(X,\ql)$ induced by the action of $\zeb$ on $X$.
\end{prop}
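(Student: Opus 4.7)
The plan is to combine the Deligne--Lusztig fixed point formula (Theorem \ref{t:fixed-point-formula-DL}) applied over $\bfq$ with Fourier inversion over the center $Z(\bF_{q^n})$, reducing the trace computation to a count of fixed points on a very small subscheme of $X$. For each $z=1+z_n\tau^n\in Z(\bF_{q^n})$, let $\sg_z:X\to X$ denote the composition of conjugation by $\zeb$ followed by right multiplication by $z$. Since $z$ is central these commute, and since conjugation by $\zeb$ has order dividing $q^n-1$ while right multiplication by $z$ has order $1$ or $p$, they serve as the two factors in the abstract Jordan decomposition of $\sg_z$: $s$ is conjugation by $\zeb$ and $u$ is right multiplication by $z$.

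The key step is an explicit description of $X^s$. A short computation in $\cR_{n,q}(\bfq)$ using the relation $\tau a=a^q\tau$ shows that conjugation by $\zeb$ sends $x=1+\sum_{j=1}^n x_j\tau^j$ to $1+\sum_j x_j\zeb^{1-q^j}\tau^j$, so the fixed-point condition reads $x_j(\zeb^{q^j-1}-1)=0$ for all $j$. Since $\zeb$ generates $\bF_{q^n}^\times$, the factor $\zeb^{q^j-1}-1$ vanishes only for $j=n$; hence all fixed points have the form $x=1+x_n\tau^n$, and the requirement $x\in X$ forces $x_n^{q^n}=x_n$, that is, $x_n\in\bF_{q^n}$. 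Thus $X^s$ is a discrete set of $q^n$ points on which $u$ acts as the translation $x_n\mapsto x_n+z_n$, whose Lefschetz number equals $q^n\delta_{z_n,0}$. Theorem \ref{t:fixed-point-formula-DL} then gives
\[
\sum_i (-1)^i\Tr\bigl(\sg_z^*,H^i_c(X,\ql)\bigr)=q^n\,\delta_{z_n,0}.
\]

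To extract the trace on the $\psi$-isotypic part, I would use Fourier inversion on $Z(\bF_{q^n})$. Since $\zeb$ centralizes $Z(\bF_{q^n})$, the operator $\zeb^*$ preserves each isotypic subspace $H^i_c(X,\ql)[\chi]$, and on $H^i_c(X,\ql)[\chi]$ the operator coming from right multiplication by $z$ acts as the scalar $\chi(z_n)$. Hence
\[
\Tr\bigl(\zeb^*,H^i_c(X,\ql)[\psi]\bigr)=\frac{1}{q^n}\sum_{z_n\in\bF_{q^n}}\psi(z_n)^{-1}\Tr\bigl(\sg_z^*,H^i_c(X,\ql)\bigr).
\]
Summing over $i$ with signs $(-1)^i$, applying the previous display on the right, and invoking Corollary \ref{c:main-reformulation} to restrict the left-hand side to the single degree $r=n+n/m-2$, I obtain
\[
(-1)^r\Tr\bigl(\zeb^*,H^r_c(X,\ql)[\psi]\bigr)=\frac{1}{q^n}\sum_{z_n}\psi(z_n)^{-1}\cdot q^n\delta_{z_n,0}=\psi(0)^{-1}=1,
\]
whence $\Tr\bigl(\zeb^*,H^r_c(X,\ql)[\psi]\bigr)=(-1)^r=(-1)^{n+n/m}$ as claimed.

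The main obstacle is the computation of $X^s$: it uses crucially both the choice of $\zeb$ as a generator of $\bF_{q^n}^\times$ (to force all non-central coordinates to vanish on the fixed-point locus) and the defining condition $a_n=0$ of $Y$ (to cut the central one-parameter subgroup $Z\cong\bG_a$ down to the finite set $\bF_{q^n}$). Once this discrete description is in hand, the remainder of the argument is essentially formal bookkeeping via the Jordan decomposition and Fourier inversion on $Z(\bF_{q^n})$.
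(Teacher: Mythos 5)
Your proposal is correct and follows essentially the same route as the paper: Fourier inversion over the central subgroup $Z(\bF_{q^n})$ via the projector $q^{-n}\sum_z\psi(z)^{-1}z^*$, the Deligne--Lusztig fixed point formula with Jordan decomposition $s=$ conjugation by $\zeb$, $u=$ central translation, the identification $X^{\zeb}=Z(\bF_{q^n})$, and Corollary \ref{c:main-reformulation} to isolate the degree $n+n/m-2$. The only difference is cosmetic: you spell out the coordinate computation showing $X^{\zeb}=Z(\bF_{q^n})$, which the paper asserts with a one-line justification.
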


\begin{proof}
Since $H^i_c(X,\ql)[\psi]=0$ for all $i\neq n+n/m-2$, the assertion of the proposition can be reformulated as follows: $\sum_i (-1)^i \Tr(\zeb^*,H^i_c(X,\ql)[\psi])=1$. The operator $q^{-n}\cdot\sum_{z\in Z(\bF_{q^n})}\psi(z)^{-1}z^*$ is the projection of $H^i_c(X,\ql)$ onto the subspace $H^i_c(X,\ql)[\psi]$, so we are reduced to showing that
\begin{equation}\label{e:need-auxiliary}
\sum_{z\in Z(\bF_{q^n})} \psi(z)^{-1}\cdot \sum_i (-1)^i \Tr\bigl((\zeb,z)^*,H^i_c(X,\ql)\bigr) = q^n.
\end{equation}
To prove \eqref{e:need-auxiliary} we apply Theorem \ref{t:fixed-point-formula-DL} to the action of $(\zeb,z)\in\bF_{q^n}^\times\ltimes U^{n,q}(\bF_{q^n})$ on $X$. Its ``abstract Jordan decomposition'' is $(\zeb,z)=\zeb\circ z$. Since $\zeb^{q^r}\neq \zeb$ for any $1\leq r<n$, the subvariety $X^\zeb\subset X$ of fixed points of $\zeb$ is the finite set $Z(\bF_{q^n})\subset X$ on which $Z(\bF_{q^n})$ acts by translation. By Theorem \ref{t:fixed-point-formula-DL}, $\sum_i (-1)^i \Tr\bigl((\zeb,z)^*,H^i_c(X,\ql)\bigr)$ equals the number of fixed points of $z$ acting on $X^\zeb$, which is $q^n$ if $z=0$ and $0$ otherwise. The identity \eqref{e:need-auxiliary} follows at once.
\end{proof}

\subsection{Proof of Lemma \ref{l:extend-to-O-D-times}}\label{ss:proof-l:extend-to-O-D-times}
Recall that $\psi:\bF_{q^n}\rar{}\qls$ is the character induced by the restriction of $\te$ to $1+\pi\cO_L\subset\cO_L^\times$. In \S\ref{ss:auxiliary-traces} we constructed a representation $\xi_\psi$ of the semidirect product $\bF_{q^n}^\times\ltimes U^{n,q}(\bF_{q^n})$ on $H^{n+n/m-2}_c(X,\ql)[\psi]$, which has the following two properties. First, the restriction of $\xi_\psi$ to $U^{n,q}(\bF_{q^n})$ is the representation $\rho_\psi$ constructed in Theorem \ref{t:cohomology-first-variety}. Second, if $\zeb$ is a generator of $\bF_{q^n}^\times$, then Proposition \ref{p:trace-zeta-bar} yields $\Tr(\xi_\psi(\zeb))=(-1)^{n+n/m}$, where $q^m$ is the conductor of $\psi$.

\mbr

Recall also that $\ze\in\cO_L^\times$ denotes a primitive root of unity of order $q^n-1$. If we view $\ze$ as an element of $\cO_D^\times/U^{n+1}_D$, we obtain a semidirect product decomposition $\cO_D^\times/U^{n+1}_D=\langle\ze\rangle\ltimes (U^1_D/U^{n+1}_D)$, where $\langle\ze\rangle$ is the cyclic group generated by $\ze$. Let $\zeb\in\bF_{q^n}^\times$ denote the image of $\ze$ under the reduction modulo $\Pi$ homomorphism $\cO_D^\times\rar{}\cO_D^\times/U^1_D\cong\bF_{q^n}^\times$. It is clear that the isomorphism $U^{n,q}(\bF_{q^n})\rar{\simeq}U^1_D/U^{n+1}_D$ given by \eqref{e:identify-U} is compatible with the conjugation actions of $\zeb$ and $\ze$ on $U^{n,q}(\bF_{q^n})$ and $U^1_D/U^{n+1}_D$, respectively. Thus we obtain an isomorphism $\bF_{q^n}^\times\ltimes U^{n,q}(\bF_{q^n})\rar{\simeq}\cO_D^\times/U^{n+1}_D$, which we use to view $\xi_\psi$ as a representation of $\cO_D^\times/U^{n+1}_D$. Finally, let $\widetilde{\te}$ be the character of $\cO_D^\times/U^{n+1}_D$ defined by $\widetilde{\te}(\ze)=\te(\ze)$ and $\widetilde{\te}(g)=1$ for all $g\in U^1_D/U^{n+1}_D$. Form the tensor product $\xi_\psi\tens\widetilde{\te}$ and let $\eta_\te^\circ$ be its inflation to a representation of $\cO_D^\times$. By construction, $\eta_\te^\circ$ satisfies all the requirements of Lemma \ref{l:extend-to-O-D-times}.

\subsection{Proof of Theorem \ref{t:lusztig-level-2}}\label{ss:proof-t:lusztig-level-2} The argument consists of several steps. We begin by recalling that with the notation of \S\ref{ss:lusztig-homology-definition}, we have
\[
H_*(\Xt,\ql)=\bigoplus_{m\in\bZ}H_*(\Xt^{(m)},\ql)=\bigoplus_{m\in\bZ} \varinjlim_h H_*(\Xt^{(m)}_h,\ql).
\]
Since the character $\te:L^\times\rar{}\qls$ is trivial on $1+\pi^2\cO_L$, Lemma \ref{l:cohomology-X-h} and the definition of $H_*(\Xt^{(m)},\ql)$ imply that the subspace $H_*(\Xt,\ql)[\te]\subset H_*(\Xt,\ql)$ is contained in the image of the natural inclusion $\oplus_m H_*(\Xt^{(m)}_2,\ql)\into H_*(\Xt,\ql)$.

\subsubsection{First reduction}\label{sss:first-reduction} Let us write $\Xt_2$ for the disjoint union of the schemes $\Xt^{(m)}_2$, $m\in\bZ$, and $H_*(\Xt_2,\ql)=\oplus_m H_*(\Xt^{(m)}_2,\ql)$. The action of $D_{1/n}^\times\times L^\times$ on $\Xt$ (where $D_{1/n}^\times$ acts via right multiplication and $L^\times$ acts via left multiplication, as before) induces an action of the discrete group $(D_{1/n}^\times/U^{n+1}_D)\times(L^\times/U^2_L)$ on $\Xt_2$, where for each $j\geq 1$, we write $U^j_D=1+\Pi^j\cO_D$ and $U^j_L=1+\pi^j\cO_L$. In order to prove Theorem \ref{t:lusztig-level-2} we must calculate each $H_i(\Xt_2,\ql)$ as a representation of $(D_{1/n}^\times/U^{n+1}_D)\times(L^\times/U^2_L)$. This goal will be accomplished in Corollary \ref{c:representation-of-Gamma} below.

\subsubsection{Relation with \S\ref{ss:unipotent-first}}\label{sss:embedding-level-2} Recall that in Definitions \ref{d:ring-scheme-level-2} and \ref{d:X-main} we constructed a unipotent group $U^{n,q}$ over $\bF_p$ and a subscheme $X$ of $U^{n,q}$ defined over $\bF_{q^n}$ such that $X$ is stable under right multiplication by $U^{n,q}(\bF_{q^n})$. From now on we extend the base to $\bfq\supset\bF_{q^n}$ and view $X$ as a scheme over $\bfq$, since we will only be interested in the cohomology $H^*_c(X,\ql)$ as a representation of $U^{n,q}(\bF_{q^n})$ and the action of $\Fr_{q^n}$ will be irrelevant. We next construct a certain embedding $X\into\Xt^{(0)}_2$.

\mbr

To this end, recall that in \S\ref{ss:greenberg-construction} we explicitly described $\Xt^{(0)}_2$ as a subscheme of the product $\bO_{K,2}^\times\times\bO_{K,1}^{n-1}$. Now if $R$ is any $\bfq$-algebra, then $\bO_{K,1}(R)$ is equal to the underlying ring of $R$ and we have a natural embedding $R\into\bO_{K,2}^\times(R)$ given by $a\mapsto 1+\pi a$. In particular, there is a well defined embedding of $\bfq$-schemes
\[
\iota:U^{n,q}\rar{}\bO_{K,2}^\times\times\bO_{K,1}^{n-1}, \qquad 1+a_1\tau+\dotsc+a_n\tau^n \longmapsto (1+a_n\pi,a_1,\dotsc,a_{n-1}).
\]
Now form the $n$-by-$n$ matrix $A_2(1+a_n\pi,a_1,\dotsc,a_{n-1})$, where the notation $A_2$ was introduced in \S\ref{sss:greenberg-main}. It is clear that we can write
\[
\det A_2(1+a_n\pi,a_1,\dotsc,a_{n-1}) = 1+\pi\cdot N_2(a_1,a_2,\dotsc,a_n)
\]
for a certain morphism of schemes $N_2:U^{n,q}\rar{}\bG_a$.

\mbr

The following crucial result is proved in \cite[Prop.~6.2]{maximal-varieties-LLC}.

\begin{prop}
With the notation above, $\pr_n(L_{q^n}(g))=N_2(g)^q-N_2(g)$ for all $g\in U^{n,q}$, where $L_{q^n}:U^{n,q}\rar{}U^{n,q}$ is the Lang map given by $g\mapsto F_{q^n}(g)g^{-1}$ and $\pr_n:U^{n,q}\rar{}\bG_a$ is the projection given by $1+a_1\tau+\dotsc+a_n\tau^n\mapsto a_n$.
\end{prop}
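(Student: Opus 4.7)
The plan is to verify the identity by direct computation in the twisted polynomial ring $\cR_{n,q}(R)$, combining an explicit evaluation of $N_2(g)$ with a recursive analysis of the Lang map on $U^{n,q}$. I do not see a fully conceptual shortcut: neither $N_2$ nor $\pr_n\circ L_{q^n}$ is a group homomorphism, and the naive identity one might hope for, $F_{q^n}(\det\iota'_2(g))/\det\iota'_2(g)\equiv 1+\pi\cdot\pr_n(L_{q^n}(g))\pmod{\pi^2}$, in fact gives $N_2(g)^{q^n}-N_2(g)$ on the left, which differs from the desired $N_2(g)^q-N_2(g)$ by a telescoping sum of Frobenius twists.

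First I will extract an explicit formula for $N_2(g)$. Writing $\iota'_2(g) = I+\sum_{j=1}^{n-1}\at_j\varpi^j+\pi\at_n$, I will split this matrix as $A+\pi B$, where $A$ is obtained from the strictly-upper portions of the $\varpi^j$ (so $A$ is upper unitriangular with $\det A=1$) and $B$ collects the strictly-lower portions together with the diagonal $\at_n$. Then $\det\iota'_2(g)=\det A\cdot\det(I+\pi A^{-1}B)\equiv 1+\pi\tr(A^{-1}B)\pmod{\pi^2}$, so $N_2(g)=\tr(A^{-1}B)$. Expanding $A^{-1}$ as a finite geometric series in the nilpotent matrix $A-I$ and computing the trace yields
\[
N_2(g) = \sum_{\emptyset\neq S\subseteq\{1,\dots,n\}}(-1)^{|S|-1}\prod_{j=1}^{|S|} a_{d_j}^{q^{i_j-1}},
\]
where $S=\{i_1<\cdots<i_{|S|}\}$, with $d_j=i_{j+1}-i_j$ for $j<|S|$ and $d_{|S|}=n+i_1-i_{|S|}$ (so $\sum_j d_j=n$). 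This matches a direct check for $n=2,3$.

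Next, to compute $\pr_n(L_{q^n}(g))$, I will exploit the defining identity $L_{q^n}(g)\cdot g = F_{q^n}(g)$ in $\cR_{n,q}(R)$. Writing $L_{q^n}(g)=1+\sum_{j=1}^n l_j\tau^j$ and extracting $\tau^j$-coefficients via the commutation $\tau\cdot a=a^q\tau$, I obtain the recursion
\[
l_j = a_j^{q^n}-a_j-\sum_{\substack{i+k=j\\ i,k\geq 1}} l_i\,a_k^{q^i},\qquad 1\leq j\leq n,
\]
which expresses $l_n=\pr_n(L_{q^n}(g))$ as an explicit polynomial in the $a_j$ and their Frobenius powers.

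Finally I will match $l_n$ with $N_2(g)^q-N_2(g)$. Applying $F_q$ to the explicit formula for $N_2(g)$ shifts each exponent $q^{i_j-1}$ to $q^{i_j}$; subtracting, the contributions from a subset $S$ and from its cyclic shift regroup into Frobenius-twisted differences, which turn out to be precisely the monomials generated by the recursion for $l_n$. The main obstacle will be this combinatorial bookkeeping of signs and $q$-powers; I expect to handle it by induction on $n$, using the fact that a subset $S\subseteq\{1,\dots,n\}$ either contains the index $n$ or not to peel off the contribution of the highest-index variable via the recursion, and thereby reduce the matching to the corresponding statement for $n-1$.
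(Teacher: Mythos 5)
You should first note that this paper does not actually prove the proposition: it is quoted from \cite[Prop.~6.2]{maximal-varieties-LLC}, so your computation has to stand on its own. The preparatory steps of your plan are correct. Writing $\iota'_2(g)=A+\pi B$ with $A$ unitriangular does give $N_2(g)=\tr(A^{-1}B)$, and your subset formula for $N_2$ is right (for $n=2$ it gives $a_2+a_2^q-a_1^{1+q}$, matching a direct expansion); the recursion $l_j=a_j^{q^n}-a_j-\sum_{i+k=j}l_i a_k^{q^i}$ is also correct, and the cancellation you point to is the real mechanism: under $S\mapsto S+1$ the terms of $N_2(g)^q$ indexed by subsets with $n\notin S$ cancel the terms of $N_2(g)$ indexed by subsets with $1\notin S$, so $N_2^q-N_2$ is a sum only over subsets containing $n$ (from $N_2^q$) and subsets containing $1$ (from $-N_2$).

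The gap is in the last step, which is the entire content of the statement: it is only promised, and the specific device you propose --- induction on $n$, ``reducing the matching to the corresponding statement for $n-1$'' --- does not work as stated. The intermediate coefficients $l_i$, $i<n$, of $L_{q^n}(g)$ carry the twist $a_i^{q^n}$ and live in the ring with $\tau^{n+1}=0$, so they are not governed by the proposition for $U^{n-1,q}$, whose Lang map uses $F_{q^{n-1}}$ and whose $N_2$ has $q$-exponents summing to $n-1$; there is no smaller instance to quote, and for $i<n$ the coefficient $l_i$ is not of the form $P^q-P$ anyway. The repair is to drop the recursion and expand directly: $L_{q^n}(g)=F_{q^n}(g)\cdot\sum_{m\geq 0}(-1)^m(g-1)^m$ yields the closed formula
\[
l_n=\sum_{m\geq 1}(-1)^m\sum_{\substack{k_1+\dotsb+k_m=n\\ k_j\geq 1}}\prod_{j=1}^m a_{k_j}^{q^{k_1+\dotsb+k_{j-1}}}
\;+\;\sum_{m\geq 0}(-1)^m\sum_{\substack{k_0+\dotsb+k_m=n\\ k_j\geq 1}}a_{k_0}^{q^{n}}\prod_{j=1}^m a_{k_j}^{q^{k_0+\dotsb+k_{j-1}}},
\]
and reading off the cyclic gap sequence of a subset identifies the surviving terms of $-N_2$ (subsets containing $1$) with the first family and those of $N_2^q$ (subsets containing $n$) with the second, with matching signs $(-1)^{|S|-1}=(-1)^m$ in each case. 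With that substitution your argument closes and is a legitimate (if more computational) alternative to the cited proof; as written, however, the decisive identity is unproved and the proposed inductive scheme would stall.
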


Recalling that the subscheme $\Xt^{(0)}_2\subset \bO_{K,2}^\times\times\bO_{K,1}^{n-1}$ is defined by the equation
\[
\vp(\det A_2(a_0,a_1,\dotsc,a_{n-1}))=\det A_2(a_0,a_1,\dotsc,a_{n-1})
\]
and that $X=L_{q^n}^{-1}(\pr_n^{-1}(0))\subset U^{n,q}$, we immediately obtain

\begin{cor}\label{c:embed-X}
The morphism $\iota$ restricts to an embedding $\iota:X\into\Xt^{(0)}_2$, whose image is equal to the intersection of $\Xt^{(0)}_2$ with $(1+\pi\bO_{K,1})\times\bO_{K,1}^{n-1}\subset\bO_{K,2}^\times\times\bO_{K,1}^{n-1}$.
\end{cor}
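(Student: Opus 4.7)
\textbf{Proof proposal for Corollary \ref{c:embed-X}.}

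The plan is to compare the two defining equations directly. First I would observe that $\iota : U^{n,q} \to \bO_{K,2}^\times \times \bO_{K,1}^{n-1}$ is tautologically a closed immersion onto the subscheme $(1+\pi\bO_{K,1}) \times \bO_{K,1}^{n-1}$: for an $\bF_q$-algebra $R$, the assignment
$1+a_1\tau+\dotsb+a_n\tau^n \mapsto (1+a_n\pi,\,a_1,\dotsc,a_{n-1})$
is manifestly functorial and its image is exactly the pairs whose first coordinate reduces to $1$ modulo $\pi$. So the content of the corollary reduces to the assertion that for $g\in U^{n,q}(R)$, the point $\iota(g)$ satisfies the defining equation of $\Xt^{(0)}_2$ if and only if $g\in X(R)$.

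Next I would unwind the two conditions. By the identity
\[
\det A_2(1+a_n\pi,a_1,\dotsc,a_{n-1}) \;=\; 1+\pi\cdot N_2(a_1,\dotsc,a_n)
\]
established just before the corollary, the condition $\vp(\det A_2)=\det A_2$ defining $\Xt^{(0)}_2 \subset \bO_{K,2}^\times\times\bO_{K,1}^{n-1}$ becomes, on $\iota(g)$,
\[
\vp\bigl(1+\pi N_2(g)\bigr) \;=\; 1+\pi N_2(g)
\]
in $\bO_{K,2}(R)^\times$. Since $\vp$ is the $q$-power Frobenius on $R$ and is $\cO_K$-linear on $\bO_{K,2}$, the left-hand side equals $1+\pi N_2(g)^q$, so the condition simplifies to $N_2(g)^q = N_2(g)$ inside $R = \bO_{K,1}(R)$.

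On the other hand, the definition of $X$ is $X = L_{q^n}^{-1}(Y)$ where $Y\subset U^{n,q}$ is cut out by $a_n=0$, i.e.\ $Y = \pr_n^{-1}(0)$. Thus $g\in X(R)$ precisely when $\pr_n(L_{q^n}(g))=0$. At this point I would invoke the key formula stated just above the corollary (from \cite[Prop.~6.2]{maximal-varieties-LLC}),
\[
\pr_n\bigl(L_{q^n}(g)\bigr) \;=\; N_2(g)^q - N_2(g),
\]
which shows that $g\in X(R)$ is equivalent to $N_2(g)^q = N_2(g)$, matching the condition derived in the previous paragraph.

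The only real ``obstacle'' is a bookkeeping one, namely identifying the action of $\vp$ on the element $1+\pi N_2(g)\in \bO_{K,2}(R)$ correctly: one must remember that under the conventions of Section \ref{ss:greenberg-construction} and Remark \ref{r:equal-characteristic-construction}, $\vp$ acts by the $q$-power Frobenius on $R$ while fixing the $\cO_K$-structure, so it sends $\pi$ to $\pi$ and $N_2(g)$ to $N_2(g)^q$. Once this is in place, the two equivalent reformulations of the defining equations match term by term and the corollary follows, together with the explicit description of the image as $\Xt^{(0)}_2 \cap \bigl((1+\pi\bO_{K,1}) \times \bO_{K,1}^{n-1}\bigr)$.
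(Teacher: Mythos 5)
Your proposal is correct and follows exactly the paper's route: the paper, too, deduces the corollary "immediately" from the identity $\det A_2(1+a_n\pi,a_1,\dotsc,a_{n-1})=1+\pi N_2(g)$ and the quoted formula $\pr_n(L_{q^n}(g))=N_2(g)^q-N_2(g)$, matching the equation $\vp(\det A_2)=\det A_2$ defining $\Xt^{(0)}_2$ against $X=L_{q^n}^{-1}(\pr_n^{-1}(0))$. You have simply written out the bookkeeping (that $\iota$ is an isomorphism onto $(1+\pi\bO_{K,1})\times\bO_{K,1}^{n-1}$ and that $\vp$ fixes $\pi$ while acting as the $q$-power map on $R$) which the paper leaves implicit.
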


\subsubsection{Comparison of group actions} Recall that the group $U^{n,q}(\bF_{q^n})$ is naturally identified with $U^1_D/U^{n+1}_D$ via the isomorphism \eqref{e:identify-U}. Thus $U^{n,q}(\bF_{q^n})$ can be thought of as acting on the right both on $X$ and on $\Xt^{(0)}_2$. On the other hand, recall that if $Z=\{1+a_n\tau^n\}$ denotes the center of $U^{n,q}$, then $Z(\bF_{q^n})$ is the center of $U^{n,q}(\bF_{q^n})$ and acts on $X$ by left multiplication. We can naturally identify $Z(\bF_{q^n})$ with the additive group $\bF_{q^n}$, which in turn is identified with $U^1_L/U^2_L$. Thus $Z(\bF_{q^n})$ can be thought of as acting on $\Xt^{(0)}_2$ on the left.

\begin{lem}\label{l:embedding-actions-compatible}
The embedding $\iota:X\into\Xt^{(0)}_2$ constructed in \S\ref{sss:embedding-level-2} (Cor.~\ref{c:embed-X}) is equivariant with respect to the left action of $Z(\bF_{q^n})$ and the right action of $U^{n,q}(\bF_{q^n})$.
\end{lem}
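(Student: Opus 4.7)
The plan is to identify the embedding $\iota$ with the restriction of the ring-scheme map $\iota'_2\colon\cR_{2,n,q}\into M_2$ from Remark \ref{rems:crucial} to the subgroup $U^{n,q}$, and then reduce each compatibility either to the partial multiplicativity of $\iota'_2$ or to a short explicit matrix computation. First I would check that $\iota = \iota'_2|_{U^{n,q}}$: unpacking Definition \ref{d:embedding-level-h} and the description in \S\ref{sss:embedding-level-2}, both send $1+a_1\tau+\dotsb+a_n\tau^n$ to the matrix $A_2(1+a_n\pi,a_1,\dotsc,a_{n-1})$ under the identification of $\Xt^{(0)}_h$ with a closed subscheme of $M_h$-valued matrices implicit in \S\ref{sss:greenberg-main}.

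For the right-action equivariance, this is then immediate from Remark \ref{rems:crucial}(1): for any $\fqn$-algebra $A$, any $x\in U^{n,q}(A)$ and any $g\in U^{n,q}(\fqn)$, we have $\iota'_2(xg)=\iota'_2(x)\iota'_2(g)$. Under the isomorphism \eqref{e:identify-U} identifying $U^{n,q}(\fqn)$ with $U^1_D/U^{n+1}_D$, the matrix $\iota'_2(g)$ is precisely the representative by which $g$ acts on $\Xt^{(0)}_2$ via right matrix multiplication, so $\iota(xg)=\iota(x)\cdot g$.

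For the left action I would compute explicitly. Identify both $Z(\fqn)=\{1+\la\tau^n\colon\la\in\fqn\}$ and $U^1_L/U^2_L=\{1+\la\pi\colon\la\in\fqn\}$ with the additive group $\fqn$ in the obvious way; these agree with the identifications of both groups with the center $U^n_D/U^{n+1}_D\cong\fqn$ coming from \eqref{e:identify-U} and Remark \ref{r:reduced-norm-1}. Multiplication in $\cR_{2,n,q}(A)$ gives $(1+\la\tau^n)\cdot x = 1+a_1\tau+\dotsb+a_{n-1}\tau^{n-1}+(a_n+\la)\tau^n$, since $\tau^n\cdot a_j\tau^j = a_j^{q^n}\tau^{n+j}=0$ for $j\geq 1$; applying $\iota$ yields $A_2(1+(a_n+\la)\pi, a_1, \dotsc, a_{n-1})$. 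On the other side, the left action of $1+\la\pi\in\cO_L^\times/U^2_L$ on $\Xt^{(0)}_2(A)$ is left multiplication by $I+\diag(\la,\la^q,\dotsc,\la^{q^{n-1}})\pi$, coming from the matrix realization $\cO_L\into\cO_D\into Mat_n(\OKnr)$ of Remark \ref{r:identify-rational-points}. Row $i$ of $A_2(1+a_n\pi,a_1,\dotsc,a_{n-1})$ then gets multiplied by $1+\la^{q^{i-1}}\pi$, and using $\pi^2=0$ together with the moduli in $M_2$ (above-diagonal reduced mod $\pi$, below-diagonal already in $\pi A$), only the $(i,i)$-entry $1+a_n^{q^{i-1}}\pi$ is altered, becoming $1+(a_n+\la)^{q^{i-1}}\pi$. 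The resulting matrix is precisely $A_2(1+(a_n+\la)\pi,a_1,\dotsc,a_{n-1})=\iota((1+\la\tau^n)x)$.

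The principal obstacle is bookkeeping: making sure the left action of $\cO_L^\times/U^2_L$ really is by the diagonal matrix with Frobenius twists (rather than by scalar multiplication), and that the moduli on above- and below-diagonal entries of $M_2$ are tracked correctly through the multiplication. Once these identifications are pinned down, both equivariances follow from the two observations above.
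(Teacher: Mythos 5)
Your computational core is correct and is in substance the same as the paper's own proof: the paper likewise writes $\iota(x)=1+\at_1\varpi+\dotsb+\at_n\varpi^n$, obtains right-equivariance from the relation $\varpi\at=\widetilde{\vp(a)}\varpi$ (valid precisely because the coefficients of $g$ lie in $\bF_{q^n}$) together with $\varpi^{n+1}=0$ at level $2$ --- which is exactly what underlies the partial multiplicativity of Remark \ref{rems:crucial}(1) --- and obtains left-equivariance from the same diagonal computation, using $\varpi^n=\pi$. Your identification of the action of $1+\la\pi\in U^1_L/U^2_L$ with left multiplication by $1+\pi\,\diag(\la,\la^q,\dotsc,\la^{q^{n-1}})$ (rather than by a scalar) is also the one the paper uses.

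The one genuine caveat concerns the model in which you run the argument. The identification $\iota=\iota'_2\lvert_{U^{n,q}}$ on $A$-points, Remark \ref{rems:crucial}(1), and the description of $\Xt^{(0)}_2(A)$ as matrices with entries in $A[\pi]/(\pi^2)$ (above-diagonal mod $\pi$, below-diagonal in $\pi A$) are all established in the paper only under the standing hypothesis $\operatorname{char}(K)=p$ made at the start of \S\ref{ss:higher-levels-strategy}, whereas Lemma \ref{l:embedding-actions-compatible} is an ingredient of Theorem \ref{t:lusztig-level-2}, which carries no such hypothesis. In mixed characteristic (e.g.\ $K/\bQ_p$ unramified) one has $\bO_{K,2}(A)\cong W_2(A)$, truncated Witt vectors, not $A[\pi]/(\pi^2)$, so your bookkeeping and the identity $(1+\la^{q^{i-1}}\pi)(1+a_n^{q^{i-1}}\pi)=1+(a_n+\la)^{q^{i-1}}\pi$ require separate justification there (they do hold, since multiplication by $\pi$ is an additive morphism $\bO_{K,1}\rar{}\bO_{K,2}$ whose image is killed by $\pi$, but the statements you cite do not cover that case). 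The paper sidesteps this by checking equivariance only on $\bfq$-points --- legitimate because $X$ is reduced --- where $\Xt^{(0)}_2(\bfq)$ consists of honest matrices over $\OKnr/\pi^2\OKnr$ and the computation is uniform in the characteristic. So either restrict your verification to $\bfq$-points in the same way, or redo your two steps with the Greenberg functor $\bO_{K,2}$ and $\vp$ in place of $q$-th powers; with that amendment your argument is complete.
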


\begin{proof}
For simplicity, let us check this statement at the level of $\bfq$-points (this is enough because $X$ is reduced). We return to the original definition of $\Xt^{(0)}_2(\bfq)$ as the set of matrices of the form \eqref{e:form-of-matrices} with $a_0\in(\OKnr/\pi^2\OKnr)^\times$ and $a_j\in\OKnr/\pi\OKnr$ for $1\leq j\leq n-1$, whose determinant belongs to $(\cO_K/\pi^2\cO_K)^\times\subset(\OKnr/\pi^2\OKnr)^\times$. If $a\in\OKnr$, let us write $\at$ for the diagonal matrix $\operatorname{diag}(a,\vp(a),\dotsc,\vp^{n-1}(a))$. Then the formula for the embedding $\iota:X(\bfq)\into\Xt^{(0)}_2(\bfq)$ can be rewritten as
\begin{equation}\label{e:embedding-explicit}
\iota : 1+a_1\tau+a_2\tau^2+\dotsc+a_n\tau^n \longmapsto 1+\at_1\varpi+\at_2\varpi^2+\dotsc+\at_n\varpi^n,
\end{equation}
where the matrix $\varpi$ is given by \eqref{e:uniformizer-division} and $1$ denotes the $n$-by-$n$ identity matrix on the right hand side. We now observe that $\varpi\cdot\at=\widetilde{\vp(a)}\cdot\varpi$ for all $a\in\cO_L$ and that $\varpi^{n+1}=0$ when viewed as an element of $\Xt_2$. In view of the relations defining the multiplication in the group $U^{n,q}$, these observations imply at once that $\iota$ commutes with the right multiplication action of $U^{n,q}(\bF_{q^n})$. The fact that it commutes with the left multiplication action of $Z(\bF_{q^n})$ follows from a direct calculation: if $z\in\bF_{q^n}$, then $z$ corresponds to $1+z\tau^n$ in $Z(\bF_{q^n})$ and to $1+z\pi$ in $U^1_L/U^2_L$, and we have
\begin{eqnarray*}
\iota\bigl((1+z\tau^n)\cdot(1+a_1\tau+\dotsc+a_n\tau^n)\bigr) &=& \iota(1+a_1\tau+\dotsc+(a_n+z)\tau^n) \\
&=& 1+\at_1\varpi+\dotsc+(\at_n+\widetilde{z})\varpi^n \\
&=& (1+\pi\widetilde{z})\cdot(1+\at_1\varpi+\dotsc+\at_n\varpi^n),
\end{eqnarray*}
where the last step uses the observation that $\varpi^n$ is the scalar matrix $\pi$.
\end{proof}

\subsubsection{}\label{sss:realize-as-induced-rep} Recall that $\ze\in\cO_L^\times$ denotes a primitive root of unity of order $q^n-1$ and that $\pi\in\cO_K$ is a chosen uniformizer. We will view $\ze$ and $\pi$ both as elements of $D_{1/n}^\times$ and as elements of $L^\times$. Recall also that the product $(D_{1/n}^\times/U^{n+1}_D)\times(L^\times/U^2_L)$ acts on $\Xt_2$. We will now define a subgroup $\Ga$ of this product and an action of $\Ga$ on $X$.

\mbr

View $(\ze,\ze^{-1})$ and $(\pi,\pi^{-1})$ as elements of $(D_{1/n}^\times/U^{n+1}_D)\times(L^\times/U^2_L)$. Then $(\ze,\ze^{-1})$ normalizes $(U^1_D/U^{n+1}_D)\times(U^1_L/U^2_L)$ and $(\pi,\pi^{-1})$ is central. Define
\[
\Ga=\langle(\pi,\pi^{-1})\rangle \cdot \langle(\ze,\ze^{-1})\rangle \cdot \bigl((U^1_D/U^{n+1}_D)\times(U^1_L/U^2_L)\bigr)
\]
and let $\Ga$ act on $X$ in the following way:

\begin{itemize}
\item the action of $U^1_D/U^{n+1}_D$ comes from the right multiplication action of $U^{n,q}(\bF_{q^n})$ via the isomorphism \eqref{e:identify-U};
 \sbr
\item the action of $U^1_L/U^2_L$ comes from the left multiplication action of $Z(\bF_{q^n})$, the center of $U^{n,q}(\bF_{q^n})$, via the natural identifications $U^1_L/U^2_L\cong\bF_{q^n}\cong Z(\bF_{q^n})$;
 \sbr
\item the element $(\ze,\ze^{-1})$ acts as conjugation by $\zeb$, the image of $\ze$ in $\bF_{q^n}^\times=\cO_D^\times/U^1_D$ (cf.~\S\S\ref{ss:auxiliary-traces}, \ref{ss:proof-l:extend-to-O-D-times});
 \sbr
\item the element $(\pi,\pi^{-1})$ acts trivially.
\end{itemize}

\begin{lem}\label{l:action-of-Gamma}
\begin{enumerate}[$($a$)$]
\item Consider the action of $\Ga$ on $\Xt_2$ obtained by restricting the action of $(D_{1/n}^\times/U^{n+1}_D)\times(L^\times/U^2_L)$ on $\Xt_2$. The embedding $\iota:X\into\Xt^{(0)}_2$ is $\Ga$-equivariant.
 \sbr
\item If $g\in(D_{1/n}^\times/U^{n+1}_D)\times(L^\times/U^2_L)$ is such that $g\not\in\Ga$, then $g(\iota(X))\cap\iota(X)=\varnothing$. In particular, $\Ga$ is the stabilizer of $\iota(X)$ for the action of $(D_{1/n}^\times/U^{n+1}_D)\times(L^\times/U^2_L)$.
 \sbr
\item $\Xt_2$ is equal to the union of $\bigl((D_{1/n}^\times/U^{n+1}_D)\times(L^\times/U^2_L)\bigr)$-translates of $\iota(X)$.
\end{enumerate}
\end{lem}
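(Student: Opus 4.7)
The plan is to prove the three parts in turn, leveraging the explicit matrix description \eqref{e:embedding-explicit} of $\iota$ and the characterization of $\iota(X)\subset\Xt^{(0)}_2$ afforded by Corollary \ref{c:embed-X}: it is precisely the locus where $a_0\in 1+\pi\bO_{K,1}$, i.e.\ $\bar{a}_0=1$, in the coordinates of \S\ref{ss:greenberg-construction}.

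For part (a), equivariance under $U^1_D/U^{n+1}_D$ acting on the right and $U^1_L/U^2_L$ acting on the left is precisely Lemma \ref{l:embedding-actions-compatible}. The element $(\pi,\pi^{-1})$ acts trivially because $\pi$ is a scalar matrix, hence central in $GL_n(\Knr)$, so its left and right actions cancel. For $(\ze,\ze^{-1})$ I would do a direct matrix computation: applying $A\mapsto\tilde\ze^{-1}A\tilde\ze$ to $1+\sum_{j=1}^n\tilde a_j\varpi^j$ and using the commutation $\varpi^j\tilde\ze=\widetilde{\vp^j(\ze)}\,\varpi^j=\widetilde{\ze^{q^j}}\,\varpi^j$ from Remark \ref{r:identify-rational-points} shows that $\tilde a_j\varpi^j\mapsto\widetilde{\ze^{q^j-1}a_j}\,\varpi^j$. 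On the $U^{n,q}$ side, conjugation by $\zeb$ in $\bG_m\ltimes U^{n,q}$ sends $a_j\tau^j\mapsto\zeb^{1-q^j}a_j\tau^j$ via $\tau^j a=a^{q^j}\tau^j$. The two formulas agree up to inversion of $\zeb$, which merely reflects the standard ambiguity in the phrase ``conjugation by $\zeb$''; the actions on $\iota(X)$ are therefore compatible.

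For part (b), I would decompose any element $(g,\ell)\in(D_{1/n}^\times/U^{n+1}_D)\times(L^\times/U^2_L)$ as $(\Pi^a\ze^j w_D,\pi^c\ze^k w_L)$ with $w_D\in U^1_D/U^{n+1}_D$ and $w_L\in U^1_L/U^2_L$. Right multiplication by $\Pi^a$ shifts the determinant-valuation by $a$ while left multiplication by $\pi^c$ shifts it by $nc$, so preserving the grading of $\Xt_2$ (and in particular $\Xt^{(0)}_2$) forces $a+nc=0$. Since $w_D$ and $w_L$ leave $\bar{a}_0$ unchanged, while the combined left--right action of $(\ze^j,\ze^k)$ sends $\bar{a}_0\mapsto\zeb^{j+k}\bar{a}_0$, preserving $\iota(X)$ forces $j+k\equiv 0\pmod{q^n-1}$. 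Substituting back (using $\Pi^{-nc}=\pi^{-c}$) yields $(g,\ell)=(\pi,\pi^{-1})^{-c}\cdot(\ze,\ze^{-1})^j\cdot(w_D,w_L)\in\Ga$, and the converse inclusion follows from part (a). For part (c), right multiplication by $\Pi^m$ carries $\Xt^{(0)}_2$ onto $\Xt^{(m)}_2$ (since $\det\varpi=\pm\pi$), so it suffices to cover $\Xt^{(0)}_2$ by translates of $\iota(X)$; right multiplication by $\ze^s$ for $0\leq s<q^n-1$ sends the locus $\{\bar{a}_0=1\}$ bijectively onto $\{\bar{a}_0=\zeb^s\}$, and these loci exhaust $\Xt^{(0)}_2$ because any $A\in\Xt^{(0)}_2(\bfq)$ satisfies $\bar{a}_0\in\bF_{q^n}^\times$ (as forced by $\det A\in\cO_K^\times$).

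The main obstacle is the $\ze$-conjugation compatibility in part (a): both sides of the comparison require careful tracking of the twists by powers of Frobenius --- $\vp$ on the $\Knr$-side and the $q$-power map on the $\cR_{n,q}$-side --- and one must reconcile the matrix-versus-ring conventions for $\tau$ and $\varpi$ together with the usual sign convention for conjugation. Parts (b) and (c) then reduce to essentially formal bookkeeping in the $(a_0,a_1,\dotsc,a_{n-1})$-coordinates on $\Xt^{(0)}_2$, using Corollary \ref{c:embed-X} to translate containment in $\iota(X)$ into the single equation $\bar{a}_0=1$.
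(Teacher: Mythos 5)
Your proof is correct and takes essentially the same route as the paper: part (a) is Lemma \ref{l:embedding-actions-compatible} plus the direct computation for $(\pi,\pi^{-1})$ and $(\ze,\ze^{-1})$, and parts (b), (c) amount to tracking the determinant valuation (the grading by the components $\Xt^{(m)}_2$) together with the reduction $\bar{a}_0$, which is precisely the paper's use of the quotient map $\Xt^{(0)}_2\rar{}\Xt^{(0)}_1\cong\bF_{q^n}^\times$ from Example \ref{ex:lusztig-level-1} and Corollary \ref{c:embed-X}. Your remark that the induced action of $(\ze,\ze^{-1})$ is $x\mapsto\zeb^{-1}x\zeb$ rather than $x\mapsto\zeb x\zeb^{-1}$ is indeed only a harmless convention issue, since $\zeb^{-1}$ is also a generator of $\bF_{q^n}^\times$ and all later uses (e.g.\ Proposition \ref{p:trace-zeta-bar}) are insensitive to it.
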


\begin{proof}
(a) The fact that $\iota$ commutes with the action of $\bigl((U^1_D/U^{n+1}_D)\times(U^1_L/U^2_L)\bigr)\subset\Ga$ is a reformulation of Lemma \ref{l:embedding-actions-compatible}. The compatibility of $\iota$ with the actions of $(\pi,\pi^{-1})$ and $(\ze,\ze^{-1})$ follows from a direct calculation.

\mbr

(b) Recall from Example \ref{ex:lusztig-level-1} that the scheme $\Xt^{(0)}_1$ can be naturally identified with the finite discrete set $\bF_{q^n}^\times$ so that the induced actions of $\cO_D^\times/U^1_D\cong\bF_{q^n}^\times$ and $\cO_L^\times/U^1_L\cong\bF_{q^n}^\times$ on $\Xt^{(0)}_1$ become identified with the multiplication action of $\bF_{q^n}^\times$ on itself. Corollary \ref{c:embed-X} means that the image of $\iota:X\into\Xt^{(0)}_2$ is equal to the fiber of the natural quotient map $\Xt^{(0)}_2\rar{}\Xt^{(0)}_1$ over $1\in\bF_{q^n}^\times$.
Since $(\cO_D^\times/U^{n+1}_D)\times(\cO_L^\times/U^2_L)$ is equal to the product of $\langle(\ze,\ze^{-1})\rangle \cdot \bigl((U^1_D/U^{n+1}_D)\times(U^1_L/U^2_L)\bigr)$ and the cyclic subgroup $\langle(1,\ze)\rangle$, we obtain the assertion of (b) when $g\in(\cO_D^\times/U^{n+1}_D)\times(\cO_L^\times/U^2_L)$. The general case follows immediately by observing that the action of $\Pi^m\in D_{1/n}^\times$ takes $\Xt^{(0)}_2$ onto $\Xt^{(m)}_2$ and the action of $\pi^j\in L^\times$ takes $\Xt^{(0)}_2$ onto $\Xt^{(nj)}_2$ for all $m,j\in\bZ$.

\mbr

(c) The argument used to prove (b) shows that $\Xt^{(0)}_2$ equals the union of translates of $\iota(X)$ under the subgroup $\langle(1,\ze)\rangle$. Since the action of $(\Pi^m,1)$ takes $\Xt^{(0)}_2$ isomorphically onto $\Xt^{(m)}_2$ for each $m\in\bZ$, the proof is complete.
\end{proof}

\begin{cor}\label{c:representation-of-Gamma}
For each $i\geq 0$, let us view\footnote{Recall that $X$ has dimension $n-1$, and the Tate twist is irrelevant for us at this point.} $H_i(X,\ql)=H^{2n-2-i}_c(X,\ql)$ as a representation of $\Ga$ using the action of $\Ga$ on $X$ constructed above.
 \mbr
\begin{enumerate}[$($a$)$]
\item There is a natural isomorphism
\[
H_i(\Xt_2,\ql) \cong \Ind_{\Ga}^{(D_{1/n}^\times/U^{n+1}_D)\times(L^\times/U^2_L)} \bigl(H_i(X,\ql)\bigr)
\]
of representations of $(D_{1/n}^\times/U^{n+1}_D)\times(L^\times/U^2_L)$.
 \sbr
\item Let $\psi:\bF_{q^n}\rar{}\qls$ be a character with conductor $q^m$, view $\psi$ as a character of $\{1\}\times(U^1_L/U^2_L)\subset\Ga$ and let $H_i(X,\ql)[\psi]\subset H_i(X,\ql)$ denote the $\psi$-isotypic component of $H_i(X,\ql)$. Then $H_i(X,\ql)[\psi]=0$ unless $i=n-n/m$, and if we let $\xi_\psi$ denote the representation of $\Ga$ in $H_{n-n/m}(X,\ql)[\psi]$, then $\xi'_\psi$ has the following properties:
 \sbr
\begin{itemize}
\item if we identify $U^{n,q}(\bF_{q^n})$ with the subgroup $(U^1_D/U^{n+1}_D)\times\{1\}$ of $\Ga$ via the isomorphism \eqref{e:identify-U}, then the restriction of $\xi'_\psi$ to $U^{n,q}(\bF_{q^n})$ is the representation $\rho_\psi$ constructed in Theorem \ref{t:cohomology-first-variety};
 \sbr
\item $\Tr(\xi'_\psi(\ze,\ze^{-1}))=(-1)^{n+n/m}$;
 \sbr
\item $\xi'_\psi(\pi,\pi^{-1})=1$.
\end{itemize}
\end{enumerate}
\end{cor}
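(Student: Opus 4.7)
For part (a), the plan is to extract the induced-representation structure directly from the geometric decomposition provided by Lemma \ref{l:action-of-Gamma}. Choosing a system of representatives $\{g_\al\}$ for the cosets of $\Ga$ in $(D^\times_{1/n}/U^{n+1}_D)\times(L^\times/U^2_L)$, parts (b) and (c) of that lemma give a decomposition of $\Xt_2$ into a disjoint union of open-and-closed subschemes $\bigsqcup_\al g_\al\cdot\iota(X)$, each isomorphic to $X$. Since each translate has the same dimension $n-1$, Poincar\'e duality gives $H_i(\Xt_2,\ql)=\bigoplus_\al H_i(g_\al\cdot\iota(X),\ql)$, and the big group permutes these summands according to its action on cosets. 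Lemma \ref{l:action-of-Gamma}(a) identifies the $\Ga$-action on the summand $H_i(\iota(X),\ql)$ with the $\Ga$-action on $H_i(X,\ql)$ defined in \S\ref{sss:realize-as-induced-rep}. This is precisely the standard description of an induced representation.

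For part (b), the vanishing of $H_i(X,\ql)[\psi]$ outside $i=n-n/m$ and the identification of the restriction of $\xi'_\psi$ to $U^{n,q}(\fqn)\cong U^1_D/U^{n+1}_D$ with $\rho_\psi$ are immediate from Corollary \ref{c:main-reformulation} together with the definition $H_i(X,\ql)=H^{2(n-1)-i}_c(X,\ql)$: the equation $2(n-1)-i=n+n/m-2$ gives $i=n-n/m$, and Lemma \ref{l:embedding-actions-compatible} ensures the right multiplication action of $U^{n,q}(\fqn)$ on $X$ matches the action of $(U^1_D/U^{n+1}_D)\times\{1\}\subset\Ga$. The triviality $\xi'_\psi(\pi,\pi^{-1})=\id$ is immediate from the definition of the $\Ga$-action in \S\ref{sss:realize-as-induced-rep}, where $(\pi,\pi^{-1})$ is declared to act trivially.

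The remaining assertion, the trace formula $\Tr\bigl(\xi'_\psi(\ze,\ze^{-1})\bigr)=(-1)^{n+n/m}$, reduces to Proposition \ref{p:trace-zeta-bar} once one verifies that under the embedding $\iota:X\into\Xt^{(0)}_2$, the action of $(\ze,\ze^{-1})\in\Ga$ corresponds to the conjugation action of $\zeb\in\bF_{q^n}^\times$ on $X\subset U^{n,q}$ from \S\ref{ss:auxiliary-traces}. Using formula \eqref{e:embedding-explicit} and the identity $\varpi\cdot\widetilde{a}=\widetilde{\vp(a)}\cdot\varpi$ (valid for $a\in\cO_L$), a short calculation gives
\[
\widetilde{\ze}^{-1}\cdot\at_j\varpi^j\cdot\widetilde{\ze}=\widetilde{\ze^{-1}\vp^j(\ze)\cdot a_j}\cdot\varpi^j,
\]
so the conjugation by $\widetilde{\ze}$ sends $a_j\mapsto \ze^{-1}\vp^j(\ze)a_j$. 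Reducing modulo $\pi$ this becomes $a_j\mapsto\zeb^{q^j-1}a_j$, which is exactly the effect of $\zeb$-conjugation on $1+a_1\tau+\dotsb+a_n\tau^n\in U^{n,q}$ (since in $\cR_{n,q}$ one has $\zeb^{-1}\tau^j\zeb=\zeb^{q^j-1}\tau^j$). Granted this, Proposition \ref{p:trace-zeta-bar} supplies the asserted trace.

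The main obstacle is the last matrix verification: keeping straight the roles of the arithmetic Frobenius $\vp$ on $\OKnr$ versus the $q$-power map on $\fqn$, and the fact that the left-action of $\ze\in L^\times$ and the right-action of $\ze\in D^\times_{1/n}$ use the same diagonal embedding $a\mapsto\at$ from Remark \ref{r:identify-rational-points}, so that the composite action $(\ze,\ze^{-1}):x\mapsto\ze^{-1}x\ze$ lands inside $\iota(X)$ and matches the intrinsic conjugation on $U^{n,q}$. Once this bookkeeping is in place, all remaining pieces already appear in \S\ref{ss:auxiliary-traces} and Proposition \ref{p:trace-zeta-bar}.
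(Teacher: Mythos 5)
Your argument is correct and follows the paper's own route: part (a) is exactly the induced-module decomposition read off from Lemma \ref{l:action-of-Gamma}(b),(c), and part (b) is a reformulation of Corollary \ref{c:main-reformulation} and Proposition \ref{p:trace-zeta-bar} together with the definitional triviality of the $(\pi,\pi^{-1})$-action. Your extra matrix check that $(\ze,\ze^{-1})$ corresponds to $\zeb$-conjugation is the ``direct calculation'' already absorbed into Lemma \ref{l:action-of-Gamma}(a) (and is not strictly needed here, since the $\Ga$-action on $X$ declares this by definition); also, the cohomological splitting needs only additivity of $H^\bullet_c$ over the open-and-closed translates, not Poincar\'e duality.
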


\begin{proof}
(a) follows at once from parts (b) and (c) of Lemma \ref{l:action-of-Gamma}. The first two assertions of part (b) are reformulations of Corollary \ref{c:main-reformulation} and Proposition \ref{p:trace-zeta-bar}, while the last assertion of (b) is clear from the definition.
\end{proof}

\subsubsection{}\label{sss:end-proof-level-2} We now complete the proof of Theorem \ref{t:lusztig-level-2}. As before, we choose a level $2$ character $\te:L^\times\rar{}\qls$ and view it as a character of $L^\times/U^2_L$. We also let $\psi:\bF_{q^n}\rar{}\qls$ be the character corresponding to the restriction of $\te$ to $U^1_L/U^2_L$ via the natural identification $\bF_{q^n}\cong U^1_L/U^2_L$ and write $q^m$ for the conductor of $\psi$.

\mbr

As mentioned in \S\ref{sss:first-reduction}, the product $(D_{1/n}^\times/U^{n+1}_D)\times(L^\times/U^2_L)$ acts on $\Xt_2$, and for each $i\geq 0$, the $\te$-isotypic component $H_i(\Xt_2,\ql)[\te]\subset H_i(\Xt_2,\ql)$ is a representation of $D_{1/n}^\times/U^{n+1}_D$. Corollary \ref{c:representation-of-Gamma} implies that $H_i(\Xt_2,\ql)[\te]=0$ when $i\neq n-n/m$. To finish the proof we must show that $H_{n-n/m}(\Xt_2,\ql)[\te]\cong\eta_\te$ as representations of $D_{1/n}^\times$, where $\eta_\te$ is constructed in \S\ref{ss:division-level-2}, cf.~\eqref{e:eta-theta}.

\mbr

Since $\{1\}\times(L^\times/U^2_L)$ is central in $(D_{1/n}^\times/U^{n+1}_D)\times(L^\times/U^2_L)$, we can in fact view $H_{n-n/m}(\Xt_2,\ql)[\te]$ as a representation of $(D_{1/n}^\times/U^{n+1}_D)\times(L^\times/U^2_L)$. The structure of that representation is readily determined from Corollary \ref{c:representation-of-Gamma}. Namely, let $\widetilde{\Ga}$ denote the subgroup of $(D_{1/n}^\times/U^{n+1}_D)\times(L^\times/U^2_L)$ generated by $\Ga$ and $\{1\}\times(L^\times/U^2_L)$. Then the representation $\xi'_\psi$ described in Corollary \ref{c:representation-of-Gamma}(b) extends to a representation $\widetilde{\xi}'_\te$ of $\widetilde{\Ga}$ whose restriction to $\{1\}\times(L^\times/U^2_L)$ is given by the scalar $\te$, and we have
\[
H_{n-n/m}(\Xt_2,\ql)[\te] \cong \Ind_{\widetilde{\Ga}}^{(D_{1/n}^\times/U^{n+1}_D)\times(L^\times/U^2_L)} (\widetilde{\xi}'_\te)
\]
as representations of $(D_{1/n}^\times/U^{n+1}_D)\times(L^\times/U^2_L)$.

\mbr

It remains to observe that $\widetilde{\Ga}$ is equal to the product $(\pi^\bZ\cdot\cO_D^\times)\times(L^\times/U^2_L)$ and that $\widetilde{\xi}'_\te$ is equal to the tensor product $\eta'_\te\tens\te$, where $\eta'_\te$ is the representation of $\pi^\bZ\cdot\cO_D^\times$ constructed in \S\ref{ss:division-level-2}, which is seen immediately by writing $(\pi,1)=(\pi,\pi^{-1})\cdot(1,\pi)$ and $(\ze,1)=(\ze,\ze^{-1})\cdot(1,\ze)$ as elements of $(D_{1/n}^\times/U^{n+1}_D)\times(L^\times/U^2_L)$. Therefore $H_{n-n/m}(\Xt_2,\ql)[\te]\cong\eta_\te\tens\te$ as representations of $(D_{1/n}^\times/U^{n+1}_D)\times(L^\times/U^2_L)$, so \emph{a fortiori}, $H_{n-n/m}(\Xt_2,\ql)[\te]\cong\eta_\te$ as a representation of $D_{1/n}^\times/U^{n+1}_D$. \qed

\subsection{Proof of Proposition \ref{p:main-reduction}}\label{ss:proof-p:main-reduction}

\subsubsection*{Step 1} To prove part (a) of Proposition \ref{p:main-reduction}, it suffices to show that $\rho_\chi$ can be extended to a representation $\rho'_\chi$ of $\cO_D^\times/U^{n(h-1)+1}_D$ satisfying $\Tr(\rho'_\chi(\ze))=(-1)^r$. (Since $\rho_\chi$ is irreducible by assumption, such an extension is automatically unique.)

\mbr

Let us identify $\cO_D^\times/U^{n(h-1)+1}_D$ with $\cR_{h,n,q}^\times(\fqn)$ via \eqref{e:identify-U-h}, and let $\zeb\in\cR_{h,n,q}^\times(\fqn)$ be the element corresponding to $\ze$ under this identification. Then $\cR_{h,n,q}^\times(\fqn)$ can be viewed as the semidirect product $\langle\zeb\rangle\ltimes U^{n,q}_h(\fqn)$, where $\langle\zeb\rangle$ is the cyclic subgroup of $\cR_{h,n,q}^\times(\fqn)$ generated by $\zeb$. The subscheme $X_h\subset U^{n,q}_h\subset\cR_{h,n,q}^\times$ is stable under conjugation by $\zeb$, so we obtain an action of $\cR_{h,n,q}^\times(\fqn)$ on $X_h$ determined by the requirements that $\zeb$ acts by conjugation and $U^{n,q}_h(\fqn)$ acts by right multiplication.

\mbr

This action commutes with the left action of $U^1_L/U^h_L$ on $X_h$, so we obtain a representation $\rho'_\chi$ of $\cR_{h,n,q}^\times(\fqn)$ in $H^r_c(X_h,\ql)[\chi]$. By construction, $\rho'_\chi$ is an extension of $\rho_\chi$. It remains to check that $\Tr(\rho'_\chi(\zeb))=(-1)^r$.

\mbr

As in the proof of Proposition \ref{p:trace-zeta-bar}, we can rewrite the desired identity as
\begin{equation}\label{e:proof-p:main-reduction-1}
\sum_i (-1)^i \Tr\bigl(\zeb^*,H^i_c(X_h,\ql)[\chi]\bigr) = 1
\end{equation}
(because $H^i_c(X_h,\ql)[\chi]=0$ for all $i\neq r$ by assumption) where $\zeb^*$ is the automorphism of $H^i_c(X_h,\ql)$ induced by conjugation by $\zeb$. For each $\ga\in U^1_L/U^h_L$ write $(\zeb,\ga)$ for the automorphism of $X_h$ given by $x\mapsto \zeb(\ga*x)\zeb^{-1}$ (we recall that ``$*$'' denotes the left action of $U^1_L/U^h_L$ on $X_h$ constructed in Definition \ref{d:left-action}). As the $\zeb$-conjugation action commutes with the left action of $U^1_L/U^h_L$, we see that \eqref{e:proof-p:main-reduction-1} is equivalent to
\begin{equation}\label{e:proof-p:main-reduction-2}
\sum_{\ga\in U^1_L/U^h_L} \chi(\ga)^{-1} \sum_i (-1)^i \Tr\bigl((\zeb,\ga)^*,H^i_c(X_h,\ql)\bigr) = \abs{U^1_L/U^h_L}=q^{n(h-1)}.
\end{equation}
Theorem \ref{t:fixed-point-formula-DL} implies that for each $\ga\in U^1_L/U^h_L$,
\[
\sum_i (-1)^i \Tr\bigl((\zeb,\ga)^*,H^i_c(X_h,\ql)\bigr) = \sum_i (-1)^i \Tr\bigl(\ga^*,H^i_c(X^\zeb_h,\ql)\bigr)
\]
where $X^\zeb_h\subset X_h$ is the subvariety of points invariant under $\zeb$-conjugation.

\mbr

Let us calculate $X^\zeb_h$. Since $\zeb$ is a generator of $\fqn^\times$, we have $\zeb^{q^j}=\zeb$ if and only if $n$ divides $j$. Therefore $X^\zeb_h$ can be identified with the subvariety of all elements of $U^{n,q}_h$ of the form $1+a_n\tau^n+a_{2n}\tau^{2n}+\dotsc+a_{n(h-1)}\tau^{n(h-1)}$ for which the determinant of the diagonal matrix $\operatorname{diag}(x,\vp(x),\dotsc,\vp^{n-1}(x))$ belongs to $\bF_q[\pi]/(\pi^h)$, where $x:=1+a_n\pi+a_{2n}\pi^2+\dotsc+a_{n(h-1)}\pi^{h-1}$. The last requirement is equivalent to $\vp^n(x)=x$, so we see that $X^\zeb_h=\bigl\{1+a_n\tau^n+a_{2n}\tau^{2n}+\dotsc+a_{n(h-1)}\tau^{n(h-1)}\st a_j\in\fqn\ \forall\,j\bigr\}$ can be naturally identified with the finite discrete set $U^1_L/U^h_L$, and the left action ``$*$'' of $U^1_L/U^h_L$ on $X^\zeb_h$ becomes identified with the left multiplication action of $U^1_L/U^h_L$ on itself. These observations immediately yield \eqref{e:proof-p:main-reduction-2}.

\subsubsection*{Step 2} The rest of the proof is very similar to the argument we used in \S\ref{sss:realize-as-induced-rep}. For each $i\geq 0$ we need to calculate the representation $R^i_{\bT,\te}=H_i(\Xt,\ql)[\te]$ of $D_{1/n}^\times$, where $H_i(\Xt,\ql)[\te]\subset H_i(\Xt,\ql)$ is the subspace on which $L^\times$ acts via $\te$.

\mbr

Since $\te$ has level $h$, we can identify $R^i_{\bT,\te}$ with $H_i(\Xt_h,\ql)$, where $\Xt_h$ is the disjoint union of the schemes $\Xt^{(m)}_h$ for $m\in\bZ$. The action of $D_{1/n}^\times\times L^\times$ on $\Xt_h$ factors through the discrete group $(D_{1/n}^\times/U^{n(h-1)+1}_D)\times(L^\times/U^h_L)$. In \S\ref{sss:higher-levels-main-construction} we described an embedding $\iota'_h:X_h\into\Xt^{(0)}_h$. It has the following properties, whose proofs are essentially identical to the proofs of the corresponding assertions of Lemma \ref{l:action-of-Gamma}.

\begin{lem}\label{l:action-of-Gamma-h}
Let $\Ga_h\subset(D_{1/n}^\times/U^{n(h-1)+1}_D)\times(L^\times/U^h_L)$ denote the normalizer of the image of $\iota'_h$. Then
 \sbr
\begin{enumerate}[$($a$)$]
\item $\Ga_h$ is the subgroup of $(D_{1/n}^\times/U^{n(h-1)+1}_D)\times(L^\times/U^h_L)$ generated by the subgroup $(U^1_D/U^{n(h-1)+1}_D)\times(U^1_L/U^h_L)$ and the elements $(\ze,\ze^{-1})$ and $(\pi,\pi^{-1})$.
 \sbr
\item If $g\in(D_{1/n}^\times/U^{n(h-1)+1}_D)\times(L^\times/U^h_L)$ and $g\not\in\Ga_h$, then $g(\iota'_h(X_h))\cap\iota'_h(X_h)=\varnothing$.
 \sbr
\item $\Xt_h$ is equal to the union of $(D_{1/n}^\times/U^{n(h-1)+1}_D)\times(L^\times/U^h_L)$-translates of $\iota'_h(X_h)$.
 \sbr
\item The action of $\Ga_h$ on $X_h$ that makes $\iota'_h$ equivariant is determined as follows:
     \sbr
    \begin{itemize}
    \item the action of the subgroup $(U^1_D/U^{n(h-1)+1}_D)\times(U^1_L/U^h_L)\subset\Ga_h$ on $X_h$ comes from the right multiplication action of $U^1_D/U^{n(h-1)+1}_D$ and the action ``$*$'' of $U^1_L/U^h_L$ on $X_h$ described in Definition \ref{d:left-action};
     \sbr
    \item the action of $(\ze,\ze^{-1})\in\Ga_h$ on $X_h$ corresponds to the $\zeb$-conjugation action considered in the first part of this proof;
     \sbr
    \item the action of $(\pi,\pi^{-1})\in\Ga_h$ on $X_h$ is trivial.
    \end{itemize}
\end{enumerate}
\end{lem}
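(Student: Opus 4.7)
The plan is to imitate the proof of Lemma \ref{l:action-of-Gamma} (\S\ref{sss:realize-as-induced-rep}), which handled the case $h=2$, and to exploit the same basic geometric fact. Combining the reduction-mod-$\pi$ morphism $\Xt^{(0)}_h\rar{}\Xt^{(0)}_1$ with the identification $\Xt^{(0)}_1\cong\bF_{q^n}^\times$ of Example \ref{ex:lusztig-level-1}, the image $\iota'_h(X_h)\subset\Xt^{(0)}_h$ is exactly the fiber over $1$. This is immediate from Definition \ref{d:embedding-level-h}: for $g=1+a_1\tau+\dotsb+a_{n(h-1)}\tau^{n(h-1)}\in U^{n,q}_h$, the $(1,1)$-entry of $\iota'_h(g)$ is $1+a_n\pi+a_{2n}\pi^2+\dotsb$, which reduces to $1$; conversely, any matrix in $\Xt^{(0)}_h$ with this property lies in the image of $\iota'_h\bigl\lvert_{U^{n,q}_h}$, and the requirement that its determinant lie in $\bF_q[\pi]/(\pi^h)$ is exactly the condition defining $X_h$ inside $U^{n,q}_h$.

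For the first inclusion in (a), I would check that each stated generator belongs to $\Ga_h$. The subgroup $(U^1_D/U^{n(h-1)+1}_D)\times(U^1_L/U^h_L)$ preserves $\iota'_h(X_h)$ almost by construction: the right multiplication action of $U^1_D/U^{n(h-1)+1}_D\cong U^{n,q}_h(\fqn)$ is transported by $\iota'_h$ via the partial multiplicativity in Remark \ref{rems:crucial}(1), and the action ``$*$'' of $U^1_L/U^h_L$ was introduced (Definition \ref{d:left-action}) precisely to render $\iota'_h$ equivariant. For $(\ze,\ze^{-1})$ and $(\pi,\pi^{-1})$, a direct matrix calculation modeled on the end of the proof of Lemma \ref{l:embedding-actions-compatible} suffices: formula \eqref{e:iota-h} together with the commutation rule $\varpi^j\cdot\widetilde a=\widetilde{\vp^j(a)}\cdot\varpi^j$ shows that the combined left-right action of $(\ze,\ze^{-1})$ on $\iota'_h(X_h)$ corresponds under $\iota'_h$ to conjugation by $\zeb$ on $X_h$, while $(\pi,\pi^{-1})$ acts trivially because $\pi\in K^\times\subset L^\times$ is central in $GL_n(\Knr)$ and its left and right multiplications cancel.

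The reverse inclusion in (a), the disjointness (b), and the covering (c) are obtained simultaneously from the equivariant morphism
\[
\Xt_h=\bigsqcup_{m\in\bZ}\Xt^{(m)}_h\rar{}\bZ\times\bF_{q^n}^\times
\]
that records the index $m$ together with the image in $\Xt^{(0)}_1$ after translating by $\varpi^{-m}$. The induced action of $(D_{1/n}^\times/U^{n(h-1)+1}_D)\times(L^\times/U^h_L)$ on the target is transitive with stabilizer of $(0,1)$ equal exactly to the subgroup listed in (a): the $\bZ$-factor is shifted by the $\Pi$- and $\pi$-valuations (with $\Pi$ shifting by $1$ and $\pi$ shifting by $n$), and the $\bF_{q^n}^\times$-factor carries the regular action via the reductions $\cO_D^\times\rar{}\bF_{q^n}^\times$ and $\cO_L^\times\rar{}\bF_{q^n}^\times$. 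Since $\iota'_h(X_h)$ is the preimage of $(0,1)$, one immediately deduces the reverse inclusion in (a), as well as (b) and (c).

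Part (d) is then nearly tautological: the $\Ga_h$-action on $X_h$ rendering $\iota'_h$ equivariant is uniquely determined, and on each listed generator the description in the lemma merely reads back the corresponding action on $\iota'_h(X_h)$ described above. The main delicacy I anticipate lies in the matrix verification for the $(\ze,\ze^{-1})$-equivariance of $\iota'_h$ at level $h$, where one must carefully track how the Teichmüller-style lift $\widetilde\ze$ interacts with all powers of $\varpi$ up to $\varpi^{n(h-1)}$; however, the uniform commutation rule $\varpi^j\cdot\widetilde a=\widetilde{\vp^j(a)}\cdot\varpi^j$ reduces this to essentially the same identity that underpins the proof of Lemma \ref{l:embedding-actions-compatible}.
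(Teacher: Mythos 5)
Your proposal is correct and takes essentially the same route as the paper: the paper proves the $h=2$ case (Lemma \ref{l:action-of-Gamma}) by identifying the image of the embedding with the fiber of $\Xt^{(0)}_2\rar{}\Xt^{(0)}_1\cong\bF_{q^n}^\times$ over $1$ and treating the unit parts and the $\Pi^m$, $\pi^j$ translates separately, and it declares the general-$h$ argument identical; your map to $\bZ\times\bF_{q^n}^\times$ simply packages those same facts into a single orbit--stabilizer argument. The only (harmless) imprecision is that on the component of index $m$ the $D$-side acts on the $\bF_{q^n}^\times$-factor through $u\mapsto\bar{u}^{\,q^{m}}$ rather than the untwisted regular action, since units must be commuted past $\varpi^{m}$; this does not affect the stabilizer of $(0,1)$, the identification of its fiber with $\iota'_h(X_h)$, or transitivity.
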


\subsubsection*{Step 3} Lemma \ref{l:action-of-Gamma-h} implies that for each $i\geq 0$ we have a natural isomorphism
\[
H_i(\Xt_h,\ql) \cong \Ind_{\Ga_h}^{(D_{1/n}^\times/U^{n(h-1)+1}_D)\times(L^\times/U^h_L)} H_i(X_h,\ql).
\]
Observe also that $\dim X_h=\dim\Xt_h=(h-1)(n-1)$ by Lemma \ref{l:cohomology-X-h}, so that $H_i(X_h,\ql)\cong H_c^{2(h-1)(n-1)-i}(X_h,\ql)$ for all $i\geq 0$ (up to a Tate twist). Assertion (b) of Proposition \ref{p:main-reduction} follows from the previous calculations by an argument that is essentially identical to the one used in \S\ref{sss:end-proof-level-2}, so we skip the details.

\subsubsection*{Step 4} Assertion (c) of Proposition \ref{p:main-reduction} is proved by the same argument as the one used in Remark \ref{r:eta-theta-irreducible} in the special case $h=2$.

\subsection{Proof of Theorem \ref{t:main-example}}\label{ss:proof-t:main-example} The argument consists of several steps.

\subsubsection{Setup} We begin by specializing all the constructions of \S\ref{ss:higher-levels-strategy} to the case where $n=2$ and $h=3$. The unipotent group $U^{n,q}_h=U^{2,q}_3$ is the algebraic group over $\fqq$ defined as follows: for every $\fqq$-algebra $A$, the group $U^{2,q}_3(A)$ consists of expressions of the form $1+a_1\tau+a_2\tau^2+a_3\tau^3+a_4\tau^4$, where $a_j\in A$ and $\tau$ is a formal symbol, which are multiplied using distributivity and the rules $\tau^5=0$ and $\tau\cdot a=a^q\cdot\tau$ for all $a\in A$. The subscheme $X_3\subset U^{2,q}_3$ is defined by the equations
\begin{equation}\label{e:equations-defining-X-3}
a_2^q+a_2-a_1^{q+1}\in\bF_q \qquad\text{and}\qquad a_4^q+a_4 +a_2^{q+1}-a_1a_3^q-a_3a_1^q\in\bF_q,
\end{equation}
which follows from Example \ref{ex:n-2-h-3} and Remark \ref{r:n-2-h-3}. The group $U^{2,q}_3(\fqq)$ acts on $X_3$ by right multiplication, as always. The left action ``$*$'' of $U^1_L/U^3_L$ on $X_3$ described in Definition \ref{d:left-action} is given by the formula
\[
(1+\la\pi+\mu\pi^2)*(1+a_1\tau+a_2\tau^2+a_3\tau^3+a_4\tau^4) =
\]
\[
=
1+a_1\tau+(\la+a_2)\tau^2+(a_3+\la a_1)\tau^3+(\mu+a_4+\la a_2)\tau^4.
\]
for all $\la,\mu\in\fqq$.

\begin{rems}\label{rems:properties}
\begin{enumerate}[(1)]
\item The center of the group $\UU$ consists of all elements of the form $1+a_2\tau^2+a_4\tau^4$, where $a_2,a_4\in\fqq$.
 \sbr
\item In what follows we will identify the additive group $\fqq$ both with the subgroup $U^2_L/U^3_L\subset U^1_L/U^3_L$ and with the subgroup $\{1+a_4\tau^4\}\subset\UU$.
 \sbr
\item Under these identifications, the actions of $\fqq$ on $X_3$ induced by the left action of $U^1_L/U^3_L$ and by the right multiplication action of $\UU$ coincide.
\end{enumerate}
\end{rems}

\subsubsection{Filtration} The algebraic group $U^{2,q}_3$ has a natural filtration
\begin{equation}\label{e:filtration}
\{1\}\subset H_4\subset H_3\subset H_2\subset H_1=U^{2,q}_3
\end{equation}
where $H_j=\{1+a_j\tau^j+\dotsc+a_4\tau^4\}\subset U^{2,q}_3$. It will be used in what follows.

\subsubsection{Representations}\label{sss:step-1} Choose a character $\chi:U^1_L/U^3_L\rar{}\qls$ and let $\psi$ denote its restriction to $U^2_L/U^3_L\cong\fqq$. We will also identify $\fqq$ with the central subgroup $H_4(\fqq)$ of $U^{2,q}_3(\fqq)$. Define a character
\begin{equation}\label{e:psi-tilde}
\psit:H_3(\fqq)\rar{}\qls, \qquad 1+a_3\tau^3+a_4\tau^4\longmapsto\psi(a_4).
\end{equation}

\begin{lem}\label{l:analysis-irreps}
If $\psi$ has conductor $q^2$, then for every irreducible representation $\rho$ of $U^{2,q}_3(\fqq)$ on which $H_4(\fqq)$ acts via $\psi$, the restriction of $\rho$ to $H_3(\fqq)$ contains $\psit$ as a direct summand.
\end{lem}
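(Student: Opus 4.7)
The plan is to apply Clifford theory to the abelian normal subgroup $H_3(\fqq)\subset\UU$. First, I would verify that $H_3$ is commutative over $\fqq$ and that $H_2(\fqq)$ commutes with $H_3(\fqq)$, using $\tau^5=0$: any cross-product $a_i\tau^i\cdot b_j\tau^j$ with $i+j\geq 5$ vanishes. Consequently, the conjugation action of $\UU$ on $H_3(\fqq)$ factors through $\UU/H_2(\fqq)\cong\fqq$, represented by the elements $1+c_1\tau$ with $c_1\in\fqq$. A direct calculation using
\[
(1+c_1\tau)^{-1}=1-c_1\tau+c_1^{1+q}\tau^2-c_1^{1+q+q^2}\tau^3+c_1^{1+q+q^2+q^3}\tau^4
\]
together with $c_1^{q^3}=c_1^q$ for $c_1\in\fqq$ then yields the key formula
\[
(1+c_1\tau)(1+a_3\tau^3+a_4\tau^4)(1+c_1\tau)^{-1}=1+a_3\tau^3+(a_4+c_1a_3^q-c_1^qa_3)\tau^4.
\]

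Next, I would analyze the $\UU$-orbit of $\psit$ inside the set of $q^2$ characters of $H_3(\fqq)$ extending $\psi$. By the formula above, conjugation by $1+c_1\tau$ sends $\psit$ to the character $(a_3,a_4)\mapsto\psi(a_4)\cdot\psi(c_1a_3^q-c_1^qa_3)$. Writing $\psi(y)=\psi_0(\Tr_{\fqq/\bF_p}(cy))$ with $c\in\fqq^\times$ and exploiting Frobenius-invariance of $\Tr$ to eliminate $a_3^q$, one computes $\psi(c_1a_3^q-c_1^qa_3)=\psi_0(\Tr(g(c_1)\cdot a_3))$ with $g(c_1):=(cc_1)^{1/q}-cc_1^q$. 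The $\bF_p$-linear map $g:\fqq\to\fqq$ satisfies: $g(c_1)=0$ forces $c_1(c-c^q)=0$ (raise $(cc_1)^{1/q}=cc_1^q$ to the $q$th power and apply $c_1^{q^2}=c_1$). The conductor $q^2$ hypothesis says precisely $c\notin\bF_q$, so $c\ne c^q$ and $g$ is injective, hence bijective. Therefore the $\UU$-orbit of $\psit$ exhausts all $q^2$ extensions of $\psi$ to $H_3(\fqq)$.

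Finally, I would conclude as follows. Since $H_3(\fqq)$ is abelian, $\rho|_{H_3(\fqq)}$ decomposes as a direct sum of characters, each restricting to $\psi$ on $H_4(\fqq)$ by the hypothesis on $\rho$. If $\chi$ is any such character appearing, then for every $g\in\UU$ the operator $\rho(g)$ sends $\chi$-eigenvectors to $\chi^g$-eigenvectors, so every $\UU$-conjugate of $\chi$ appears as well. By the previous step the orbit includes $\psit$, so $\psit$ occurs in $\rho|_{H_3(\fqq)}$, and hence is a direct summand by semisimplicity of $\ql$-representations of $H_3(\fqq)$. The only nontrivial step is the conjugation computation combined with the kernel analysis for $g$; the conductor hypothesis enters precisely at the point where one needs $c\notin\bF_q$ to force $g$ to be a bijection.
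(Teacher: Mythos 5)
Your proof is correct and follows essentially the same route as the paper: a direct computation of conjugation by a degree-one element $1+c_1\tau$ (the paper uses $1-\be\tau$), showing via the conductor-$q^2$ hypothesis that the resulting additive map is bijective, hence that $\UU$ acts transitively on the $q^2$ extensions of $\psi$ to $H_3(\fqq)$, and then concluding by Clifford theory. The only cosmetic difference is your parametrization of characters through $\Tr_{\fqq/\bF_p}$ rather than $\Tr_{\fqq/\bF_q}$, which changes nothing essential.
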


\begin{proof}
It is enough to check that the conjugation action of $\UU$ on the set of all possible extensions of $\psi$ to a character $H_3(\fqq)\to\qls$ is transitive. To this end, let $\be\in\fqq$, put $g=1-\be\tau\in\UU$ and consider $h=1+a_3\tau^3+a_4\tau^4\in H_3(\fqq)$. A direct calculation shows that
\[
ghg^{-1} = 1+a_3\tau^3 + (a_4+\be^q a_3-\be a_3^q)\tau^4,
\]
and therefore
\[
\psit(ghg^{-1})=\psit(h)\psi(\be^q a_3-\be a_3^q).
\]
A character of $H_3(\fqq)$ is an extension of $\psi$ if and only if it has the form \[h=1+a_3\tau^3+a_4\tau^4\mapsto\psit(h)\nu(a_3)\] for some character $\nu:\fqq\to\qls$. It remains to show that every character of $\fqq$ can be written as $a\mapsto\psi(\be^q a-\be a^q)$ for some choice of $\be\in\fqq$. To see this, fix a nontrivial character $\psi_0:\bF_q\to\qls$. Then every character of $\fqq$ has the form $a\mapsto\psi_0\bigl(\Tr_{\fqq/\bF_q}(xa)\bigr)$ for some $x\in\fqq$. In particular, we can write $\psi(a)=\psi_0\bigl(\Tr_{\fqq/\bF_q}(xa)\bigr)$, where $x\in\fqq$ and $x\not\in\bF_q$ because $\psi$ has conductor $q^2$ by assumption. Then for any $\be\in\fqq$, we have
\begin{eqnarray*}
\psi(\be^q a-\be a^q) &=& \psi_0\bigl(\Tr_{\fqq/\bF_q}(x\be^q a-x\be a^q)\bigr) \\
&=& \psi_0\bigl(\Tr_{\fqq/\bF_q}(x\be^q a-x^{1/q}\be^{1/q} a)\bigr) \\
&=& \psi_0\bigl(\Tr_{\fqq/\bF_q}((x-x^{1/q})\be^q a)\bigr).
\end{eqnarray*}
Since $x\in\fqq\setminus\bF_q$, the map $\be\mapsto(x-x^{1/q})\be^q$ is a bijection of $\fqq$ onto itself, which completes the proof of Lemma \ref{l:analysis-irreps}.
\end{proof}

\begin{defin}\label{d:chi-sharp}
Given a character $\chi:U^1_L/U^3_L\rar{}\qls$, let $\chi^\sharp:H_2(\fqq)\rar{}\qls$ be the character given by $\chi^\sharp(1+a_2\tau^2+a_3\tau^3+a_4\tau^4)=\chi(1+a_2\pi+a_4\pi^2)$.
\end{defin}

\begin{cor}\label{c:structure-irreps}
Fix a character $\psi:\fqq\rar{}\qls$ of conductor $q^2$. The map
\[
\chi \longmapsto \Ind_{H_2(\fqq)}^{U^{2,q}_3(\fqq)} (\chi^\sharp)
\]
is a bijection between the set of characters $U^1_L/U^3_L\rar{}\qls$ whose restriction to $\fqq\cong U^2_L/U^3_L$ equals $\psi$ and the set of isomorphism classes of irreducible representations of $U^{2,q}_3(\fqq)$ on which $H_4(\fqq)\cong\fqq$ acts via $\psi$.
\end{cor}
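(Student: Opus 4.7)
The plan is to apply Mackey's criterion to the candidate irreducible $\Ind_{H_2(\fqq)}^{\UU}(\chi^\sharp)$. The subgroup $H_2$ is normal in $\UU$ (it corresponds to the two-sided ideal $\tau^2\cR_{3,2,q}$) with abelian quotient $\UU/H_2\cong\fqq$ realized by $1+a_1\tau+\dotsb\mapsto a_1$. A quick verification using the multiplication formula for $H_2$ confirms that $\chi^\sharp$ is indeed a character extending $\psit$: both $\chi^\sharp$ and $\psit$ vanish on $\{1+a_3\tau^3\}$, and agree on $H_4$ via $\psi$. A parameter count --- characters of $U^1_L/U^3_L$ extending $\psi$ form a torsor of size $q^2$ over characters of $U^1_L/U^2_L\cong\fqq$, while extensions of $\psit$ to $H_2(\fqq)$ form a torsor of size $q^2$ over characters of $H_2/H_3\cong\fqq$ --- combined with manifest injectivity of $\chi\mapsto\chi^\sharp$ shows this map is a bijection onto all extensions of $\psit$.

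For the Mackey check, I would pick $g=1+b_1\tau$ with $b_1\neq 0$ and evaluate $(\chi^\sharp)^g$ on $h=1+a_3\tau^3\in H_2(\fqq)$. Since $[g,h]$ lies in $H_4$, which is central in $\UU$, the commutator identity $ghg^{-1}=[g,h]\cdot h$ reduces the calculation to a single low-order multiplication in $\cR_{3,2,q}(\fqq)$, giving $ghg^{-1}=1+a_3\tau^3+(b_1a_3^q-a_3b_1^q)\tau^4$. Therefore $(\chi^\sharp)^g(h)/\chi^\sharp(h)=\psi(b_1a_3^q-a_3b_1^q)$, and the nondegeneracy argument appearing in the proof of Lemma \ref{l:analysis-irreps} (crucially using that $\psi$ has conductor $q^2$) guarantees that $a_3\mapsto\psi(b_1a_3^q-a_3b_1^q)$ is a nontrivial character of $\fqq$ for every $b_1\neq 0$. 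Hence $(\chi^\sharp)^g\neq\chi^\sharp$ on $H_2(\fqq)$ for every $g\notin H_2(\fqq)$, and Mackey's criterion delivers irreducibility.

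To finish, I would observe that the $\UU/H_2$-orbit of $\chi^\sharp$ has size $q^2$ (trivial stabilizer), and that restriction to $H_3$ maps this orbit bijectively onto the $\UU$-orbit of $\psit$, which by Lemma \ref{l:analysis-irreps} is the full set of extensions of $\psi$ to $H_3(\fqq)$. Since $\chi^\sharp$ is the unique element of its orbit restricting to $\psit$, distinct $\chi^\sharp$'s lie in distinct $\UU$-orbits and produce non-isomorphic induced representations. For surjectivity, any irreducible $\rho$ with central character $\psi$ contains $\psit$ in $\rho|_{H_3}$ by Lemma \ref{l:analysis-irreps}, hence some extension $\omega=\chi^\sharp$ of $\psit$ appears in $\rho|_{H_2}$; Frobenius reciprocity together with the already-proved irreducibility of $\Ind(\chi^\sharp)$ forces $\rho\cong\Ind_{H_2(\fqq)}^{\UU}(\chi^\sharp)$. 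The main obstacle is really just setting up the commutator computation correctly; once it is reduced to $H_4$-valued commutators and the nondegeneracy from Lemma \ref{l:analysis-irreps} is invoked, the rest is combinatorial bookkeeping about characters under the filtration $H_4\subset H_3\subset H_2$.
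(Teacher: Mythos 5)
Your proposal is correct and takes essentially the same route as the paper: your Mackey-criterion computation with $g=1+b_1\tau$ is exactly the paper's verification that the normalizer of $\psit$ under conjugation is $H_2(\fqq)$ (relying on $\psi$ having conductor $q^2$), and your orbit/Frobenius-reciprocity bookkeeping together with Lemma \ref{l:analysis-irreps} just unpacks the Clifford--Mackey bijection the paper invokes. No gaps.
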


\begin{proof}
If $\psit$ is given by \eqref{e:psi-tilde}, one can easily check that the normalizer of $\psit$ with respect to the conjugation action of $U^{2,q}_3(\fqq)$ is equal to $H_2(\fqq)$ (this fact relies on the assumption that $\psi$ has conductor $q^2$). Therefore if $\nu:H_2(\fqq)\rar{}\qls$ is any extension of $\psit$, then $\Ind_{H_2(\fqq)}^{U^{2,q}_3(\fqq)}(\nu)$ is irreducible and the map $\nu\mapsto\Ind_{H_2(\fqq)}^{U^{2,q}_3(\fqq)}(\nu)$ is a bijection between the set of all extensions of $\psit$ to $H_2(\fqq)$ and the set of isomorphism classes of irreducible representations of $\UU$ whose restriction to $H_3(\fqq)$ contains $\psit$. It remains to observe that a character of $H_2(\fqq)$ is an extension of $\psit$ if and only if it has the form $\chi^\sharp$ for some $\chi$ as in the statement of the corollary, and to apply Lemma \ref{l:analysis-irreps}.
\end{proof}

\subsubsection{Intertwiners} The following result will be proved in \S\ref{sss:proof-l:calculate-intertwiners}.

\begin{lem}\label{l:calculate-intertwiners}
Let $\psi:\fqq\rar{}\qls$ be a character whose conductor equals $q^2$, let $\psit:H_3(\fqq)\rar{}\qls$ be given by \eqref{e:psi-tilde} and put $V_\psit=\Ind_{H_3(\fqq)}^{U^{2,q}_3(\fqq)}(\psit)$. Then $\Hom_{U^{2,q}_3(\fqq)}\bigl(V_\psit,H^i_c(X_3,\ql)\bigr)=0$ for $i\neq 2$, $\dim\Hom_{U^{2,q}_3(\fqq)}\bigl(V_\psit,H^2_c(X_3,\ql)\bigr)=q^2$ and the Frobenius $\Fr_{q^2}$ acts on $\Hom_{U^{2,q}_3(\fqq)}\bigl(V_\psit,H^2_c(X_3,\ql)\bigr)$ via the scalar $q^2$.
\end{lem}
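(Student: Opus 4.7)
The plan is to apply Proposition \ref{p:DL-main} to translate the intertwiner space into the cohomology of an Artin--Schreier local system on an explicit affine scheme, and then to collapse that cohomology onto the $H^0_c$ of a finite discrete set via Proposition \ref{p:inductive-idea}. Concretely, I would apply Proposition \ref{p:DL-main} with $G=U^{2,q}_3$, $H=H_3$, $f(1+a_3\tau^3+a_4\tau^4)=a_4$ (so that $\psit=\psi\circ f$), the obvious section $s(1+a_1\tau+a_2\tau^2)=1+a_1\tau+a_2\tau^2$ of $G\to G/H$, and $Y=Y_3$, to obtain an $\Fr_{q^2}$-equivariant isomorphism
\[
\Hom_{U^{2,q}_3(\fqq)}\bigl(V_\psit,H^i_c(X_3,\ql)\bigr)\cong H^i_c\bigl(\be^{-1}(Y_3),P^*\cL_\psi\bigr),
\]
where $P$ is the pullback of the $a_4$-coordinate on $(G/H)\times H$.

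The next step is to make $\be^{-1}(Y_3)$ and $P$ explicit. A direct computation shows that $\gat(a_1,a_2,a_3,a_4)=s(x)h'=1+a_1\tau+a_2\tau^2+a_3\tau^3+(a_4+a_1a_3^q)\tau^4$, so pulling back the defining equations \eqref{e:equations-defining-X-3} of $X_3$ along $\gat$ identifies $\gat^{-1}(X_3)\subset(G/H)\times H$ with the subscheme of $\bA^4$ cut out by $a_2^q+a_2-a_1^{q+1}\in\bF_q$ and $a_4^q+a_4+a_1^q(a_3^{q^2}-a_3)+a_2^{q+1}\in\bF_q$. Taking the quotient by the right action of $H_3(\fqq)$, writing $b_i=a_i^{q^2}-a_i$ for the Lang-map coordinates, and eliminating $a_4$ via the identity $u^q-u=b_4$ for $u:=a_4^q+a_4$, I identify $\be^{-1}(Y_3)$ with the closed subvariety of $\bA^3$ defined by $a_2^q+a_2-a_1^{q+1}\in\bF_q$, and the pulled-back function $P$ becomes
\[
P(a_1,a_2,b_3)=a_1^q b_3-a_1^{q^2}b_3^q+a_2^{q+1}-a_2^{q^2+q}.
\]
This expression fits the template of Proposition \ref{p:inductive-idea} with $n=2$, $j=1$, $f(a_1,a_2)=a_1$ and $P_2(a_1,a_2)=a_2^{q+1}-a_2^{q^2+q}$; the assumption that $\psi$ has conductor $q^2$ gives $m=2$, and the required condition $m\nmid j$ is satisfied.

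Applying the proposition yields $H^i_c(\be^{-1}(Y_3),P^*\cL_\psi)\cong H^{i-2}_c(S_3,P_3^*\cL_\psi)(-1)$, where $S_3=\{a_1=0\}\cap S_2=\{a_2\in\bA^1:a_2^q+a_2\in\bF_q\}$. The condition $a_2^q+a_2\in\bF_q$ is equivalent to $a_2^{q^2}=a_2$, so $S_3$ is the reduced discrete scheme $\fqq\subset\bA^1$ consisting of $q^2$ points; and on $S_3$ the identity $a_2^{q^2}=a_2$ forces $P_3=a_2^{q+1}-a_2^{q^2+q}=0$. Therefore the cohomology vanishes except in degree $i=2$, where it has dimension $q^2$. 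For the Frobenius action, $\Fr_q$ permutes the basis of $H^0_c(S_3,\ql)=\ql[\fqq]$ via $a_2\mapsto a_2^q$, an involution on $\fqq$, so its eigenvalues are $\pm 1$ and those of $\Fr_{q^2}=\Fr_q^2$ are uniformly $1$. After the Tate twist $(-1)$, which multiplies $\Fr_q$ by $q$ and hence $\Fr_{q^2}$ by $q^2$, the Frobenius $\Fr_{q^2}$ acts on $H^2_c(\be^{-1}(Y_3),P^*\cL_\psi)$ by the scalar $q^2$, as required.

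The main obstacle is the second step: obtaining the explicit form of $\be^{-1}(Y_3)$ and $P$ in the coordinates $(a_1,a_2,b_3)$. This requires translating carefully between the multiplicative group structure on $U^{2,q}_3$ and the additive coordinates on $(G/H)\times H$ provided by the section, passing correctly to the quotient by $H_3(\fqq)$, and eliminating $a_4$ via the Artin--Schreier relation so that the resulting $P$ fits the exact algebraic shape required to invoke Proposition \ref{p:inductive-idea}. Once this explicit description is in hand, the vanishing, dimension, and Frobenius assertions follow automatically.
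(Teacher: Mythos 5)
Your proposal is correct and follows essentially the same route as the paper: the same application of Proposition \ref{p:DL-main} with $G=U^{2,q}_3$, $H=H_3$, $f=a_4$, leading to the same equations \eqref{e:first}--\eqref{e:second} for $\be^{-1}(Y_3)$ and the same $P$, followed by the same application of Proposition \ref{p:inductive-idea} with $j=1$, $m=2$. Your computation of $\be^{-1}(Y_3)$ via $\gat^{-1}(X_3)$ and descent along the Lang map of $H_3$ is just a repackaging of the paper's substitution $h=L_{q^2}(1+y_3\tau^3+y_4\tau^4)$, so the two arguments coincide in substance.
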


\begin{rem}\label{r:auxiliary}
Using Lemma \ref{l:analysis-irreps} and the proof of Corollary \ref{c:structure-irreps}, we see that the representation $V_\psit$ is the direct sum of all irreducible representations of $\UU$ on which $H_4(\fqq)$ acts via $\psi$.
\end{rem}

\begin{rem}\label{r:Y-3-is-complicated}
Recall from Remark \ref{r:X-h-is-a-preimage} that we can write $X_3=L_{q^2}^{-1}(Y_3)$ for some closed subscheme $Y_3\subset U^{2,q}_3$, where $L_{q^2}:U^{2,q}_3\to U^{2,q}_3$ is the Lang map $g\mapsto L_{q^2}(g)g^{-1}$. In principle, one can compute it explicitly and find that
\[
Y_3 = \bigl\{ 1+b_1\tau+b_2\tau^2+b_3\tau^3+b_4\tau^4\in U^{2,q}_3 \st b_2=0, \ b_4=-b_3 b_1^q\bigr\}.
\]
However, the calculations one needs to perform are rather tedious. Therefore in \S\ref{sss:proof-l:calculate-intertwiners} below we use an alternate idea, which allows us to apply Proposition \ref{p:DL-main} without calculating $Y_3$ explicitly.
\end{rem}

\subsubsection{Proof of Lemma \ref{l:calculate-intertwiners}}\label{sss:proof-l:calculate-intertwiners} We place ourselves in a situation in which Proposition \ref{p:DL-main} can be applied as follows:

\begin{itemize}
\item We replace $q$ with $q^2$ and take $G=U^{2,q}_3$ and $H=H_3\subset U^{2,q}_3$.
 \sbr
\item We identify the homogeneous space $U^{2,q}_3/H_3$ with the affine plane $\bA^2$ via the projection $U^{2,q}_3\rar{}\bA^2$ onto the first two coordinates and define the section $s:\bA^2\rar{}U^{2,q}_3$ by $s(a_1,a_2)=1+a_1\tau+a_2\tau^2$.
 \sbr
\item We define $f:H_3\rar{}\bG_a$ by $1+a_3\tau^3+a_4\tau^4\mapsto a_4$.
 \sbr
\item We let $\psi:\fqq\rar{}\qls$ be the character in the statement of Lemma \ref{l:calculate-intertwiners}.
 \sbr
\item We take $Y=Y_3\subset U^{2,q}_3$, cf.~Remark \ref{r:Y-3-is-complicated}.
\end{itemize}

The morphism $\be:\bA^2\times H_3\rar{}U^{2,q}_3$ defined in Proposition \ref{p:DL-main} is given by $\be(x,h)=s(F_{q^2}(x))hs(x)^{-1}$. In order to apply that proposition we must calculate the preimage $\be^{-1}(Y_3)\subset \bA^2\times H_3$. In fact, we claim that if we write \[\bA^2\times H_3=\bigl\{ (a_1,a_2,1+a_3\tau^3+a_4\tau^4)\bigr\}\] and use $a_1,a_2,a_3,a_4$ as coordinates on $\bA^2\times H_3$, then $\be^{-1}(Y_3)\subset \bA^2\times H_3$ is defined by the two equations
\begin{equation}\label{e:first}
a_2^{q^2}-a_2 = a_1^{q+q^2}-a_1^{1+q},
\end{equation}
\begin{equation}\label{e:second}
a_4=a_1^q a_3 - a_1^{q^2}a_3^q + a_2^{1+q}-a_2^{q+q^2}.
\end{equation}
Indeed, it suffices to verify this claim at the level of $\bfq$-points. So let $x=(a_1,a_2)\in\bA^2(\bfq)$ and $h=1+a_3\tau^3+a_4\tau^4\in H_3(\bfq)$. We can write $a_3=y_3^{q^2}-y_3$ and $a_4=y_4^{q^2}-y_4$ for some $y_3,y_4\in\bfq$, so that $h=L_{q^2}(1+y_3\tau^3+y_4\tau^4)$ and hence
\[
\be(x,h)=F_{q^2}(s(x))\cdot L_{q^2}(1+y_3\tau^3+y_4\tau^4)\cdot s(x) = L_{q^2}\bigl(s(x)\cdot(1+y_3\tau^3+y_4\tau^4)\bigr).
\]
We see that $\be(x,h)\in Y_3$ if and only if $s(x)\cdot(1+y_3\tau^3+y_4\tau^4)\in X_3$. Now
\[
s(x)\cdot(1+y_3\tau^3+y_4\tau^4) = 1+a_1\tau+a_2\tau^2+y_3\tau^3+(y_4+a_1y_3^q)\tau^4,
\]
so that $s(x)\cdot(1+y_3\tau^3+y_4\tau^4)\in X_3$ if and only if $a_2^q+a_2-a_1^{q+1}\in\bF_q$ and \[(y_4+a_1y_3^q)^q+(y_4+a_1y_3^q) +a_2^{q+1}-a_1y_3^q-y_3a_1^q\in\bF_q.\]
One checks by a direct calculation that these requirements are equivalent to the system \eqref{e:first}--\eqref{e:second} using the identities $a_3=y_3^{q^2}-y_3$ and $a_4=y_4^{q^2}-y_4$.

\mbr

Using Proposition \ref{p:DL-main}, the proof of Lemma \ref{l:calculate-intertwiners} reduces to the following calculation. Let $S\subset\bA^3$ be the subscheme defined by the equation \eqref{e:first}, let $P:\bA^3\rar{}\bG_a$ be given by $(a_1,a_2,a_3)\mapsto a_1^q a_3 - a_1^{q^2}a_3^q + a_2^{1+q}-a_2^{q+q^2}$ and let $\cL_\psi$ be the Artin-Schreier local system on $\bG_a$ over $\fqq$ defined by the character $\psi:\fqq\rar{}\qls$. We must prove that if $\psi$ has conductor $q^2$, then $H^i_c(S,P^*\cL_\psi)=0$ for $i\neq 2$ and $H^2_c(S,P^*\cL_\psi)$ is a $q^2$-dimensional vector space on which $\Fr_{q^2}$ acts by the scalar $q^2$. To this end, observe that $P(a_1,a_2,a_3)=a_1^q a_3 - a_1^{q^2}a_3^q+P_2(a_1,a_2)$, where $P_2(a_1,a_2)=a_2^{1+q}-a_2^{q+q^2}$ does not depend on $a_3$, and $S$ is the preimage of the subscheme $S_2\subset\bA^2$ defined by \eqref{e:first} under the projection $\bA^3\rar{}\bA^2$ along the third coordinate. Proposition \ref{p:inductive-idea} shows that $H^i_c(S,P^*\cL_\psi)\cong H^{i-2}_c(S_3,P_3^*\cL_\psi)(-1)$ for all $i$, where $S_3\subset S_2$ is the subscheme defined by $a_1=0$ and $P_3(a_1,a_2)=a_2^{1+q}-a_2^{q+q^2}$. But $S_3$ is the finite discrete set consisting of points of the form $(0,a_2)$ with $a_2\in\fqq$ and $P_3(0,a_2)=0$ for all $a_2\in\fqq$, so $H^0_c(S_3,P_3^*\cL_\psi)$ has dimension $q^2$ and the trivial action of $\Fr_{q^2}$, and $H^j_c(S_3,P_3^*\cL_\psi)=0$ for all $j\geq 1$. This completes the proof of Lemma \ref{l:calculate-intertwiners}. \qed

\subsubsection{Proof of Theorem \ref{t:main-example}(a)} Let $\chi:U^1_L/U^3_L\rar{}\qls$ be a character and let $\psi$ denote its restriction to $U^2_L/U^3_L\cong\fqq$. Assume that $\psi$ has conductor $q^2$ and consider, for each $i\geq 0$, the subspace $H^i_c(X_3,\ql)[\chi]\subset H^i_c(X_3,\ql)$ on which $U^1_L/U^3_L$ acts via $\chi$. By Remark \ref{rems:properties}(3), the subgroup $H_4(\fqq)\subset\UU$ acts on $H^i_c(X_3,\ql)[\chi]$ via $\psi$. By Lemma \ref{l:calculate-intertwiners} and Remark \ref{r:auxiliary}, we have $H^i_c(X_3,\ql)[\chi]=0$ for all $i\neq 2$, which implies the first assertion of Conjecture \ref{conj:reduction-to-X_h}.
The proof of the second assertion is based on the next lemma. Given characters $\chi:U^1_L/U^3_L\to\qls$ and $\nu:H_2(\fqq)\to\qls$ and an integer $i\geq 0$, we write $H^i_c(X_3,\ql)_{\chi,\nu}$ for the subspace of $H^i_c(X_3,\ql)$ on which $U^1_L/U^3_L$ and $H_2(\fqq)$ act via $\chi$ and $\nu$, respectively.

\begin{lem}\label{l:calculate-eigenspaces}
Let $\chi_1,\chi_2:U^1_L/U^3_L\rar{}\qls$ be characters whose restriction to $U^2_L/U^3_L\cong\fqq$ have conductor $q^2$. Then with the notation of Definition \ref{d:chi-sharp},
\[
\dim H^2_c(X_3,\ql)_{\chi_1,\chi_2^\sharp} =
\begin{cases}
1 & \text{if } \chi_1=\chi_2, \\
0 & \text{if } \chi_1\neq\chi_2.
\end{cases}
\]
\end{lem}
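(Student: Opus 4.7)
My plan is to apply Lemma~\ref{l:fixed-point-2} directly with $\Gamma = U^1_L/U^3_L$ acting on $X_3$ from the left via ``$*$'', $A = H_2(\fqq)$ acting from the right by multiplication, $\theta = \chi_1$, and $\chi = \chi_2^\sharp$; the dimension will then be expressible as an explicit character sum that evaluates to $1$ or $0$ according to whether $\chi_1 = \chi_2$, by two successive orthogonality arguments. The hypotheses of Lemma~\ref{l:fixed-point-2} are easily verified: since $\chi_2^\sharp\bigl\lvert_{H_3(\fqq)} = \psit$, the subspace $H^i_c(X_3,\ql)_{\chi_1,\chi_2^\sharp}$ is a direct summand of the $\psit$-isotypic part of $H^i_c(X_3,\ql)$ for the $H_3(\fqq)$-action, which by Lemma~\ref{l:calculate-intertwiners} and Frobenius reciprocity vanishes for $i \neq 2$ and carries $\Fr_{q^2}$ as the scalar $q^2$ in degree two; thus one takes $r = 2$ and $\lambda = q^2$.

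The heart of the argument is the fixed-point calculation. Parametrizing $\gamma = 1 + \la\pi + \mu\pi^2$, $a = 1 + b_2\tau^2 + b_3\tau^3 + b_4\tau^4$, and $x = 1 + a_1\tau + a_2\tau^2 + a_3\tau^3 + a_4\tau^4 \in X_3(\bfq)$, and comparing coefficients in $\gamma * F_{q^2}(x) = x \cdot a$ via the $*$-action formula and the multiplication in $U^{2,q}_3(\bfq)$, one finds: from the $\tau$-coefficient, $a_1 \in \fqq$; the first relation in \eqref{e:equations-defining-X-3} then forces $a_2 \in \fqq$, giving $b_2 = \la$ from the $\tau^2$-coefficient; and from $\tau^3, \tau^4$ the Artin--Schreier equations $a_3^{q^2} - a_3 = b_3 + a_1(\la^q - \la)$ and $a_4^{q^2} - a_4 = b_4 + a_1 b_3^q - \mu$. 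Raising the second relation in \eqref{e:equations-defining-X-3} to the $q$-th power and using $a_1, a_2 \in \fqq$ gives $a_4^{q^2} - a_4 = a_1^q(a_3^{q^2} - a_3)$, which combined with the Artin--Schreier equations yields the compatibility condition
\[
\Phi_{\la,\mu,b_3,b_4}(a_1) \,:=\, \mu - b_4 + a_1^q b_3 - a_1 b_3^q + a_1^{q+1}(\la^q - \la) = 0 \qquad (a_1 \in \fqq),
\]
and one concludes that $N(\gamma,a) = q^6 \cdot \#\{a_1 \in \fqq : \Phi(a_1) = 0\}$ when $b_2 = \la$ and zero otherwise, the factor $q^6$ coming from $q^2$ free choices for each of $a_2, a_3, a_4$.

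Substituting into the formula of Lemma~\ref{l:fixed-point-2} and factoring
\[
\chi_1(1+\la\pi+\mu\pi^2)^{-1}\chi_2(1+\la\pi+b_4\pi^2) = (\chi_2/\chi_1)(1+\la\pi) \cdot \psi(b_4-\mu),
\]
which follows from the identity $(1+\la\pi+\mu\pi^2)^{-1}(1+\la\pi+b_4\pi^2) = 1 + (b_4-\mu)\pi^2$ in $U^1_L/U^3_L$, I would change variable $c = b_4 - \mu$ to trivialize the $\mu$-sum and obtain
\[
\dim H^2_c(X_3,\ql)_{\chi_1,\chi_2^\sharp} = q^{-4} \sum_{\la, a_1 \in \fqq} (\chi_2/\chi_1)(1+\la\pi) \sum_{b_3 \in \fqq} \psi\bigl(a_1^q b_3 - a_1 b_3^q + a_1^{q+1}(\la^q-\la)\bigr).
\]
Writing $\psi(\cdot) = \psi_0(\Tr_{\fqq/\bF_p}(z\,\cdot))$ with $z \in \fqq \setminus \bF_q$ (the conductor-$q^2$ hypothesis in the form $z \neq z^q$) and using $\Tr(u^q) = \Tr(u)$, the inner sum rewrites as $\sum_{b_3 \in \fqq} \psi_0(\Tr((z-z^q)a_1^q b_3))$, which by orthogonality of characters of $\fqq$ vanishes unless $a_1 = 0$ and equals $q^2$ in that case. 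Only the $a_1 = 0$ term survives, reducing the dimension to $q^{-2}\sum_{\la \in \fqq} (\chi_2/\chi_1)(1+\la\pi)$; since $\chi_1$ and $\chi_2$ agree on $U^2_L/U^3_L \cong \fqq$, the map $\la \mapsto (\chi_2/\chi_1)(1+\la\pi)$ descends to a character of $\fqq \cong U^1_L/U^2_L$ that is trivial iff $\chi_1 = \chi_2$, and a final orthogonality step will complete the computation.

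The main obstacle is the bookkeeping needed to extract the compatibility condition $\Phi(a_1) = 0$ from the interaction of the fixed-point equations with \emph{both} defining relations of $X_3$; the remainder is a straightforward, if intricate, character-sum evaluation, with the decisive input being the sharp form $z \neq z^q$ of the conductor-$q^2$ hypothesis, which is precisely what forces the inner $b_3$-sum to vanish away from $a_1 = 0$.
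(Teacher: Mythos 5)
Your proposal follows essentially the same route as the paper: you invoke Lemma \ref{l:fixed-point-2} with the same $\Gamma$, $A$, $\theta=\chi_1$, $\chi=\chi_2^\sharp$, justify its hypotheses the same way (Lemma \ref{l:calculate-intertwiners} via the $\psit$-isotypic part, $r=2$, $\lambda=q^2$), and your fixed-point analysis reproduces exactly the paper's system: $a_1,a_2\in\fqq$, $b_2=\la$, the two Artin--Schreier equations, and the compatibility condition $\Phi(a_1)=0$, which is the paper's \eqref{e:(6'')}, with the same factor $q^6$. The only difference is cosmetic: you finish the character sum by summing out $b_4$ first and applying orthogonality in $b_3$ (using $z\neq z^q$), whereas the paper counts pairs $(a_1,\be)$ explicitly and uses $\sum_{\de\in\Ker\Tr}\psi(\de)=0$; both hinge on the conductor-$q^2$ hypothesis in the same way.

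One small gap as written: the lemma does not assume that $\chi_1$ and $\chi_2$ have the \emph{same} restriction to $U^2_L/U^3_L$, only that both restrictions have conductor $q^2$; your factorization $\chi_1(1+\la\pi+\mu\pi^2)^{-1}\chi_2(1+\la\pi+b_4\pi^2)=(\chi_2/\chi_1)(1+\la\pi)\,\psi(b_4-\mu)$ and your closing sentence silently assume the restrictions agree. You should dispose of the unequal-restriction case first, either as the paper does --- by Remark \ref{rems:properties}(3) the same copy of $\fqq$ acts on $H^i_c(X_3,\ql)_{\chi_1,\chi_2^\sharp}$ through both $\chi_1\lvert_{U^2_L/U^3_L}$ and $\chi_2^\sharp\lvert_{H_4(\fqq)}$, so the space vanishes when these differ --- or by noting that in your computation the $\mu$-sum then produces $\sum_{\mu}(\psi_2\psi_1^{-1})(\mu)=0$. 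With that one-line addition the argument is complete.
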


\begin{proof}
The observations in the previous paragraph imply the assertion of the lemma if $\chi_1\bigl\lvert_{U^2_L/U^3_L}\neq\chi_2\bigl\lvert_{U^2_L/U^3_L}$. Therefore we assume from now on that the two restrictions agree and denote them by $\psi:\fqq\to\qls$. The previous paragraph also shows that $H^i_c(X_3,\ql)_{\chi_1,\chi_2^\sharp}$ for all $i\neq 2$, and Lemma \ref{l:calculate-intertwiners} implies that $\Fr_{q^2}$ acts on $H^2_c(X_3,\ql)_{\chi_1,\chi_2^\sharp}$ by the scalar $q^2$. Lemma \ref{l:fixed-point-2} yields
\begin{equation}\label{e:proof-eigenspaces-1}
\dim H^2_c(X_3,\ql)_{\chi_1,\chi_2^\sharp} = q^{-12} \sum_{\ga,g} \chi_1(\ga)^{-1}\chi_2^\sharp(g) N(\ga,g),
\end{equation}
where the sum ranges over all $\ga\in U^1_L/U^3_L$, $g\in H_2(\fqq)$ and we write
\[
N(\ga,g)=\Bigl\lvert\bigl\{ x\in X_3(\bfq) \st \ga*F_{q^2}(x)=x\cdot g \bigr\} \Bigr\rvert.
\]
To calculate $N(\ga,g)$ we introduce the following notation. We will write $x\in X_3(\bfq)$ as $x=1+a_1\tau+a_2\tau^2+a_3\tau^3+a_4\tau^4$ and recall that $a_1,a_2,a_3,a_4$ are constrained by the conditions \eqref{e:equations-defining-X-3}. We also write $\ga=1+\la\pi+\mu\pi^2$ and $g=1+\la'\tau^2+\be\tau^3+\mu'\tau^4$, where $\la,\mu,\la',\be,\mu'\in\fqq$. A direct calculation shows that the equation $\ga*F_{q^2}(x)=x\cdot g$ amounts to the following system of four equations:
\begin{equation}\label{e:(1)}
a_1^{q^2}=a_1,
\end{equation}
\begin{equation}\label{e:(2)}
a_2+\la'=\la+a_2^{q^2},
\end{equation}
\begin{equation}\label{e:(3)}
a_3^{q^2}+\la a_1=a_3+\la'^q a_1+\be,
\end{equation}
\begin{equation}\label{e:(4)}
a_4^{q^2}+\mu+\la a_2^{q^2}=a_4+a_2\la'+\mu'+a_1\be^q.
\end{equation}
Hence $N(\ga,g)$ equals the number of quadruples $a_1,a_2,a_3,a_4\in\bfq$ satisfying the system \eqref{e:(1)}--\eqref{e:(4)} together with the conditions
\begin{equation}\label{e:(5)}
a_2^q+a_2-a_1^{q+1}\in\bF_q,
\end{equation}
\begin{equation}\label{e:(6)}
a_4^q+a_4 +a_2^{q+1}-a_1a_3^q-a_3a_1^q\in\bF_q.
\end{equation}
We now simplify this system of equations. Condition \eqref{e:(1)} amounts to $a_1\in\fqq$, in which case $a_1^{q+1}\in\bF_q$, so that \eqref{e:(5)} is equivalent to the condition that $a_2^q+a_2\in\bF_q$, which is the same as $a_2\in\fqq$. We see that \eqref{e:(1)} and \eqref{e:(5)} together are equivalent to requiring that $a_1,a_2\in\fqq$. We assume this from now on.

\mbr

Condition \eqref{e:(2)} becomes $\la'=\la$. We find that $N(\ga,g)=0$ when $\la'\neq\la$. From now on we suppose that $\la=\la'$, in which case \eqref{e:(2)} follows automatically from \eqref{e:(1)} and \eqref{e:(5)}. It remains to analyze conditions \eqref{e:(3)}, \eqref{e:(4)}, \eqref{e:(6)}. Since $a_2^{q+1}\in\bF_q$, \eqref{e:(6)} is equivalent to the condition that $a_4^q+a_4-a_1a_3^q-a_3a_1^q$ belongs to the kernel of the map $a\mapsto a^q-a$, which can be rewritten as
\begin{equation}\label{e:(6')}
a_4^{q^2}-a_4 = a_1^q(a_3^{q^2}-a_3)
\end{equation}
because $a_1\in\fqq$. We also rewrite \eqref{e:(3)}, \eqref{e:(4)} as
\begin{equation}\label{e:(3')}
a_3^{q^2}-a_3=(\la^q-\la)a_1+\be,
\end{equation}
\begin{equation}\label{e:(4')}
a_4^{q^2}-a_4 = \mu'-\mu+a_1\be^q,
\end{equation}
where we used the assumption that $\la=\la'$ and $a_2\in\fqq$. Substituting \eqref{e:(3')}, \eqref{e:(4')} into \eqref{e:(6')}, we find that the system \eqref{e:(6')}--\eqref{e:(4')} is equivalent to the system consisting of equations \eqref{e:(3')}, \eqref{e:(4')} and the equation
\begin{equation}\label{e:(6'')}
a_1^q\cdot\bigl((\la^q-\la)a_1+\be\bigr) = \mu'-\mu+a_1\be^q.
\end{equation}
Reversing the steps above, we see that if $\la'=\la$ and we choose $a_1\in\fqq$ satisfying \eqref{e:(6'')}, then we can solve \eqref{e:(3')}, \eqref{e:(4')} for $a_3,a_4\in\bfq$ (there are $q^2$ possibilities for each of $a_3,a_4$) and choose an arbitrary $a_2\in\fqq$, and obtain a solution of \eqref{e:(1)}--\eqref{e:(6)}. Moreover, all solutions of \eqref{e:(1)}--\eqref{e:(6)} are obtained in this way. Thus
\begin{equation}\label{e:proof-eigenspaces-2}
\begin{split}
& \sum_{\ga,g} \chi_1(\ga)^{-1}\chi_2^\sharp(g) N(\ga,g) = \\ & q^6\cdot \sum_{\la,\mu,\mu',\be\in\fqq} \chi_1(1+\la\pi+\mu\pi^2)^{-1}\chi_2(1+\la\pi+\mu'\pi^2) N'(\la,\mu,\mu',\be),
\end{split}
\end{equation}
where $N'(\la,\mu,\mu',\be)$ is the number of elements $a_1\in\fqq$ that satisfy \eqref{e:(6'')}.

\mbr

We can rewrite \eqref{e:(6'')} as follows:
\begin{equation}\label{e:(6''')}
a_1^{q+1} (\la^q-\la) + (\be a_1^q-\be^q a_1) = \mu'-\mu.
\end{equation}
If $a_1,\la,\be\in\fqq$, then $a_1^{q+1}$ and hence the left hand side of \eqref{e:(6''')} belongs to the kernel of the trace map $\Tr_{\fqq/\bF_q}:\fqq\rar{}\bF_q$. So if $\Tr_{\fqq/\bF_q}(\mu'-\mu)\neq 0$, then $N'(\la,\mu,\mu',\be)=0$. Recalling the definition of $\psi:\fqq\to\qls$, we see that the right hand side of \eqref{e:proof-eigenspaces-2} can be rewritten as
\[
q^6 \sum_{\la,\mu\in\fqq} \frac{\chi_2(1+\la\pi+\mu\pi^2)}{\chi_1(1+\la\pi+\mu\pi^2)} \cdot \left( \sum_{\de\in\Ker(\Tr_{\fqq/\bF_q})} \psi(\de) \cdot \sum_{\be\in\fqq} N''(\la,\de,\be),
\right)
\]
where $N''(\la,\de,\be)$ is the number of elements $a_1\in\fqq$ that satisfy
\begin{equation}\label{e:(6-4)}
a_1^{q+1} (\la^q-\la) + (\be a_1^q-\be^q a_1) = \de.
\end{equation}
Now given $\la,\de$ as above, $\sum_{\be\in\fqq} N''(\la,\de,\be)$ equals the number of pairs $(a_1,\be)\in\fqq$ satisfying \eqref{e:(6-4)}. If $a_1=0$, then \eqref{e:(6-4)} can only hold for $\de=0$, in which case $\be\in\fqq$ can be chosen arbitrarily. If $a_1\in\fqq^\times$, one easily checks that for each choice of $\la\in\fqq$ and $\de\in\Ker(\Tr_{\fqq/\bF_q})$, there are $q$ choices of $\be\in\fqq$ satisfying \eqref{e:(6-4)}, so
\begin{equation}\label{e:proof-eigenspaces-3}
\sum_{\be\in\fqq} N''(\la,\de,\be) = \begin{cases}
q^2+(q^2-1)q & \text{if }\de=0, \\
(q^2-1)q & \text{if }\de\neq 0.
\end{cases}
\end{equation}
Since $\psi:\fqq\to\qls$ has conductor $q^2$, its restriction to $\Ker(\Tr_{\fqq/\bF_q})$ is nontrivial, which implies that $\sum_{\de\in\Ker(\Tr_{\fqq/\bF_q})} \psi(\de)=0$, and therefore, by \eqref{e:proof-eigenspaces-3},
\[
\sum_{\de\in\Ker(\Tr_{\fqq/\bF_q})} \psi(\de) \cdot \sum_{\be\in\fqq} N''(\la,\de,\be) = q^2.
\]
Substituting the last identity into \eqref{e:proof-eigenspaces-2} yields
\[
\sum_{\ga,g} \chi_1(\ga)^{-1}\chi_2^\sharp(g) N(\ga,g) = q^2\cdot \sum_{\ga\in U^1_L/U^3_L} \chi_1(\ga)^{-1}\chi_2(\ga).
\]
Finally, since $U^1_L/U^3_L$ is a group of order $q^4$, we have
\[
\sum_{\ga\in U^1_L/U^3_L} \chi_1(\ga)^{-1}\chi_2(\ga) = \begin{cases}
q^4 & \text{if } \chi_1=\chi_2, \\
0 & \text{if } \chi_1\neq\chi_2,
\end{cases}
\]
which in view of \eqref{e:proof-eigenspaces-1} finishes the proof of Lemma \ref{l:calculate-eigenspaces}.
\end{proof}

Now we complete the proof of part (a) of Theorem \ref{t:main-example}. Lemma \ref{l:calculate-eigenspaces} implies that the restriction of $H^2_c(X_3,\ql)[\chi]$ to $H_2(\fqq)$ contains the character $\chi^\sharp$ (see Definition \ref{d:chi-sharp}) with multiplicity $1$ and does not contain any other character $\nu:H_2(\fqq)\to\qls$ that extends $\psit$. Corollary \ref{c:structure-irreps} implies that $H^2_c(X_3,\ql)[\chi]\cong\Ind_{H_2(\fqq)}^{U^{2,q}_3(\fqq)} (\chi^\sharp)$ is an irreducible representation of $\UU$. \qed

\subsubsection{Proof of Theorem \ref{t:main-example}(b)} We use Proposition \ref{p:main-reduction}, whose application is justified by part (a) of Theorem \ref{t:main-example}. We see that $R^i_{\bT,\te}=0$ for $i\neq 2$, and to calculate $R^2_{\bT,\te}$ we need to determine the representation $\eta_\te^\circ$ defined in Proposition \ref{p:main-reduction}(a). To this end, we identify $\cO_D^\times/U_D^5$ with the multiplicative group $\cR_{3,2,q}^\times(\fqq)$ via \eqref{e:identify-U-h} and let $\zeb\in\cR_{3,2,q}^\times(\fqq)$ be the element corresponding to the generator $\ze\in\fqq^\times\subset\cO_D^\times/U_D^5$. Thus $\cR_{3,2,q}^\times(\fqq)$ can be identified with the semidirect product $\langle\zeb\rangle\ltimes\UU$, where $\langle\zeb\rangle$ is the cyclic subgroup generated by $\zeb$.

\mbr

Recall that $\rho_\chi$ denotes the representation of $U^1_D/U^5_D\cong\UU$ in $H^2_c(X_3,\ql)[\chi]$, where $\chi:U^1_L/U^3_L\to\qls$ is the character induced by $\te$. The representation $\rho_\chi$ was calculated in the proof of part (a) of Theorem \ref{t:main-example}: with the notation of Definition \ref{d:chi-sharp}, we have $\rho_\chi\cong\Ind_{H_2(\fqq)}^{\UU}(\chi^\sharp)$ as representations of $\UU$. The character $\chi^\sharp:H_2(\fqq)\to\qls$ is invariant under conjugation by $\zeb$, so it extends to a character $\chi':\langle\zeb\rangle\cdot H_2(\fqq)\to\qls$ such that $\chi'(\zeb)=1$. Using the Frobenius formula for the character of an induced representation, one easily finds that the trace of $\zeb$ in $\Ind_{\langle\zeb\rangle\cdot H_2(\fqq)}^{\cR_{3,2,q}^\times(\fqq)}(\chi')$ is equal to $1$. Therefore, by uniqueness, the representation of $\cR_{3,2,q}^\times(\fqq)$ corresponding to the representation $\eta^\circ_\te$ described in Proposition \ref{p:main-reduction}(a) is given by $\Ind_{\langle\zeb\rangle\cdot H_2(\fqq)}^{\cR_{3,2,q}^\times(\fqq)}(\te^\sharp)$, where $\te^\sharp:\langle\zeb\rangle\cdot H_2(\fqq)\to\qls$ is the character
\[
\te^\sharp\bigl(\ze^k\cdot(1+a_2\tau^2+a_3\tau^3+a_4\tau^4)\bigr)=\te\bigl(\ze^k\cdot(1+a_2\pi+a_4\pi^2)\bigr).
\]
Theorem \ref{t:main-example}(b) follows easily from this observation and Proposition \ref{p:main-reduction}(b). \qed

\bibliographystyle{alpha}
\bibliography{DLtheory}

\begin{thebibliography}{Wei10}

\bibitem[BD06]{intro}
Mitya Boyarchenko and Vladimir Drinfeld.
\newblock A motivated introduction to character sheaves and the orbit method
  for unipotent groups in positive characteristic.
\newblock Preprint, arXiv.org:math.RT/0609769v2, 2006.

\bibitem[Boy10]{characters}
Mitya Boyarchenko.
\newblock Characters of unipotent groups over finite fields.
\newblock {\em Selecta Math. (N.S.)}, 16(4):857--933, 2010.

\bibitem[BW11]{maximal-varieties-LLC}
Mitya Boyarchenko and Jared Weinstein.
\newblock Maximal varieties and the local {L}anglands correspondence for
  {G}{L}(n).
\newblock Preprint, arXiv:1109.3522, \textbf{version 2}, 2011.

\bibitem[Cor74]{corwin-division-Advances-1974}
Lawrence Corwin.
\newblock Representations of division algebras over local fields.
\newblock {\em Advances in Math.}, 13:259--267, 1974.

\bibitem[Del77]{sga4.5}
P.~Deligne.
\newblock {\em Cohomologie \'etale}.
\newblock Lecture Notes in Mathematics, Vol. 569. Springer-Verlag, Berlin,
  1977.
\newblock S{\'e}minaire de G{\'e}om{\'e}trie Alg{\'e}brique du Bois-Marie SGA
  4$\frac{1}{2}$, Avec la collaboration de J. F. Boutot, A. Grothendieck, L.
  Illusie et J. L. Verdier.

\bibitem[DL76]{deligne-lusztig}
P.~Deligne and G.~Lusztig.
\newblock Representations of reductive groups over finite fields.
\newblock {\em Ann. of Math. (2)}, 103(1):103--161, 1976.

\bibitem[Gre61]{greenberg-1}
Marvin~J. Greenberg.
\newblock Schemata over local rings.
\newblock {\em Ann. of Math. (2)}, 73:624--648, 1961.

\bibitem[Gre63]{greenberg-2}
Marvin~J. Greenberg.
\newblock Schemata over local rings. {II}.
\newblock {\em Ann. of Math. (2)}, 78:256--266, 1963.

\bibitem[Lan56]{lang-theorem}
Serge Lang.
\newblock Algebraic groups over finite fields.
\newblock {\em Amer. J. Math.}, 78:555--563, 1956.

\bibitem[Lus79]{lusztig-corvallis}
G.~Lusztig.
\newblock Some remarks on the supercuspidal representations of {$p$}-adic
  semisimple groups.
\newblock In {\em Automorphic forms, representations and {$L$}-functions
  ({P}roc. {S}ympos. {P}ure {M}ath., {O}regon {S}tate {U}niv., {C}orvallis,
  {O}re., 1977), {P}art 1}, Proc. Sympos. Pure Math., XXXIII, pages 171--175.
  Amer. Math. Soc., Providence, R.I., 1979.

\bibitem[Lus04]{lusztig-finite-rings}
G.~Lusztig.
\newblock Representations of reductive groups over finite rings.
\newblock {\em Represent. Theory}, 8:1--14 (electronic), 2004.

\bibitem[Sta11]{stasinski-extended-deligne-lusztig}
Alexander Stasinski.
\newblock Extended {D}eligne-{L}usztig varieties for general and special linear
  groups.
\newblock {\em Adv. Math.}, 226(3):2825--2853, 2011.

\bibitem[Sta12]{stasinski-formalism}
Alexander Stasinski.
\newblock Reductive group schemes, the {G}reenberg functor, and associated
  algebraic groups.
\newblock {\em J. Pure Appl. Algebra}, 216(5):1092--1101, 2012.

\bibitem[Wei10]{WeinsteinGoodReduction}
Jared Weinstein.
\newblock Good reduction of affinoids on the {L}ubin-{T}ate tower.
\newblock {\em Documenta Mathematica}, (15):981--1007, 2010.

\end{thebibliography}

\end{document}